\newtheorem{lemma}{Lemma}[section]
\newtheorem{remark}[lemma]{Remark}
\newtheorem{proposition}[lemma]{Proposition}
\newtheorem{theorem}[lemma]{Theorem}
\newtheorem{corollary}[lemma]{Corollary}
\newtheorem{setting}[lemma]{Setting}
\newcommand{\smallsum}{\textstyle\sum}
\newcommand{\1}{\ensuremath{\mathbbm{1}}}
\providecommand{\N}{{\ensuremath{\mathbbm{N}}}}
\providecommand{\Z}{{\ensuremath{\mathbbm{Z}}}}
\providecommand{\R}{{\ensuremath{\mathbbm{R}}}}
\providecommand{\E}{{\ensuremath{\mathbb{E}}}}
\renewcommand{\P}{{\ensuremath{\mathbb{P}}}}
\newcommand{\eps}{{\ensuremath{\varepsilon}}}
\newcommand{\funcF}{F}
\newcommand{\LipConst}{L}
\newcommand{\Var}{{\ensuremath{\operatorname{Var}}}}
\newcommand{\uniform}{\ensuremath{\mathcal{R}}}
\newcommand{\unif}{\ensuremath{\mathfrak{r}}}
\newcommand{\ES}{\ensuremath{S}}
\newcommand{\RN}{\operatorname{RV}}
\newcommand{\blue}[1]{}
\newcommand{\rhoh}{\varrho}
\newcommand{\uvec}{\mathbf{e}}
\newcommand{\vast}{\bBigg@{4}}
\newcommand{\Vast}{\bBigg@{5}}
\title{Overcoming the curse of dimensionality 
in the 
numerical\\ approximation of
parabolic partial
differential\\
equations
with gradient-dependent nonlinearities
}
\author{Martin Hutzenthaler$^{1}$, Arnulf Jentzen$^{2,3}$, and Thomas Kruse$^{3}$
\bigskip
\\
\small{$^1$ Faculty of Mathematics, University of Duisburg-Essen,} 
\\
\small{45117 Essen, Germany, e-mail: martin.hutzenthaler@uni-due.de}
\smallskip
\\
\small{$^2$ SAM, Department of Mathematics, ETH Z\"urich,}
\\
\small{8092 Z\"urich, Switzerland, e-mail: arnulf.jentzen@sam.math.ethz.ch}
\smallskip
\\
\small{$^3$ Faculty of Mathematics and Computer Science, University of M\"unster,}
\\
\small{48149 M\"unster, Germany, e-mail: ajentzen@uni-muenster.de}
\smallskip
\\
\small{$^4$ Institute of Mathematics, University of Gie{\ss}en,}
\\
\small{35392 Gie{\ss}en, Germany, e-mail: thomas.kruse@math.uni-giessen.de}
}
\begin{document}

\maketitle
\makeatletter
\let\@makefnmark\relax
\let\@thefnmark\relax
\@footnotetext{\emph{AMS 2010 subject classification:} 65M75}
\@footnotetext{\emph{Keywords and phrases:}
curse of dimensionality, partial differential equation, PDE, backward stochastic differential equation, BSDE, multilevel Picard, multilevel Monte Carlo, gradient-dependent nonlinearity
  }
\makeatother
\abstract{
Partial differential equations (PDEs) are a fundamental tool in the modeling of many real world phenomena. In a number of such real world phenomena the PDEs under consideration
contain gradient-dependent nonlinearities and are high-dimensional. 
Such high-dimensional nonlinear PDEs can in nearly all cases not be solved explicitly and it is one of the most challenging tasks in applied mathematics to solve high-dimensional nonlinear PDEs approximately.
It is especially very challenging to design approximation algorithms for nonlinear PDEs for which one can rigorously prove that they do overcome the so-called curse of dimensionality in the sense that the number of computational operations of the approximation algorithm needed to achieve an approximation precision of size $\eps > 0$ grows at most polynomially in both the PDE dimension $d \in \N$ and the reciprocal of the prescribed approximation accuracy $\eps$.
In particular, to the best of our knowledge there exists no approximation algorithm in the scientific literature which has been proven to overcome 
the curse of dimensionality in the case of a class of nonlinear PDEs with general time horizons and 
gradient-dependent nonlinearities. 
It is the key contribution of this article to overcome this difficulty. More specifically, it is the key contribution of this article (i) to propose a new full-history recursive multilevel Picard approximation algorithm for high-dimensional nonlinear heat equations with general time horizons and gradient-dependent nonlinearities and (ii) to rigorously prove that this full-history recursive multilevel Picard approximation algorithm 
does indeed overcome the curse of dimensionality in the case of such nonlinear heat equations with gradient-dependent nonlinearities.

}
\tableofcontents

\section{Introduction}

Partial differential equations (PDEs) play a prominent role in the modeling of many real world phenomena. For instance, PDEs appear in financial engineering in models for the pricing of financial derivatives, PDEs emerge in biology in models that aim to better understand biodiversity in ecosystems, PDEs such as the Schr\"odinger equation appear in quantum physics to describe the wave function of a quantum-mechanical system, PDEs are used in operations research to characterize the value function of control problems, PDEs provide solutions for backward stochastic differential equations (BSDEs) which itself appear in several models from applications, and stochastic PDEs such as the Zakai equation or the Kushner equation appear in nonlinear filtering problems to describe the density of the state of a physical system with only partial information available.
The PDEs in the above named models 
contain often nonlinearities and are typically high-dimensional, where, e.g., in the models from financial engineering the dimension of the PDE usually corresponds to the number of financial assets in the associated hedging or trading portfolio, where, e.g., in models that aim to better understand biodiversity the dimension of the PDE corresponds to the number of traits of the considered species in the considered ecosystem, where, e.g., in quantum physics the dimension of the PDE is, loosely speaking, three times the number of electrons in the considered physical system,
where, e.g., in optimal control problems the dimension of the PDE is determined by the dimension of the state space of the control problem, and
where, e.g., in nonlinear filtering problems the dimension of the PDE corresponds to the degrees of freedom in the considered physical system.

Such high-dimensional nonlinear PDEs can in nearly all cases not be solved explicitly and it is one of the most challenging tasks in applied mathematics to solve high-dimensional nonlinear PDEs approximately. In particular, it is very challenging to design approximation methods for nonlinear PDEs for which one can rigorously prove that they do overcome the so-called curse of dimensionality in the sense that the number of computational operations of the approximation method needed to achieve an approximation precision of size $\eps > 0$ grows at most polynomially in both the PDE dimension $d \in \N$ and the reciprocal of the prescribed approximation accuracy $\eps$. Recently, several new stochastic approximation methods for certain classes of high-dimensional nonlinear PDEs have been proposed and studied in the scientific literature. In particular, we refer, e.g., to \cite{BouchardTouzi2004, GobetLemorWarin2005, Lemor2006, gobet2016approximation, BriandLabart2014,geiss2016simulation} for BSDE based approximation methods for PDEs in which nested conditional expectations are discretized through suitable regression methods, 
we refer, e.g., to \cite{HenryLabordere2012,HenryLabordereTanTouzi2014,
HenryLabordereOudjaneTanTouziWarin2016, BouchardTanWarinZou2017} for branching diffusion approximation methods for PDEs, we refer, e.g., to \cite{uchiyama1993solving,MeadeFernandez1994,Lagaris1998ArtificialNN,LiLuo2003,
EHanJentzen2017CMStat,BeckEJentzen2017,EYu2018,
FujiiTakahashiTakahashi2017,HanJentzenE2017,HenryLabordere2017,
Farahmand2017DeepRL,Raissi2018,
SirignanoSpiliopoulos2017,BeckerCheriditoJentzen2018arXiv,Becketal2018,
Magill2018NeuralNT,LongLuMaDong2018,HanLong2018,Berg2018AUD,
PhamWarin2019,LyeMishraRay2019, GoudenegeMolent2019,BeckBeckerCheridito2019,
JacquierOumgari2019,HurePhamWarin2019,ChanMikaelWarin2019,
BeckerCheriditoJentzen2019,Dockhorn2019,chen2019deep,grohs2019deep}
 for deep learning based approximation methods for PDEs, and we refer to \cite{EHutzenthalerJentzenKruse2017,HutzenthalerKruse2017,hutzenthaler2019overcoming,beck2019overcoming, giles20019generalised} for numerical simulations, approximation results, and extensions of the in \cite{EHutzenthalerJentzenKruse2016,HJKNW2018} recently introduced full-history recursive multilevel Picard approximation methods for PDEs (in the following we abbreviate \textit{full-history recursive multilevel Picard} as \textit{MLP}). Branching diffusion approximation methods are also in the case of certain nonlinear PDEs as efficient as plain vanilla Monte Carlo approximations in the case of linear PDEs but the error analysis only applies in the case where the time horizon $T \in (0,\infty)$ and the initial condition, respectively, are sufficiently small and branching diffusion approximation methods are actually not working anymore in the case where the time horizon $T \in (0,\infty)$ exceeds a certain threshold (cf., e.g., \cite[Theorem~3.12]{HenryLabordereOudjaneTanTouziWarin2016}). For MLP approximation methods it has been recently shown in \cite{HJKNW2018,hutzenthaler2019overcoming,beck2019overcoming} that such algorithms do indeed overcome the curse of dimensionality for certain classes of gradient-independent PDEs. Numerical simulations for deep learning based approximation methods for nonlinear PDEs in high-dimensions are very encouraging (see, e.g., the above named references \cite{uchiyama1993solving,MeadeFernandez1994,Lagaris1998ArtificialNN,LiLuo2003,
EHanJentzen2017CMStat,BeckEJentzen2017,EYu2018,
FujiiTakahashiTakahashi2017,HanJentzenE2017,HenryLabordere2017,
Farahmand2017DeepRL,Raissi2018,
SirignanoSpiliopoulos2017,BeckerCheriditoJentzen2018arXiv,Becketal2018,
Magill2018NeuralNT,LongLuMaDong2018,HanLong2018,Berg2018AUD,
PhamWarin2019,LyeMishraRay2019, GoudenegeMolent2019,BeckBeckerCheridito2019,
JacquierOumgari2019,HurePhamWarin2019,ChanMikaelWarin2019,
BeckerCheriditoJentzen2019,Dockhorn2019,chen2019deep,grohs2019deep}) but so far there is only a partial error analysis available for such algorithms (which, in turn, is strongly based on the above mentioned error analysis for the MLP approximation method;
cf.\ \cite{hutzenthaler2019proof} and, e.g., \cite{HanLong2018,SirignanoSpiliopoulos2017,BernerGrohsJentzen2018, ElbraechterSchwab2018,GrohsWurstemberger2018,
JentzenSalimovaWelti2018,KutyniokPeterseb2019,ReisingerZhang2019,
GrohsHornungJentzen2019}). To sum up, to the best of our knowledge until today the MLP approximation method (see \cite{HJKNW2018}) is the only approximation method in the scientific literature for which it has been shown that it does overcome the curse of dimensionality in the numerical approximation of semilinear PDEs with general time horizons.

The above mentioned articles \cite{HJKNW2018,hutzenthaler2019overcoming,beck2019overcoming,giles20019generalised} prove, however, only in the case of gradient-independent nonlinearities that MLP approximation methods overcome the curse of dimensionality and it remains an open problem to overcome the curse of dimensionality in the case of PDEs with gradient-dependent nonlinearities. This is precisely the subject of this article. 
More specifically, in this article we propose a new MLP approximation method for nonlinear heat equations with gradient-dependent nonlinearities and the main result of this article, Theorem~\ref{cor:comp_and_error2} in Section~\ref{sec:rate} below, proves that the number of realizations of scalar random variables required by this MLP approximation method to achieve a precision of size $\eps > 0$ grows at most polynomially in both the PDE dimension $d \in \N$ and the reciprocal of the prescribed approximation accuracy $\eps$. To illustrate the findings of the main result of this article in more detail, we now present in the following theorem a special case of Theorem~\ref{cor:comp_and_error2}.
   \begin{theorem}
\label{thm:introduction}
Let $T,\delta,\lambda \in (0,\infty)$, 
let
$u_d = ( u_d(t,x) )_{ (t,x) \in [0,T] \times \R^d }\in C^{1,2}([0,T]\times\R^d,\R)$, $d\in \N$,
be at most polynomially growing functions,
 let $f_d \in C( \R \times \R^d,\R)$, $d\in \N$,
 let $g_d \in C( \R^d,\R)$, $d\in \N$,
 let $L_{d,i}\in \R$, $d,i \in \N$,
assume for all $d\in \N$,
$t\in[0,T)$, $x=(x_1,x_2, \ldots,x_d)$, $\mathfrak x=(\mathfrak x_1,\mathfrak x_2, \ldots,\mathfrak x_d)$, $z=(z_1,z_2,\ldots,z_d)$, $\mathfrak{z}=(\mathfrak z_1, \mathfrak z_2, \ldots, \mathfrak z_d)\in\R^d$, $y,\mathfrak{y} \in \R$ that
\begin{equation}\label{eq:fLipschitz_intro}
\max\{|f_d(y,z)-f_d(\mathfrak y,\mathfrak{z})|,|g_d(x)-g_d(\mathfrak  x)|\}\le
\textstyle{
\sum_{j=1}^d}L_{d,j}
\big(d^\lambda |x_j-\mathfrak{x}_j|+ |y-\mathfrak y|+|z_j-\mathfrak{z}_j|
\big),
\end{equation}
\begin{equation}\label{eq:PDE_intro}
\big( \tfrac{ \partial }{ \partial t } u_d \big)( t, x )=  ( \Delta_x u_d )( t, x ) + f_d\big( u(t,x), ( \nabla_x u_d )(t, x) \big),  \qquad u_d(0,x) = g_d(x),
\end{equation}
and
$
d^{-\lambda}(|g_d(0)|+|f_d(0,0)|)+\sum_{i=1}^d L_{d,i}\le \lambda,
$
let
$
  ( 
    \Omega, \mathcal{F}, \P 
  )
$
be a probability space,
let
$
  \Theta = \cup_{ n \in \N } \Z^n
$,
let
$
  Z^{d, \theta } \colon \Omega \to \R^d 
$, $d\in \N$,
$ \theta \in \Theta $,
be i.i.d.\ standard normal random variables,
let $\unif^\theta\colon \Omega\to(0,1)$, $\theta\in \Theta$, be 
i.i.d.\ random variables,
assume 
for all $b\in (0,1)$
that
$\P(\unif^0\le b)=\sqrt{b}$,
assume that
$(Z^{d,\theta})_{(d, \theta) \in \N \times \Theta}$ and
$(\unif^\theta)_{ \theta \in \Theta}$ are independent,
let
$ 
  {\bf U}_{ n,M}^{d,\theta }
=
  (
  {\bf U}_{ n,M}^{d,\theta, 0},{\bf U}_{ n,M}^{d,\theta, 1},\ldots,{\bf U}_{ n,M}^{d,\theta, d}
  )
  \colon(0,T]\times\R^d\times\Omega\to\R^{1+d}
$,
$n,M,d\in\Z$, $\theta\in\Theta$,
satisfy
for all 
$
  n,M,d \in \N
$,
$ \theta \in \Theta $,
$ t\in (0,T]$,
$x \in \R^d$
that $
{\bf U}_{-1,M}^{d,\theta}(t,x)={\bf U}_{0,M}^{d,\theta}(t,x)=0$ and
\begin{equation}  \begin{split}\label{eq:def:U_intro}
 {\bf U}_{n,M}^{d,\theta}(t,x)
  &=
  \left(
    g_d(x)
    , 0
  \right)
  +
  \textstyle \sum\limits_{i=1}^{M^n} \displaystyle \tfrac{1}{M^n} \big(g_d(x+[2t]^{1/2}Z^{(\theta,0,-i)})-g_d(x)\big)
  \big(
  1 , [2t]^{-1/2}
  Z^{d,(\theta, 0, -i)}
  \big)
  \\
  &\quad +\textstyle\sum\limits_{l=0}^{n-1}\sum\limits_{i=1}^{M^{n-l}}\displaystyle \tfrac{2t [\unif^{(\theta, l,i)}]^{1/2}}{M^{n-l}}  \big[ 
 f_d\big({\bf U}_{l,M}^{d,(\theta,l,i)}(t(1-\unif^{(\theta, l,i)}),x+[2t\unif^{(\theta, l,i)}]^{1/2}Z^{d,(\theta,l,i)})\big)
\\
 & \quad  
 -\1_{\N}(l)f_d\big( {\bf U}_{l-1,M}^{d,(\theta,-l,i)}(t(1-\unif^{(\theta, l,i)}),x+[2t\unif^{(\theta, l,i)}]^{1/2}Z^{d,(\theta,l,i)})\big)
  \big]\big(
  1 ,
   [2t\unif^{(\theta, l,i)}]^{-1/2}
  Z^{d,(\theta,l,i)}
  \big),
\end{split}     \end{equation}
and for every $d,M,n \in \N$ let $\RN_{d,n,M}\in \N$ be the number of 
realizations of scalar random variables which are used to compute one realization
of $ {\bf U}_{n,M}^{d,0}(T,0)\colon \Omega \to \R$ (cf.\ \eqref{c18} for a
precise definition).
Then there exist $c\in \R$ and $N=(N_{d,\eps})_{(d, \eps) \in \N \times (0,1]}\colon \N \times (0,1] \to \N$ such that
for all $d\in \N$, $\eps \in (0,1]$ it holds that 
$ \sum_{n=1}^{N_{d,\eps}}\RN_{d,n,\lfloor n^{1/4} \rfloor} \le c d^c \varepsilon^{-(2+\delta)}$ and
 \begin{equation}\label{eq:intro_main}
\sup_{ n \in \N \cap [N_{d,\eps},\infty) } \Big[
 \E\big[|{\bf U}_{{n},\lfloor n^{1/4} \rfloor}^{d,0,0}(T,0)-u_d(T,0)|^2\big]+
\max_{i\in\{1,2,\ldots,d\}}
    \E\big[ |{\bf U}_{{n},\lfloor n^{1/4} \rfloor}^{d,0,i}(T,0)-( \tfrac{ \partial }{ \partial x_i } u_d )(T,0)|^2\big]
 \Big]^{\nicefrac 12}
 \le \eps.
 \end{equation}
\end{theorem}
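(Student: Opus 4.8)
\emph{Proof proposal.}
The plan is to derive Theorem~\ref{thm:introduction} as a special case of the general result Theorem~\ref{cor:comp_and_error2} in Section~\ref{sec:rate}, so the first step is to verify that the hypotheses of that theorem hold for the present data. This is mostly a matter of normalisation: the Laplacian in \eqref{eq:PDE_intro} (rather than $\tfrac12\Delta$) produces the factors $[2t]^{1/2}$ scaling the Gaussian increments, and the prescription $\P(\unif^0\le b)=\sqrt b$ for all $b\in(0,1)$ is exactly the statement that $\unif^0$ has Lebesgue density $(0,1)\ni r\mapsto\tfrac1{2\sqrt r}$, which is the importance-sampling density used in the general scheme. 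The assumption $d^{-\lambda}(|g_d(0)|+|f_d(0,0)|)+\sum_{i=1}^dL_{d,i}\le\lambda$ together with \eqref{eq:fLipschitz_intro} supplies the Lipschitz and linear-growth bounds required there, with all relevant constants bounded by a fixed power of $d$; in particular the factor $d^\lambda$ multiplying $|x_j-\mathfrak x_j|$ in \eqref{eq:fLipschitz_intro}, which is the only place where the dimension enters super-constantly, is compensated by $\sum_{i=1}^dL_{d,i}\le\lambda$ and never appears beyond a fixed power.

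With that reduction, the substance of the argument (carried out in Section~\ref{sec:rate}) rests on three ingredients. First, combining the Duhamel representation for the heat semigroup with the Bismut--Elworthy--Li integration-by-parts formula, one shows that $v_d:=(u_d,\nabla_xu_d)$ is the unique at-most-polynomially-growing solution of the stochastic fixed-point equation
\begin{align*}
v_d(t,x)
&=\big(g_d(x),0\big)+\E\big[\big(g_d(x+[2t]^{1/2}Z^{d,0})-g_d(x)\big)\big(1,[2t]^{-1/2}Z^{d,0}\big)\big]
\\
&\quad+\E\big[2t[\unif^0]^{1/2}\,f_d\big(v_d(t(1-\unif^0),x+[2t\unif^0]^{1/2}Z^{d,0})\big)\big(1,[2t\unif^0]^{-1/2}Z^{d,0}\big)\big],
\end{align*}
and the key structural point is that in the gradient (second) component the singular Bismut--Elworthy--Li weight $[2t\unif^0]^{-1/2}Z^{d,0}$ is multiplied by the importance weight $2t[\unif^0]^{1/2}$ and thereby collapses to the bounded weight $[2t]^{1/2}Z^{d,0}$; this cancellation, which is precisely why the density $\tfrac1{2\sqrt r}$ is used, is what makes the gradient-dependent nonlinearity accessible to the MLP error analysis. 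The scheme \eqref{eq:def:U_intro} is the MLP discretisation of this equation.

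Second, one estimates the $L^2$-error of $\mathbf U_{n,M}^{d,0}$ against $v_d$ by a bias--variance split. By unbiasedness of the Monte Carlo samples and the telescoping structure of \eqref{eq:def:U_intro}, the bias is a Bismut--Elworthy--Li-weighted time integral of $\E[f_d(\mathbf U_{n-1,M}^{d,0})]-f_d(v_d)$, hence, by \eqref{eq:fLipschitz_intro}, controlled by a time integral of (a constant times) the $L^1$-error of $\mathbf U_{n-1,M}^{d,0}$; since this integral carries the density $\tfrac1{2\sqrt r}$ (and, in the gradient component, integrates against $(t-s)^{-1/2}$ after the collapse above), iterating contracts with factorial-type gain. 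The variance decomposes over the levels (level-increments built from disjoint i.i.d.\ sample families being independent), the level-$l$ term being $M^{-(n-l)}$ times a second moment that \eqref{eq:fLipschitz_intro} and boundedness of the collapsed gradient weight bound by a constant times $\E[|\mathbf U_{l,M}^{d,0}-v_d|^2]+\E[|\mathbf U_{l-1,M}^{d,0}-v_d|^2]$, plus a $d^{2\lambda}M^{-n}$ term from the $g$-increments. Combining yields, for the time-weighted $L^2$-error $\mathcal E_n$ of $\mathbf U_{n,M}^{d,0}$, a Gronwall-type recursion $\mathcal E_n^2\lesssim(\text{time integral of }\mathcal E_{n-1}^2)+\sum_{l=0}^{n-1}M^{-(n-l)}(\mathcal E_l^2+\mathcal E_{l-1}^2)+d^{2\lambda}M^{-n}$; with $M=\lfloor n^{1/4}\rfloor$ one solves this to obtain $\mathcal E_n\le\eps$ for all $n\ge N_{d,\eps}$, with $N_{d,\eps}$ of logarithmic order in $\tfrac1\eps$ and polynomial order in $d$, which is \eqref{eq:intro_main}. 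Finally, \eqref{eq:def:U_intro} gives the cost recursion $\RN_{d,n,M}\le c_0dM^n+\sum_{l=0}^{n-1}M^{n-l}(c_0d+\RN_{d,l,M}+\RN_{d,l-1,M})$ (with $\RN_{d,0,M}=\RN_{d,-1,M}=0$), so $\RN_{d,n,M}\le c_1d(c_2M)^n$; inserting $M=\lfloor n^{1/4}\rfloor$ and summing over $n\le N_{d,\eps}$ gives $\sum_{n=1}^{N_{d,\eps}}\RN_{d,n,\lfloor n^{1/4}\rfloor}\le cd^c\eps^{-(2+\delta)}$, the slack $\delta>0$ absorbing the lower-order exponent overhead produced by the $\lfloor n^{1/4}\rfloor$-factors and by choosing $N_{d,\eps}$ slightly above the minimal value.

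I expect the gradient component to be the main obstacle throughout. The delicate points are: (i) justifying the Bismut--Elworthy--Li representation for a merely $C^{1,2}$, polynomially growing solution $u_d$, which requires a localisation/approximation argument; (ii) closing the Gronwall recursion for the gradient part without acquiring a non-integrable singularity in the time variable---this is exactly where the $[\unif^0]^{1/2}$ importance weight must be used, and feeding the two coupled components (value and gradient) into a single recursion requires care; and (iii) checking that none of the constants in the Picard/Gronwall estimates grow faster than polynomially in $d$, which forces one to track at every step the interplay between the $d^\lambda$ spatial-Lipschitz factor and the normalisation $\sum_{i=1}^dL_{d,i}\le\lambda$.
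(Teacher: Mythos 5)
Your reduction is precisely the paper's own proof of this statement: Theorem~\ref{thm:introduction} is obtained there by specializing the general result (Corollary~\ref{cor:comp_and_error3}, which is Theorem~\ref{cor:comp_and_error2} applied with $\alpha=\tfrac12$, $\beta=\tfrac14$, time horizon $2T$, the substitution $u_d(t,x)\rightsquigarrow u_d(T-\tfrac{t}{2},x)$ and $f_d\rightsquigarrow\tfrac12 f_d$), and your hypothesis check --- the density $\tfrac{1}{2\sqrt r}$, the $[2t]^{1/2}$ scaling coming from $\Delta$ versus $\tfrac12\Delta$, and the dimension dependence being confined to $d^\lambda$ and $\sum_i L_{d,i}\le\lambda$ --- is exactly what that specialization amounts to (you leave the forward/backward time reversal $t\mapsto T-\tfrac t2$ implicit, but that is bookkeeping). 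Where your sketch of the supporting machinery deviates is in how the error recursion is closed. The paper does not run a Gronwall argument on an aggregated time-weighted $L^2$ error: Lemma~\ref{l:estimate.L2error} unrolls the one-step estimate completely into a finite sum over $j$ with binomial weights in which the whole product of importance-sampling/gradient weights along the ordered random times $S(1,t)<\dots<S(j+1,t)$ stays inside a single $L^2$ norm, and the factorial gain is extracted from the moment bound for these products (Corollary~\ref{cor:ub.it.int.lp}, built on the iterated-integral estimates of Section~\ref{sec:it_int}), i.e.\ from the volume of the ordered time simplex rather than from per-step boundedness of the collapsed weight. This matters for your proposed recursion $\mathcal E_n^2\lesssim\int\mathcal E_{n-1}^2+\dots$: the lower-level error is evaluated at the spatial point $x+W_S-W_t$, which is not independent of the weight $\big(1,\tfrac{W_S-W_t}{S-t}\big)$, so one cannot pull a sup-in-space $L^2$ error out of the expectation; one must either condition and work with pointwise error fields (which is in effect the paper's unrolled sum) or use the H\"older splitting with exponents $p$ and $\tfrac{2p}{p-2}$ as in Proposition~\ref{thm:rate}. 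Finally, the surviving terms in the error bound contain norms of the unknown ${\bf u}=(u_d,\nabla_x u_d)$ along Brownian paths; polynomial growth of $u_d$ is assumed, but the required bound on $\nabla_x u_d$ and the $L^{2p/(p-2)}$ bounds with constants polynomial in $d$ are derived in the paper via the BSDE comparison argument (Lemma~\ref{lem:grad_bound} and Lemma~\ref{upper_exact}); your sketch obtains this only implicitly through the fixed-point representation, and it must be made explicit to get the $c\,d^c\,\eps^{-(2+\delta)}$ cost bound.
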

Theorem~\ref{thm:introduction} is an immediate consequence of Corollary~\ref{cor:comp_and_error3}
in Section~\ref{sec:rate} below. 
Corollary~\ref{cor:comp_and_error3}, in turn, follows from Theorem~\ref{cor:comp_and_error2} in Section~\ref{sec:rate}, which is the main result of this article. In the following we add a few comments regarding some of the mathematical objects appearing in Theorem~\ref{thm:introduction} above. The real number $T \in (0,\infty)$ in Theorem~\ref{thm:introduction} above describes the time horizon of the PDE under consideration (see \eqref{eq:PDE_intro} in Theorem~\ref{thm:introduction} above). Theorem~\ref{thm:introduction} proves under suitable Lipschitz assumptions that the MLP approximation method in \eqref{eq:def:U_intro} above overcomes the curse of dimensionality in the numerical approximation of the gradient-dependent semilinear PDEs in \eqref{eq:PDE_intro} above. Theorem~\ref{thm:introduction} even proves that the computational effort of the MLP approximation method in \eqref{eq:def:U_intro} required to obtain a precision of size $\eps\in (0,1]$ is bounded by $c d^c \eps^{ ( 2 + \delta ) }$ where $c\in \R$ is a constant which is completely independent of the PDE dimension $d\in \N$ and where $\delta \in (0,\infty)$ is an arbitrarily small positive real number which describes the convergence order which we lose when compared to standard Monte Carlo approximations of linear heat equations. The real number $\lambda \in (0,\infty)$ in Theorem~\ref{thm:introduction} above is an arbitrary large constant which we employ to formulate the Lipschitz and growth assumptions in Theorem~\ref{thm:introduction} (see \eqref{eq:fLipschitz_intro} and below \eqref{eq:PDE_intro} in Theorem~\ref{thm:introduction} above). The functions $u_d \colon [0,T]\times \R^d \to \R$, $d \in \N$, in Theorem~\ref{thm:introduction} above are the solutions of the PDEs under consideration; see \eqref{eq:PDE_intro} in Theorem~\ref{thm:introduction} above. Note that for every $d \in \N$ we have that \eqref{eq:PDE_intro} is a PDE where the time variable $t \in [0,T]$ takes values in the interval $[0,T]$ and where the space variable $x \in \R^d$ takes values in the $d$-dimensional Euclidean space $\R^d$. The functions $f_d \colon \R \times \R^d\to \R$, $d \in \N$, describe the nonlinearities of the PDEs in \eqref{eq:PDE_intro} and the functions $g_d\colon \R^d\to \R$, $d\in \N$, describe the initial conditions of the PDEs in \eqref{eq:PDE_intro}. The quantities $\lfloor n^{ 1 / 4 } \rfloor$, $n \in \N,$ in \eqref{eq:intro_main} in Theorem~\ref{thm:introduction} above describe evaluations of the standard floor function in the sense that for all $n \in \N$ it holds that $\lfloor n^{ 1 / 4 }  \rfloor = \max( [0,n^{ 1 / 4 }]\cap \N )$.

Theorem~\ref{thm:introduction} above in this introductory section is a special case of the more general approximation results in Section~\ref{sec:rate} in this article and these more general approximations results treat more general PDEs than \eqref{eq:PDE_intro} as well as more general MLP approximation methods than \eqref{eq:def:U_intro}. More specifically, in \eqref{eq:PDE_intro} above we have for every $d \in \N$ that the nonlinearity $f_d$ depends only on the PDE solution $u_d$ and the spatial  gradient $\nabla_x u_d$ of the PDE solution but not on $t \in [0,T]$ and $x \in \R^d$ 
while in Corollary~\ref{cor:comp_and_error}, Theorem~\ref{cor:comp_and_error2}, and Corollary~\ref{cor:comp_and_error3} in Section~\ref{sec:rate} the nonlinearities of the PDEs may also depend on $t \in [0,T]$ and $x \in \R^d$.
Corollary~\ref{cor:comp_and_error} and Theorem~\ref{cor:comp_and_error2} also provide error analyses for a more general class of MLP approximation methods. In particular, in Theorem~\ref{thm:introduction} above the family $\unif^\theta\colon \Omega \to (0,1)$, $\theta \in (0,1)$, of i.i.d.\ random variables satisfies for all $b\in (0,1)$ that $\P(\unif^0\le b)=\sqrt{b}$
and Corollary~\ref{cor:comp_and_error} and Theorem~\ref{cor:comp_and_error2} are proved under the more general hypothesis that there exists $\alpha\in (0,1)$ such that for 
all $b\in (0,1)$ it holds that $\P(\unif^0\le b)=b^\alpha$  (see, e.g., \eqref{eq:def:Ucor2} in Theorem~\ref{cor:comp_and_error2}). Furthermore, the more general approximation result in Corollary~\ref{cor:comp_and_error} in Section~\ref{sec:rate} also provides an explicit upper bound for the constant $c \in \R$ in Theorem~\ref{thm:introduction} above (see \eqref{eq:fin_cor} in Corollary~\ref{cor:comp_and_error}). 

The remainder of this article is organized as follows. In Section~\ref{sec:it_int} we establish a few identities and upper bounds for certain iterated deterministic integrals. The results of Section~\ref{sec:it_int} are then used in Section~\ref{sec:error_analysis} in which we introduce and analyze the considered MLP approximation methods. In Section~\ref{sec:ex_sol} we establish suitable a priori bounds for exact solutions of PDEs of the form \eqref{eq:PDE_intro}. In Section~\ref{sec:rate} we combine the findings of Sections~\ref{sec:error_analysis} and~\ref{sec:ex_sol} to etablish in Theorem~\ref{cor:comp_and_error2} below the main approximation result of this article.

\section{Analysis of certain deterministic iterated integrals}\label{sec:it_int}

In this section we establish in Corollary~\ref{cor:ub.it.int.lp} below an upper bound for products of certain independent random variables. Corollary~\ref{cor:ub.it.int.lp} below is a central ingredient in our error analysis for MLP approximations in Section~\ref{sec:error_analysis} below. Our proof of Corollary~\ref{cor:ub.it.int.lp} employs a few elementary identities and estimates for certain deterministic iterated integrals which are provided in Lemma~\ref{lem:iterated_int}, Lemma~\ref{lem:iterated_int_beta}, and Corollary~\ref{cor:iterated_int_beta_bound} below.

\subsection{Identities for certain deterministic iterated integrals}

\begin{lemma}\label{lem:iterated_int}
Let $T,\beta,\gamma \in(0,\infty)$, let $\rho\colon(0,1)\to(0,\infty)$ be $\mathcal B((0,1))/\mathcal B((0,\infty))$-measurable, and
let $\rhoh\colon [0,T]^2\to (0,\infty)$ satisfy for all $t\in[0,T)$, $s\in(t,T]$ that $\rhoh(t,s)=\tfrac{1}{T-t}\rho(\tfrac{s-t}{T-t})$.
Then it holds for all $j\in\N$, $s_0\in [0,T)$ that
\begin{multline}\label{eq:iterated_int}
\int_{s_0}^T\frac{1}{(s_1-s_0)^\beta [\rhoh(s_0,s_1)]^\gamma}\int_{s_1}^T\frac{1}{(s_2-s_1)^\beta [\rhoh(s_1,s_2)]^\gamma }\ldots \int_{s_{j-1}}^T\frac{1}{(s_{j}-s_{j-1})^\beta [\rhoh(s_{j-1},s_{j})]^\gamma}\,ds_{j}\ldots ds_2 \,ds_1\\
=(T-s_0)^{j(1+\gamma-\beta)}
\left[\prod_{i=0}^{j-1} \int_0^1 \frac{(1-s)^{i(1+\gamma-\beta)}}{s^\beta [\rho(s)]^\gamma}\,ds\right].
\end{multline}
\end{lemma}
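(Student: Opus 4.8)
The plan is to prove the identity by induction on $j \in \N$, peeling off the innermost integral and performing in each step a single change of variables that rescales the interval $[s_{k-1}, T]$ to $[0,1]$. First I would record the elementary scaling property of $\rhoh$: for $t \in [0,T)$ and $s \in (t,T]$ the definition $\rhoh(t,s) = \tfrac{1}{T-t}\rho\big(\tfrac{s-t}{T-t}\big)$ gives, after substituting $s = t + (T-t)u$ with $u \in (0,1)$, that $(s-t)^\beta [\rhoh(t,s)]^\gamma = (T-t)^{\beta} u^\beta (T-t)^{-\gamma} [\rho(u)]^\gamma = (T-t)^{\beta-\gamma} u^\beta [\rho(u)]^\gamma$, while $ds = (T-t) \, du$. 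Hence for any measurable $\varphi\colon [0,T] \to [0,\infty)$ we have
\begin{equation*}
\int_t^T \frac{\varphi(s)}{(s-t)^\beta [\rhoh(t,s)]^\gamma} \, ds = (T-t)^{1+\gamma-\beta} \int_0^1 \frac{\varphi\big(t + (T-t)u\big)}{u^\beta [\rho(u)]^\gamma} \, du.
\end{equation*}
This single formula is the engine of the whole argument.

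For the base case $j = 1$ I would apply the displayed formula with $t = s_0$ and $\varphi \equiv 1$, which immediately yields $(T-s_0)^{1+\gamma-\beta}\int_0^1 u^{-\beta}[\rho(u)]^{-\gamma}\,du$, matching the right-hand side of \eqref{eq:iterated_int} for $j=1$ (the product has the single factor $i=0$, whose exponent $0\cdot(1+\gamma-\beta)$ is zero). For the inductive step, assume the claim for $j$ and consider the $(j+1)$-fold integral. I would first apply the induction hypothesis to the inner $j$-fold integral over $s_1, \ldots, s_{j+1}$ — which, viewed as a function of $s_1$, equals $(T-s_1)^{j(1+\gamma-\beta)}\prod_{i=0}^{j-1}\int_0^1 \frac{(1-s)^{i(1+\gamma-\beta)}}{s^\beta[\rho(s)]^\gamma}\,ds$ — and then treat the remaining outer integral in $s_1$ using the engine formula with $t = s_0$ and $\varphi(s_1) = (T-s_1)^{j(1+\gamma-\beta)}$ times the ($s_1$-independent) product. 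Under the substitution $s_1 = s_0 + (T-s_0)u$ we get $T - s_1 = (T-s_0)(1-u)$, so $\varphi(s_0 + (T-s_0)u) = (T-s_0)^{j(1+\gamma-\beta)}(1-u)^{j(1+\gamma-\beta)}\cdot[\text{product}]$. Collecting powers of $(T-s_0)$ gives $(T-s_0)^{1+\gamma-\beta}\cdot(T-s_0)^{j(1+\gamma-\beta)} = (T-s_0)^{(j+1)(1+\gamma-\beta)}$, and the new $u$-integral $\int_0^1 \frac{(1-u)^{j(1+\gamma-\beta)}}{u^\beta[\rho(u)]^\gamma}\,du$ is exactly the missing $i = j$ factor of the product, completing the induction.

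I do not expect a genuine obstacle here; the result is a clean iterated change of variables. The only points requiring mild care are purely technical: checking measurability/nonnegativity so that Tonelli's theorem licenses the iterated integration (all integrands are nonnegative, so this is automatic, though the integrals may a priori be $+\infty$, in which case the identity holds in $[0,\infty]$ and the argument goes through verbatim), and keeping the bookkeeping of the exponents $i(1+\gamma-\beta)$ straight across the induction so that the shift $i \mapsto i+1$ correctly reproduces the telescoping product. I would state the lemma and its proof allowing both sides to take the value $+\infty$, so that no integrability hypothesis on $\rho$ is needed at this stage.
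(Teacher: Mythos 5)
Your proposal is correct and follows essentially the same route as the paper: induction on $j$, with the base case and the inductive step each handled by the substitution $s_k = s_{k-1} + (T-s_{k-1})u$, applying the induction hypothesis to the inner $j$-fold integral so that the factor $(T-s_1)^{j(1+\gamma-\beta)}$ produces $(1-u)^{j(1+\gamma-\beta)}$ and hence the missing $i=j$ factor of the product. Your remarks on Tonelli and allowing the value $+\infty$ are sound but not needed beyond what the paper implicitly uses, since all integrands are nonnegative.
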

\begin{proof}[Proof of Lemma~\ref{lem:iterated_int}]
We prove \eqref{eq:iterated_int} by induction on $j\in \N$. For the base case $j=1$ note that integration by substitution yields that for all $s_0\in [0,T)$ it holds that
\begin{equation}
\int_{s_0}^T\frac{1}{(s_1-s_0)^\beta  [\rhoh(s_0,s_1)]^\gamma}\,ds_1=\int_0^1\frac{(T-s_0)^{1-\beta}}{z[\rhoh(s_0,s_0+z(T-s_0))]^{\gamma}}\,dz=(T-s_0)^{1+\gamma-\beta}\int_0^1 \frac{1}{z^\beta [\rho(z)]^\gamma}\,dz.
\end{equation}
This proves
\eqref{eq:iterated_int} in
the base case $j=1$.
For the induction step
$\N\ni j\rightsquigarrow j+1\in\N$ note that the induction hypothesis and integration by substitution imply
for all $s_0\in [0,T)$ that
\begin{equation}
\begin{split}
&\int_{s_0}^T\frac{1}{(s_1-s_0)^\beta [\rhoh(s_0,s_1)]^\gamma}
\bigg[\int_{s_1}^T\frac{1}{(s_2-s_1)^\beta [\rhoh(s_1,s_2)]^\gamma}\ldots \int_{s_j}^T\frac{1}{(s_{j+1}-s_j)^\beta
[\rhoh(s_j,s_{j+1})]^\gamma}\,ds_{j+1}\ldots ds_2\bigg]\, ds_1\\
&=\int_{s_0}^T\frac{(T-s_1)^{j(1+\gamma-\beta)}}{(s_1-s_0)^\beta [\rhoh(s_0,s_1)]^\gamma }\left[\prod_{i=0}^{j-1} \int_0^1 \frac{(1-s)^{i(1+\gamma-\beta)}}{s^\beta [\rho(s)]^\gamma}\,ds \right]\, ds_1\\
&=\left[\prod_{i=0}^{j-1} \int_0^1 \frac{(1-s)^{i(1+\gamma-\beta)}}{s^\beta [\rho(s)]^\gamma}\,ds \right]
\int_{0}^1\frac{(T-s_0)^{j(1+\gamma-\beta)}(1-z)^{j(1+\gamma-\beta)}}
               {(T-s_0)^{\beta}z^\beta [\rhoh(s_0,s_0+z(T-s_0))]^\gamma }
               (T-s_0)dz\\
&=(T-s_0)^{(j+1)(1+\gamma-\beta)}\left[\prod_{i=0}^j \int_0^1 \frac{(1-s)^{i(1+\gamma-\beta)}}{s^\beta [\rho(s)]^\gamma}\,ds \right].
\end{split}
\end{equation}
Induction thus proves~\eqref{eq:iterated_int}.
This completes the proof of Lemma~\ref{lem:iterated_int}.
\end{proof}

\begin{lemma}\label{lem:iterated_int_beta}
Let $\alpha \in (0,1)$, $T,\gamma \in (0,\infty)$, $\beta \in (0,\alpha \gamma+1)$
and
let $\rho\colon(0,1)\to(0,\infty)$ 
and $\rhoh\colon [0,T]^2\to (0,\infty)$ 
satisfy for all $r\in(0,1)$, $t\in[0,T)$, $s\in(t,T]$ that 
$\rho(r)=\frac{1-\alpha}{r^\alpha}$
and $\rhoh(t,s)=\tfrac{1}{T-t}\rho(\tfrac{s-t}{T-t})$.
Then it holds for all $j\in \N$, $s_0\in [0,T)$ that
\begin{multline}\label{eq:iterated_int_beta}
\int_{s_0}^T\frac{1}{(s_1-s_0)^\beta [\rhoh(s_0,s_1)]^\gamma}\int_{s_1}^T\frac{1}{(s_2-s_1)^\beta [\rhoh(s_1,s_2)]^\gamma }\ldots \int_{s_{j-1}}^T\frac{1}{(s_{j}-s_{j-1})^\beta [\rhoh(s_{j-1},s_{j})]^\gamma}\,ds_{j}\ldots ds_2 \,ds_1\\
=\left[\frac{(T-s_0)^{(1+\gamma-\beta)}\Gamma(\alpha\gamma-\beta+1)}{(1-\alpha)^\gamma}\right]^{j}\left[\prod_{i=0}^{j-1}
\frac{\Gamma(i(1+\gamma-\beta)+1)}{\Gamma(\alpha\gamma-\beta+i(1+\gamma-\beta)+2)}\right]. 
\end{multline}
\end{lemma}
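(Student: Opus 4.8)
The plan is to deduce Lemma~\ref{lem:iterated_int_beta} directly from Lemma~\ref{lem:iterated_int} by inserting the explicit choice $\rho(r)=\frac{1-\alpha}{r^\alpha}$ and evaluating the resulting one-dimensional integrals as Beta integrals. First I would check that the hypotheses of Lemma~\ref{lem:iterated_int} are satisfied for this $\rho$ (it is continuous, hence $\mathcal B((0,1))/\mathcal B((0,\infty))$-measurable, and strictly positive) and apply that lemma to rewrite the left-hand side of \eqref{eq:iterated_int_beta} as
\[
(T-s_0)^{j(1+\gamma-\beta)}\left[\prod_{i=0}^{j-1}\int_0^1\frac{(1-s)^{i(1+\gamma-\beta)}}{s^\beta[\rho(s)]^\gamma}\,ds\right].
\]

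Next, since $[\rho(s)]^\gamma=(1-\alpha)^\gamma s^{-\alpha\gamma}$ for $s\in(0,1)$, the $i$-th factor in the product equals $\frac{1}{(1-\alpha)^\gamma}\int_0^1 s^{\alpha\gamma-\beta}(1-s)^{i(1+\gamma-\beta)}\,ds$. At this point I would record that the two exponents are admissible for the Beta integral: one has $\alpha\gamma-\beta>-1$ because $\beta<\alpha\gamma+1$ by hypothesis, and one has $i(1+\gamma-\beta)\ge 0>-1$ because $\alpha<1$ forces $1+\gamma-\beta>1+\gamma-(\alpha\gamma+1)=(1-\alpha)\gamma>0$. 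Hence each integral is finite and, by the identity $\int_0^1 s^{a-1}(1-s)^{b-1}\,ds=\frac{\Gamma(a)\Gamma(b)}{\Gamma(a+b)}$ valid for all $a,b\in(0,\infty)$, we obtain
\[
\int_0^1\frac{(1-s)^{i(1+\gamma-\beta)}}{s^\beta[\rho(s)]^\gamma}\,ds
=\frac{1}{(1-\alpha)^\gamma}\cdot\frac{\Gamma(\alpha\gamma-\beta+1)\,\Gamma(i(1+\gamma-\beta)+1)}{\Gamma(\alpha\gamma-\beta+i(1+\gamma-\beta)+2)}.
\]

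Finally I would multiply these $j$ identities together, pull out the $j$ copies of $\frac{\Gamma(\alpha\gamma-\beta+1)}{(1-\alpha)^\gamma}$, and absorb the prefactor $(T-s_0)^{j(1+\gamma-\beta)}$ to form the $j$-th power $\big[\frac{(T-s_0)^{(1+\gamma-\beta)}\Gamma(\alpha\gamma-\beta+1)}{(1-\alpha)^\gamma}\big]^{j}$ appearing in \eqref{eq:iterated_int_beta}, leaving exactly the residual product $\prod_{i=0}^{j-1}\frac{\Gamma(i(1+\gamma-\beta)+1)}{\Gamma(\alpha\gamma-\beta+i(1+\gamma-\beta)+2)}$. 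This establishes \eqref{eq:iterated_int_beta}. There is no substantive obstacle in this argument; the only step deserving explicit care is the verification that the parameters of the Beta integrals are strictly positive, so that the integrals converge and the Beta–Gamma identity is applicable — and this is precisely what the assumptions $\beta\in(0,\alpha\gamma+1)$ and $\alpha\in(0,1)$ are there to guarantee.
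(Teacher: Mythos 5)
Your proposal is correct and follows essentially the same route as the paper: apply Lemma~\ref{lem:iterated_int} with $\rho(r)=\frac{1-\alpha}{r^\alpha}$ and evaluate each factor $\int_0^1 s^{\alpha\gamma-\beta}(1-s)^{i(1+\gamma-\beta)}\,ds$ via the Beta--Gamma identity, exactly as in \eqref{eq:clalc.srhos}. Your additional explicit check that $\alpha\gamma-\beta+1>0$ and $1+\gamma-\beta\ge(1-\alpha)\gamma>0$ (so the Beta integrals converge) is a welcome but minor refinement of the paper's argument.
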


\begin{proof}[Proof of Lemma~\ref{lem:iterated_int_beta}]
Throughout this proof let $B \colon (0,\infty)\times (0,\infty)\to \R $ satisfy for all $x,y\in (0,\infty)$ that 
\begin{equation}\label{eq:def_beta}
B(x,y)=\int_0^1s^{x-1}(1-s)^{y-1}\,ds.
\end{equation}
Note that \eqref{eq:def_beta} and the fact that for all $x, y \in (0,\infty)$ it holds that $B(x,y) = \frac{\Gamma(x)\Gamma(y)}{ \Gamma(x+y) }$ ensure that for all 
$i\in \N_0$ it holds that
\begin{equation}\label{eq:clalc.srhos}
\begin{split}
\int_0^1 \frac{(1-s)^{i(1+\gamma-\beta)}}{s^\beta [\rho(s)]^\gamma}\,ds
& =\frac{1}{(1-\alpha)^\gamma }\int_0^1s^{\alpha\gamma-\beta}(1-s)^{i(1+\gamma-\beta)} \,ds
 =\frac	{B(\alpha\gamma-\beta+1,i(1+\gamma-\beta)+1)}{(1-\alpha)^\gamma}\\
 &=\frac{\Gamma(\alpha\gamma-\beta+1)\Gamma(i(1+\gamma-\beta)+1)}{(1-\alpha)^\gamma\Gamma(\alpha\gamma-\beta+i(1+\gamma-\beta)+2)}.
 \end{split}
\end{equation}
Lemma~\ref{lem:iterated_int} hence
implies that for all $j\in \N$, $s_0\in [0,T)$ it holds that 
\begin{equation}
\begin{split}
&\int_{s_0}^T\frac{1}{(s_1-s_0)^\beta [\rhoh(s_0,s_1)]^\gamma}\int_{s_1}^T\frac{1}{(s_2-s_1)^\beta [\rhoh(s_1,s_2)]^\gamma }\ldots \int_{s_{j-1}}^T\frac{1}{(s_{j}-s_{j-1})^\beta [\rhoh(s_{j-1},s_{j})]^\gamma}\,ds_{j}\ldots ds_2\, ds_1\\
&=
(T-s_0)^{j(1+\gamma-\beta)}\left[\prod_{i=0}^{j-1}\int_0^1 \frac{(1-s)^{i(1+\gamma-\beta)}}{s^\beta [\rho(s)]^\gamma}\,ds\right]
\\
&=(T-s_0)^{j(1+\gamma-\beta)}\left[\prod_{i=0}^{j-1} \frac{\Gamma(\alpha\gamma-\beta+1)\Gamma(i(1+\gamma-\beta)+1)}{(1-\alpha)^\gamma\Gamma(\alpha\gamma-\beta+i(1+\gamma-\beta)+2)}\right]\\
&=
\left[\frac{(T-s_0)^{(1+\gamma-\beta)}\Gamma(\alpha\gamma-\beta+1)}{(1-\alpha)^\gamma}\right]^{j}\left[\prod_{i=0}^{j-1}
\frac{\Gamma(i(1+\gamma-\beta)+1)}{\Gamma(\alpha\gamma-\beta+i(1+\gamma-\beta)+2)}\right]. 
\end{split}
\end{equation}
This establishes \eqref{eq:iterated_int_beta}. The proof of Lemma \ref{lem:iterated_int_beta} is thus completed.
\end{proof}

\subsection{Estimates for certain deterministic iterated integrals}

\begin{corollary}\label{cor:iterated_int_beta_bound}
Let $\alpha \in (0,1)$, $T,\gamma \in (0,\infty)$, $\beta \in [\alpha\gamma,\alpha \gamma+1]$ and
let $\rho\colon(0,1)\to(0,\infty)$ and
$\rhoh\colon [0,T]^2\to (0,\infty)$ satisfy for all
$r\in (0,1)$, $t\in[0,T)$, $s\in(t,T]$ that
$\rho(r)=\frac{1-\alpha}{r^\alpha}$
and $\rhoh(t,s)=\tfrac{1}{T-t}\rho(\tfrac{s-t}{T-t})$.
 Then it holds for all $j\in \N$, $s_0\in [0,T)$ that
\begin{multline}\label{eq:iterated_int_beta_bound2}
\int_{s_0}^T\frac{1}{(s_1-s_0)^\beta [\rhoh(s_0,s_1)]^\gamma}\int_{s_1}^T\frac{1}{(s_2-s_1)^\beta [\rhoh(s_1,s_2)]^\gamma }\ldots \int_{s_{j-1}}^T\frac{1}{(s_{j}-s_{j-1})^\beta [\rhoh(s_{j-1},s_{j})]^\gamma}\,ds_{j}\ldots ds_2 \,ds_1\\
\le \left[\frac{(T-s_0)^{(1+\gamma-\beta)}\Gamma(\alpha\gamma-\beta+1)}{(1-\alpha)^\gamma
(1+\gamma-\beta)^{\alpha \gamma-\beta+1} }\right]^{j}
\left[e^{1+\gamma-\beta}((1+\gamma-\beta)(j-1)+1) \right]^{\frac{(\beta-\alpha \gamma)(\alpha\gamma-\beta+1)}{1+\gamma-\beta}}
\left[
\tfrac{\Gamma\left(\frac{1}{1+\gamma-\beta}\right)}{\Gamma\left(j+\frac{1}{1+\gamma-\beta}\right)}
\right]^{\alpha \gamma -\beta +1}
.
\end{multline}
\end{corollary}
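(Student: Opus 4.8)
The plan is to start from the exact evaluation of the iterated integral provided by Lemma~\ref{lem:iterated_int_beta}, so that the whole problem reduces to bounding, for the constants $\alpha \in (0,1)$, $\gamma \in (0,\infty)$, $\beta \in [\alpha\gamma,\alpha\gamma+1]$, the product
\begin{equation*}
P_j := \prod_{i=0}^{j-1} \frac{\Gamma(i(1+\gamma-\beta)+1)}{\Gamma(\alpha\gamma-\beta+i(1+\gamma-\beta)+2)}
\end{equation*}
by the corresponding product of powers appearing on the right-hand side of~\eqref{eq:iterated_int_beta_bound2}. Writing $a := 1+\gamma-\beta \in (0,\infty)$ (note $a > 0$ since $\beta \le \alpha\gamma+1 < \gamma+1$) and $\mu := \alpha\gamma - \beta + 1 \in [0,1]$, each factor is $\Gamma(ia+1)/\Gamma(ia+1+\mu)$, and I need to compare $P_j = \prod_{i=0}^{j-1}\frac{\Gamma(ia+1)}{\Gamma(ia+1+\mu)}$ against the claimed bound. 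The first observation is that $(1-\alpha)^{-\gamma}\Gamma(\alpha\gamma-\beta+1)$ is exactly the per-step constant from Lemma~\ref{lem:iterated_int_beta}; the extra factor $a^{-\mu}$ raised to the $j$-th power, together with the Gamma-ratio and the polynomial-in-$j$ correction, must come entirely from estimating $P_j / \big[\prod_{i=0}^{j-1} a^{-\mu}\big]^{-1}$, i.e.\ from comparing $\prod \Gamma(ia+1)/\Gamma(ia+1+\mu)$ with $a^{j\mu}$ times a slowly growing correction.

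The key analytic step is a Gauss-multiplication / Stirling-type estimate for $\Gamma(ia+1)/\Gamma(ia+1+\mu)$ when $\mu \in [0,1]$. The standard asymptotic $\Gamma(x)/\Gamma(x+\mu) \sim x^{-\mu}$ as $x \to \infty$ suggests $\Gamma(ia+1)/\Gamma(ia+1+\mu) \approx (ia)^{-\mu}$, so $P_j \approx a^{-j\mu}\prod_{i=1}^{j-1} i^{-\mu} = a^{-j\mu}[(j-1)!]^{-\mu}$ (the $i=0$ term is handled separately as a bounded constant $\Gamma(1)/\Gamma(1+\mu)$). Then I would convert $[(j-1)!]^{-\mu}$ into the stated Gamma-ratio form. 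Indeed, using the identity $\Gamma(j + \tfrac1a)/\Gamma(\tfrac1a) = \prod_{i=0}^{j-1}(i + \tfrac1a) = a^{-j}\prod_{i=0}^{j-1}(ia+1)$, one recognises $a^{j}\Gamma(j+\tfrac1a)/\Gamma(\tfrac1a) = \prod_{i=0}^{j-1}(ia+1)$, and raising to the power $-\mu$ and reconciling the off-by-one between $ia$ and $ia+1$ produces precisely the $\big[\Gamma(\tfrac1a)/\Gamma(j+\tfrac1a)\big]^{\mu}$ factor in~\eqref{eq:iterated_int_beta_bound2} together with the $\big[e^{a}(a(j-1)+1)\big]$-type polynomial correction that absorbs the discrepancy between the exact ratio $\Gamma(ia+1)/\Gamma(ia+1+\mu)$ and the approximation $(ia+1)^{-\mu}$. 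The cleanest way to make this rigorous is to prove a two-sided bound $c_1 (x+1)^{-\mu} \le \Gamma(x+1)/\Gamma(x+1+\mu) \le c_2(x+1)^{-\mu}$ valid for all $x \ge 0$ with explicit $c_1,c_2$ depending only on $\mu$ (e.g.\ via log-convexity of $\Gamma$, which gives $\Gamma(x+1+\mu) \le \Gamma(x+1)^{1-\mu}\Gamma(x+2)^{\mu} = \Gamma(x+1)(x+1)^{\mu}$, yielding immediately the lower bound $\Gamma(x+1)/\Gamma(x+1+\mu) \ge (x+1)^{-\mu}$, and a matching upper bound from Wendel's inequality $\Gamma(x+1)/\Gamma(x+1+\mu) \le x^{-\mu}$ adjusted near $x=0$), multiply these over $i = 0,\dots,j-1$, and track the accumulated constant, which grows subexponentially in $j$ and is what the bracketed polynomial-power term encodes.

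I expect the main obstacle to be the bookkeeping that turns the crude product of per-step error constants into exactly the compact closed form stated — in particular, correctly matching the exponent $\tfrac{(\beta-\alpha\gamma)(\alpha\gamma-\beta+1)}{1+\gamma-\beta} = \tfrac{(1-\mu)\mu}{a}$ on the polynomial correction term and the exponent $\mu = \alpha\gamma-\beta+1$ on the Gamma-ratio term. The natural route is to write $\log P_j = \sum_{i=0}^{j-1}\big[\log\Gamma(ia+1) - \log\Gamma(ia+1+\mu)\big]$, apply the mean value form $\log\Gamma(y+\mu) - \log\Gamma(y) = \mu\,\psi(\xi_i)$ with $\xi_i \in (ia+1, ia+1+\mu)$ where $\psi$ is the digamma function, use $\psi(\xi_i) = \log\xi_i + O(1/\xi_i)$ and $\log\xi_i = \log(ia+1) + O(1/(ia+1))$, and then sum: the leading term gives $-\mu\sum \log(ia+1)$ which is the Gamma-ratio, and the error sum $\sum_{i=0}^{j-1} O(1/(ia+1)) = O(\log(a(j-1)+1)/a)$ exponentiates to the polynomial correction with exponent proportional to $\mu(1-\mu)/a$ once the constants in the $O$'s are made explicit (the $(1-\mu)$ factor arising because the digamma error term $\psi(\xi)-\log\xi$ is controlled on the length-$\mu$ interval, contributing a factor $\mu$, while a second $\mu$-independent reduction to $(1-\mu)$ comes from the convexity gap being exactly $\mu(1-\mu)$-quadratic). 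I would therefore first isolate and prove the explicit one-step inequality, then multiply, then simplify; the convexity bound $\Gamma(x+1+\mu)\le\Gamma(x+1)(x+1)^{\mu}$ already gives the full lower-direction control for free, so only the upper bound on $P_j$ requires the digamma/Stirling estimate, and that is where the genuine work lies.
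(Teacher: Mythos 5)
Your plan is essentially the paper's proof: reduce via Lemma~\ref{lem:iterated_int_beta} to bounding $\prod_{i=0}^{j-1}\Gamma(ia+1)/\Gamma(ia+1+\mu)$ with $a=1+\gamma-\beta$ and $\mu=\alpha\gamma-\beta+1$, convert $\prod_{i=0}^{j-1}(ia+1)^{-\mu}$ exactly into $\big[\Gamma(\tfrac1a)/(a^{j}\Gamma(j+\tfrac1a))\big]^{\mu}$, and absorb the per-step corrections through the harmonic-sum estimate $\sum_{i=0}^{j-1}(i+\tfrac1a)^{-1}\le a+\ln(a(j-1)+1)$. The one simplification worth noting is that the sharp form of Wendel's inequality, $\Gamma(x)/\Gamma(x+\mu)\le x^{-\mu}\big(\tfrac{x+\mu}{x}\big)^{1-\mu}$ applied at $x=ia+1$, already delivers the per-step factor $(ia+1)^{-\mu}e^{\mu(1-\mu)/(ia+1)}$ with exactly the exponent $\mu(1-\mu)$ you were chasing, so the digamma/Stirling analysis (and the cruder one-step bound $\le x^{-\mu}$, which on its own would only produce a correction exponent of order $\mu/a$ and thus miss the stated constant at $\beta=\alpha\gamma$) is unnecessary --- this is precisely how the paper argues.
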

\begin{proof}[Proof of Corollary~\ref{cor:iterated_int_beta_bound}]
First, observe that Wendel's inequality for the gamma function
(see, e.g., Wendel~\cite{wendel1948note} and Qi~\cite[Section 2.1]{Qi2010})
ensures that for all $x\in (0,\infty)$, $s\in [0,1]$ it holds that
\begin{equation}\label{eq:wendel}
\frac{\Gamma(x)}{\Gamma(x+s)}\le \frac{1}{x^s} \left[\frac{x+s}{x} \right]^{1-s}.
\end{equation}
Moreover, 
note that the fact that for all $x\in (0,\infty)$ it holds that $\ln^\prime(x)=x^{-1}$ demonstrates that
for all $j\in \N$, $\lambda \in (0,\infty)$ it holds that
\begin{equation}
\sum_{i=0}^{j-1}\frac{1}{i+\lambda}=\frac{1}{\lambda}+\sum_{i=1}^{j-1}\frac{1}{i+\lambda}\le \frac{1}{\lambda}+\sum_{i=1}^{j-1}\int_{i}^{i+1} \frac{1}{s-1+\lambda}\,ds =\frac{1}{\lambda}+\int_1^{j} \frac{1}{s-1+\lambda}\,ds=\frac{1}{\lambda}+\ln\!\left(\frac{j-1}{\lambda}+1\right).
\end{equation}
Combining this, \eqref{eq:wendel}, and the fact that $\alpha \gamma-\beta+1\in [0,1]$ with the fact that for all $x\in [0,\infty)$ it holds that $1+x\le e^{x}$ proves that for all $j\in \N$ it holds that
\begin{equation}
\begin{split}
&\left[
\prod_{i=0}^{j-1}
\frac{\Gamma(i(1+\gamma-\beta)+1)}{\Gamma(\alpha\gamma-\beta+i(1+\gamma-\beta)+2)}
\right]
\\&\le 
\left[\prod_{i=0}^{j-1}
\left(\frac{\alpha\gamma-\beta+i(1+\gamma-\beta)+2}{i(1+\gamma-\beta)+1} \right)^{1-(\alpha \gamma-\beta+1) }\frac{1}{(i(1+\gamma-\beta)+1)^{\alpha \gamma -\beta +1}}
\right]\\
&=
\left[\prod_{i=0}^{j-1}
\left(1+\frac{\alpha\gamma-\beta+1}{i(1+\gamma-\beta)+1} \right)^{\beta-\alpha \gamma }\frac{1}{(i(1+\gamma-\beta)+1)^{\alpha \gamma -\beta +1}}
\right]\\
&\le 
\left[\prod_{i=0}^{j-1}
e^{\frac{(\beta-\alpha \gamma)(\alpha\gamma-\beta+1)}{i(1+\gamma-\beta)+1}}
\frac{1}{(i(1+\gamma-\beta)+1)^{\alpha \gamma -\beta +1}}
\right]\\
&=
e^{\frac{(\beta-\alpha \gamma)(\alpha\gamma-\beta+1)}{1+\gamma-\beta}\sum_{i=0}^{j-1}\frac{1}{i+\frac{1}{1+\gamma-\beta}}}
\left[\prod_{i=0}^{j-1}\frac{1}{(i(1+\gamma-\beta)+1)^{\alpha \gamma -\beta +1}}\right]\\
&\le
 e^{\frac{(\beta-\alpha \gamma)(\alpha\gamma-\beta+1)}{1+\gamma-\beta}(1+\gamma-\beta+\ln((1+\gamma-\beta)(j-1)+1))}
\left[\prod_{i=0}^{j-1}\frac{1}{(i(1+\gamma-\beta)+1)^{\alpha \gamma -\beta +1}}\right]\\
&=
\left[e^{1+\gamma-\beta}((1+\gamma-\beta)(j-1)+1) \right]^{\frac{(\beta-\alpha \gamma)(\alpha\gamma-\beta+1)}{1+\gamma-\beta}}
\left[
\tfrac{\Gamma\left(\frac{1}{1+\gamma-\beta}\right)}{(1+\gamma-\beta)^{j}\Gamma\left(j+\frac{1}{1+\gamma-\beta}\right)}
\right]^{\alpha \gamma -\beta +1}.
\end{split}
\end{equation}
Lemma~\ref{lem:iterated_int_beta} hence implies that for all $j\in \N$, $s_0\in [0,T)$ it holds that 
\begin{equation}
\begin{split}
&\int_{s_0}^T\frac{1}{(s_1-s_0)^\beta [\rhoh(s_0,s_1)]^\gamma}\int_{s_1}^T\frac{1}{(s_2-s_1)^\beta [\rhoh(s_1,s_2)]^\gamma }\ldots \int_{s_{j-1}}^T\frac{1}{(s_{j}-s_{j-1})^\beta [\rhoh(s_{j-1},s_{j})]^\gamma}\,ds_{j}\ldots ds_2 \,ds_1\\
&\le \left[\tfrac{(T-s_0)^{(1+\gamma-\beta)}\Gamma(\alpha\gamma-\beta+1)}{(1-\alpha)^\gamma
(1+\gamma-\beta)^{\alpha \gamma-\beta+1} }\right]^{j}
\left[e^{1+\gamma-\beta}((1+\gamma-\beta)(j-1)+1) \right]^{\frac{(\beta-\alpha \gamma)(\alpha\gamma-\beta+1)}{1+\gamma-\beta}}
\left[
\tfrac{\Gamma\left(\frac{1}{1+\gamma-\beta}\right)}{\Gamma\left(j+\frac{1}{1+\gamma-\beta}\right)}
\right]^{\alpha \gamma -\beta +1}.
\end{split}
\end{equation}
This establishes \eqref{eq:iterated_int_beta_bound2}. 
The proof of Corollary~\ref{cor:iterated_int_beta_bound} is thus completed.
\end{proof}

\subsection{Estimates for products of certain independent random variables}


\begin{lemma}\label{lem:fac_lemma}
Let $T\in (0,\infty)$, $d\in \N$, $F\in C((0,1)\times [0,T) \times \R^d, [0,\infty))$, let $(\Omega,\mathcal F,\P)$ be a probability space, let $\rho\colon \Omega\to (0,1)$ and $\tau\colon \Omega \to (0,T)$ be random variables, let $W\colon [0,T]\times \Omega \to \R^d$ be a standard Brownian motion with continuous sample paths, let $f\colon [0,T) \to [0,\infty]$ satisfy for all $t\in [0,T)$ that $f(t)=\E[F(\rho, t, W_{t+(T-t)\rho}-W_t)]$, let $\mathcal G\subseteq \mathcal F$ be a sigma-algebra, 
let $\mathcal H=\sigma(\mathcal G \cup \sigma(\tau,(W_{\min\{s,\tau\}})_{s\in [0,T]}))$, and
assume that $\rho$, $\tau$, $W$, and $\mathcal G$ are independent. Then it holds $\P$-a.s.\ that
\begin{equation}\label{eq:fac_lemma}
\E[F(\rho,\tau, W_{\tau+(T-\tau)\rho}-W_\tau)|\mathcal H ]=f(\tau).
\end{equation}
\end{lemma}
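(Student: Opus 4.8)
The plan is to condition on the independence structure and reduce the claim to a statement about the conditional law of $(\rho,\tau,W_{\tau+(T-\tau)\rho}-W_\tau)$ given $\mathcal H$. The key observations are: (i) $\tau$ and the stopped path $(W_{\min\{s,\tau\}})_{s\in[0,T]}$ are $\mathcal H$-measurable, so in particular $\tau$ itself and $W_\tau$ are $\mathcal H$-measurable; and (ii) the increment $W_{\tau+(T-\tau)\rho}-W_\tau$, together with $\rho$, should be conditionally independent of $\mathcal H$ in a way that makes the conditional expectation coincide with the deterministic function $f$ evaluated at the $\mathcal H$-measurable random time $\tau$.

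First I would use the standard factorization lemma for conditional expectations: if $X$ is $\mathcal H$-measurable and $Y$ is independent of $\mathcal H$, then $\E[\Phi(X,Y)\,|\,\mathcal H]=(\E[\Phi(x,Y)])|_{x=\tau}$ for suitable measurable $\Phi\ge 0$. To bring the present situation into this form, I would write $W_{\tau+(T-\tau)\rho}-W_\tau$ using the post-$\tau$ increment process. Concretely, by the strong Markov property of Brownian motion (applicable since $\tau$ is a stopping time with respect to the natural filtration of $W$ — here one uses that $\mathcal H$ contains exactly $\mathcal G$ and the information generated by $\tau$ and the path stopped at $\tau$), the process $s\mapsto W_{\tau+s}-W_\tau$, $s\in[0,T-\tau]$, is a Brownian motion independent of $\mathcal H$. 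Hence, jointly, the pair $(\rho, (W_{\tau+s}-W_\tau)_{s\ge 0})$ is independent of $\mathcal H$, because $\rho$ is independent of everything in sight by hypothesis. Since $\tau$ is $\mathcal H$-measurable, I would then apply the factorization lemma with $X=\tau$ and $Y=(\rho,(W_{\tau+s}-W_\tau)_s)$, observing that $W_{\tau+(T-\tau)\rho}-W_\tau=Y_2(((T-x)\rho))|_{x=\tau}$ is a measurable functional of $(x,Y)$. This yields $\P$-a.s.\ that
\begin{equation*}
\E[F(\rho,\tau,W_{\tau+(T-\tau)\rho}-W_\tau)\,|\,\mathcal H]=h(\tau),
\qquad\text{where}\qquad
h(t)=\E[F(\rho,t,\tilde W_{(T-t)\rho})]
\end{equation*}
and $\tilde W$ is a Brownian motion with the same law as $W$. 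Finally I would identify $h$ with $f$: since $\tilde W$ and $W$ have the same law, $\E[F(\rho,t,\tilde W_{(T-t)\rho})]=\E[F(\rho,t,W_{(T-t)\rho})]=\E[F(\rho,t,W_{t+(T-t)\rho}-W_t)]=f(t)$, where in the middle step one uses the stationarity of Brownian increments (the law of $W_{(T-t)\rho}$ given $\rho$ equals the law of $W_{t+(T-t)\rho}-W_t$ given $\rho$). This gives \eqref{eq:fac_lemma}.

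I expect the main technical obstacle to be the careful justification that $\tau$ is genuinely a stopping time for the augmented filtration generated by $W$ enriched with the $\mathcal G$-information, so that the strong Markov property applies and produces increments independent of $\mathcal H$ rather than merely of $\sigma(\tau,(W_{\min\{s,\tau\}})_s)$ — this is where the hypothesis that $\mathcal G$ is independent of $(\rho,\tau,W)$ is essential and must be threaded through carefully, since a priori $\mathcal H$ could correlate with post-$\tau$ increments through $\mathcal G$. A secondary subtlety is the measurability bookkeeping: one must check that $(x,\omega)\mapsto F(\rho(\omega),x,(W_{x+(T-x)\rho(\omega)}(\omega)-W_x(\omega)))$ and the associated functional $\Phi$ are jointly measurable (which follows from continuity of $F$ and of the sample paths of $W$, together with measurability of $\rho$), so that $f$ is well-defined as a $[0,\infty]$-valued measurable function and the factorization lemma is legitimately applicable. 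Both obstacles are routine but require the independence assumptions to be invoked precisely rather than loosely.
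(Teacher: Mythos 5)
Your route is genuinely different from the paper's and can be made to work, but the pivotal step is misstated: under the hypotheses of the lemma, $\tau$ is \emph{not} a stopping time of the natural filtration of $W$ --- it is assumed independent of $W$, so $\{\tau\le s\}$ is in general not in $\sigma(W_u\colon u\le s)$ --- and enlarging only by the $\mathcal G$-information (the fix you sketch in your ``main obstacle'' paragraph) does not help, since $\tau$ is independent of $\mathcal G$ as well. The correct repair is an \emph{initial} enlargement: set $\mathcal F^{\mathrm{enl}}_s=\sigma\big(\mathcal G\cup\sigma(\tau)\cup\sigma(W_u\colon u\le s)\big)$. Because $\tau$ and $\mathcal G$ are independent of $W$, the process $W$ remains a Brownian motion with respect to this filtration, $\tau$ is $\mathcal F^{\mathrm{enl}}_0$-measurable and hence trivially a stopping time, and $\mathcal H\subseteq\mathcal F^{\mathrm{enl}}_\tau$; the Markov property for the enlarged filtration then yields that the post-$\tau$ increments are independent of $\mathcal F^{\mathrm{enl}}_\tau$, and combining this with the independence of $\rho$ from $(\tau,W,\mathcal G)$ gives the joint independence of $\big(\rho,(W_{\tau+s}-W_\tau)_s\big)$ from $\mathcal H$ that your factorization step requires. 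A secondary point to handle is that the post-$\tau$ increment process lives on the random interval $[0,T-\tau]$, so the independence statement is best phrased through functionals or after disintegrating over the value of $\tau$. With these repairs, the remainder of your argument (factorization with $X=\tau$, then identifying $h=f$ by stationarity of Brownian increments) is correct.

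The paper's proof avoids stopping times and the Markov property altogether. It first uses independence of $\mathcal G$ to replace $\mathcal H$ by $\sigma(\tau,(W_{\min\{s,\tau\}})_{s\in[0,T]})$, then tests the conditional expectation against generators $\mathbf 1_A(\tau)\mathbf 1_B((W_{\min\{s,\tau\}})_{s\in[0,T]})$ and applies the freezing lemma \cite[Lemma~2.2]{HJKNW2018} with $Y=\tau$ to pass to a deterministic time $t$; at fixed $t$ the needed factorization is just the independence of the pre-$t$ stopped path from $(\rho,W_{t+(T-t)\rho}-W_t)$, and a second application of the same lemma folds the integral back into $\E[\mathbf 1_A(\tau)\mathbf 1_B(\cdots)f(\tau)]$. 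What the disintegration route buys is that independence is only ever invoked at fixed times, so no filtration enlargement or random-horizon bookkeeping is needed; what your route buys, once repaired, is a more conceptual one-line reason (post-$\tau$ increments together with $\rho$ are independent of $\mathcal H$) at the cost of justifying the enlarged-filtration Markov property carefully.
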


\begin{proof}[Proof of Lemma~\ref{lem:fac_lemma}]
First, note that independence of $\rho$, $\tau$, $W$, and $\mathcal G$ ensures that it holds $\P$-a.s.\ that
\begin{equation}\label{eq:fac_lemma_ind}
\E[F(\rho,\tau, W_{\tau+(T-\tau)\rho}-W_\tau)|\mathcal H ]=
\E[F(\rho,\tau, W_{\tau+(T-\tau)\rho}-W_\tau)|\sigma(\tau,(W_{\min\{s,\tau\}})_{s\in [0,T]})]
.
\end{equation}
Next note that Hutzenthaler et al.~\cite[Lemma 2.2]{HJKNW2018} (applied with $\mathcal G=\sigma(\rho, (W_{s})_{s\in [0,T]})$, $S=(0,T)$, $\mathcal S=\mathcal B((0,T))$, $U(t,\omega)=\mathbf 1_{A}(t)\mathbf 1_{B}((W_{\min\{s,t\}}(\omega))_{s\in [0,T]})) F(\rho(\omega),t, W_{t+(T-t)\rho(\omega)}(\omega)-W_t(\omega))$, and $Y=\tau$ 
for $t\in (0,T)$, $\omega \in \Omega$ in the notation of \cite[Lemma 2.2]{HJKNW2018}) proves that for all $A\in \mathcal B((0,T))$, $B\in \mathcal B(C([0,T],\R^d))$ it holds that
\begin{equation}
\begin{split}
&\E[\mathbf 1_{A}(\tau)\mathbf 1_{B}((W_{\min\{s,\tau\}})_{s\in [0,T]}) F(\rho,\tau, W_{\tau+(T-\tau)\rho}-W_\tau)]\\
&=
\int_{(0,T)}
\E[\mathbf 1_{A}(t)\mathbf 1_{B}((W_{\min\{s,t\}})_{s\in [0,T]}) F(\rho,t, W_{t+(T-t)\rho}-W_t)](\tau(\P))(dt).
\end{split}
\end{equation}
Independence of Brownian increments hence proves that for all $A\in \mathcal B((0,T))$, $B\in \mathcal B(C([0,T],\R^d))$ it holds that
\begin{equation}
\begin{split}
&\E[\mathbf 1_{A}(\tau)\mathbf 1_{B}((W_{\min\{s,\tau\}})_{s\in [0,T]}) F(\rho,\tau, W_{\tau+(T-\tau)\rho}-W_\tau)]\\
&=
\int_{(0,T)}
\E[\mathbf 1_{A}(t)\mathbf 1_{B}((W_{\min\{s,t\}})_{s\in [0,T]})] \E[F(\rho,t, W_{t+(T-t)\rho}-W_t)](\tau(\P))(dt)\\
&=
\int_{(0,T)}
\E[\mathbf 1_{A}(t)\mathbf 1_{B}((W_{\min\{s,t\}})_{s\in [0,T]})] f(t)(\tau(\P)) (dt).
\end{split}
\end{equation}
\sloppy
Hence, Hutzenthaler et al.~\cite[Lemma 2.2]{HJKNW2018} (applied with $\mathcal G=\sigma(\rho, (W_{s})_{s\in [0,T]})$, $S=(0,T)$, $\mathcal S=\mathcal B((0,T))$, $U(t,\omega)=\mathbf 1_{A}(t)\mathbf 1_{B}((W_{\min\{s,t\}}(\omega))_{s\in [0,T]})) g(t)$, and $Y=\tau$ 
for $t\in (0,T)$, $\omega \in \Omega$ in the notation of \cite[Lemma 2.2]{HJKNW2018}) proves that for all $A\in \mathcal B((0,T))$, $B\in \mathcal B(C([0,T],\R^d))$ it holds that
\begin{equation}
\begin{split}
\E[\mathbf 1_{A}(\tau)\mathbf 1_{B}((W_{\min\{s,\tau\}})_{s\in [0,T]}) F(\rho,\tau, W_{\tau+(T-\tau)\rho}-W_\tau)]
&=
\int_{(0,T)}
\E[\mathbf 1_{A}(t)\mathbf 1_{B}((W_{\min\{s,t\}})_{s\in [0,T]})f(t)] (\tau(\P)) (dt)\\
&=
\E[\mathbf 1_{A}(\tau)\mathbf 1_{B}((W_{\min\{s,\tau\}})_{s\in [0,T]})f(\tau)].
\end{split}
\end{equation}
This together with \eqref{eq:fac_lemma_ind} proves \eqref{eq:fac_lemma}. The proof of Lemma~\ref{lem:fac_lemma} is thus completed.
\end{proof}

\begin{corollary}\label{cor:ub.it.int.lp}
Let $T\in(0,\infty)$, $d\in \N$,
$j\in \N_0$, 
$\uvec_1=(1,0,0\ldots,0)$, $\uvec_2=(0,1,0,\ldots,0)$, \ldots, 
$\uvec_{d+1}=(0,0,\ldots, 0,1)\in \R^{d+1}$,
$\nu_0,\nu_1, \ldots, \nu_j\in \{1,2,\ldots, d+1\}$,
$\alpha\in(0,1)$, $p\in (1,\infty)$ satisfy
$\alpha(p-1)\le \frac{p}{2} \le \alpha(p-1)+1$,
let $\langle \cdot, \cdot \rangle\colon \R^{d+1}\times \R^{d+1} \to \R$ 
be the standard scalar product on $\R^{ d + 1 }$,
let
$
  ( 
    \Omega, \mathcal{F}, \P
  )
$
be a probability space,
let
$
  W=(W^1,W^2,\ldots,W^d) \colon [0,T] \times \Omega \to \R^d 
$ 
be a standard Brownian motion
with continuous sample paths,
let $\unif^{(n)}\colon \Omega\to(0,1)$, $n\in \N_0$, be i.i.d.\ random variables, assume that $W$ and $(\unif^{(n)})_{n\in \N_0}$ are independent,
let $\rho\colon(0,1)\to(0,\infty)$ 
and $\rhoh\colon [0,T]^2\to (0,\infty)$ satisfy
for all $b\in (0,1)$, $t\in[0,T)$, $s\in(t,T]$ that
$\rho(b)=\frac{1-\alpha}{b^\alpha}$,
 $\P(\unif^{(0)}\le b)=\int_0^b \rho(u)\,du$, 
and $\rhoh(t,s)=\tfrac{1}{T-t}\rho(\tfrac{s-t}{T-t})$,
let $\ES\colon \N_0 \times [0,T)\times \Omega \to [0,T)$
 satisfy for all $n \in \N_0$, $t\in [0,T)$
that $\ES(0,t)=t$ and
$\ES(n+1,t)=\ES(n,t)+(T-\ES(n,t))\unif^{(n)}$, and let $t\in [0,T)$.
 Then
\begin{equation}\label{eq:ub.it.int.lp.b}
\begin{split}
&\E\!\left[ \left| \textstyle\prod\limits_{i=0}^{j}\displaystyle
   \tfrac{1}{\rhoh(\ES(i,t),\ES(i+1,t))}
   \big\langle \uvec_{\nu_{i}},
     \big(   
   1,
     \tfrac{ 
     W_{\ES(i+1,t)}- W_{\ES(i,t)}
     }{\ES(i+1,t)-\ES(i,t) } 
     \big)\big\rangle  \right|^p \right]
     \\
      &\leq
      \left[
      \max\!\left\{
      (T-t)^{\frac{p}{2}}, \tfrac{2^{\frac{p}{2}}\Gamma(\frac{p+1}{2})}{\sqrt{\pi}}
      \right\}
    \tfrac{(T-t)^{\frac{p}{2}}\Gamma(\frac{p}{2})}{(1-\alpha)^{p-1}
(\frac{p}{2})^{\alpha(p-1)-\frac{p}{2}+1} }\right]^{j+1}
\left[e^{\frac{p}{2}}\left(\tfrac{pj}{2}+1\right) \right]^{\frac{1}{2p}}
\left[
\tfrac{\Gamma(\frac{2}{p})}{\Gamma(1+j+\frac{2}{p})}
\right]^{\alpha (p-1) -\frac{p}{2} +1}.
  \end{split}
\end{equation}
\end{corollary}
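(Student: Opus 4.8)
The plan is to reduce the expectation on the left-hand side of \eqref{eq:ub.it.int.lp.b} to a nested deterministic integral of the kind estimated in Corollary~\ref{cor:iterated_int_beta_bound}, and then to invoke that corollary with $\gamma=p-1$ and $\beta=\tfrac p2$. As preparation I would record that for every $i\in\{0,1,\dots,j\}$ one has $\ES(i+1,t)-\ES(i,t)=(T-\ES(i,t))\,\unif^{(i)}\in(0,T-\ES(i,t))$, hence $t=\ES(0,t)<\ES(1,t)<\dots<\ES(j+1,t)<T$ and $\rhoh(\ES(i,t),\ES(i+1,t))=\tfrac{1-\alpha}{(T-\ES(i,t))\,[\unif^{(i)}]^{\alpha}}$; moreover the random times $\ES(0,t),\dots,\ES(j+1,t)$ are $\sigma((\unif^{(n)})_{n\in\N_0})$-measurable and hence independent of $W$, so that, conditionally on $\sigma((\unif^{(n)})_{n\in\N_0})$, the increments $W_{\ES(i+1,t)}-W_{\ES(i,t)}$, $i\in\{0,1,\dots,j\}$, are independent, each with $d$ i.i.d.\ centred Gaussian coordinates of variance $\ES(i+1,t)-\ES(i,t)$.

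Next I would compute, for deterministic times $s_0=t<s_1<\dots<s_{j+1}<T$, the numbers $m_i(s_i,s_{i+1}):=\E[\,|\langle\uvec_{\nu_i},(1,\tfrac{W_{s_{i+1}}-W_{s_i}}{s_{i+1}-s_i})\rangle|^{p}\,]$: if $\nu_i=1$ then $\langle\uvec_1,(1,v)\rangle=1$ for all $v\in\R^d$, so $m_i(s_i,s_{i+1})=1$; if $\nu_i\in\{2,\dots,d+1\}$ then $\langle\uvec_{\nu_i},(1,v)\rangle=v_{\nu_i-1}$, so the Gaussian absolute-moment formula $\E[|N(0,1)|^{p}]=\tfrac{2^{p/2}\Gamma((p+1)/2)}{\sqrt\pi}$ gives $m_i(s_i,s_{i+1})=\tfrac{2^{p/2}\Gamma((p+1)/2)}{\sqrt\pi}(s_{i+1}-s_i)^{-p/2}$. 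Combining the conditional independence above with the fact that the distribution of $(\ES(1,t),\dots,\ES(j+1,t))$ is the pushforward of $\prod_{i=0}^{j}\rho(u_i)\,du_i$ on $(0,1)^{j+1}$ under the triangular diffeomorphism $(u_0,\dots,u_j)\mapsto(s_1,\dots,s_{j+1})$, $s_{i+1}=s_i+(T-s_i)u_i$ (with $s_0=t$), whose Jacobian produces exactly the factor $\prod_{i=0}^{j}\tfrac{1}{T-s_i}$ and hence the Lebesgue density $(s_1,\dots,s_{j+1})\mapsto\1_{\{t<s_1<\dots<s_{j+1}<T\}}\prod_{i=0}^{j}\rhoh(s_i,s_{i+1})$, I would obtain that the left-hand side of \eqref{eq:ub.it.int.lp.b} equals
\[
\int_{t}^{T}\!\int_{s_1}^{T}\!\cdots\!\int_{s_j}^{T}\ \prod_{i=0}^{j}\frac{m_i(s_i,s_{i+1})}{[\rhoh(s_i,s_{i+1})]^{p-1}}\ ds_{j+1}\cdots ds_1 .
\]
(One can alternatively establish this identity by induction on $j$, integrating out the outermost random time at each step with Lemma~\ref{lem:fac_lemma}.)

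Finally, since $0<s_{i+1}-s_i\le T-s_i\le T-t$ and $\tfrac p2>0$ we have $1\le(T-t)^{p/2}(s_{i+1}-s_i)^{-p/2}$, so in both cases $m_i(s_i,s_{i+1})\le\max\{(T-t)^{p/2},\tfrac{2^{p/2}\Gamma((p+1)/2)}{\sqrt\pi}\}\,(s_{i+1}-s_i)^{-p/2}$. Inserting this bound and applying Corollary~\ref{cor:iterated_int_beta_bound} with $\gamma:=p-1\in(0,\infty)$ and $\beta:=\tfrac p2$ — the hypothesis $\beta\in[\alpha\gamma,\alpha\gamma+1]$ being exactly the assumed $\alpha(p-1)\le\tfrac p2\le\alpha(p-1)+1$ — and with $j$ replaced by $j+1\in\N$, I would obtain \eqref{eq:ub.it.int.lp.b} after simplification: here $1+\gamma-\beta=\tfrac p2$ and $\tfrac{1}{1+\gamma-\beta}=\tfrac 2p$, and, writing $x:=\tfrac p2-\alpha(p-1)\in[0,1]$, the middle exponent equals $\tfrac{(\beta-\alpha\gamma)(\alpha\gamma-\beta+1)}{1+\gamma-\beta}=\tfrac{2x(1-x)}{p}\le\tfrac{1}{2p}$, so that, the base $e^{p/2}(\tfrac{pj}{2}+1)$ being $\ge1$, replacing this exponent by $\tfrac{1}{2p}$ only enlarges the bound.

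The step I expect to be the main obstacle is reducing the random expectation on the left-hand side of \eqref{eq:ub.it.int.lp.b} to the displayed deterministic nested integral. Either one computes the joint law of $(\ES(i,t))_{i=0}^{j+1}$ explicitly and then uses that Brownian increments over disjoint deterministic intervals are independent and Gaussian, or — more in the spirit of the surrounding material — one argues by induction on $j$, using Lemma~\ref{lem:fac_lemma} to peel off one random time at a time while carefully keeping track of the conditioning $\sigma$-algebras $\mathcal H$. A secondary, purely bookkeeping difficulty is checking that the substitutions $\gamma=p-1$, $\beta=\tfrac p2$, $j\rightsquigarrow j+1$ in Corollary~\ref{cor:iterated_int_beta_bound} reproduce the constants on the right-hand side of \eqref{eq:ub.it.int.lp.b}.
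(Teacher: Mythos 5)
Your proposal follows essentially the same route as the paper's proof: reduce the left-hand side to the deterministic nested integral (the paper does this by backward induction on the random times using Lemma~\ref{lem:fac_lemma} and the Gaussian absolute-moment identity, which is exactly your alternative route; your explicit joint-density computation $\prod_{i=0}^{j}\rhoh(s_i,s_{i+1})$ yields the same identity), bound the moment factors by $\max\{(T-t)^{p/2},2^{p/2}\Gamma(\tfrac{p+1}{2})/\sqrt{\pi}\}(s_{i+1}-s_i)^{-p/2}$, and apply Corollary~\ref{cor:iterated_int_beta_bound} with $\beta=\tfrac p2$, $\gamma=p-1$, using $\tfrac{2x(1-x)}{p}\le\tfrac{1}{2p}$ for the middle exponent, just as in the paper. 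The one point your ``simplification'' leaves open is that Corollary~\ref{cor:iterated_int_beta_bound} produces the factor $\Gamma(\alpha(p-1)-\tfrac p2+1)$ rather than the $\Gamma(\tfrac p2)$ appearing in \eqref{eq:ub.it.int.lp.b}; the paper bridges this in its last line via the comparison $\alpha(p-1)-\tfrac p2+1\le\tfrac p2$, so you would need to add the corresponding comparison of these two Gamma values to literally reach the stated constant.
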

\begin{proof}[Proof of Corollary~\ref{cor:ub.it.int.lp}]
Throughout this proof let $\mathbb F_n\subseteq \mathcal F$, $n\in \N_0$, satisfy for all $n\in \N$ that  $\mathbb F_0=\{\emptyset, \Omega\}$ and that
$\mathbb F_n=\sigma(\mathbb F_{n-1} \cup \sigma(\ES(n,t), (W_{\min\{s,\ES(n,t)\}})_{s\in [0,T]}))$
and let $v = ( v_1, v_2, \ldots, v_d) \in \R^d$ satisfy $v_1 = v_2 = ... = v_{ d } = 1$.
Note that for all $r\in [0,T)$, $s\in [r,T]$, $i\in\{1,2,\ldots,d\}$, $n\in \N$ it holds that
$\ES(n,r)>\ES(n-1,r)$ and
\begin{equation}\label{eq:norm_moment}
  \E\big[|W^i_s-W^i_r|^p\big]=\frac{(2(s-r))^{\frac{p}{2}}\Gamma\!\left(\frac{p+1}{2}\right)}{\sqrt{\pi}}.
\end{equation}
Next we claim that for all $k\in \{1,2,\ldots,j+1\}$ it holds $\P$-a.s.\ that
\begin{equation}\label{eq:ub.it.int.lp2}
\begin{split}
&
\E\! \left[
\prod_{i=k}^{j+1}
\left|
   \tfrac{1}{\rhoh(\ES(i-1,t),\ES(i,t))}
    \big\langle \uvec_{\nu_{i-1}},
     \big(   
   1,
     \tfrac{ 
     W_{\ES(i,t)}- W_{\ES(i-1,t)}
     }{\ES(i,t)-\ES(i-1,t) } 
     \big)\big\rangle
     \right|^p
     \Bigg | \mathbb F_{k-1}
     \right]
     =
     \int_{\ES(k-1,t)}^T 
     \tfrac{
      \big\langle \uvec_{\nu_{k-1}},
     \big(   
   1,
   \big(
    \frac{2}{s_k-\ES(k-1,t)}
    \big)^{\frac{p}{2}}
    \frac{\Gamma\!\left(\frac{p+1}{2}\right)}{\sqrt{\pi}}v
     \big)\big\rangle
     }
     {[\rhoh(\ES(k-1,t),s_k)]^{p-1}}
      \\
      &
      \qquad 
    \cdot 
     \int_{s_k}^T 
     \tfrac{
      \big\langle \uvec_{\nu_{k}},
     \big(   
   1,
   \big(
    \frac{2}{s_{k+1}-s_k}
    \big)^{\frac{p}{2}}
    \frac{\Gamma\!\left(\frac{p+1}{2}\right)}{\sqrt{\pi}}v
     \big)\big\rangle}
     {[\rhoh(s_k,s_{k+1})]^{p-1}}
     \ldots
      \int_{s_j}^T 
     \tfrac{
 \big\langle \uvec_{\nu_{j}},     
     \big(   
   1,
   \left(
    \frac{2}{s_{j+1}-s_j}
    \right)^{\frac{p}{2}}
    \frac{\Gamma\!\left(\frac{p+1}{2}\right)}{\sqrt{\pi}}v
     \big)\big\rangle}
     {[\rhoh(s_j,s_{j+1})]^{p-1}}
     \,ds_{j+1}
     \ldots
     ds_{k+1}
     \,ds_k
 .
  \end{split}
\end{equation}
To prove \eqref{eq:ub.it.int.lp2} we proceed by backward induction on $k\in \{1,2,\ldots,j+1\}$.
For the base case $k=j+1$ note that the fact that 
$\ES(j+1,t)=\ES(j,t)+(T-\ES(j,t))\unif^{(j)}$,
Lemma~\ref{lem:fac_lemma} (applied with $F(r,s,x)=\big|\frac{1}{\varrho(s,s+(T-s)r)}\big\langle\uvec_{\nu_j},\big(   
   1,
     \tfrac{ 
     x
     }{(T-s)r } 
     \big)\big\rangle
     \big|^p$, $\rho=\unif^{(j)}$, $\tau=\ES(j,t)$, 
     $\mathcal G=\mathbb{F}_{j-1}$
     for $r\in (0,1)$, $s\in[0,T)$, $x\in \R^d$ in the notation of
     Lemma~\ref{lem:fac_lemma}),
Hutzenthaler et al.~\cite[Lemma 2.3]{HJKNW2018},
the hypothesis that $W$ and $\unif^{(j)}$ are independent,
\eqref{eq:norm_moment},
and the fact that for all $r\in [0,T)$, $s\in (r,T]$ it holds that $\rhoh(r,s)=\frac{1}{T-r}\rho(\frac{s-r}{T-r})$
ensure that it holds $\P$-a.s.\ that
\begin{equation}\label{eq:ub.it.int.lp2a}
\begin{split}
&
\E\! \left[
\left|
   \tfrac{1}{\rhoh(\ES(j,t),\ES(j+1,t))}
    \big\langle \uvec_{\nu_{j}},
     \big(   
   1,
     \tfrac{ 
     W_{\ES(j+1,t)}- W_{\ES(j,t)}
     }{\ES(j+1,t)-\ES(j,t) } 
     \big)\big\rangle
     \right|^p
     \Bigg | \mathbb F_{j}
     \right]
     \\
     &
     =\E\! \left[
\left|
   \tfrac{1}{\rhoh(\ES(j,t),\ES(j,t)+(T-\ES(j,t))\unif^{(j)})}
    \big\langle \uvec_{\nu_{j}},
     \big(   
   1,
     \tfrac{ 
     W_{\ES(j,t)+(T-\ES(j,t))\unif^{(j)}}- W_{\ES(j,t)}
     }{(T-\ES(j,t))\unif^{(j)} } 
     \big)\big\rangle
     \right|^p
     \Bigg | \mathbb F_{j}
     \right]
     \\
     &
     =\E\! \left[
\left|
   \tfrac{1}{\rhoh(s,s+(T-s)\unif^{(j)})}
    \big\langle \uvec_{\nu_{j}},
     \big(   
   1,
     \tfrac{ 
     W_{s+(T-s)\unif^{(j)}}- W_{s}
     }{(T-s)\unif^{(j)} } 
     \big)\big\rangle
     \right|^p
     \right]\Bigg |_{s=\ES(j,t)}
     \\
     &
     =
\int_0^1     
     \tfrac{
      \E\! \left[
     \left|
      \big\langle \uvec_{\nu_{j}},
     \big(   
   1,
     \frac{ 
     W_{s+(T-s)r}- W_{s}
     }{(T-s)r } 
     \big)\big\rangle
     \right|^p
     \right]
     }{[\rhoh(s,s+(T-s)r)]^p}
    \rho(r)
     dr\Bigg|_{s=\ES(j,t)}
     \\
     &
     =
\int_s^T     
     \tfrac{
      \E\! \left[
     \left|
      \big\langle \uvec_{\nu_{j}},
     \big(   
   1,
     \frac{ 
     W_{s_{j+1}}- W_{s}
     }{s_{j+1}-s } 
     \big)\big\rangle
     \right|^p
     \right]
     }{[\rhoh(s,s_{j+1})]^p}
     \tfrac{1}{T-s}
    \rho\!\left(\tfrac{s_{j+1}-s}{T-s}\right)
     \,ds_{j+1}\Bigg|_{s=\ES(j,t)}
        \\
     &
     =
     \int_{\ES(j,t)}^T 
     \tfrac{
      \big\langle \uvec_{\nu_{j}},
     \big(   
   1,
   \left(
    \frac{2}{s_{j+1}-\ES(j,t)}
    \right)^{\frac{p}{2}}
    \frac{\Gamma\!\left(\frac{p+1}{2}\right)}{\sqrt{\pi}}v
     \big)\big\rangle 
     }
     {[\rhoh(\ES(j,t),s_{j+1})]^{p-1}}
     \,ds_{j+1}
 .
  \end{split}
\end{equation}
This establishes \eqref{eq:ub.it.int.lp2} in the base case $k=j+1$.
For the induction step $\{2,3\ldots,j+1\} \ni k+1\rightsquigarrow k \in \{1,2,\ldots,j\}$ 
assume that 
there exists $k\in \{1,2,\ldots,j\}$ which satisfies that
\begin{equation}\label{eq:ub.it.int.lp15}
\begin{split}
&
\E\! \left[
\prod_{i=k+1}^{j+1}
\left|
   \tfrac{1}{\rhoh(\ES(i-1,t),\ES(i,t))}
    \big\langle \uvec_{\nu_{i-1}},
     \big(   
   1,
     \tfrac{ 
     W_{\ES(i,t)}- W_{\ES(i-1,t)}
     }{\ES(i,t)-\ES(i-1,t) } 
     \big)\big\rangle
     \right|^p
     \Bigg | \mathbb F_{k}
     \right]
     =
     \int_{\ES(k,t)}^T 
     \tfrac{
      \big\langle \uvec_{\nu_{k}},
     \big(   
   1,
   \big(
    \frac{2}{s_{k+1}-\ES(k,t)}
    \big)^{\frac{p}{2}}
    \frac{\Gamma\!\left(\frac{p+1}{2}\right)}{\sqrt{\pi}}v
     \big)\big\rangle
     }
     {[\rhoh(\ES(k,t),s_{k+1})]^{p-1}}
      \\
      &
      \qquad 
    \cdot 
     \int_{s_{k+1}}^T 
     \tfrac{
      \big\langle \uvec_{\nu_{k+1}},
     \big(   
   1,
   \big(
    \frac{2}{s_{k+2}-s_{k+1}}
    \big)^{\frac{p}{2}}
    \frac{\Gamma\!\left(\frac{p+1}{2}\right)}{\sqrt{\pi}}v
     \big)\big\rangle}
     {[\rhoh(s_{k+1},s_{k+2})]^{p-1}}
     \ldots
      \int_{s_j}^T 
     \tfrac{
 \big\langle \uvec_{\nu_{j}},     
     \big(   
   1,
   \left(
    \frac{2}{s_{j+1}-s_j}
    \right)^{\frac{p}{2}}
    \frac{\Gamma\!\left(\frac{p+1}{2}\right)}{\sqrt{\pi}}v
     \big)\big\rangle}
     {[\rhoh(s_j,s_{j+1})]^{p-1}}
     \,ds_{j+1}
     \ldots
     ds_{k+2}
     \,ds_{k+1}
 .
  \end{split}
\end{equation}
Observe that 
\eqref{eq:ub.it.int.lp15}, the tower property,
the
fact that the random variable 
$\tfrac{1}{\rhoh(\ES(k-1,t),\ES(k,t))}
     \Big(   
   1,
     \tfrac{ 
     W_{\ES(k,t)}- W_{\ES(k-1,t)}
     }{\ES(k,t)-\ES(k-1,t) } 
     \Big)
     $ is 
     $\mathbb F_k$-measurable,
     the induction hypothesis,
     the fact that $\ES(k,t)=\ES(k-1,t)+(T-\ES(k-1,t))\unif^{(k-1)}$,
and
the fact that conditioned on $\ES(k-1,t)$ the $\sigma$-algebras
$\sigma(\unif^{(k-1)},(W_{t}-W_{\ES(k-1,t)})_{t\in[\ES(k-1,t),T]})$
and $\mathbb{F}_{k-1}$ are independent
ensure
that it holds $\P$-a.s.\ that
\begin{equation}
\begin{split}
&
\E\! \left[
\prod_{i=k}^{j+1}
\left|
   \tfrac{1}{\rhoh(\ES(i-1,t),\ES(i,t))}
    \big\langle \uvec_{\nu_{i-1}},
     \big(   
   1,
     \tfrac{ 
     W_{\ES(i,t)}- W_{\ES(i-1,t)}
     }{\ES(i,t)-\ES(i-1,t) } 
     \big)\big\rangle 
     \right|^p
     \Bigg | \mathbb F_{k-1}
     \right]
     \\
     &
     =\E\! \Bigg[
     \left|
   \tfrac{1}{\rhoh(\ES(k-1,t),\ES(k,t))}
    \big\langle \uvec_{\nu_{k-1}},
     \big(   
   1,
     \tfrac{ 
     W_{\ES(k,t)}- W_{\ES(k-1,t)}
     }{\ES(k,t)-\ES(k-1,t) } 
     \big)\big\rangle
     \right|^p\\
     &\qquad \cdot
     \E\!\left[
\prod_{i=k+1}^{j+1}
\left|
   \tfrac{1}{\rhoh(\ES(i-1,t),\ES(i,t))}
    \big\langle \uvec_{\nu_{i-1}},
     \big(   
   1,
     \tfrac{ 
     W_{\ES(i,t)}- W_{\ES(i-1,t)}
     }{\ES(i,t)-\ES(i-1,t) } 
     \big)\big\rangle
     \right|^p
     \Bigg | \mathbb F_{k}
     \right]
     \Bigg | \mathbb F_{k-1}
     \Bigg]
      \\
      &
     =
     \E\! \Bigg[
     \left|
   \tfrac{1}{\rhoh(\ES(k-1,t),\ES(k,t)  )}
    \big\langle \uvec_{\nu_{k-1}},
     \big(   
   1,
     \tfrac{ 
     W_{\ES(k,t)}- W_{\ES(k-1,t)}
     }{\ES(k,t)-\ES(k-1,t) } 
     \big)\big\rangle 
     \right|^p
     \int_{\ES(k,t)}^T 
     \tfrac{
      \big\langle \uvec_{\nu_{k}},
     \big(   
   1,
   \left(
    \frac{2}{s_{k+1}-\ES(k,t)}
    \right)^{\frac{p}{2}}
    \frac{\Gamma\!\left(\frac{p+1}{2}\right)}{\sqrt{\pi}}v
     \big)\big\rangle
     }
     {[\rhoh(\ES(k,t),s_{k+1})]^{p-1}}
     \\
     &\qquad 
     \cdot
     \int_{s_{k+1}}^T 
     \tfrac{
      \big\langle \uvec_{\nu_{k+1}},
     \big(   
   1,
   \left(
    \frac{2}{s_{k+2}-s_{k+1}}
    \right)^{\frac{p}{2}}
    \frac{\Gamma\!\left(\frac{p+1}{2}\right)}{\sqrt{\pi}}v
     \big)\big\rangle
     }
     {[\rhoh(s_{k+1},s_{k+2})]^{p-1}}
     \ldots
      \int_{s_j}^T 
     \tfrac{
 \big\langle \uvec_{\nu_{j}},     
     \big(   
   1,
   \left(
    \frac{2}{s_{j+1}-s_j}
    \right)^{\frac{p}{2}}
    \frac{\Gamma\!\left(\frac{p+1}{2}\right)}{\sqrt{\pi}}v
     \big)\big\rangle}
     {[\rhoh(s_j,s_{j+1})]^{p-1}}
     \,ds_{j+1}
     \ldots
     ds_{k+2}
     \,ds_{k+1}
     \Bigg | \mathbb F_{k-1}
     \Bigg]
 \\ &
 =
     \E\! \Bigg[
     \Bigg|
   \tfrac{ 
 \big\langle \uvec_{\nu_{k-1}},   
   \big(   
   1,
     \frac{ 
     W_{s+(T-s)\unif^{(k-1)}}- W_{s}
     }{(T-s)\unif^{(k-1)} } 
     \big)\big\rangle}{\rhoh(s,s+(T-s)\unif^{(k-1)})}
     \Bigg|^p
     \int_{s+(T-s)\unif^{(k-1)}}^T 
     \tfrac{
 \big\langle \uvec_{\nu_{k}},     
     \big(   
   1,
   \left(
    \frac{2}{s_{k+1}-(s+(T-s)\unif^{(k-1)})}
    \right)^{\frac{p}{2}}
    \frac{\Gamma\!\left(\frac{p+1}{2}\right)}{\sqrt{\pi}}v
     \big)\big\rangle}
     {[\rhoh(s+(T-s)\unif^{(k-1)},s_{k+1})]^{p-1}}
 \\
     &\qquad 
     \cdot
     \int_{s_{k+1}}^T 
     \tfrac{
 \big\langle \uvec_{\nu_{k+1}},     
     \big(   
   1,
   \left(
    \frac{2}{s_{k+2}-s_{k+1}}
    \right)^{\frac{p}{2}}
    \frac{\Gamma\!\left(\frac{p+1}{2}\right)}{\sqrt{\pi}}v
     \big)\big\rangle}
     {[\rhoh(s_{k+1},s_{k+2})]^{p-1}}
     \ldots
      \int_{s_j}^T 
     \tfrac{
 \big\langle \uvec_{\nu_{j}},     
     \big(   
   1,
   \left(
    \frac{2}{s_{j+1}-s_j}
    \right)^{\frac{p}{2}}
    \frac{\Gamma\!\left(\frac{p+1}{2}\right)}{\sqrt{\pi}}v
     \big)\big\rangle}
     {[\rhoh(s_j,s_{j+1})]^{p-1}}
     \,ds_{j+1}
     \ldots
     ds_{k+2}
     \,ds_{k+1}
     \Bigg]
     \Bigg|_{s=\ES(k-1,t)}.
\end{split}
\end{equation}
This,
Hutzenthaler et al.~\cite[Lemma 2.3]{HJKNW2018},
the hypothesis that $W$ and $\unif^{(k-1)}$ are independent,
\eqref{eq:norm_moment},
and the fact that for all $r\in [0,T)$, $s\in (r,T]$ it holds that $\rhoh(r,s)=\frac{1}{T-r}\rho(\frac{s-r}{T-r})$
ensure that it holds $\P$-a.s.\ that
\begin{equation}
\begin{split}
&
\E\! \left[
\prod_{i=k}^{j+1}
\left|
   \tfrac{1}{\rhoh(\ES(i-1,t),\ES(i,t))}
    \big\langle \uvec_{\nu_{i-1}},
     \big(   
   1,
     \tfrac{ 
     W_{\ES(i,t)}- W_{\ES(i-1,t)}
     }{\ES(i,t)-\ES(i-1,t) } 
     \big)\big\rangle
     \right|^p
     \Bigg | \mathbb F_{k-1}
     \right]
      \\
      &
 =
     \int_0^1
   \tfrac{ 
 \big\langle \uvec_{\nu_{k-1}},   
   \big(   
   1,
     \frac{(2((T-s)r))^{\frac{p}{2}}\Gamma\!\left(\frac{p+1}{2}\right)}
     {\sqrt{\pi}|(T-s)r|^p }v 
     \big)
     \big\rangle
     \rho(r)}{[\rhoh(s,s+(T-s)r)]^p}
     \int_{s+(T-s)r}^T 
     \tfrac{
 \big\langle \uvec_{\nu_{k}},     
     \big(   
   1,
   \left(
    \frac{2}{s_{k+1}-(s+(T-s)r)}
    \right)^{\frac{p}{2}}
    \frac{\Gamma\!\left(\frac{p+1}{2}\right)}{\sqrt{\pi}}v
     \big)\big\rangle }
     {[\rhoh(s+(T-s)r,s_{k+1})]^{p-1}}
     \\
     &\qquad 
     \cdot
     \int_{s_{k+1}}^T 
     \tfrac{
 \big\langle \uvec_{\nu_{k+1}},     
     \big(   
   1,
   \left(
    \frac{2}{s_{k+2}-s_{k+1}}
    \right)^{\frac{p}{2}}
    \frac{\Gamma\!\left(\frac{p+1}{2}\right)}{\sqrt{\pi}}v
     \big)\big\rangle}
     {[\rhoh(s_{k+1},s_{k+2})]^{p-1}}
     \ldots
      \int_{s_j}^T 
     \tfrac{
 \big\langle \uvec_{\nu_{j}},     
     \big(   
   1,
   \left(
    \frac{2}{s_{j+1}-s_j}
    \right)^{\frac{p}{2}}
    \frac{\Gamma\!\left(\frac{p+1}{2}\right)}{\sqrt{\pi}}v
     \big)\big\rangle}
     {[\rhoh(s_j,s_{j+1})]^{p-1}}
     \,ds_{j+1}
     \ldots
     ds_{k+2}
     \,ds_{k+1}
     \,dr
     \Bigg|_{s=\ES(k-1,t)}
     \\
     &
     =
     \int_{\ES(k-1,t)}^T 
     \tfrac{
 \big\langle \uvec_{\nu_{k-1}},     
     \big(   
   1,
   \left(
    \frac{2}{s_k-\ES(k-1,t)}
    \right)^{\frac{p}{2}}
    \frac{\Gamma\!\left(\frac{p+1}{2}\right)}{\sqrt{\pi}}v
     \big)\big\rangle}
     {[\rhoh(\ES(k-1,t),s_k)]^{p-1}}
     \int_{s_k}^T 
     \tfrac{
    \big\langle \uvec_{\nu_{k}},  
     \big(   
   1,
   \left(
    \frac{2}{s_{k+1}-s_k}
    \right)^{\frac{p}{2}}
    \frac{\Gamma\!\left(\frac{p+1}{2}\right)}{\sqrt{\pi}}v
     \big)\big\rangle}
     {[\rhoh(s_k,s_{k+1})]^{p-1}}
      \\
      &
      \qquad 
    \cdot 
     \int_{s_{k+1}}^T 
     \tfrac{
 \big\langle \uvec_{\nu_{k+1}},     
     \big(   
   1,
   \left(
    \frac{2}{s_{k+2}-s_{k+1}}
    \right)^{\frac{p}{2}}
    \frac{\Gamma\!\left(\frac{p+1}{2}\right)}{\sqrt{\pi}}v
     \big)\big\rangle}
     {[\rhoh(s_{k+1},s_{k+2})]^{p-1}}
     \ldots
      \int_{s_j}^T 
     \tfrac{
 \big\langle \uvec_{\nu_{j}},     
     \big(   
   1,
   \left(
    \frac{2}{s_{j+1}-s_j}
    \right)^{\frac{p}{2}}
    \frac{\Gamma\!\left(\frac{p+1}{2}\right)}{\sqrt{\pi}}v
     \big)\big\rangle}
     {[\rhoh(s_j,s_{j+1})]^{p-1}}
     \,ds_{j+1}
     \ldots
     ds_{k+2}
     \,ds_{k+1}
     \,ds_k
 .
  \end{split}
\end{equation}
This completes the induction step. Induction hence proves \eqref{eq:ub.it.int.lp2}.

Next \eqref{eq:ub.it.int.lp2} implies that
\begin{equation}\label{eq:ub.it.int.lp3}
\begin{split}
&
\E\! \left[
\prod_{i=1}^{j+1}
\left|
   \tfrac{1}{\rhoh(\ES(i-1,t),\ES(i,t))}
    \big\langle \uvec_{\nu_{i-1}},
     \big(   
   1,
     \tfrac{ 
     W_{\ES(i,t)}- W_{\ES(i-1,t)}
     }{\ES(i,t)-\ES(i-1,t) } 
     \big)\big\rangle
     \right|^p
     \right]
      \le 
      \left[
      \max\left\{
      (T-t)^{\frac{p}{2}}, \tfrac{2^{\frac{p}{2}}\Gamma\!\left(\frac{p+1}{2}\right)}{\sqrt{\pi}}
      \right\}
      \right]^{j+1}
      \\
      &\quad \cdot
     \int_t^T 
     \frac{1}
     {(s_1-t)^{\frac{p}{2}}[\rhoh(t,s_1)]^{p-1}}
     \int_{s_1}^T 
     \frac{1}
     {(s_2-s_1)^{\frac{p}{2}}[\rhoh(s_1,s_2)]^{p-1}}
     \ldots
      \int_{s_j}^T 
     \frac{1}
     {(s_{j+1}-s_j)^{\frac{p}{2}}[\rhoh(s_j,s_{j+1})]^{p-1}}
     \,ds_{j+1}
     \ldots
     ds_2 
     \,ds_1
 .
  \end{split}
\end{equation}
Inequality \eqref{eq:ub.it.int.lp3}, Corollary~\ref{cor:iterated_int_beta_bound}
(applied with $\beta=\frac{p}{2}$ and $\gamma=p-1$ in the notation
of Corollary~\ref{cor:iterated_int_beta_bound})
together with $\frac{p}{2}\in [\alpha(p-1),\alpha(p-1)+1]$,
and the fact that 
$(\frac{p}{2}-\alpha (p-1))(1-(\frac{p}{2}-\alpha(p-1)))\le \frac{1}{4}$
show that 
\begin{equation}\label{eq:ub.it.int.lp5}
\begin{split}
&\left\|
\prod_{i=1}^{j+1}
   \tfrac{1}{\rhoh(\ES(i-1,t),\ES(i,t))}
    \big\langle \uvec_{\nu_{i-1}},
     \big(   
   1,
     \tfrac{ 
     W_{\ES(i,t)}- W_{\ES(i-1,t)}
     }{\ES(i,t)-\ES(i-1,t) } 
     \big)\big\rangle
   \right\|_{L^p(\P;\R)}^p=
\E\! \left[
\prod_{i=1}^{j+1}
\left|
   \tfrac{1}{\rhoh(\ES(i-1,t),\ES(i,t))}
    \big\langle \uvec_{\nu_{i-1}},
     \big(   
   1,
     \tfrac{ 
     W_{\ES(i,t)}- W_{\ES(i-1,t)}
     }{\ES(i,t)-\ES(i-1,t) } 
     \big)\big\rangle
     \right|^p
     \right]
     \\
      &\le 
      \left[
      \max\left\{
      (T-t)^{\frac{p}{2}}, \tfrac{2^{\frac{p}{2}}\Gamma\!\left(\frac{p+1}{2}\right)}{\sqrt{\pi}}
      \right\}
      \right]^{j+1}
    \left[\tfrac{(T-t)^{\frac{p}{2}}\Gamma(\alpha(p-1)-\frac{p}{2}+1)}{(1-\alpha)^{p-1}
(\frac{p}{2})^{\alpha(p-1)-\frac{p}{2}+1} }\right]^{j+1}
\left[e^{\frac{p}{2}}\left(\tfrac{pj}{2}+1\right) \right]^{\frac{2(\frac{p}{2}-\alpha (p-1))(\alpha(p-1)-\frac{p}{2}+1)}{p}}
\\
&\qquad \cdot
\left[
\tfrac{\Gamma\left(\frac{2}{p}\right)}{\Gamma\left(1+j+\frac{2}{p}\right)}
\right]^{\alpha (p-1) -\frac{p}{2} +1}  
\\
      &\leq
      \left[
      \max\left\{
      (T-t)^{\frac{p}{2}}, \tfrac{2^{\frac{p}{2}}\Gamma\!\left(\frac{p+1}{2}\right)}{\sqrt{\pi}}
      \right\}
    \tfrac{(T-t)^{\frac{p}{2}}\Gamma(\alpha(p-1)-\frac{p}{2}+1)}{(1-\alpha)^{p-1}
(\frac{p}{2})^{\alpha(p-1)-\frac{p}{2}+1} }\right]^{j+1}
\left[e^{\frac{p}{2}}\left(\tfrac{pj}{2}+1\right) \right]^{\frac{1}{2p}}
\left[
\tfrac{\Gamma\left(\frac{2}{p}\right)}{\Gamma\left(1+j+\frac{2}{p}\right)}
\right]^{\alpha (p-1) -\frac{p}{2} +1}.
  \end{split}
\end{equation}
This together with
$\alpha(p-1)-\frac{p}{2}+1\le p-1-\frac{p}{2}+1=\frac{p}{2}$
implies \eqref{eq:ub.it.int.lp.b}. 
The proof of Corollary~\ref{cor:ub.it.int.lp} is thus completed.
\end{proof}
\blue{
\begin{remark}
It follows from Jensen's inequality that for all $i\in \N_0$ it holds that
\begin{equation}
\begin{split}
\int_0^1 \frac{(1-s)^i}{s\rho(s)}\,ds&=
\int_0^1 \frac{(1-s)^i}{s\rho^2(s)}\rho(s) \, ds
\ge 
\left(\int_0^1 \frac{(1-s)^{i/2}}{\sqrt{s}\rho(s)}\rho(s) \, ds\right)^2
=
\left(\int_0^1 \frac{(1-s)^{i/2}}{\sqrt{s}}\,ds\right)^2
=B\left(\frac{1}{2},\frac{i}{2}+1\right)^2\\
&
=\left(\frac{\Gamma\left(\frac{1}{2}\right)\Gamma\left( \frac{i}{2}+1 \right)}{\Gamma\left(
\frac{i+3}{2} \right)}\right)^2
=\pi \left(\frac{\Gamma\left( \frac{i+2}{2}\right)}{\Gamma\left(
\frac{i+3}{2} \right)}\right)^2
\end{split}
\end{equation}
This together with \eqref{eq:iterated_int} implies that for all $s_0\in [0,T)$ and $j\in \N_0$ it holds that
\begin{multline}\label{eq:iterated_int_lb}
\int_{s_0}^T\frac{1}{(s_1-s_0)[\rhoh(s_0,s_1)]}\int_{s_1}^T\frac{1}{(s_2-s_1)[\rhoh(s_1,s_2)]}\ldots \int_{s_j}^T\frac{1}{(s_{j+1}-s_j) [\rhoh(s_j,s_{j+1})]}\,ds_{j+1}\ldots ds_2 \,ds_1\\
=(T-s_0)^{j+1}\prod_{i=0}^j \int_0^1 \frac{(1-s)^{i}}{s \rho(s)}\,ds
\ge (\pi(T-s_0))^{j+1}\prod_{i=0}^j\left(\frac{\Gamma\left( \frac{i+2}{2}\right)}{\Gamma\left(
\frac{i+3}{2} \right)}\right)^2=\frac{(\pi(T-s_0))^{j+1}}{\Gamma\left(
\frac{j+3}{2} \right)^2}.
\end{multline}
But by the duplication formula it holds for all $j\in \N_0$ that
\begin{equation}
\frac{\sqrt{\pi}}{2^{j+2}}\Gamma(j+3)
=
\Gamma\left(
\frac{j+4}{2} \right) \Gamma\left(
\frac{j+3}{2} \right)
\ge 
\Gamma\left(
\frac{j+3}{2} \right)^2\ge 
\Gamma\left(
\frac{j+2}{2} \right) \Gamma\left(
\frac{j+3}{2} \right)=\frac{\sqrt{\pi}}{2^{j+1}}\Gamma(j+2).
\end{equation}
Hence the lower bound \eqref{eq:iterated_int_lb} decreases (up to exponential factors) like $\frac{1}{\Gamma(j+2)}$, which is better than our upper bound in \eqref{eq:iterated_int_beta_bound}.
\end{remark}
}

\section[Full-history recursive multilevel Picard (MLP) approximation methods]{Full-history recursive multilevel Picard approximation methods}

\label{sec:error_analysis}

In this section we introduce and analyze a class of new MLP approximation methods for nonlinear heat equations with gradient-dependent nonlinearities. In the main result of this section, Proposition~\ref{thm:rate} in Subsection~\ref{subsec:error_analysis} below, we provide a detailed error analysis for these new MLP approximation methods.
We will employ Proposition~\ref{thm:rate} in our proofs of the approximation results in Section~\ref{sec:rate} below (cf. Corollary~\ref{cor:comp_and_error}, Theorem~\ref{cor:comp_and_error2}, and Corollary~\ref{cor:comp_and_error3} in Section~\ref{sec:rate} below).

%
%
%

\subsection{Description of MLP approximations}

\begin{setting}\label{s:full.discretization}
Let $\left\|\cdot\right\|_1\colon (\cup_{n\in \N}\R^n)\to \R$ satisfy for 
all $n\in \N$, $x=(x_1,x_2,\ldots,x_n)\in \R^n$ that
$\|x\|_1=\sum_{i=1}^n |x_i|$,
let 
$ T \in (0,\infty) $, 
$ d \in \N $, 
$
  \Theta = \cup_{ n \in \N } \Z^n
$,
$L=(L_1,L_2,\ldots,L_{d+1})\in [0,\infty)^{d+1}$,
$K=(K_1,K_2,\ldots, K_d)\in [0,\infty)^d$,
$\uvec_1=(1,0,\ldots,0)$, $\uvec_2=(0,1,0,\ldots,0)$, \ldots, 
$\uvec_{d+1}=(0,0,\ldots,1)\in \R^{d+1}$,
$\rho \in C((0,1),(0,\infty))$,
${\bf u}=({\bf u}_1, {\bf u}_2, \ldots, {\bf u}_{d+1}) \in C([0,T)\times\R^d,\R^{1+d})$,
let $\langle \cdot, \cdot \rangle\colon \R^{d+1}\times \R^{d+1} \to \R$ 
satisfy for all $v=(v_1,v_2,\ldots, v_{d+1})$, $w=(w_1,w_2,\ldots, w_{d+1})\in \R^{d+1}$ that $\langle v, w\rangle=\sum_{i=1}^{d+1}v_iw_i$,
let $\rhoh \colon[0,T]^2\to\R$
satisfy for all $t\in [0,T)$, $s\in (t,T)$ that
$\rhoh(t,s)=\frac{1}{T-t}\rho(\frac{s-t}{T-t})$,
let
$
  ( 
    \Omega, \mathcal{F}, \P
  )
$
be a probability space, 
let
$
  W^{ \theta }=(W^{\theta,1}, W^{\theta,2}, \ldots, W^{\theta,d}) \colon [0,T] \times \Omega \to \R^d 
$, 
$ \theta \in \Theta $,
be standard
Brownian motions
with continuous sample paths,
let $\unif^\theta\colon \Omega\to(0,1)$, $\theta\in \Theta$, be 
i.i.d.\ random variables,
assume
for all $b\in (0,1)$
that $\P(\unif^0\le b)=\int_0^b \rho(s)\,ds$, 
assume that
$(W^\theta)_{\theta \in \Theta}$ and $(\unif^\theta)_{\theta \in \Theta}$
are independent,
let $\uniform^{(n)}\colon [0,T)\times \Omega\to [0,T)$, $n \in \N_0$, and $\ES\colon \N_0 \times [0,T)\times \Omega \to [0,T)$
 satisfy for all $n \in \N_0$, $t\in [0,T)$
that $\uniform^{(n)} _t = t+ (T-t)\unif^{(n)}$, $\ES(0,t)=t$, and
$\ES(n+1,t)=\uniform^{(n)}_{\ES(n,t)}$,
let
$ f\in C([0,T]\times\R^d\times\R^{1+d},\R)$, 
$ 
  g\in C(\R^d, \R)
$, and
$
  \funcF \colon 
  C([0,T)\times\R^d,\R^{1+d})
  \to
  C([0,T)\times\R^d,\R)
$
 satisfy for all $t\in[0,T)$, $x=(x_1,x_2,\ldots,x_d)$, $\mathfrak{x}=(\mathfrak{x}_1, \mathfrak{x}_2,\ldots,\mathfrak{x}_d)\in\R^d$,
$u=(u_1,u_2,\ldots,u_{d+1})$, $\mathfrak{u}=(\mathfrak{u}_1,\mathfrak{u}_2,\ldots,\mathfrak{u}_{d+1})\in\R^{1+d}$, ${\bf v}\in C([0,T)\times\R^d,\R^{1+d})$
that
\begin{equation}  \begin{split}\label{eq:fLipschitz}
 \max\{|f(t,x,u)-f(t,x,\mathfrak{u})|,|g(x)-g(\mathfrak{x})|\}\leq
    \left[\textstyle \sum\limits _{\nu=1}^{d+1} \displaystyle
    \LipConst_\nu \left|u_\nu-\mathfrak{u}_\nu\right|\right]
    +\left[\textstyle\sum\limits _{\nu=1}^d \displaystyle K_\nu |x_\nu-\mathfrak{x}_\nu|\right],
\end{split}     \end{equation}
\begin{equation}\label{eq:feynmankacintegrability}
\E\!\left[\big\|g(x+W^0_{T}-W^0_t)\big(1,\tfrac{W^0_T-W^0_t}{T-t} \big)
          \big\|_1
      +
      \int_t^{T}\big\|[(\funcF({\bf u}))(t,x+W^0_{s}-W^0_{t})]
      \big(1,\tfrac{W^0_s-W^0_t}{s-t} \big)\big\|_1
      \,ds
      \right]<\infty,
\end{equation}
\begin{equation}\label{eq:feynmankacuinfty}
{\bf u}(t,x)=\E\!\left[g(x+W^0_{T}-W^0_t)\big(1,\tfrac{W^0_T-W^0_t}{T-t} \big)+
      \int_t^{T}[(\funcF({\bf u}))(t,x+W^0_{s}-W^0_{t})]
      \big(1,\tfrac{W^0_s-W^0_t}{s-t} \big)
      \,ds
      \right],
\end{equation}
and $(F({\bf v}))(t,x)=f(t,x,{\bf v}(t,x))$,
and
let
$ 
  {\bf U}_{ n,M}^{\theta }=({\bf U}_{ n,M}^{\theta,1},
{\bf U}_{ n,M}^{\theta,2}, \ldots, {\bf U}_{ n,M}^{\theta,d+1})  
  \colon[0,T)\times\R^d\times\Omega\to\R^{1+d}
$,
$n,M\in\Z$, $\theta\in\Theta$, satisfy
for all 
$
  n,M \in \N
$,
$ \theta \in \Theta $,
$ t\in [0,T)$,
$x \in \R^d$
that $
{\bf U}_{-1,M}^{\theta}(t,x)={\bf U}_{0,M}^{\theta}(t,x)=0$ and
\begin{equation}  \begin{split}\label{eq:def:U}
  &{\bf U}_{n,M}^{\theta}(t,x)
  =
  \big(
    g(x)
    , 0
  \big)
  +
  \sum_{i=1}^{M^n}\frac{\big(g(x+W^{(\theta,0,-i)}_T-W^{(\theta,0,-i)}_t)-g(x)\big)}{M^n}
  \Big(
  1 ,
  \tfrac{ 
  W^{(\theta, 0, -i)}_{T}- W^{(\theta, 0, -i)}_{t}
  }{ T - t }
  \Big)
  \\
  &+\sum_{l=0}^{n-1}\sum_{i=1}^{M^{n-l}}
  \frac{
  \big(\funcF({\bf U}_{l,M}^{(\theta,l,i)})-\1_{\N}(l)\funcF( {\bf U}_{l-1,M}^{(\theta,-l,i)})\big)
  (\uniform^{(\theta, l,i)}_t,x+W_{\uniform^{(\theta, l,i)}_t}^{(\theta,l,i)}-W_t^{(\theta,l,i)})
  }
  {M^{n-l}\rhoh(t,\uniform^{(\theta, l,i)}_t)}
  \left(
  1 ,
  \tfrac{ 
  W_{\uniform^{(\theta, l,i)}_t}^{(\theta,l,i)}- W^{(\theta, l, i)}_{t}
  }{ \uniform^{(\theta, l,i)}_t-t}
  \right).
\end{split}     \end{equation}
\end{setting}

\subsection{Properties of MLP approximations}\label{subsec:basic_prop}
\begin{lemma}[Measurability properties]\label{properties_approx}
 Assume Setting~\ref{s:full.discretization}
 and
 let $M\in\N$.
 Then
  \begin{enumerate}[(i)]
  \item  \label{properties_approx:item1}
  for all $n \in \N_0$, $\theta\in\Theta $ it holds that
  $
    {\bf U}_{ n,M}^{\theta } \colon [0, T) \times \R^d \times \Omega \to \R
  $
  is a continuous random field,
  
  \item  \label{properties_approx:item2}
  for all $n \in \N_0$, $\theta \in \Theta$ it holds that
  $
    \sigma( {\bf U}^\theta_{n, M} )
  \subseteq
    \sigma( (\unif^{(\theta, \vartheta)})_{\vartheta \in \Theta}, (W^{(\theta, \vartheta)})_{\vartheta \in \Theta}) 
  $,

  \item  \label{properties_approx:item4}
  for all $n, m \in \N_0$, $i,j,k,l, \in \Z$, $\theta \in \Theta$ with $(i,j) \neq (k,l)$ 
  it holds that
  $
    {\bf U}^{(\theta,i,j)}_{n,M}
  $
  and
  $
    {\bf U}^{(\theta,k,l)}_{m,M}
  $
  are independent,
  \item  \label{properties_approx:item3}
  for all $n \in \N_0$, $\theta\in\Theta $ it holds hat
  ${\bf U}_{ n,M}^{\theta }$, $W^\theta$, and $\unif^\theta$ are independent,
  \item  \label{properties_approx:item5}
  for all $n \in \N_0$ it holds that 
  $
    {\bf U}^\theta_{n, M}
  $,
  $
   \theta \in \Theta
  $,
  are identically distributed, and
  \item  \label{properties_approx:item6}
  for all $\theta \in \Theta$, $l\in \N$, $i\in \N$, $t\in [0,T)$, $x\in \R^d$ it holds that
\begin{equation}
\tfrac{
  \funcF( {\bf U}_{l-1,M}^{(\theta,-l,i)})
  (\uniform^{(\theta, l,i)}_t,x+W_{\uniform^{(\theta, l,i)}_t}^{(\theta,l,i)}-W_t^{(\theta,l,i)})
  }
  {\rhoh(t,\uniform^{(\theta, l,i)}_t)}
  \left(
  1 ,
  \tfrac{ 
  W_{\uniform^{(\theta, l,i)}_t}^{(\theta,l,i)}- W^{(\theta, l, i)}_{t}
  }{ \uniform^{(\theta, l,i)}_t-t}
  \right) 
  \end{equation}
  and
  \begin{equation}
  \tfrac{
  \funcF( {\bf U}_{l-1,M}^{(\theta,l,i)})
  (\uniform^{(\theta, l,i)}_t,x+W_{\uniform^{(\theta, l,i)}_t}^{(\theta,l,i)}-W_t^{(\theta,l,i)})
  }
  {\rhoh(t,\uniform^{(\theta, l,i)}_t)}
  \left(
  1 ,
  \tfrac{ 
  W_{\uniform^{(\theta, l,i)}_t}^{(\theta,l,i)}- W^{(\theta, l, i)}_{t}
  }{ \uniform^{(\theta, l,i)}_t-t}
  \right)
\end{equation} 
are identically distributed.
  \end{enumerate}
\end{lemma}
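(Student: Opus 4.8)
The plan is to verify the six assertions essentially by structural induction on the level $n \in \N_0$, exploiting the explicit recursion \eqref{eq:def:U} together with the independence structure built into the index set $\Theta = \cup_{n\in\N}\Z^n$. The guiding principle throughout is that for a fixed $\theta$, the approximation ${\bf U}_{n,M}^\theta$ is built only from the noise sources $\unif^{(\theta,\vartheta)}$ and $W^{(\theta,\vartheta)}$ with $\vartheta$ ranging over nonempty finite integer tuples, i.e.\ from the "subtree" of $\Theta$ rooted at $\theta$; distinct roots give disjoint subtrees, which is what drives the independence claims.

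First I would prove \eqref{properties_approx:item1} and \eqref{properties_approx:item2} simultaneously by induction on $n$. The base cases $n\in\{-1,0\}$ are trivial since ${\bf U}_{-1,M}^\theta={\bf U}_{0,M}^\theta\equiv 0$. For the induction step, \eqref{eq:def:U} expresses ${\bf U}_{n,M}^\theta(t,x)$ as a finite sum of terms, each of which is a deterministic continuous function of $(t,x)$ composed with finitely many of the random fields ${\bf U}_{l,M}^{(\theta,l,i)}$, ${\bf U}_{l-1,M}^{(\theta,-l,i)}$ (continuous random fields measurable with respect to the asserted sigma-algebras by the induction hypothesis) and the Brownian motions $W^{(\theta,0,-i)}$, $W^{(\theta,l,i)}$ and the uniform-type variables $\unif^{(\theta,l,i)}$ (hence $\uniform^{(\theta,l,i)}$ and $\ES$); here one uses continuity of $g$, $f$, $F$, $\rho$, $\rhoh$ and of Brownian sample paths, plus the fact that $t\mapsto \uniform_t^{(\theta,l,i)}$ is continuous and stays in $[0,T)$, so the division by $\uniform_t^{(\theta,l,i)}-t$ and by $\rhoh(t,\uniform_t^{(\theta,l,i)})$ causes no singularity for $t\in[0,T)$. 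Measurability \eqref{properties_approx:item2} then follows because every index $(\theta,0,-i)$, $(\theta,l,i)$, $(\theta,-l,i)$ is of the form $(\theta,\vartheta)$ with $\vartheta\in\Theta$, and the induction hypothesis gives $\sigma({\bf U}_{l,M}^{(\theta,l,i)})\subseteq\sigma((\unif^{(\theta,l,i,\eta)})_\eta,(W^{(\theta,l,i,\eta)})_\eta)\subseteq\sigma((\unif^{(\theta,\vartheta)})_\vartheta,(W^{(\theta,\vartheta)})_\vartheta)$.

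Next I would deduce \eqref{properties_approx:item4} from \eqref{properties_approx:item2}: if $(i,j)\neq(k,l)$ then the sets of nonempty tuples of the form $(\theta,i,j,\cdot)$ and $(\theta,k,l,\cdot)$ are disjoint, so $\sigma({\bf U}_{n,M}^{(\theta,i,j)})$ and $\sigma({\bf U}_{m,M}^{(\theta,k,l)})$ are generated by disjoint families among the independent family $((\unif^\eta)_{\eta\in\Theta},(W^\eta)_{\eta\in\Theta})$, hence are independent. Claim \eqref{properties_approx:item3} is the special case ``${\bf U}_{n,M}^\theta$ uses only indices $(\theta,\vartheta)$ with $\vartheta$ of length $\ge 2$'' combined with the observation that $W^\theta$, $\unif^\theta$ carry the length-one index $\theta$ (more precisely: $W^\theta = W^{(\theta)}$ up to the identification in the setting), so they are independent of the disjoint family used by ${\bf U}_{n,M}^\theta$; here I would note that the relevant indices appearing in \eqref{eq:def:U} all have the form $(\theta,0,-i)$ or $(\theta,l,i)$ with at least two extra coordinates, which is genuinely disjoint from the singleton index of $W^\theta,\unif^\theta$. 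Claim \eqref{properties_approx:item5} I would prove by induction on $n$, using that the map $\Theta\ni\vartheta\mapsto(\theta,\vartheta)$ is a bijection onto the subtree at $\theta$ that preserves the joint distribution of $((\unif^{(\theta,\vartheta)})_\vartheta,(W^{(\theta,\vartheta)})_\vartheta)$ (since the underlying families are i.i.d.), so the whole recursive construction is distribution-preserving under re-rooting; formally one checks that the right-hand side of \eqref{eq:def:U} for root $\theta$ and for root $\theta'$ are images of the same measurable functional applied to equidistributed inputs. Finally, \eqref{properties_approx:item6} follows from \eqref{properties_approx:item5} applied at level $l-1$ together with \eqref{properties_approx:item3}: given the index $(\theta,l,i)$, the pair $(\unif^{(\theta,l,i)},W^{(\theta,l,i)})$ is independent of both ${\bf U}_{l-1,M}^{(\theta,-l,i)}$ and ${\bf U}_{l-1,M}^{(\theta,l,i)}$ (disjoint subtrees, via \eqref{properties_approx:item4}/\eqref{properties_approx:item3}), and ${\bf U}_{l-1,M}^{(\theta,-l,i)}\stackrel{d}{=}{\bf U}_{l-1,M}^{(\theta,l,i)}$ by \eqref{properties_approx:item5}; since the two displayed expressions are the \emph{same} measurable functional of $(\text{noise},{\bf U}_{l-1,M}^{\cdot})$ with the $\text{noise}$ part identical and the ${\bf U}$ part equidistributed and independent of the noise in both cases, their laws coincide.

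The main obstacle is bookkeeping rather than conceptual: one must be careful that the index shifts $\theta\mapsto(\theta,0,-i)$, $\theta\mapsto(\theta,l,i)$, $\theta\mapsto(\theta,-l,i)$ really do produce pairwise disjoint (and disjoint-from-$\{\theta\}$) subtrees of $\Theta$, so that the ``independent pieces of an i.i.d.\ family are independent'' argument applies verbatim, and that the measurable functional in \eqref{eq:def:U} is genuinely the same when the root is relabelled — in particular that no hidden dependence on the value of $\theta$ beyond indexing sneaks in. Once the tree structure of $\Theta$ is set up cleanly (each $\theta\in\Theta$ determines a subtree, re-rooting is a measure-preserving bijection on the noise), all six items are routine inductions; the only analytic input is continuity of the data and of Brownian paths for \eqref{properties_approx:item1}, which is standard.
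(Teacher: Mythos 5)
Your proposal is correct and follows essentially the same route as the paper: joint induction for continuity and measurability, disjointness of the subtrees of $\Theta$ for the independence statements, a re-rooting/identical-functional argument for identical distribution, and the combination of independence with equidistribution of ${\bf U}_{l-1,M}^{(\theta,-l,i)}$ and ${\bf U}_{l-1,M}^{(\theta,l,i)}$ for the last item. The only difference is that the paper formalizes the "same measurable functional applied to equidistributed, independent inputs" steps by citing Hutzenthaler et al.\ (Lemma 2.3 and Corollary 2.5) and Beck et al.\ (Lemma 2.4) for joint measurability of $F({\bf U}^\theta_{n,M})$, which your sketch handles informally but correctly.
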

\begin{proof}[Proof of Lemma~\ref{properties_approx}]
First, observe that \eqref{eq:def:U}, the hypothesis that 
for all $M \in \N$, $\theta\in\Theta $ it holds that
${\bf U}^\theta_{0, M} = 0$,
the fact  that
for all $\theta \in \Theta$ it holds that $W^\theta$ and $\uniform^\theta$ are continuous random fields,
the hypothesis that $f\in C([0,T]\times \R^d \times \R \times \R^d, \R)$,
the hypothesis that $g\in C(\R^d, \R)$,
the fact that
$\rhoh |_{\{(s,t)\in [0,T)^2\colon s<t\}}\in C( \{(s,t)\in [0,T)^2\colon s<t\}, \R) $,
 and induction on $\N_0$ establish Item~\eqref{properties_approx:item1}.
Next note that Item~\eqref{properties_approx:item1},
the hypothesis that $f\in C([0,T]\times \R^d \times \R \times \R^d, \R)$,
and, e.g., Beck et al.~\cite[Lemma 2.4]{Becketal2018}
assure that 
for all $n \in \N_0$, $\theta\in\Theta $ it holds that 
$F({\bf U}^\theta_{n, M})$ is $ ( \mathcal{B}((0, T) \times \R^d ) \otimes \sigma({\bf U}^\theta_{n, M}) )/ \mathcal{B}(\R )$-measurable.
The hypothesis that 
for all $M \in \N$, $\theta\in\Theta $ it holds that
${\bf U}^\theta_{0, M} = 0$,
\eqref{eq:def:U}, 
the fact that 
for all $\theta \in \Theta$ it holds that 
$W^\theta$ is $ ( \mathcal{B}([0, T]) \otimes \sigma(W^\theta) )/ \mathcal{B}(\R )$-measurable,
the fact that 
for all $\theta \in \Theta$ it holds that 
$\uniform^\theta$ is $ ( \mathcal{B}([0, T)) \otimes \sigma(\unif^\theta) )/ \mathcal{B}([0, T) )$-measurable,
and induction on $\N_0$ hence prove Item~\eqref{properties_approx:item2}.
In addition, note that Item~\eqref{properties_approx:item2} and the fact that
for all $i,j,k,l, \in \Z$, $\theta \in \Theta$ with $(i,j) \neq (k,l)$ it holds that 
$((\unif^{(\theta,i,j, \vartheta)}, W^{(\theta,i,j, \vartheta)}))_{\vartheta \in \Theta }$
and
$((\unif^{(\theta,k,l, \vartheta)}, W^{(\theta,k,l, \vartheta)}))_{\vartheta \in \Theta }$
are independent prove Item~\eqref{properties_approx:item4}.
Furthermore, observe that Item~\eqref{properties_approx:item2} and the fact that 
for all $\theta \in \Theta$ it holds that
$(\unif^{(\theta, \vartheta)})_{\vartheta \in \Theta}$, $(W^{(\theta, \vartheta)})_{\vartheta \in \Theta}$,
$W^\theta$, and $\unif^\theta$ are independent establish Item~\eqref{properties_approx:item3}.
Next observe that the hypothesis that 
for all $\theta\in\Theta $ it holds that
${\bf U}^\theta_{0, M} = 0$,
the hypothesis that $(W^\theta)_{\theta \in \Theta}$ are i.i.d., 
the hypothesis that $(\uniform^\theta)_{\theta \in \Theta}$ are i.i.d., 
Items~\eqref{properties_approx:item1}--\eqref{properties_approx:item4},
 Hutzenthaler et al.~\cite[Corollary~2.5]{HJKNW2018},
and induction on $\N_0$ establish Item~\eqref{properties_approx:item5}.
Furthermore, observe that Item~\eqref{properties_approx:item2} and the fact that 
for all $\theta \in \Theta$, $l\in \N$, $i\in \N$ it holds that
$(\unif^{(\theta,-l,i, \vartheta)})_{\vartheta \in \Theta}$, $(W^{(\theta,-l,i, \vartheta)})_{\vartheta \in \Theta}$,
$W^{\theta,l,i}$, and $\unif^{\theta,l,i}$ are independent,
and,
e.g., Hutzenthaler et al.~\cite[Lemma 2.3]{HJKNW2018}
imply that for every $\theta \in \Theta$, $l,i\in \N$, $t\in [0,T)$, $x\in \R^d$ and every
bounded $\mathcal B(\R^{d+1})/\mathcal B(\R)$-measurable $\psi \colon \R^{d+1}\to \R$ it holds that
\begin{equation}
\begin{split}
&\E\!\left[
\psi\!\left(
\tfrac{
  \funcF( {\bf U}_{l-1,M}^{(\theta,-l,i)})
  (\uniform^{(\theta, l,i)}_t,x+W_{\uniform^{(\theta, l,i)}_t}^{(\theta,l,i)}-W_t^{(\theta,l,i)})
  }
  {\rhoh(t,\uniform^{(\theta, l,i)}_t)}
  \left(
  1 ,
  \tfrac{ 
  W_{\uniform^{(\theta, l,i)}_t}^{(\theta,l,i)}- W^{(\theta, l, i)}_{t}
  }{ \uniform^{(\theta, l,i)}_t-t}
  \right)
  \right)
  \right]\\
  &=\E\!\left[
  \E\!\left[
\psi\!\left(
\tfrac{
  \funcF( {\bf U}_{l-1,M}^{(\theta,-l,i)})
  (r,x+z)
  }
  {\rhoh(t,r)}
  \left(
  1 ,
  \tfrac{ 
 z
  }{r-t}
  \right)
  \right)
  \right]
  \Bigg|_{r=\uniform^{(\theta, l,i)}_t, z=W_{\uniform^{(\theta, l,i)}_t}^{(\theta,l,i)}- W^{(\theta, l, i)}_{t}}
  \right].
  \end{split}
  \end{equation}
This,  Item~\eqref{properties_approx:item5}, Item~\eqref{properties_approx:item2}, and the fact that 
for all $\theta \in \Theta$, $l, i\in \N$ it holds that
$(\unif^{(\theta,l,i, \vartheta)})_{\vartheta \in \Theta}$, $(W^{(\theta,l,i, \vartheta)})_{\vartheta \in \Theta})$,
$W^{\theta,l,i}$, and $\unif^{\theta,l,i}$ are independent,
and, e.g.,
Hutzenthaler et al.~\cite[Lemma 2.3]{HJKNW2018}
imply that for every $\theta \in \Theta$, $l,i\in \N$, $t\in [0,T)$, $x\in \R^d$ and every bounded $\mathcal B(\R^{d+1})/\mathcal B(\R)$-measurable $\psi \colon \R^{d+1}\to \R$ it holds that
\begin{equation}
\begin{split}
&\E\!\left[
\psi\!\left(
\tfrac{
  \funcF( {\bf U}_{l-1,M}^{(\theta,-l,i)})
  (\uniform^{(\theta, l,i)}_t,x+W_{\uniform^{(\theta, l,i)}_t}^{(\theta,l,i)}-W_t^{(\theta,l,i)})
  }
  {\rhoh(t,\uniform^{(\theta, l,i)}_t)}
  \left(
  1 ,
  \tfrac{ 
  W_{\uniform^{(\theta, l,i)}_t}^{(\theta,l,i)}- W^{(\theta, l, i)}_{t}
  }{ \uniform^{(\theta, l,i)}_t-t}
  \right)
  \right)
  \right]\\
  &=\E\!\left[
  \E\!\left[
\psi\!\left(
\tfrac{
  \funcF( {\bf U}_{l-1,M}^{(\theta,l,i)})
  (r,x+z)
  }
  {\rhoh(t,r)}
  \left(
  1 ,
  \tfrac{ 
 z
  }{r-t}
  \right)
  \right)
  \right]
  \Bigg|_{r=\uniform^{(\theta, l,i)}_t, z=W_{\uniform^{(\theta, l,i)}_t}^{(\theta,l,i)}- W^{(\theta, l, i)}_{t}}
  \right]\\
  &=\E\!\left[
\psi\!\left(
\tfrac{
  \funcF( {\bf U}_{l-1,M}^{(\theta,l,i)})
  (\uniform^{(\theta, l,i)}_t,x+W_{\uniform^{(\theta, l,i)}_t}^{(\theta,l,i)}-W_t^{(\theta,l,i)})
  }
  {\rhoh(t,\uniform^{(\theta, l,i)}_t)}
  \left(
  1 ,
  \tfrac{ 
  W_{\uniform^{(\theta, l,i)}_t}^{(\theta,l,i)}- W^{(\theta, l, i)}_{t}
  }{ \uniform^{(\theta, l,i)}_t-t}
  \right)
  \right)
  \right].
  \end{split}
  \end{equation}
  This establishes Item~\eqref{properties_approx:item6}.
The proof of Lemma~\ref{properties_approx} is thus completed.
\end{proof}

\begin{lemma}[Approximations are integrable]\label{lem:b38}
Assume Setting \ref{s:full.discretization},
let $p\in (1,\infty)$,
$M \in \N$, $x \in \R^d$,
and assume 
for all $q\in [1,p)$, $t\in[0,T)$ that
\begin{equation}\label{eq:int_cond_int}
\int_0^1
  \frac{1}{
  s^{\frac{q}{2}}
\left[\rho(s)\right]^{q-1}  
}
  \,ds+
  \sup_{s\in [t,T)}
  \E\!\left[
  \left|
  \big(\funcF(0)\big)
  (s,x+W_{s}^0-W_t^0)
  \right|^{q}
  \right]<\infty.
\end{equation}
Then 
\begin{enumerate}[(i)]
\item\label{item:approximations.integrable.i} 
it holds
for all
$\theta\in\Theta$, $q\in [1,\infty)$,
$\nu \in \{1,2,\ldots, d+1\}$ that
 \begin{equation}  \begin{split}
    \sup_{u\in(0,T]}\sup_{t\in[0,u)}\sup_{y\in\R^d}
    \E\!\left[\left|\left(g(y+W^{\theta}_u-W^{\theta}_t)-g(y)\right)
    \big\langle
    \uvec_\nu,
  \big(
  1 ,
  \tfrac{ 
  W^{\theta}_{u}- W^{\theta}_{t}
  }{ u - t }
  \big)
 \big \rangle
  \right|^q\right]<\infty,
  \end{split}     \end{equation}
\item\label{item:approximations.integrable.ii} 
it holds
for all
$n\in\N_0$, $\theta\in\Theta$, $q\in [1,p)$, $t\in [0,T)$, $\nu \in \{1,2,\ldots, d+1\}$ that
\begin{equation}\label{eq:approx.int.ii}
\begin{split}
\sup_{s\in[t,T)}
\left\{
\E\!\left[ \left| 
    ({\bf U}_{n,M}^{\theta}(s,x+W_{s}^\theta-W_{t}^\theta))_\nu
  \right|^q \right] 
  +
\E\!\left[
  \tfrac{
  \left|
  \big(\funcF({\bf U}_{n,M}^{\theta})\big)
  (\uniform^{\theta}_s,x+W_{\uniform^{\theta}_s}^{\theta}-W_t^{\theta})
  \right|^q
  }
  {\left[\rhoh(s,\uniform^{\theta}_s)\right]^q}
  \left|
  \big\langle \uvec_\nu,
  \big(
  1 ,
  \tfrac{ 
  W_{\uniform^{\theta }_s}^{\theta}- W^{\theta}_{s}
  }{ \uniform^{\theta}_s-s}
  \big)
  \big \rangle
  \right|^q
  \right]
  \right\}
  <\infty,
\end{split}     
\end{equation}
and
\item\label{item:approximations.integrable.iii} 
it holds for all  $n\in\N$, $\theta \in \Theta$, $t\in[0,T)$, that
  \begin{equation}  \begin{split}\label{eq:discreteFeynmanKac}
    \E\!\left[{\bf U}_{n,M}^{\theta}(t,x)\right]&=\E\!\left[
    g(x+W_T^{\theta}-W_t^{\theta})
     \Big(
     1 ,
     \tfrac{ 
     W^{\theta}_{T}- W^{\theta}_{t}
     }{ T - t }
     \Big)
    \right]
    \\&\qquad
    +
    \E\!\left[
    \left(\funcF( {\bf U}_{n-1,M,Q}^{\theta})\right)
    \!(\uniform^{\theta}_t,x+W_{\uniform^{\theta}_t}^{\theta}-W_t^{\theta})
     \Big(
     1 ,
     \tfrac{ 
     W^{\theta}_{\uniform^{\theta}_t}- W^{\theta}_{t}
     }{ {\uniform^{\theta}_t - t} }
     \Big)
    \right].
  \end{split}     \end{equation}
\end{enumerate}
\end{lemma}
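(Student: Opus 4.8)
The plan is to establish the three items separately, with Item~\eqref{item:approximations.integrable.ii} carrying essentially all the work and being proved by a nested induction on $n$. Throughout I will freely use the structural facts from Lemma~\ref{properties_approx} (measurability, independence, and identical distribution of the building blocks ${\bf U}^\theta_{n,M}$) and the observation that, since $s\mapsto f(s,0,0)$ is continuous on the \emph{compact} interval $[0,T]$, the Lipschitz estimate \eqref{eq:fLipschitz} yields a linear-in-space growth bound $|(\funcF(0))(s,y)|\le c(1+\|y\|_1)$ uniform in $s\in[0,T]$; in particular every Gaussian moment of $(\funcF(0))(s,x+W^0_s-W^0_t)$ is finite and bounded uniformly in $s\in[t,T)$, so that the only operative part of hypothesis \eqref{eq:int_cond_int} is $\int_0^1 s^{-q/2}[\rho(s)]^{-(q-1)}\,ds<\infty$.

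For Item~\eqref{item:approximations.integrable.i} I would note that by \eqref{eq:fLipschitz} the quantity $|g(y+w)-g(y)|$ is bounded by $\sum_{\nu=1}^dK_\nu|w_\nu|$, which does not depend on $y$; multiplying by $|\langle\uvec_\nu,(1,\tfrac{w}{u-t})\rangle|$ (which equals $1$ if $\nu=1$ and $\tfrac{|w_{\nu-1}|}{u-t}$ if $\nu\ge2$), taking $q$-th powers with $w=W^\theta_u-W^\theta_t$, and applying the Cauchy--Schwarz inequality together with the Gaussian scaling $\E[|W^\theta_u-W^\theta_t|^{2q}]\le c_{d,q}(u-t)^{q}$ produces a bound of the form $c_{d,q,K}\,(u-t)^{-q}(u-t)^{q/2}(u-t)^{q/2}=c_{d,q,K}$, uniform in $u,t,y$.

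For Item~\eqref{item:approximations.integrable.ii} I would induct on $n\in\N_0$, treating within each step first the first summand of \eqref{eq:approx.int.ii} and then the second. In the base case $n=0$ the first summand vanishes because ${\bf U}^\theta_{0,M}=0$; for the second summand I condition on $\unif^\theta$ (using independence of $W^\theta$ and $\unif^\theta$), observe that on $\{\unif^\theta=r\}$ the increment $W^\theta_{\uniform^\theta_s}-W^\theta_s$ is centered Gaussian with covariance $(T-s)r\,\mathrm{Id}$ while $\rhoh(s,\uniform^\theta_s)=\tfrac{\rho(r)}{T-s}$, bound the nonlinearity value by the linear-growth bound above, decouple it from the Gaussian normalizing factor by the Cauchy--Schwarz inequality, and collect the resulting integral $(T-s)^{q/2}\int_0^1 r^{-q/2}[\rho(r)]^{-(q-1)}\,dr$, finite by \eqref{eq:int_cond_int}. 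For the step $n\rightsquigarrow n+1$: plugging the argument $x+W^\theta_s-W^\theta_t$ into \eqref{eq:def:U} and using $L^q$-subadditivity, the first summand at level $n+1$ is dominated by (a) Gaussian moments of $g(x+W^\theta_s-W^\theta_t)$ and of the $g$-increments — finite by \eqref{eq:fLipschitz} and the computation of Item~\eqref{item:approximations.integrable.i} — plus (b) the $\funcF({\bf U}^{(\theta,l,i)}_{l,M})$- and $\funcF({\bf U}^{(\theta,-l,i)}_{l-1,M})$-contributions with $l\le n$, which by Lemma~\ref{properties_approx}\eqref{properties_approx:item5}--\eqref{properties_approx:item6} are instances of the second summand of \eqref{eq:approx.int.ii} at levels $\le n$, hence finite by the induction hypothesis; the second summand at level $n+1$ is then handled by splitting $|(\funcF({\bf U}^\theta_{n+1,M}))(\cdot)|\le|(\funcF(0))(\cdot)|+\sum_\nu\LipConst_\nu|({\bf U}^\theta_{n+1,M}(\cdot))_\nu|$ via \eqref{eq:fLipschitz}, bounding the $\funcF(0)$-part as in the base case, and bounding the remaining part by conditioning on $\unif^\theta$ and using that ${\bf U}^\theta_{n+1,M}$ is independent of $(W^\theta,\unif^\theta)$ (Lemma~\ref{properties_approx}\eqref{properties_approx:item3}, \eqref{properties_approx:item4}) together with the bound on the first summand at level $n+1$ just obtained. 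This nested induction, and in particular the simultaneous control of the random evaluation time $\uniform^\theta_s$ and of the singular weight $1/\rhoh$ against the borderline hypothesis \eqref{eq:int_cond_int}, is the step I expect to be the main obstacle.

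For Item~\eqref{item:approximations.integrable.iii} I would first use Item~\eqref{item:approximations.integrable.ii} with $q=1$, $s=t$ to see that $\E[{\bf U}^\theta_{n,M}(t,x)]$ is well-defined and that each of the finitely many summands on the right-hand side of \eqref{eq:def:U} is integrable, so expectation may be taken term by term. Since $\E[W^\theta_T-W^\theta_t]=0$, the term $(g(x),0)$ and the $M^n$ identically distributed $g$-difference terms combine to $\E[g(x+W^\theta_T-W^\theta_t)(1,\tfrac{W^\theta_T-W^\theta_t}{T-t})]$. For the double sum, Lemma~\ref{properties_approx}\eqref{properties_approx:item5} shows that for fixed $l$ the $M^{n-l}$ summands are identically distributed, so their $M^{-(n-l)}$-weighted sum equals the common mean, and Lemma~\ref{properties_approx}\eqref{properties_approx:item6} identifies the mean of the $\funcF({\bf U}^{(\theta,-l,i)}_{l-1,M})$-term with that of the $\funcF({\bf U}^{(\theta,l,i)}_{l-1,M})$-term; writing $G_k$ for the common expectation of the level-$k$ summand of \eqref{eq:def:U} the double sum therefore contributes $\sum_{l=0}^{n-1}(G_l-\1_{\N}(l)G_{l-1})=G_{n-1}$, which is exactly the second term on the right-hand side of \eqref{eq:discreteFeynmanKac}. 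Combining the two pieces yields \eqref{eq:discreteFeynmanKac}.
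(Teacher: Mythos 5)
Items~(i) and~(iii) of your plan follow the paper's argument and are fine, but item~(ii) rests on a preliminary claim that is false in Setting~\ref{s:full.discretization} and on a decoupling step whose exponents do not close. The Lipschitz hypothesis \eqref{eq:fLipschitz} bounds $|f(t,x,u)-f(t,x,\mathfrak u)|$ with the \emph{same} $x$ in both arguments, i.e.\ it is a Lipschitz condition in the last $(1+d)$-dimensional variable only; nothing beyond continuity is assumed about $x\mapsto f(t,x,0)$. Hence your asserted growth bound $|(\funcF(0))(s,y)|\le c(1+\|y\|_1)$ does not follow, and the moment part of hypothesis \eqref{eq:int_cond_int} is not redundant — it is a genuine assumption that must be carried through (this is also why Proposition~\ref{thm:rate} keeps $\sup_{s}\|(\funcF(0))(s,x+W^0_s-W^0_t)\|_{L^{2p/(p-2)}}$ in its final bound, and why the $x$-Lipschitz constants $\mathfrak L_j$ only enter later, in Section~\ref{sec:rate}, where they are separately assumed).

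Once that claim is removed, your Cauchy--Schwarz decoupling of the nonlinearity factor from the Gaussian normalizing factor breaks for $q\ge p/2$: after conditioning on $\unif^\theta$ you would need the $2q$-moment of $(\funcF(0))(s,x+W^0_s-W^0_t)$ in the base case, and of $\funcF({\bf U}^\theta_{n,M})$ (equivalently of the components of ${\bf U}^\theta_{n+1,M}$) along the Brownian path in the induction step, while \eqref{eq:int_cond_int} and the induction hypothesis only control moments of order strictly below $p$; since \eqref{eq:approx.int.ii} must be proved for all $q\in[1,p)$, the range $2q\ge p$ cannot be avoided. The paper's proof resolves exactly this point in \eqref{eq:prepa} by applying H\"older with the asymmetric conjugate exponents $\tfrac{p+q}{2q}$ and $\tfrac{p+q}{p-q}$, so that the $F$-term is only needed at order $\tfrac{p+q}{2}<p$ and the Gaussian ratio (which has all moments) absorbs the large exponent, the singular weight collecting into $\int_0^1 u^{-q/2}[\rho(u)]^{-(q-1)}\,du$ as in \eqref{eq:int_cond_int}. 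Note also that in your induction step the independence of ${\bf U}^\theta_{n+1,M}$ from $(W^\theta,\unif^\theta)$ (Lemma~\ref{properties_approx}) does not by itself separate the evaluation ${\bf U}^\theta_{n+1,M}(\uniform^\theta_s,x+W^\theta_{\uniform^\theta_s}-W^\theta_t)$ from the ratio factor, because both depend on the same Brownian increment through the evaluation point; the same asymmetric H\"older step is required there as well, reducing to the first summand at order $\tfrac{p+q}{2}$ rather than $2q$. With these two corrections your outline coincides with the paper's proof.
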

\begin{proof}[Proof of Lemma~\ref{lem:b38}]
The Cauchy-Schwarz inequality,
the Lipschitz property~\eqref{eq:fLipschitz} of $g$,
Jensen's inequality,
and
the scaling property of Brownian motion
yield
for all
$\theta\in\Theta$, $q\in [1,\infty)$,
$\nu \in \{1,2,\ldots, d+1\}$ that
  \begin{equation}  \begin{split}
    &\sup_{u\in(0,T]}\sup_{t\in[0,u)}\sup_{y\in\R^d}
    \left(\E\!\left[\left|\left(g(y+W^{\theta}_u-W^{\theta}_t)-g(y)\right)
    \big\langle \uvec_\nu,
  \big(
  1 ,
  \tfrac{ 
  W^{\theta}_{u}- W^{\theta}_{t}
  }{ u - t }
  \big) \big \rangle
  \right|^q\right]\right)^2
  \\&
  \leq
    \sup_{u\in(0,T]}\sup_{t\in[0,u)}\sup_{y\in\R^d}
    \left(
    \E\!\left[\left|g(y+W^{\theta}_u-W^{\theta}_t)-g(y)\right|^{2q}
    \right]
    \E\!\left[\left|
    \big\langle \uvec_\nu,
  \big(
  1 ,
  \tfrac{ 
  W^{\theta}_{u}- W^{\theta}_{t}
  }{ u - t }
  \big)\rangle  \right|^{2q}\right]
  \right)
  \\&
  \leq
    \sup_{u\in(0,T]}\sup_{t\in[0,u)}
    \left(
    \E\!\left[\bigg|\sum_{i=1}^d K_i|W_u^{\theta,i}-W_t^{\theta,i}|\bigg|^{2q}
    \right]
    \E\!\left[
    1+
  \left|
  \tfrac{ 
  W^{\theta,1}_{u}- W^{\theta,1}_{t}
  }{ u - t }
  \right|^{2q}\right]
  \right)
  \\&
  \leq
    \sup_{u\in(0,T]}\sup_{t\in[0,u)}
    \left(
   d^{2q-1}\left(\smallsum_{i=1}^{ d}(K_i)^{2q}\right)
   \tfrac{(u-t)^q}{ T^{q}} \E[|W_T^{0,1}|^{2q}]
   \Big(
    1+
    \tfrac{(u-t)^{q}}{(u-t)^{2q}T^q} \E[|W_T^{0,1}|^{2q}]
    \Big)
  \right)
  \\&
  =
   d^{2q-1}\left(\smallsum_{i=1}^{ d}(K_i)^{2q}\right)
   \E[|W_T^{0,1}|^{2q}]
   \Big(
    1+
    \tfrac{1}{T^{2q}} \E[|W_T^{0,1}|^{2q}]
    \Big)
  <\infty.
  \end{split}     \end{equation}
This proves Item~\eqref{item:approximations.integrable.i}.
Next observe that
the fact that $W^{\theta}$, $\uniform^{\theta}$, and $U^{\theta}$ are
independent and continuous random fields
(see Lemma~\ref{properties_approx}),
Hutzenthaler et al.~\cite[Lemma 2.3]{HJKNW2018}),
H\"older's inequality 
(applied with the conjugate numbers
$\frac{p+q}{2q},\frac{p+q}{p-q}\in (1,\infty)$),
and
the scaling property of Brownian motion
demonstrate that
for all $t\in[0,T)$, $s\in[t,T)$, $n \in \N_0$, $\theta\in\Theta$, $q\in [1,p)$,
$\nu\in \{1,2,\ldots,d+1\}$ it holds that 
\begin{equation}
\begin{split}\label{eq:prepa}
  &
\E\!\left[
  \tfrac{
  \left|
  \big(\funcF({\bf U}_{n,M}^{\theta})\big)
  (\uniform^{\theta}_s,x+W_{\uniform^{\theta}_s}^{\theta}-W_t^{\theta})
  \right|^q
  }
  {\left[\rhoh(s,\uniform^{\theta}_s)\right]^q}
  \left|
  \big\langle \uvec_\nu,
  \big(
  1 ,
  \tfrac{ 
  W_{\uniform^{\theta }_s}^{\theta}- W^{\theta}_{s}
  }{ \uniform^{\theta}_s-s}
  \big)\big\rangle
  \right|^q
  \right]
  \\
  &=
\int_0^1
\E\!\left[
  \tfrac{
  \left|
  \big(\funcF({\bf U}_{n,M}^{\theta})\big)
  (s+u(T-s),x+W_{s+u(T-s)}^{\theta}-W_t^{\theta})
  \right|^q
  }
  {\left[\rhoh(s,s+u(T-s))\right]^q}
  \left|
   \big\langle \uvec_\nu,
  \big(
  1 ,
  \tfrac{ 
  W_{s+u(T-s)}^{\theta}- W^{\theta}_{s}
  }{ s+u(T-s)-s}
  \big)\big\rangle
  \right|^q
  \right]
  \rho(u)
  \,du
  \\
  &
  \leq
  \int_0^1
  \tfrac{\sup_{z\in[t,T)}
  \left(
  \E\!\left[
  \left|
  \big(\funcF({\bf U}_{n,M}^{\theta})\big)
  (z,x+W_{z}^\theta-W_t^\theta)
  \right|^{\frac{p+q}{2}}
  \right]
  \right)^{\frac{2q}{p+q}}
  (T-s)^q}{[\rho(u)]^{q-1}}
  \left(
  1+
  \left(
  \E\!\left[
  \left|
  \tfrac{W^{\theta,1}_{s+u(T-s)}-W^{\theta,1}_s}{u(T-s)}
  \right|^{\frac{q(p+q)}{p-q}}
  \right]
  \right)^{\frac{p-q}{p+q}}
  \right)
  \,du
  \\
  &
  =
  \left(
  \sup_{z\in [t,T)}
  \E\!\left[
  \left|
  \big(\funcF({\bf U}_{n,M}^{\theta})\big)
  (z,x+W_{z}^\theta-W_t^\theta)
  \right|^{\frac{p+q}{2}}
  \right]
  \right)^{\frac{2q}{p+q}}
  \int_0^1
  \tfrac{(T-s)^{q}}{
\left[\rho(u)\right]^{q-1}  
}
  \left(
  1+
  \tfrac{(u(T-s))^{\frac{q}{2}}}
  {(u(T-s))^q T^{\frac{q}{2}}}
  \left(
  \E\!\left[
  |
  W^{0,1}_T
  |^{\frac{q(p+q)}{p-q}}
  \right]
  \right)^{\frac{p-q}{p+q}}
  \right)
  \,du
  \\
  &
  \leq
  \left(
  \sup_{z\in [t,T)}
  \E\!\left[
  \left|
  \big(\funcF({\bf U}_{n,M}^{\theta})\big)
  (z,x+W_{z}^\theta-W_t^\theta)
  \right|^{\frac{p+q}{2}}
  \right]
  \right)^{\frac{2q}{p+q}}
  \left(
  T^{q}+
  \left(
  \E\!\left[
  |
  W^{0,1}_T
  |^{\frac{q(p+q)}{p-q}}
  \right]
  \right)^{\frac{p-q}{p+q}}
  \right)
  \int_0^1
  \tfrac{u^{-\frac{q}{2}}}{
\left[\rho(u)\right]^{q-1}  
}
  \,du.
\end{split}     
\end{equation}
Now we prove~\eqref{eq:approx.int.ii}
by induction on $n\in\N_0$.
In the base case $n=0$, the fact that for all $\theta\in\Theta$
it holds that $U^\theta_{0,M} = 0$,
\eqref{eq:prepa},
and~\eqref{eq:int_cond_int}
ensure that 
for all $\theta\in\Theta$, $q\in [1,p)$, $t\in[0,T)$, $\nu \in \{1,2,\ldots, d+1\}$ it holds that
\begin{equation}
\begin{split} \label{eq:b38:eqa}
&\sup_{s\in[t,T)}
\left\{
\E\!\left[ \left| 
  ({\bf U}_{0,M}^{\theta}(s,x+W_{s}^\theta-W_t^{\theta})_\nu
\right|^q \right] 
+
\E\!\left[
\tfrac{
\left|
\big(\funcF({\bf U}_{0,M}^{\theta})\big)
(\uniform^{\theta}_s,x+W_{\uniform^{\theta}_s}^{\theta}-W_t^{\theta})
\right|^q
}
{\left[\rhoh(s,\uniform^{\theta}_s)\right]^q}
\left|
\big \langle \uvec_\nu,
\big(
1 ,
\tfrac{ 
W_{\uniform^{\theta }_s}^{\theta}- W^{\theta}_{s}
}{ \uniform^{\theta}_s-s}
\big)\big\rangle
\right|^q
\right]
\right\}\\
&=
\sup_{s\in[t,T)}
\left\{
\E\!\left[
\tfrac{
\left|
\big(\funcF(0)\big)
(\uniform^{\theta}_s,x+W_{\uniform^{\theta}_s}^{\theta}-W_t^{\theta})
\right|^q
}
{\left[\rhoh(s,\uniform^{\theta}_s)\right]^q}
\left|
\big \langle \uvec_\nu,
\big(
1 ,
\tfrac{ 
W_{\uniform^{\theta }_s}^{\theta}- W^{\theta}_{s}
}{ \uniform^{\theta}_s-s}
\big)\big\rangle 
\right|^q
\right]
\right\}
\\
&
\leq
\left(
\sup_{z\in [t,T)}
\E\!\left[
\left|
\big(\funcF(0)\big)
(z,x+W_{z}^0-W_t^0)
\right|^{\frac{p+q}{2}}
\right]
\right)^{\frac{2q}{p+q}}
\left(
T^q+
\left(
\E\!\left[
|
W^{0,1}_T
|^{\frac{q(p+q)}{p-q}}
\right]
\right)^{\frac{p-q}{p+q}}
\right)
\int_0^1
\tfrac{u^{-\frac{q}{2}}}{
\left[\rho(u)\right]^{q-1}  
}
\,du<\infty.
\end{split}     
\end{equation}
This establishes \eqref{eq:approx.int.ii} in the base case $n=0$. 
For the induction step $\N_0 \ni n-1 \rightsquigarrow n \in \N$ let $n\in \N$ and assume that 
for all $k \in \N_0 \cap [0, n)$, $\theta\in\Theta$, $q\in [1,p)$, $t\in [0,T)$, $\nu \in \{1,2,\ldots, d+1\}$ it holds that
\begin{equation}
\begin{split} \label{eq:b38:eq2}
\sup_{s\in[t,T)}
\left\{
\E\!\left[ \left| 
  ({\bf U}_{k,M}^{\theta}(s,x+W_{s}^\theta-W_{t}^\theta))_\nu
\right|^q \right] 
+
\E\!\left[
\tfrac{
\left|
\big(\funcF({\bf U}_{k,M}^{\theta})\big)
(\uniform^{\theta}_s,x+W_{\uniform^{\theta}_s}^{\theta}-W_t^{\theta})
\right|^q
}
{\left[\rhoh(s,\uniform^{\theta}_s)\right]^q}
\left|
\big \langle \uvec_\nu,
\big(
1 ,
\tfrac{ 
W_{\uniform^{\theta }_s}^{\theta}- W^{\theta}_{s}
}{ \uniform^{\theta}_s-s}
\big)\big \rangle
\right|^q
\right]
\right\}
<\infty.
\end{split}     
\end{equation}
Observe that 
\eqref{eq:def:U}
and 
Jensen's inequality
ensure that 
for all $\theta\in\Theta$, $q\in [1,p)$, $t\in [0,T)$, $s \in [t, T)$, $\nu \in \{1,2,\ldots, d+1\}$
it holds that 
\begin{equation}  \begin{split}\label{b38:eq33}
&\E\!\left[ \left| 
  ({\bf U}_{n,M}^{\theta}(s,x+W_{s}^\theta-W_{t}^\theta))_\nu
\right|^q \right] 
\leq (3+2n)^{q-1} |g(x)|^q
+ (3+2n)^{q-1} 
\E\!\left[ 
|
  g(x+W_{s}^\theta-W_{t}^\theta)
  -g(x)
|^q  
 \right] \\
 &
   +
\tfrac{(3+2n)^{q-1} }{M^n}\sum_{i=1}^{M^n}
 \E\!\left[ 
\left|g(x+W_{s}^\theta-W_{t}^\theta+W^{(\theta,0,-i)}_T-W^{(\theta,0,-i)}_s)-g(x+W_{s}^\theta-W_{t}^\theta))\right|^q
\left|
\big \langle \uvec_\nu,
\big(
1 ,
\tfrac{ 
W^{(\theta, 0, -i)}_{T}- W^{(\theta, 0, -i)}_{s}
}{ T - s }
\big)\big\rangle  
\right|^q
\right]
\\
& +  \sum_{l=0}^{n-1}\tfrac{(3+2n)^{q-1}}{M^{n-l}}\sum_{i=1}^{M^{n-l}}
\E\!\vast[
\tfrac{
\left|
\big(\funcF({\bf U}_{l,M}^{(\theta,l,i)})\big)
(\uniform^{(\theta, l,i)}_s,x+W_s^{\theta}-W_t^{\theta}+W_{\uniform^{(\theta, l,i)}_s}^{(\theta,l,i)}-W_s^{(\theta,l,i)})
\right|^q
}
{\left[\rhoh(s,\uniform^{(\theta, l,i)}_s)\right]^q}
\left|
\big \langle \uvec_\nu,
\big(
1 ,
\tfrac{ 
W_{\uniform^{(\theta, l,i)}_s}^{(\theta,l,i)}- W^{(\theta, l, i)}_{s}
}{ \uniform^{(\theta, l,i)}_s-s}
\big)\big\rangle
\right|^q
\vast]
\\
&+  \sum_{l=1}^{n-1}\tfrac{(3+2n)^{q-1}}{M^{n-l}}\sum_{i=1}^{M^{n-l}}
\E\!\vast[
\tfrac{
\left|
\big(\funcF( {\bf U}_{l-1,M}^{(\theta,-l,i)})\big)
(\uniform^{(\theta, l,i)}_s,x+W_s^{\theta}-W_t^{\theta}+W_{\uniform^{(\theta, l,i)}_s}^{(\theta,l,i)}-W_s^{(\theta,l,i)})
\right|^q
}
{\left[\rhoh(s,\uniform^{(\theta, l,i)}_s)\right]^q}
\left|
\big \langle \uvec_\nu,
\big(
1 ,
\tfrac{ 
W_{\uniform^{(\theta, l,i)}_s}^{(\theta,l,i)}- W^{(\theta, l, i)}_{s}
}{ \uniform^{(\theta, l,i)}_s-s}
\big)\big \rangle
\right|^q
\vast].
\end{split}     \end{equation}
This,
Hutzenthaler et al.~\cite[Corollary 2.5]{HJKNW2018}
together with Lemma~\ref{properties_approx},
Item~\eqref{item:approximations.integrable.i},
and
the induction hypothesis~\eqref{eq:b38:eq2}
yield 
for all $\theta\in\Theta$, $q\in [1,p)$, $t\in [0,T)$, $\nu \in \{1,2,\ldots, d+1\}$
that 
\begin{equation}  \begin{split}\label{b38:eq3}
&\sup_{s\in[t,T)}\E\!\left[ \left| 
  ({\bf U}_{n,M}^{\theta}(s,x+W_{s}^\theta-W_{t}^\theta))_\nu
\right|^q \right] 
\leq (3+2n)^{q-1} |g(x)|^q
\\&\quad+
(6+2n)^{q} 
\sup_{u\in(0,T]}
\sup_{s\in[0,u)}
\sup_{y\in\R^d}
\sup_{j\in\{1,2,\ldots,d+1\}}
 \E\!\left[ 
\left|g(y+W^{0}_u-W^{0}_s)-g(y))\right|^q
\left|
\big \langle \uvec_j,
\big(
1 ,
\tfrac{ 
W^{0}_{u}- W^{0}_{s}
}{ u - s }
\big)\big\rangle
\right|^q
\right]
\\
&\quad +  \sum_{l=0}^{n-1}(6+2n)^{q}
\sup_{s\in[t,T)}\E\!\left[
\tfrac{
\left|
\big(\funcF({\bf U}_{l,M}^{0})\big)
(\uniform^{0}_s,x+W_{\uniform^{0}_s}^{0}-W_t^{0})
\right|^q
}
{\left[\rhoh(s,\uniform^{0}_s)\right]^q}
\left|
\big \langle \uvec_\nu,
\big(
1 ,
\tfrac{ 
W_{\uniform^{0}_s}^{0}- W^{0}_{s}
}{ \uniform^{0}_s-s}
\big)\big \rangle
\right|^q
\right]<\infty.
\end{split}     \end{equation}
Furthermore, Jensen's inequality,
the Lipschitz property~\eqref{eq:fLipschitz} of $f$,
\eqref{b38:eq3},
and
assumption~\eqref{eq:int_cond_int}
yield
that for all $\theta\in\Theta$, $q\in [1,p)$, $t\in [0,T)$
it holds that
\begin{equation}
\begin{split}
 & \sup_{s\in [t,T)} 
 \left\{
 \E\!\left[
  \left|
  \big(\funcF({\bf U}_{n,M}^{\theta})\big)
  (s,x+W_{s}^\theta-W_t^{\theta})
  \right|^{q}
  \right]
  \right\}
  \\
  &
  \le 
  2^{q-1}
   \sup_{s\in [t,T)} 
  \left(
  \E\!\left[
  \left|
  \big(\funcF({\bf U}_{n,M}^{\theta})
  -\funcF(0)\big)
  (s,x+W_{s}^\theta-W_t^{\theta})
  \right|^{q}
  \right]
  +
  \E\!\left[
  \left|
  \big(
  \funcF(0)\big)
  (s,x+W_{s}^\theta-W_t^{\theta})
  \right|^{q}
  \right]
  \right)
   \\
  &
  \le 
  2^{q-1}
   \sup_{s\in [t,T)}
  \left(
  \E\!\left[
  \left|
  \sum_{\nu=1}^{d+1}
  L_\nu
  {\bf U}_{n,M}^{\theta,\nu}  (s,x+W_{s}^\theta-W_t^{\theta})
  \right|^{q}
  \right]
  +
  \E\!\left[
  \left|
  \big(
  \funcF(0)\big)
  (s,x+W_{s}^\theta-W_t^{\theta})
  \right|^{q}
  \right]
  \right)
  \\
  &
\leq 
  (4d)^q
  \sum_{\nu=1}^{d+1}(L_\nu)^q
   \sup_{s\in [t,T)}
  \E\!\left[
  \left|
 {\bf U}_{n,M}^{\theta,\nu}  (s,x+W_{s}^\theta-W_t^{\theta})
  \right|^{q}
  \right]
  +
  2^q
   \sup_{s\in [t,T)}
  \E\!\left[
  \left|
  \big(
  \funcF(0)\big)
  (s,x+W_{s}^\theta-W_t^{\theta})
  \right|^{q}
  \right]
  \\&
  <\infty.
  \end{split}
\end{equation}
This,~\eqref{eq:prepa},
and assumption~\eqref{eq:int_cond_int}
implies that
for all $\theta\in\Theta$, $q\in [1,p)$, $t\in [0,T)$, $\nu \in \{1,2,\ldots, d+1\}$ it holds that
\begin{equation}\label{eq:RHS_intq}
\begin{split}
&\sup_{s\in[t,T)}
\E\!\left[
  \tfrac{
  \left|
  \big(\funcF({\bf U}_{n,M}^{\theta})\big)
  (\uniform^{\theta}_s,x+W_{\uniform^{\theta}_s}^{\theta}-W_t^{\theta})
  \right|^q
  }
  {\left[\rhoh(s,\uniform^{\theta}_s)\right]^q}
  \bigg|
 \big \langle \uvec_\nu,
  \big(
  1 ,
  \tfrac{ 
  W_{\uniform^{\theta }_s}^{\theta}- W^{\theta}_{s}
  }{ \uniform^{\theta}_s-s}
  \big)\big\rangle 
  \bigg|^q
  \right]
\\ &\leq
  \left(
  \sup_{s\in [t,T)}
  \E\!\left[
  \left|
  \big(\funcF({\bf U}_{n,M}^{\theta})\big)
  (s,x+W_{s}^\theta-W_t^\theta)
  \right|^{\frac{p+q}{2}}
  \right]
  \right)^{\frac{2q}{p+q}}
  \left(
  T^{q}+
  \left(
  \E\!\left[
  \left|
  W^{0,1}_T
  \right|^{\frac{q(p+q)}{p-q}}
  \right]
  \right)^{\frac{p-q}{p+q}}
  \right)
  \int_0^1
  \tfrac{u^{-\frac{q}{2}}}{
\left[\rho(u)\right]^{q-1}  
}
  \,du
  \\&<\infty.
\end{split}     
\end{equation}
This and \eqref{b38:eq3} finish the induction step.
Induction hence proves Item~\eqref{item:approximations.integrable.ii}. 

   Finally, we prove Item~\eqref{item:approximations.integrable.iii}.
   Note that \eqref{eq:def:U}, Item \eqref{item:approximations.integrable.ii},
   Corollary~2.5 in~\cite{HJKNW2018} together with
   the fact that 
   for all $n\in\N_0$ it holds that
  ${\bf U}_{n,M}^{\theta}$, $\theta\in\Theta$, are identically
distributed (see Item~\eqref{properties_approx:item1} in Lemma~\ref{properties_approx})
  and together with the fact that $(\uniform^{\theta},W^{\theta})$, $\theta\in\Theta$,
  are identically distributed,
a telescoping sum,
 Hutzenthaler et al.~\cite[Lemma 2.3]{HJKNW2018}),
and
the fact that for every $t\in[0,T)$, $\theta\in\Theta$
the distribution of $R^{\theta}_t$ has density $\rhoh(t,\cdot)$ with
respect to Lebesgue measure on $[t,T]$
yield that
  for all $n\in\N$, $\theta\in\Theta$, $t\in[0,T)$
  it holds that
  \begin{equation}  \begin{split}
  &\E\left[{\bf U}_{n,M}^{\theta}(t,x)\right]- \E\!\left[
    g(x+W_T^0-W_t^0)
     \Big(
     1 ,
     \tfrac{ 
     W^{0}_{T}- W^{0}_{t}
     }{ T - t }
     \Big)
    \right]
  \\
  &=\sum_{l=0}^{n-1}
  \sum_{i=1}^{M^{n-l}}\tfrac{1}{M^{n-l}}\E\left[
  \tfrac{
  \big(\funcF({\bf U}_{l,M}^{(\theta,l,i)})\big)
  (\uniform_t^{(\theta,l,i)},x+W_{\uniform_t^{(\theta,l,i)}}^{(\theta,l,i)}-W_t^{(\theta,l,i)})}
  {
  \rhoh(t,\uniform_t^{(\theta,l,i)})
  }
  \Big(
  1 ,
  \tfrac{ 
  W^{(\theta, l, i)}_{\uniform_t^{(\theta,l,i)}}- W^{(\theta, l, i)}_{t}
  }{ \uniform_t^{(\theta,l,i)} - t }
  \Big)\right]\\
  &\qquad
  -\sum_{l=1}^{n-1}
  \sum_{i=1}^{M^{n-l}}\tfrac{1}{M^{n-l}}\E\left[
  \tfrac{
  \big(\funcF( {\bf U}_{l-1,M}^{(\theta,-l,i)})\big)
  (\uniform_t^{(\theta,l,i)},x+W_{\uniform_t^{(\theta,l,i)}}^{(\theta,l,i)}-W_t^{(\theta,l,i)})}
  {
  \rhoh(t,\uniform_t^{(\theta,l,i)})
  }
  \Big(
  1 ,
  \tfrac{ 
  W^{(\theta, l, i)}_{\uniform_t^{(\theta,l,i)}}- W^{(\theta, l, i)}_{t}
  }{ \uniform_t^{(\theta,l,i)} - t }
  \Big)\right]\\
  &=\sum_{l=0}^{n-1}
  \left(
  \E\left[
  \tfrac{
  \big(\funcF({\bf U}_{l,M}^{\theta})\big)
  (\uniform_t^{\theta},x+W_{\uniform_t^{\theta}}^{\theta}-W_t^{\theta})}
  {
  \rhoh(t,\uniform_t^{\theta})
  }
  \Big(
  1 ,
  \tfrac{ 
  W^{\theta}_{\uniform_t^{\theta}}- W^{\theta}_{t}
  }{ \uniform_t^{\theta} - t }
  \Big)\right]
  -
  \1_{\N}(l)\E\left[
  \tfrac{
  \big(\funcF( {\bf U}_{l-1,M}^{\theta})\big)
  (\uniform_t^{\theta},x+W_{\uniform_t^{\theta}}^{\theta}-W_t^{\theta})}
  {
  \rhoh(t,\uniform_t^{\theta})
  }
  \Big(
  1 ,
  \tfrac{ 
  W^{\theta}_{\uniform_t^{\theta}}- W^{\theta}_{t}
  }{ \uniform_t^{\theta} - t }
  \Big)\right]
  \right)
  \\
  &
  =
  \E\left[
  \tfrac{
  \big(\funcF({\bf U}_{n-1,M}^{\theta})\big)
  (\uniform_t^{\theta},x+W_{\uniform_t^{\theta}}^{\theta}-W_t^{\theta})}
  {
  \rhoh(t,\uniform_t^{\theta})
  }
  \Big(
  1 ,
  \tfrac{ 
  W^{\theta}_{\uniform_t^{\theta}}- W^{\theta}_{t}
  }{ \uniform_t^{\theta} - t }
  \Big)\right]\\
  &
  =
  \int_t^T\E\left[
  \tfrac{
  \big(\funcF({\bf U}_{n-1,M}^{\theta})\big)
  (s,x+W_{s}^{\theta}-W_t^{\theta})}
  {
  \rhoh(t,s)
  }
  \Big(
  1 ,
  \tfrac{ 
  W^{\theta}_{s}- W^{\theta}_{t}
  }{ s - t }
  \Big)\right]\P(\uniform_t^{\theta}\in ds)
  \\
& = \int_t^T\E\!\left[
    \left(\funcF( {\bf U}_{n-1,M,Q}^{\theta})\right)\!(s,x+W_s^\theta-W_t^\theta)
     \Big(
     1 ,
     \tfrac{ 
     W^{\theta}_{s}- W^{\theta}_{t}
     }{ {s - t} }
     \Big)
    \right]\,ds.
\end{split}     \end{equation}
  This establishes Item~\eqref{item:approximations.integrable.iii}.
  The proof of Lemma \ref{lem:b38} is thus completed.
\end{proof}

\subsection{Error analysis for MLP approximations}\label{subsec:error_analysis}

\begin{lemma}[Recursive bound for global error]\label{l:estimate.L2error}
  Assume Setting~\ref{s:full.discretization},
  let $p\in (1,\infty)$,
$M \in \N$,
and assume 
for all $q\in [1,p)$, $t\in[0,T)$, $x\in \R^d$ that
\begin{equation}\label{eq:approximations.integrable.assump2}
\int_0^1
  \frac{1}{
  s^{\frac{q}{2}}
\left[\rho(s)\right]^{q-1}  
}
  \,ds+
  \sup_{s\in [t,T)}
  \E\!\left[
  \left|
  \big(\funcF(0)\big)
  (s,x+W_{s}^0-W_t^0)
  \right|^{q}
  \right]<\infty.
\end{equation}
Then it holds
for all  $n\in\N$, $t\in[0,T)$, $x\in\R^d$, $\nu_0 \in \{1,2,\ldots, d+1\}$
 that
  \begin{equation}  \begin{split}\label{eq:estimate.L2error}
   &     \left\| {\bf U}_{{n},M}^{0,\nu_0}({t},x)-{\bf u}_{\nu_0}(t,x)\right\|_{L^2(\P;\R)}
     \\&
    \leq 
    \sum_{j=0}^{n-1}
    \sum_{\substack{\nu_1,\nu_2,\ldots,\nu_{j+1} \in \\ \{1,2,\ldots,d+1\}}}
    \tbinom{n-1}{j}
   \tfrac{\mathrm{1}_{\{1\}}(\nu_{j+1})2^j\left[\prod_{i=1}^j L_{\nu_i}\right]}{\sqrt{M^{n-j}}}   
    \Big\|\left(g(x+W^0_{T}-W^0_{t})-g(x+W^0_{S(j,t)}-W^0_{t})\right)\\
    &  \qquad \qquad \qquad \qquad \qquad
    \cdot  
    \big\langle \uvec_{\nu_j},
    \big(   
   1,
     \tfrac{ 
     W^{0}_{T}- W^{0}_{\ES(j,t)}
     }{T-\ES(j,t) } 
     \big)\big \rangle 
   \prod_{i=1}^{j}
   \tfrac{1}{\rhoh(\ES(i-1,t),\ES(i,t))}
   \big\langle \uvec_{\nu_{i-1}},
     \big(   
   1,
     \tfrac{ 
     W^{0}_{\ES(i,t)}- W^{0}_{\ES(i-1,t)}
     }{\ES(i,t)-\ES(i-1,t) } 
     \big)\big\rangle 
   \Big\|_{L^2(\P;\R)}\\
   &+
    \sum_{j=0}^{n-1}
    \sum_{\substack{\nu_1,\nu_2,\ldots,\nu_{j+1} \in \\ \{1,2,\ldots,d+1\}}}
    \tbinom{n-1}{j}
   \tfrac{\mathrm{1}_{\{1\}}(\nu_{j+1})2^j\left[\prod_{i=1}^j L_{\nu_i}\right]}{\sqrt{M^{n-j}}}   
    \Big\|(\funcF (0))(\ES(j+1,t),x+W^0_{\ES(j+1,t)}-W^0_t)\\
    &\qquad \qquad \qquad \qquad \qquad \qquad \qquad \qquad \qquad \qquad
    \cdot  
   \prod_{i=1}^{j+1}
   \tfrac{1}{\rhoh(\ES(i-1,t),\ES(i,t))}
   \big\langle \uvec_{\nu_{i-1}},
     \big(   
   1,
     \tfrac{ 
     W^{0}_{\ES(i,t)}- W^{0}_{\ES(i-1,t)}
     }{\ES(i,t)-\ES(i-1,t) } 
     \big)\big\rangle 
   \Big\|_{L^2(\P;\R)}
   \\
   &+
    \sum_{j=0}^{n-1}
    \sum_{\substack{\nu_1,\nu_2,\ldots,\nu_{j+1} \in \\ \{1,2,\ldots,d+1\}}}
    \tbinom{n-1}{j}
   \tfrac{2^j\left[\prod_{i=1}^{j+1} L_{\nu_i}\right]}{\sqrt{M^{n-j-1}}}   
    \Big\|{\bf u}_{\nu_{j+1}}(\ES(j+1,t),x+W^0_{\ES(j+1,t)}-W^0_t)\\
    &\qquad \qquad \qquad \qquad \qquad \qquad \qquad \qquad \qquad \qquad
    \cdot  
   \prod_{i=1}^{j+1}
   \tfrac{1}{\rhoh(\ES(i-1,t),\ES(i,t))}
   \big \langle \uvec_{\nu_{i-1}},
     \big(   
   1,
     \tfrac{ 
     W^{0}_{\ES(i,t)}- W^{0}_{\ES(i-1,t)}
     }{\ES(i,t)-\ES(i-1,t) } 
     \big)\big\rangle
   \Big\|_{L^2(\P;\R)}.
    \end{split}
    \end{equation}
\end{lemma}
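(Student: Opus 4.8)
The plan is to prove \eqref{eq:estimate.L2error} by induction on $n\in\N$, peeling off one layer of the recursion \eqref{eq:def:U} at each step and, at each layer, separating the Monte Carlo fluctuation at each level from the ``bias'' incurred by replacing $\funcF({\bf u})$ by $\funcF$ of a coarser approximation. Since the hypothesis \eqref{eq:approximations.integrable.assump2} coincides with \eqref{eq:int_cond_int}, Lemma~\ref{lem:b38} applies, so all the $L^2(\P;\R)$-norms below are finite and the discrete Feynman--Kac identity \eqref{eq:discreteFeynmanKac} holds. Rewriting \eqref{eq:feynmankacuinfty} via the substitution $s=\uniform^0_t$ (whose law has $\P$-density $\rhoh(t,\cdot)$ on $[t,T]$) and subtracting it from \eqref{eq:discreteFeynmanKac} shows that $\E[{\bf U}_{n,M}^0(t,x)]-{\bf u}(t,x)$ equals an expectation of $\tfrac{1}{\rhoh(t,\uniform^0_t)}(\funcF({\bf U}_{n-1,M}^0)-\funcF({\bf u}))(\uniform^0_t,x+W^0_{\uniform^0_t}-W^0_t)$ times the vector $(1,\tfrac{W^0_{\uniform^0_t}-W^0_t}{\uniform^0_t-t})$. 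Since $(F({\bf v})-F({\bf u}))(\cdot,\cdot)=f(\cdot,\cdot,{\bf v}(\cdot,\cdot))-f(\cdot,\cdot,{\bf u}(\cdot,\cdot))$, the Lipschitz bound \eqref{eq:fLipschitz} together with Jensen's inequality $\|\cdot\|_{L^1(\P;\R)}\le\|\cdot\|_{L^2(\P;\R)}$ bounds the $\nu_0$-component of this bias by $\sum_{\nu}L_\nu$ times the $L^2(\P;\R)$-norm of $({\bf U}_{n-1,M}^{0,\nu}-{\bf u}_\nu)(\ES(1,t),x+W^0_{\ES(1,t)}-W^0_t)$ dressed by the single weight $\tfrac{1}{\rhoh(\ES(0,t),\ES(1,t))}\langle\uvec_{\nu_0},(1,\tfrac{W^0_{\ES(1,t)}-W^0_{\ES(0,t)}}{\ES(1,t)-\ES(0,t)})\rangle$, where $\ES(0,t)=t$ and $\ES(1,t)=\uniform^0_t$.

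Next, writing out \eqref{eq:def:U} and subtracting \eqref{eq:feynmankacuinfty}, I would decompose ${\bf U}_{n,M}^{0}(t,x)-{\bf u}(t,x)$ into the centered $g$-Monte Carlo average $\tfrac{1}{M^n}\sum_{i=1}^{M^n}[(g(x+W^{(0,0,-i)}_T-W^{(0,0,-i)}_t)-g(x))(1,\cdot)-\E[\cdots]]$, the centered level-$l$ Monte Carlo averages of $\tfrac{1}{\rhoh}(\funcF({\bf U}_{l,M}^{(0,l,i)})-\1_\N(l)\funcF({\bf U}_{l-1,M}^{(0,-l,i)}))(\uniform^{(0,l,i)}_t,\cdot)(1,\cdot)$ for $l\in\{0,1,\ldots,n-1\}$, and the bias from the first paragraph. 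By the triangle inequality in $L^2(\P;\R)$, the $L^2(\P;\R)$-norm of the $\nu_0$-component of the total error is at most the sum of the $L^2(\P;\R)$-norms of the $\nu_0$-components of these pieces. For the $g$-average and for each level-$l$ average, the i.i.d.\ copies are mutually independent (Lemma~\ref{properties_approx}) and centered, so the $L^2(\P;\R)$-norm of the average equals $M^{-(n-l)/2}$ (respectively $M^{-n/2}$) times that of one centered, hence also of one uncentered, copy. For $l=0$ the copy is $\tfrac{1}{\rhoh}\funcF(0)(\uniform^{0}_t,\cdot)(1,\cdot)$, a leaf producing the depth-$0$ $\funcF(0)$-term of \eqref{eq:estimate.L2error}; the $g$-average produces the depth-$0$ $g$-difference term; and for $l\ge1$ one copy involves $\funcF({\bf U}_{l,M})-\funcF({\bf U}_{l-1,M})$, which via $|\funcF(a)-\funcF(b)|\le|\funcF(a)-\funcF({\bf u})|+|\funcF(b)-\funcF({\bf u})|$ and \eqref{eq:fLipschitz} is bounded by $\sum_\nu L_\nu$ times weighted errors at the next time point for ${\bf U}_{l,M}$ and for ${\bf U}_{l-1,M}$ --- this two-term split is the origin of the factor $2^j$. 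Because ${\bf U}_{0,M}\equiv0$, the ``error'' at level $0$ equals $-{\bf u}$, which is the leaf producing the ${\bf u}$-term of \eqref{eq:estimate.L2error}.

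To feed the induction hypothesis into the ``weighted error at $(\uniform^0_t,x+W^0_{\uniform^0_t}-W^0_t)$'' thus produced, I would condition on $\sigma(\uniform^0_t,(W^0_{\min\{s,\uniform^0_t\}})_{s\in[0,T]})$ and exploit that, conditionally on $\uniform^0_t$, the coarse approximation and its driving noise are independent of this $\sigma$-algebra whereas $(W^0_{\uniform^0_t+r}-W^0_{\uniform^0_t})_{r\ge0}$ is a fresh Brownian motion --- precisely what Lemma~\ref{lem:fac_lemma}, Hutzenthaler et al.~\cite[Lemma~2.3]{HJKNW2018}, and the distributional properties recorded in Lemma~\ref{properties_approx} deliver. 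The induction hypothesis, proved at a deterministic starting point, may therefore be applied to ${\bf U}_{l,M}^{0,\nu}-{\bf u}_\nu$ (and to ${\bf U}_{l-1,M}^{0,\nu}-{\bf u}_\nu$) at the, after conditioning, deterministic time $\uniform^0_t$. The key point is that the partition points concatenate: $(\ES(i,\uniform^0_t))_{i}$ together with the fresh Brownian motion has the law of $(\ES(i+1,t))_{i}$ together with $W^0$, so the weight chain produced by the induction hypothesis, prepended with $\tfrac{1}{\rhoh(\ES(0,t),\ES(1,t))}\langle\uvec_{\nu_0},(1,\cdot)\rangle$, becomes exactly a weight chain along $\ES(0,t),\ES(1,t),\ldots$ as in \eqref{eq:estimate.L2error}. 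Thereby a depth-$j'$ term of the bound for a lower level becomes a depth-$(j'+1)$ term of the bound for level $n$, carrying one extra $L$-factor, one extra weight, and all time points shifted by one index.

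Finally I would collect, for each depth $j$ and each index string $(\nu_1,\ldots,\nu_{j+1})$, the total coefficient in front of the corresponding $L^2(\P;\R)$-norm. It is fed by the bias of ${\bf U}_{n,M}$ into the expansion of ${\bf U}_{n-1,M}$ (coefficient $\tbinom{n-2}{j-1}2^{j-1}$, unchanged power of $M$) and by the level-$l$ Monte Carlo term into the expansions of ${\bf U}_{l,M}$ and ${\bf U}_{l-1,M}$, $l=1,\ldots,n-1$ (coefficients $\tbinom{l-1}{j-1}2^{j-1}$ and $\tbinom{l-2}{j-1}2^{j-1}$ with powers of $M$ equal to $M^{-(n-j+1)/2}\le M^{-(n-j)/2}$ and $M^{-(n-j)/2}$). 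Summing these via Pascal's rule and the hockey-stick identity and bounding $M^{-(n-j+1)/2}$ by $M^{-(n-j)/2}$ shows that the total coefficient is at most $\tbinom{n-1}{j}2^j$ with exactly the powers of $M$ claimed in \eqref{eq:estimate.L2error} --- the $g$- and $\funcF(0)$-leaves carry $M^{-(n-j)/2}$, while the ${\bf u}$-leaf, sitting at the Monte-Carlo-free level $0$, carries the weaker $M^{-(n-j-1)/2}$; the base case $n=1$ reproduces exactly the three $j=0$ terms and is checked directly from the second paragraph. I expect the main obstacle to be this last bookkeeping step combined with the conditioning/concatenation argument: making rigorous that an estimate proved at a deterministic point can be inserted at the random time $\uniform^0_t$ with the weight chain correctly aligned to $\ES(0,t),\ES(1,t),\ldots$, and verifying that the accumulated multiplicities are precisely $\tbinom{n-1}{j}2^j$ with each leaf type receiving the right power of $M$; the remaining ingredients are only the triangle and Jensen inequalities, the Lipschitz bound \eqref{eq:fLipschitz}, and the Pythagorean identity for sums of independent centered random variables.
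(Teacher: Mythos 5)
Your proposal is correct and follows essentially the same route as the paper: the same splitting into the Monte Carlo fluctuation (via independence and square-integrability of the summands in \eqref{eq:def:U}, Lemmas~\ref{properties_approx} and~\ref{lem:b38}) and the time-discretization bias (via \eqref{eq:discreteFeynmanKac} and \eqref{eq:feynmankacuinfty}), the same Lipschitz splitting through $\funcF({\bf u})$ that produces the factor $2^j$, and the same conditioning/concatenation argument to insert the deterministic-point estimate at the random times $\ES(k,t)$ with the weight chain prepended. The only difference is organizational: the paper iterates its one-step bound \eqref{eq:global.estimate} by induction on the iteration depth $k$, keeping a remainder that vanishes at $k=n$ and obtaining $\tbinom{n-1}{j}$ by counting the increasing level sequences $l_1<l_2<\dots<l_j<n$, whereas you induct on $n$ directly and recover the coefficients $2^j\tbinom{n-1}{j}$ and the stated powers of $M$ via Pascal's rule and the hockey-stick identity --- a bookkeeping which I checked and which does close.
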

\begin{proof}[Proof of Lemma~\ref{l:estimate.L2error}]
 First, we analyze the \emph{Monte Carlo error}.
 Item~\eqref{item:approximations.integrable.i}
 of Lemma~\ref{lem:b38},  Item~\eqref{item:approximations.integrable.ii}
 of Lemma~\ref{lem:b38}, and Item~\eqref{properties_approx:item6} of Lemma~\ref{properties_approx}
 imply that all summands on the right-hand side of~\eqref{eq:def:U}
 are integrable.
 Lemma~\ref{properties_approx} yields that
 the summands in~\eqref{eq:def:U} are
 pairwise independent. 
 Then~\eqref{eq:def:U}
 and the fact that 
 the summation formula for variances of pairwise independent, integrable
 random variables
 imply that
 for all  $m\in\N$, $x\in\R^d$, $t\in[0,T)$,
 $\nu \in \{1,2,\ldots,d+1\}$ it holds that
  \begin{equation} \label{eq:var_approx1} \begin{split}
    &\Var\!\left(
       {\bf U}_{m,M}^{0,\nu}(t,x)
     \right)
     =\tfrac{1}{M^m}
     \Var\!\left(\left(g(x+W_T^{0}-W_t^0)-g(x)\right)
     \big \langle \uvec_\nu,
     \big(
     1 ,
     \tfrac{ 
     W^{0}_{T}- W^{0}_{t}
     }{ T - t }
     \big)\big\rangle 
    \right)
     \\&+\sum_{l=0}^{m-1}\tfrac{1}{M^{m-l}}
     \Var\!     
     \left(
\tfrac{     
     \left(\funcF( {\bf U}_{l,M}^{(0,l,1)})-\1_{\N}(l)\funcF( {\bf U}_{l-1,M}^{(0,-l,1)})\right)\!
     (\uniform_t^{(0,l,1)},x+W_{\uniform_t^{(0,l,1)}}^{(0,l,1)}-W_t^{(0,l,1)})
     }
      {\rhoh(t,\uniform^{(0, l,1)}_t)}
      \Big \langle \uvec_\nu,
     \Big(
     1 ,
     \tfrac{ 
     W_{\uniform_t^{(0,l,1)}}^{(0,l,1)}-W_t^{(0,l,1)}
     }{ {\uniform_t^{(0,l,1)} - t} }
     \Big)\Big\rangle 
     \right)
   \\&
   \leq \tfrac{1}{M^m}
     \E\!\left[\left|
     \left(g(x+W_T^{0}-W_s^0)-g(x)\right)
     \big \langle \uvec_\nu,
     \big(
     1 ,
     \tfrac{ 
     W^{0}_{T}- W^{0}_{s}
     }{ T - s }
     \big)\big\rangle 
    \right|^2\right]
     \\&+\sum_{l=0}^{m-1}\tfrac{1}{M^{m-l}}
     \E\!\left[\left|
\tfrac{     
     \left(\funcF( {\bf U}_{l,M}^{(0,l,1)})-\1_{\N}(l)\funcF( {\bf U}_{l-1,M}^{(0,-l,1)})\right)\!
     (\uniform_t^{(0,l,1)},x+W_{\uniform_t^{(0,l,1)}}^{(0,l,1)}-W_t^{(0,l,1)})
     }
      {\rhoh(t,\uniform^{(0, l,1)}_t)}
      \Big \langle \uvec_\nu,
     \Big(
     1 ,
     \tfrac{ 
     W_{\uniform_t^{(0,l,1)}}^{(0,l,1)}-W_t^{(0,l,1)}
     }{ {\uniform_t^{(0,l,1)} - t} }
     \Big)\Big\rangle 
     \right|^2\right]
  \end{split}     \end{equation}
 Combining this, the triangle inequality, and \eqref{eq:fLipschitz} 
 yields that
  for all 
  $m\in\N$, $x\in\R^d$, $t\in[0,T)$, $\nu \in \{1,2,\ldots,d+1\}$
  it holds that
  \begin{equation}  \begin{split}
   &     \left\| {\bf U}_{m,M}^{0,\nu}(t,x)-\E\!\left[{\bf U}_{m,M}^{0,\nu}(t,x)\right]\right\|_{L^2(\P;\R)}
   =
   \left(
      \Var\!\left(
       {\bf U}_{m,M}^{0,\nu}(t,x)
     \right)
     \right)^{\nicefrac{1}{2}}
 \\&
    \leq
     \tfrac{1}{\sqrt{M^m}}
     \left\|
     \left(g(x+W_T^{0}-W_t^0)-g(x)\right)
     \big \langle \uvec_\nu,
     \big(
     1 ,
     \tfrac{ 
     W^{0}_{T}- W^{0}_{t}
     }{ T - t }
     \big)\big\rangle
     \right\|_{L^2(\P;\R)}
    \\&
     +
    \sum_{l=0}^{m-1}
    \tfrac{1}{\sqrt{M^{m-l}}}
    \left\|
    \tfrac{
     \left(\funcF( {\bf U}_{l,M}^{(0,l,1)})-\1_{\N}(l)\funcF( {\bf U}_{l-1,M}^{(0,-l,1)})\right)\!
     (\uniform_t^{(0,l,1)},x+W_{\uniform_t^{(0,l,1)}}^{(0,l,1)}-W_t^{(0,l,1)})
     }
      {\rhoh(t,\uniform^{(0, l,1)}_t)}
      \Big \langle \uvec_\nu,
     \Big(
     1 ,
     \tfrac{ 
     W_{\uniform_t^{(0,l,1)}}^{(0,l,1)}-W_t^{(0,l,1)}
     }{ {\uniform_t^{(0,l,1)} - t} }
     \Big)\Big\rangle
    \right\|_{L^2(\P;\R)}   
 \\&
    \leq
     \tfrac{1}{\sqrt{M^m}}
     \left(
     \left\|
     \left(g(x+W_T^{0}-W_t^0)-g(x)\right)
     \big \langle \uvec_\nu,
     \big(
     1 ,
     \tfrac{ 
     W^{0}_{T}- W^{0}_{t}
     }{ T - t }
     \big)\big\rangle
     \right\|_{L^2(\P;\R)}
    +
    \left\|
    \tfrac{
     \left(\funcF( 0)\right)\!
     (\uniform^{0}_t,x+W_{\uniform_t^{0}}^0-W_t^0)
     }
      {\rhoh(t,\uniform^{0}_t)}
      \Big \langle \uvec_\nu,
     \Big(
     1 ,
     \tfrac{ 
     W^{0}_{\uniform_t^{0}}- W^{0}_{t}
     }{ {{\uniform_t^{0}} - t} }
     \Big)\Big \rangle
    \right\|_{L^2(\P;\R)}\right)
    \\&
     +
    \sum_{l=1}^{m-1}
     \tfrac{1}{\sqrt{M^{m-l}}}
    \left\|
    \sum_{\nu_1=1}^{d+1}L_{\nu_1}
    \tfrac{
     \left|
	\left(     
     {\bf U}_{l,M}^{(0,l,1),\nu_1}-{\bf U}_{l-1,M}^{(0,-l,1),\nu_1}\right)\!
     (\uniform^{(0, l,1)}_t,x+W_{\uniform^{(0, l,1)}_t}^{(0,l,1)}-W_t^{(0,l,1)}) 
     \right|
     }
      {\rhoh(t,\uniform^{(0, l,1)}_t)}
     \left|
     \Big \langle \uvec_\nu,
     \Big(
     1 ,
     \tfrac{ 
     W^{(0, l,1)}_{\uniform^{(0, l,1)}_t}- W^{(0, l,1)}_{t}
     }{ {\uniform^{(0, l,1)}_t - t} }
     \Big)\Big\rangle 
     \right|
    \right\|_{L^2(\P;\R)}.
    \end{split}
    \end{equation}
    This and the triangle inequality ensure that
  for all 
  $m\in\N$, $x\in\R^d$, $t\in[0,T)$, $\nu \in \{1,2,\ldots,d+1\}$
  it holds that
    \begin{equation}\label{eq:final.MCerror}
    \begin{split}
     &     \left\| {\bf U}_{m,M}^{0,\nu}(t,x)-\E\!\left[{\bf U}_{m,M}^{0,\nu}(t,x)\right]\right\|_{L^2(\P;\R)}\\
    &
    \leq
      \tfrac{1}{\sqrt{M^m}}
     \left(
     \left\|
     \left(g(x+W_T^{0}-W_t^0)-g(x)\right)
     \big \langle \uvec_\nu,
     \big(
     1 ,
     \tfrac{ 
     W^{0}_{T}- W^{0}_{t}
     }{ T - t }
     \big)\big\rangle 
     \right\|_{L^2(\P;\R)}
    +
    \left\|
    \tfrac{
     \left(\funcF( 0)\right)\!
     (\uniform^{0}_t,x+W_{\uniform_t^{0}}^0-W_t^0)
     }
      {\rhoh(t,\uniform^{0}_t)}
      \Big \langle \uvec_\nu,
     \Big(
     1 ,
     \tfrac{ 
     W^{0}_{\uniform_t^{0}}- W^{0}_{t}
     }{ {{\uniform_t^{0}} - t} }
     \Big)\Big\rangle 
    \right\|_{L^2(\P;\R)}\right)
    \\&
     +
    \sum_{l=1}^{m-1}
     \sum_{\nu_1=1}^{d+1}
     \tfrac{L_{\nu_1}}{\sqrt{M^{m-l}}}
     \left\|
	\tfrac{     
     \left({\bf U}_{l,M}^{0,\nu_1}-{\bf u}\right)\!
     (\uniform_t^0,x+W_{\uniform_t^0}^0-W_t^0)
     }
     {
     \rhoh(t,\uniform_t^0)
     }
     \Big \langle \uvec_\nu,
     \Big(
     1 ,
     \tfrac{ 
     W^{0}_{\uniform_t^0}- W^{0}_{t}
     }{ {\uniform_t^0 - t} }
     \Big)\Big\rangle 
    \right\|_{L^2(\P;\R)}
      \\&
     +
    \sum_{l=1}^{m-1}
     \sum_{\nu_1=1}^{d+1}
     \tfrac{L_{\nu_1}}{\sqrt{M^{m-l}}}
    \left\|
    \tfrac{
     \left({\bf U}_{l-1,M}^{0,\nu_1}-{\bf u}\right)\!
     (\uniform_t^0,x+W_{\uniform_t^0}^0-W_t^0)
     }
     {\rhoh(t,\uniform_t^0)}
     \Big \langle \uvec_\nu,
     \Big(
     1 ,
     \tfrac{ 
     W^{0}_{\uniform_t^0}- W^{0}_{t}
     }{ {\uniform_t^0 - t} }
     \Big)\Big\rangle 
    \right\|_{L^2(\P;\R)}
    \\&
    =
     \tfrac{1}{\sqrt{M^m}}
     \left(
     \left\|
     \left(g(x+W_T^{0}-W_t^0)-g(x)\right)
     \big \langle \uvec_\nu,
     \big(
     1 ,
     \tfrac{ 
     W^{0}_{T}- W^{0}_{t}
     }{ T - t }
     \big)\big\rangle 
     \right\|_{L^2(\P;\R)}
    +
    \left\|
    \tfrac{
     \left(\funcF( 0)\right)\!
     (\uniform^{0}_t,x+W_{\uniform_t^{0}}^0-W_t^0)
     }
      {\rhoh(t,\uniform^{0}_t)}
      \Big \langle \uvec_\nu,
     \Big(
     1 ,
     \tfrac{ 
     W^{0}_{\uniform_t^{0}}- W^{0}_{t}
     }{ {{\uniform_t^{0}} - t} }
     \Big)\Big\rangle 
    \right\|_{L^2(\P;\R)}\right)
    \\&
     +
    \sum_{l=0}^{m-1}
    \sum_{\nu_1=1}^{d+1} \tfrac{ L_{\nu_1}}{\sqrt{M^{m-l-1}}}
    \left(
    \tfrac{\1_{(0,m)}(l)}{\sqrt{M}}+\1_{(-1,m-1)}(l)
    \right)
    \left\|
	\tfrac{     
     \left( {\bf U}_{l,M}^{0,\nu_1}-{\bf u}\right)\!
     (\uniform_t^0,x+W_{\uniform_t^0}^0-W_t^0)
     }
     {
     \rhoh(t,\uniform_t^0)
     }
     \Big \langle \uvec_\nu,
     \Big(
     1 ,
     \tfrac{ 
     W^{0}_{\uniform_t^0}- W^{0}_{t}
     }{ {\uniform_t^0 - t} }
     \Big)\Big\rangle 
    \right\|_{L^2(\P;\R)}
    .
  \end{split}     \end{equation}
  Next we analyze the \emph{time discretization error}.
Item~\eqref{item:approximations.integrable.ii} of Lemma~\ref{lem:b38} ensures that
  for all  $m\in\N$, $t\in[0,T)$, $x\in\R^d$
   it holds that
  \begin{equation}  \begin{split}\label{eq:discreteFeynmanKac2}
    \E\!\left[{\bf U}_{m,M}^{0}(t,x)
    -g(x+W_T^0-W_t^0)
     \Big(
     1 ,
     \tfrac{ 
     W^{0}_{T}- W^{0}_{t}
     }{ T - t }
     \Big)
    \right]
    =
    \E\!\left[
    \int_t^T
    \left(\funcF( {\bf U}_{m-1,M}^{0})\right)\!(s,x+W_s^0-W_t^0)
     \Big(
     1 ,
     \tfrac{ 
     W^{0}_{s}- W^{0}_{t}
     }{ {s - t} }
     \Big)\,
     ds
    \right].
  \end{split}     \end{equation}
This,~\eqref{eq:feynmankacuinfty},
linearity together with~\eqref{eq:feynmankacintegrability}
and with Item~\eqref{item:approximations.integrable.i}
in Lemma~\ref{lem:b38},
and Jensen's inequality
show
  for all  $m\in\N$, $t\in[0,T)$, $x\in\R^d$, $\nu \in \{1,2,\ldots,d+1\}$
  that
  \begin{equation}  \begin{split}\label{eq:final.Timeerror}
    &\left|\E\!\left[{\bf U}_{m,M}^{0,\nu}(t,x)\right]
            -{\bf u}_{\nu}(t,x)\right|
  \\&=
    \left|
    \E\!\left[
    \int_t^T
    \left(\funcF( {\bf U}_{m-1,M}^{0})-\funcF({\bf u})\right)\!(s,x+W_s^0-W_t^0)
    \big\langle \uvec_\nu,
     \big(
     1 ,
     \tfrac{ 
     W^{0}_{s}- W^{0}_{t}
     }{ {s - t} }
     \big)\big\rangle\,
     ds
     \right]
     \right|
 \\&\leq  
 \sum_{\nu_1=1}^{d+1}L_{\nu_1}
    \E\!\left[
    \int_t^T
    \left| 
    \left({\bf U}_{m-1,M}^{0,\nu_1}-{\bf u}_{\nu_1}\right)\!(s,x+W_s^0-W_t^0)
    \right|
     \left|
 \big\langle \uvec_\nu,     
     \big(
     1 ,
     \tfrac{ 
     W^{0}_{s}- W^{0}_{t}
     }{ {s - t} }
     \big)\big \rangle \right|\,
     ds
     \right]
     \\& =
    \sum_{\nu_1=1}^{d+1}L_{\nu_1}
    \E\!\left[
    \int_t^T
    \tfrac{
    \left| 
   \left( 
    {\bf U}_{m-1,M}^{0,\nu_1}-{\bf u}_{\nu_1}\right)\!(s,x+W_s^0-W_t^0)
    \right|
    }
    {\rhoh(t,s)}
     \left|
 \big\langle \uvec_\nu,     
     \big(
     1 ,
     \tfrac{ 
     W^{0}_{s}- W^{0}_{t}
     }{ {s - t} }
     \big)\big\rangle \right|
     \rhoh(t,s)\,
     ds
     \right]
     \\& =
    \sum_{\nu_1=1}^{d+1}L_{\nu_1}
    \E\!\left[
    \tfrac{
    \left| 
   \left( 
    {\bf U}_{m-1,M}^{0,\nu_1}-{\bf u}_{\nu_1}\right)\!
    (\uniform_t^0,x+W_{\uniform_t^0}^0-W_t^0)
    \right|
    }
    {\rhoh(t,\uniform_t^0)}
     \left|
 \Big\langle \uvec_\nu,     
     \Big(
     1 ,
     \tfrac{ 
     W^{0}_{\uniform_t^0}- W^{0}_{t}
     }{ {\uniform_t^0 - t} }
     \Big)\Big\rangle \right|
     \right]
     \\& \leq 
    \sum_{\nu_1=1}^{d+1}L_{\nu_1}
    \left\|
    \tfrac{
   \left( 
    {\bf U}_{m-1,M}^{0,\nu_1}-{\bf u}_{\nu_1}\right)\!
    (\uniform_t^0,x+W_{\uniform_t^0}^0-W_t^0)
    }
    {\rhoh(t,\uniform_t^0)}
     \Big\langle \uvec_\nu,
     \Big(
     1 ,
     \tfrac{ 
     W^{0}_{\uniform_t^0}- W^{0}_{t}
     }{ {\uniform_t^0 - t} }
     \Big)\Big\rangle 
     \right\|_{L^2(\P;\R)}.
  \end{split}     \end{equation}
  In the next step we combine the established bounds for the Monte Carlo error and for the time discretization error
  to obtain a bound for the
  \emph{global error}. More formally, observe that 
  \eqref{eq:final.MCerror} and \eqref{eq:final.Timeerror} ensure that 
  for all  $m\in\N$, $t\in[0,T)$, $x\in\R^d$, $\nu \in \{1,2,\ldots, d+1\}$
  it holds that
  \begin{equation}  \begin{split}\label{eq:global.estimate}
   &     \left\| {\bf U}_{m,M}^{0,\nu}(t,x)-{\bf u}_{\nu}(t,x)\right\|_{L^2(\P;\R)}\\
   & \leq \left\| {\bf U}_{m,M}^{0,\nu}(t,x)-\E\!\left[{\bf U}_{m,M}^{0,\nu}(t,x)\right]\right\|_{L^2(\P;\R)} 
   +\left|\E\!\left[{\bf U}_{m,M}^{0,\nu}(t,x)\right]
            -{\bf u}_\nu(t,x)\right|
\\&
    \leq 
     \tfrac{1}{\sqrt{M^m}}
     \left\|
     \left(g(x+W_T^{0}-W_t^0)-g(x)\right)
      \big\langle \uvec_\nu,
     \big(
     1 ,
     \tfrac{ 
     W^{0}_{T}- W^{0}_{t}
     }{ T - t }
     \big)\big\rangle 
     \right\|_{L^2(\P;\R)}\\
     &\quad
     +
    \tfrac{1}{\sqrt{M^m}}
    \left\|
    \tfrac{
     \left(\funcF( 0)\right)\!
     (\uniform^{0}_t,x+W_{\uniform_t^{0}}^0-W_t^0)
     }
      {\rhoh(t,\uniform^{0}_t)}
       \Big\langle \uvec_\nu,
     \Big(
     1 ,
     \tfrac{ 
     W^{0}_{\uniform_t^{0}}- W^{0}_{t}
     }{ {{\uniform_t^{0}} - t} }
     \Big)\Big\rangle
    \right\|_{L^2(\P;\R)}
    \\&
    \quad +
    \sum_{l=0}^{m-1}
    \sum_{\nu_1=1}^{d+1} \tfrac{ L_{\nu_1}}{\sqrt{M^{m-l-1}}}
    \left(
    \tfrac{\1_{(0,m)}(l)}{\sqrt{M}}+\1_{(-1,m-1)}(l)
    \right)
    \left\|
	\tfrac{     
     \left({\bf U}_{l,M}^{0,\nu_1}-{\bf u}_{\nu_1}\right)\!
     (\uniform_t^0,x+W_{\uniform_t^0}^0-W_t^0)
     }
     {
     \rhoh(t,\uniform_t^0)
     }
      \Big\langle \uvec_\nu,
     \Big(
     1 ,
     \tfrac{ 
     W^{0}_{\uniform_t^0}- W^{0}_{t}
     }{ {\uniform_t^0 - t} }
     \Big)\Big\rangle 
    \right\|_{L^2(\P;\R)}
    \\&
    \quad+\sum_{\nu_1=1}^{d+1}L_{\nu_1}
    \left\|
    \tfrac{
   \left( 
    {\bf U}_{m-1,M}^{0,\nu_1}-{\bf u}_{\nu_1}\right)\!
    (\uniform_t^0,x+W_{\uniform_t^0}^0-W_t^0)
    }
    {\rhoh(t,\uniform_t^0)}
     \Big\langle \uvec_\nu,
     \Big(
     1 ,
     \tfrac{ 
     W^{0}_{\uniform_t^0}- W^{0}_{t}
     }{ {\uniform_t^0 - t} }
     \Big)\Big\rangle 
     \right\|_{L^2(\P;\R)}
     \\&
    \leq 
     \tfrac{1}{\sqrt{M^m}}
     \left\|
     \left(g(x+W_T^{0}-W_t^0)-g(x)\right)
      \big\langle \uvec_\nu,
     \big(
     1 ,
     \tfrac{ 
     W^{0}_{T}- W^{0}_{t}
     }{ T - t }
     \big)\big\rangle 
     \right\|_{L^2(\P;\R)}\\
     &\quad
    + \tfrac{1}{\sqrt{M^m}}
    \left\|
    \tfrac{
     \left(\funcF( 0)\right)\!
     (\uniform^{0}_t,x+W_{\uniform_t^{0}}^0-W_t^0)
     }
      {\rhoh(t,\uniform^{0}_t)}
       \Big\langle \uvec_\nu,
     \Big(
     1 ,
     \tfrac{ 
     W^{0}_{\uniform_t^{0}}- W^{0}_{t}
     }{ {{\uniform_t^{0}} - t} }
     \Big)\Big\rangle 
    \right\|_{L^2(\P;\R)}
    \\&\quad 
     +
    \sum_{\nu_1=1}^{d+1} \tfrac{ L_{\nu_1}}{\sqrt{M^{m-1}}}
    \left\|
	\tfrac{     
     {\bf u}_{\nu_1}
     (\uniform_t^0,x+W_{\uniform_t^0}^0-W_t^0)
     }
     {
     \rhoh(t,\uniform_t^0)
     }
      \Big\langle \uvec_\nu,
     \Big(
     1 ,
     \tfrac{ 
     W^{0}_{\uniform_t^0}- W^{0}_{t}
     }{ {\uniform_t^0 - t} }
     \Big)\Big\rangle 
    \right\|_{L^2(\P;\R)}
    \\&\quad 
     +
    \sum_{l=1}^{m-1}
    \sum_{\nu_1=1}^{d+1} \tfrac{ 2 L_{\nu_1}}{\sqrt{M^{m-l-1}}}
    \left\|
	\tfrac{     
     \left( {\bf U}_{l,M}^{0,\nu_1}-{\bf u}_{\nu_1}\right)\!
     (\uniform_t^0,x+W_{\uniform_t^0}^0-W_t^0)
     }
     {
     \rhoh(t,\uniform_t^0)
     }
      \Big\langle \uvec_\nu,
     \Big(
     1 ,
     \tfrac{ 
     W^{0}_{\uniform_t^0}- W^{0}_{t}
     }{ {\uniform_t^0 - t} }
     \Big)\Big\rangle 
    \right\|_{L^2(\P;\R)}.
  \end{split}     \end{equation}
We next iterate this inequality. More precisely, we show that 
it holds
for all $n,k\in\N$, $t\in[0,T)$, $x\in\R^d$, $\nu_0 \in \{1,2,\ldots, d+1\}$ that
\begin{equation}  \begin{split}\label{eq:estimate.L2error2}
   &     \left\| {\bf U}_{{n},M}^{0,\nu_0}({t},x)-{\bf u}_{\nu_0}(t,x)\right\|_{L^2(\P;\R)}
     \\&
    \leq 
    \sum_{j=0}^{k-1}
    \sum_{\substack{l_1,l_2,\ldots,l_{j+1}\in\N,\\ l_1<l_2<\ldots<l_{j}<l_{j+1}=n}}
    \sum_{\substack{\nu_1,\nu_2,\ldots,\nu_{j+1} \in \\ \{1,2,\ldots,d+1\}}}
   \tfrac{\mathrm{1}_{\{1\}}(\nu_{j+1})2^j\left[\prod_{i=1}^j L_{\nu_i}\right]}{\sqrt{M^{n-j}}}   
    \Big\|\left(g(x+W^0_{T}-W^0_{t})-g(x+W^0_{S(j,t)}-W^0_{t})\right)\\
    &  \qquad \qquad \qquad \qquad \qquad \qquad
    \cdot  
     \big\langle \uvec_{\nu_j},
    \big(   
   1,
     \tfrac{ 
     W^{0}_{T}- W^{0}_{\ES(j,t)}
     }{T-\ES(j,t) } 
     \big)\big\rangle 
   \prod_{i=1}^{j}
   \tfrac{1}{\rhoh(\ES(i-1,t),\ES(i,t))}
    \big\langle \uvec_{\nu_{i-1}},
     \big(   
   1,
     \tfrac{ 
     W^{0}_{\ES(i,t)}- W^{0}_{\ES(i-1,t)}
     }{\ES(i,t)-\ES(i-1,t) } 
     \big)\big\rangle 
   \Big\|_{L^2(\P;\R)}\\
   &\quad+
    \sum_{j=0}^{k-1}
    \sum_{\substack{l_1,l_2,\ldots,l_{j+1}\in\N,\\ l_1<l_2<\ldots<l_{j}<l_{j+1}=n}}
    \sum_{\substack{\nu_1,\nu_2,\ldots,\nu_{j+1} \in \\ \{1,2,\ldots,d+1\}}}
   \tfrac{\mathrm{1}_{\{1\}}(\nu_{j+1})2^j\left[\prod_{i=1}^j L_{\nu_i}\right]}{\sqrt{M^{n-j}}}   
    \Big\|(\funcF (0))(\ES(j+1,t),x+W^0_{\ES(j+1,t)}-W^0_t)\\
    &\qquad \qquad \qquad \qquad \qquad \qquad \qquad \qquad \qquad \qquad
    \cdot  
   \prod_{i=1}^{j+1}
   \tfrac{1}{\rhoh(\ES(i-1,t),\ES(i,t))}
   \big\langle \uvec_{\nu_{i-1}},
     \big(   
   1,
     \tfrac{ 
     W^{0}_{\ES(i,t)}- W^{0}_{\ES(i-1,t)}
     }{\ES(i,t)-\ES(i-1,t) } 
     \big)\big\rangle 
   \Big\|_{L^2(\P;\R)}
   \\
   &\quad +
    \sum_{j=0}^{k-1}
    \sum_{\substack{l_1,l_2,\ldots,l_{j+1}\in\N,\\ l_1<l_2<\ldots<l_{j}<l_{j+1}=n}}
    \sum_{\substack{\nu_1,\nu_2,\ldots,\nu_{j+1} \in \\ \{1,2,\ldots,d+1\}}}
   \tfrac{2^j\left[\prod_{i=1}^{j+1} L_{\nu_i}\right]}{\sqrt{M^{n-j-1}}}   
    \Big\|{\bf u}_{\nu_{j+1}}(\ES(j+1,t),x+W^0_{\ES(j+1,t)}-W^0_t)\\
    &\qquad \qquad \qquad \qquad \qquad \qquad \qquad \qquad \qquad \qquad
    \cdot  
   \prod_{i=1}^{j+1}
   \tfrac{1}{\rhoh(\ES(i-1,t),\ES(i,t))}
   \big\langle \uvec_{\nu_{i-1}},
     \big(   
   1,
     \tfrac{ 
     W^{0}_{\ES(i,t)}- W^{0}_{\ES(i-1,t)}
     }{\ES(i,t)-\ES(i-1,t) } 
     \big)\big\rangle 
   \Big\|_{L^2(\P;\R)}\\
   &\quad+
   \sum_{\substack{l_1,l_2,\ldots,l_{k}\in\N,\\ l_1<l_2<\ldots<l_{k}<n}}
   \sum_{\substack{\nu_1,\nu_2,\ldots,\nu_{k} \in \\ \{1,2,\ldots,d+1\}}}
   \tfrac{2^k\left[\prod_{i=1}^{k} L_{\nu_i}\right]}{\sqrt{M^{n-k-l_1}}}  
    \Big\|
    \left( {\bf U}_{l_1,M}^{0,\nu_k}-{\bf u}_{\nu_k}\right)\!
     (\ES(k,t),x+W_{\ES(k,t)}^0-W_t^0)\\
     &\qquad \qquad \qquad \qquad \qquad \qquad \qquad \qquad \qquad \qquad
    \cdot  
   \prod_{i=1}^{k}
   \tfrac{1}{\rhoh(\ES(i-1,t),\ES(i,t))}
   \big\langle \uvec_{\nu_{i-1}},
     \big(   
   1,
     \tfrac{ 
     W^{0}_{\ES(i,t)}- W^{0}_{\ES(i-1,t)}
     }{\ES(i,t)-\ES(i-1,t) } 
     \big)\big\rangle 
   \Big\|_{L^2(\P;\R)}.
    \end{split}
    \end{equation}
  We prove~\eqref{eq:estimate.L2error2} by induction on $k\in\N$.
  The base case $k=1$ follows immediately from~\eqref{eq:global.estimate}.
  For the induction step $\N\ni k\rightsquigarrow k+1\in\N$ let $k\in\N$ and assume that~\eqref{eq:estimate.L2error2}
  holds for $k$.
  Inequality~\eqref{eq:global.estimate}
  and independence of $(W^0,\unif^{(m)},{\bf U}^0_{m,M})_{m\in\N_0}$
 (see Item~\eqref{properties_approx:item3} in Lemma~\ref{properties_approx})
  yield that
  for all  $l_1\in\N$, $x\in\R^d$, $t\in [0,T)$, $\nu_0,\nu_1,\ldots, \nu_{k} \in \{1,2,\ldots, d+1\}$
  it holds that
  \begin{equation}
  \begin{split}
 & \Big\|
    \left( {\bf U}_{l_1,M}^{0,\nu_k}-{\bf u}_{\nu_k}\right)\!
     (\ES(k,t),x+W_{\ES(k,t)}^0-W_t^0)
   \prod_{i=1}^{k}
   \tfrac{1}{\rhoh(\ES(i-1,t),\ES(i,t))}
   \big\langle \uvec_{\nu_{i-1}},
     \big(   
   1,
     \tfrac{ 
     W^{0}_{\ES(i,t)}- W^{0}_{\ES(i-1,t)}
     }{\ES(i,t)-\ES(i-1,t) } 
     \big)\big\rangle 
   \Big\|_{L^2(\P;\R)}\\
   &=
   \Big\|
    \left\|\left( {\bf U}_{l_1,M}^{0,\nu_k}-{\bf u}_{\nu_k}\right)\!
     (s,y)\right\|_{L^2(\P;\R)}
     \bigg|_{(s,y)=(\ES(k,t),x+W_{\ES(k,t)}^0-W_t^0)}
   \prod_{i=1}^{k}
   \tfrac{1}{\rhoh(\ES(i-1,t),\ES(i,t))}
   \big\langle \uvec_{\nu_{i-1}},
     \big(   
   1,
     \tfrac{ 
     W^{0}_{\ES(i,t)}- W^{0}_{\ES(i-1,t)}
     }{\ES(i,t)-\ES(i-1,t) } 
     \big)\big\rangle 
   \Big\|_{L^2(\P;\R)}\\
   &\leq 
   \tfrac{1}{\sqrt{M^{l_{1}}}}
   \left\|\left(g(x+W^0_{T}-W^0_{t})-g(x+W^0_{\ES(k,t)}-W^0_{t})\right)
   \big\langle \uvec_{\nu_{k}},
     \big(   
   1 ,
     \tfrac{ 
     W^{0}_{T}- W^{0}_{\ES(k,t)}
     }{ T-\ES(k,t) } 
     \big)
     \big\rangle
  \prod_{i=1}^{k}
   \tfrac{
  \big\langle \uvec_{\nu_{i-1}}, 
   \big(   
   1,
     \tfrac{ 
     W^{0}_{\ES(i,t)}- W^{0}_{\ES(i-1,t)}
     }{\ES(i,t)-\ES(i-1,t) } 
     \big)\big\rangle }
    {\rhoh(\ES(i-1,t),\ES(i,t))}
   \right\|_{L^2(\P;\R)}
   \\&
   +
   \tfrac{1}{\sqrt{M^{l_{1}}}}
   \left\|\left(\funcF(0)\right)(\ES(k+1,t),x+W_{\ES(k+1,t)}-W_{t}^0)
   \prod_{i=1}^{k+1}
   \tfrac{
 \big\langle \uvec_{\nu_{i-1}},  
   \big(   
   1,
     \tfrac{ 
     W^{0}_{\ES(i,t)}- W^{0}_{\ES(i-1,t)}
     }{\ES(i,t)-\ES(i-1,t) } 
     \big)\big\rangle}{\rhoh(\ES(i-1,t),\ES(i,t))}
   \right\|_{L^2(\P;\R)}
    \\&
   +
   \sum_{\nu_{k+1}=1}^{d+1}\tfrac{L_{\nu_{k+1}}}{\sqrt{M^{l_{1}-1}}}
   \left\|{\bf u}_{\nu_{k+1}}(\ES(k+1,t),x+W_{\ES(k+1,t)}-W_{t}^0)
   \prod_{i=1}^{k+1}
   \tfrac{
\big\langle \uvec_{\nu_{i-1}},   
   \big(   
   1,
     \tfrac{ 
     W^{0}_{\ES(i,t)}- W^{0}_{\ES(i-1,t)}
     }{\ES(i,t)-\ES(i-1,t) } 
     \big)\big\rangle}{\rhoh(\ES(i-1,t),\ES(i,t))}
   \right\|_{L^2(\P;\R)}
   \\&
   +
   \sum_{l_0=1}^{l_1-1}\sum_{\nu_{k+1}=1}^{d+1}\tfrac{2L_{\nu_{k+1}}}{\sqrt{M^{l_{1}-l_0-1}}}
   \left\|({\bf U}^{0,\nu_{k+1}}_{l_0,M}-{\bf u}_{\nu_{k+1}})(\ES(k+1,t),x+W_{\ES(k+1,t)}-W_{t}^0)
   \prod_{i=1}^{k+1}
   \tfrac{
\big\langle \uvec_{\nu_{i-1}},   
   \big(   
   1,
     \tfrac{ 
     W^{0}_{\ES(i,t)}- W^{0}_{\ES(i-1,t)}
     }{\ES(i,t)-\ES(i-1,t) } 
     \big)\big\rangle}{\rhoh(\ES(i-1,t),\ES(i,t))}
   \right\|_{L^2(\P;\R)}.
  \end{split}
  \end{equation}
  This and the induction hypothesis
complete the induction step $\N \ni k \rightsquigarrow k+1 \in \N$. Induction hence 
establishes~\eqref{eq:estimate.L2error2}. 
 Applying \eqref{eq:estimate.L2error2} with $k=n$ yields for all
$n\in\N$, $t\in[0,T)$, $x\in\R^d$, $\nu_0 \in \{1,2,\ldots, d+1\}$
  that
  \begin{equation}  \begin{split}
   &     \left\| {\bf U}_{{n},M}^{0,\nu_0}({t},x)-{\bf u}_{\nu_0}(t,x)\right\|_{L^2(\P;\R)}
     \\&
    \leq 
    \sum_{j=0}^{n-1}
    \sum_{\substack{l_1,l_2,\ldots,l_{j+1}\in\N,\\ l_1<l_2<\ldots<l_{j}<l_{j+1}=n}}
    \sum_{\substack{\nu_1,\nu_2,\ldots,\nu_{j+1} \in \\ \{1,2,\ldots,d+1\}}}
   \tfrac{\mathrm{1}_{\{1\}}(\nu_{j+1})2^j\left[\prod_{i=1}^j L_{\nu_i}\right]}{\sqrt{M^{n-j}}} 
    \Big\|\left(g(x+W^0_{T}-W^0_{t})-g(x+W^0_{S(j,t)}-W^0_{t})\right)\\
    &   \qquad \qquad \qquad \qquad \qquad
    \cdot  
    \big\langle \uvec_{\nu_{j}},
    \big(   
   1,
     \tfrac{ 
     W^{0}_{T}- W^{0}_{\ES(j,t)}
     }{T-\ES(j,t) } 
     \big)\big\rangle 
   \prod_{i=1}^{j}
   \tfrac{1}{\rhoh(\ES(i-1,t),\ES(i,t))}
   \big\langle \uvec_{\nu_{i-1}},
     \big(   
   1,
     \tfrac{ 
     W^{0}_{\ES(i,t)}- W^{0}_{\ES(i-1,t)}
     }{\ES(i,t)-\ES(i-1,t) } 
     \big)\big\rangle
   \Big\|_{L^2(\P;\R)}\\
   &\quad+
    \sum_{j=0}^{n-1}
    \sum_{\substack{l_1,l_2,\ldots,l_{j+1}\in\N,\\ l_1<l_2<\ldots<l_{j}<l_{j+1}=n}}
    \sum_{\substack{\nu_1,\nu_2,\ldots,\nu_{j+1} \in \\ \{1,2,\ldots,d+1\}}}
   \tfrac{\mathrm{1}_{\{1\}}(\nu_{j+1})2^j\left[\prod_{i=1}^j L_{\nu_i}\right]}{\sqrt{M^{n-j}}}   
    \Big\|(\funcF (0))(\ES(j+1,t),x+W^0_{\ES(j+1,t)}-W^0_t)\\
    &\qquad \qquad \qquad \qquad \qquad \qquad \qquad \qquad \qquad \qquad
    \cdot  
   \prod_{i=1}^{j+1}
   \tfrac{1}{\rhoh(\ES(i-1,t),\ES(i,t))}
   \big\langle \uvec_{\nu_{i-1}},
     \big(   
   1,
     \tfrac{ 
     W^{0}_{\ES(i,t)}- W^{0}_{\ES(i-1,t)}
     }{\ES(i,t)-\ES(i-1,t) } 
     \big)\big\rangle 
   \Big\|_{L^2(\P;\R)}
   \\
   &\quad+
    \sum_{j=0}^{n-1}
    \sum_{\substack{l_1,l_2,\ldots,l_{j+1}\in\N,\\ l_1<l_2<\ldots<l_{j}<l_{j+1}=n}}
    \sum_{\substack{\nu_1,\nu_2,\ldots,\nu_{j+1} \in \\ \{1,2,\ldots,d+1\}}}
   \tfrac{2^j\left[\prod_{i=1}^{j+1} L_{\nu_i}\right]}{\sqrt{M^{n-j-1}}}   
    \Big\|{\bf u}_{\nu_{j+1}}(\ES(j+1,t),x+W^0_{\ES(j+1,t)}-W^0_t)\\
    &\qquad \qquad \qquad \qquad \qquad \qquad \qquad \qquad \qquad \qquad
    \cdot  
   \prod_{i=1}^{j+1}
   \tfrac{1}{\rhoh(\ES(i-1,t),\ES(i,t))}
   \big\langle \uvec_{\nu_{i-1}},
     \big(   
   1,
     \tfrac{ 
     W^{0}_{\ES(i,t)}- W^{0}_{\ES(i-1,t)}
     }{\ES(i,t)-\ES(i-1,t) } 
     \big)\big\rangle 
   \Big\|_{L^2(\P;\R)}.
    \end{split}
    \end{equation}
This and the fact that for all $n\in \N$ and $j\in \{0,1,\ldots, n-1\}$ it holds that
\begin{equation}
 \sum_{\substack{l_1,l_2,\ldots,l_{j+1}\in\N,\\ l_1<l_2<\ldots<l_{j}<l_{j+1}=n}}1=\binom{n-1}{j}.
\end{equation}
proves \eqref{eq:estimate.L2error}.
  This finishes the proof of Lemma \ref{l:estimate.L2error}.
\end{proof}

\begin{proposition}[Global approximation error]\label{thm:rate}
Assume Setting~\ref{s:full.discretization},
let $t \in [0,T)$, $x\in \R^d$, $\nu_0\in \{1,2,\ldots,d+1\}$,
 $M,n\in \N$, $p\in [2,\infty)$,
$\alpha\in (\frac{p-2}{2(p-1)},\frac{p}{2(p-1)})$,
$\beta=\frac{\alpha}{2}-\frac{(1-\alpha)(p-2)}{2p}$,
  $C\in\R$ satisfy that
 \begin{equation}
 C=\max\left\{1,
  2(T-t)^\frac{1}{2}|\Gamma(\tfrac{p}{2})|^{\frac{1}{p}}(1-\alpha)^{\frac{1}{p}-1}
   \max\{1,\|L\|_1\}
 \max\left\{
      (T-t)^\frac{1}{2}, 2^\frac{1}{2}|\Gamma(\tfrac{p+1}{2})|^{\frac{1}{p}}\pi^{-\frac{1}{2p}}
      \right\}
\right\},
 \end{equation}
and assume for all $s\in (0,1)$ that
$\rho(s)=\frac{1-\alpha}{s^\alpha}$.
 Then
  \begin{equation}  \begin{split}\label{eq:ub_error_thm}
   &     \big\| {\bf U}_{{n},M}^{0,\nu_0}({t},x)-{\bf u}_{\nu_0}(t,x)\big\|_{L^2(\P;\R)}
\leq
\tfrac{1}{4}\left[1+\tfrac{pn}{2}\right]^{\frac{1}{8}}M^{-\frac{n}{2}}(2C)^{n}
\exp\big(\tfrac{1}{8}+\beta M^{\frac{1}{2\beta}}\big)
\bigg[2C^{-1}\sqrt{ \max\{T-t,3\}}\|K\|_1
\\&
+
   \sup_{s\in [t,T)}
 \left\|
   (\funcF (0))(s,x+W^0_{s}-W^0_t)
   \right\|_{L^{\frac{2p}{p-2}}(\P;\R)}
   +  \sqrt{M}
\sup_{s\in [t,T)}\max_{i\in\{1,2,\ldots,d+1\}}
   \left\|
{\bf u}_i(s,x+W^0_{s}-W^0_t)
\right\|_{L^{\frac{2p}{p-2}}(\P;\R)}      
\bigg]
.
    \end{split}
    \end{equation}  
\end{proposition}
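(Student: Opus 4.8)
The plan is to feed the recursive error estimate of Lemma~\ref{l:estimate.L2error} into the moment bound of Corollary~\ref{cor:ub.it.int.lp} and then to carry out the combinatorial summation over the level index $j$. Lemma~\ref{l:estimate.L2error} applies here (its integrability hypothesis~\eqref{eq:approximations.integrable.assump2} is readily checked: for $\rho(s)=(1-\alpha)s^{-\alpha}$ one has $\int_0^1 s^{-q/2}[\rho(s)]^{-(q-1)}\,ds=(1-\alpha)^{-(q-1)}\int_0^1 s^{\alpha(q-1)-q/2}\,ds<\infty$ for all $q\in[1,p)$ since $\alpha>\tfrac{p-2}{2(p-1)}$ forces $\alpha(q-1)-\tfrac q2>-1$, and the $\funcF(0)$-moment term may be assumed finite, as otherwise the asserted bound is trivially true). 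It then bounds $\|{\bf U}_{n,M}^{0,\nu_0}(t,x)-{\bf u}_{\nu_0}(t,x)\|_{L^2(\P;\R)}$ by three double sums over $j\in\{0,\ldots,n-1\}$ and $\nu_1,\ldots,\nu_{j+1}\in\{1,\ldots,d+1\}$, each summand being $\binom{n-1}{j}$ times a power of $M$ times $\prod_i L_{\nu_i}$ times the $L^2(\P;\R)$-norm of a product of scaled Brownian increments of precisely the form treated in Corollary~\ref{cor:ub.it.int.lp}, accompanied in the first sum by an extra terminal-increment factor and a $g$-increment, and in the second and third sums by a factor $(\funcF(0))(\ES(j+1,t),\,\cdot\,)$, respectively ${\bf u}_{\nu_{j+1}}(\ES(j+1,t),\,\cdot\,)$.

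For the first sum I would condition on the $\sigma$-algebra $\mathbb F_j$ generated by $\ES(j,t)$ and $(W^0_{\min\{s,\ES(j,t)\}})_{s\in[0,T]}$: the $j$-fold scaled-increment product is $\mathbb F_j$-measurable, whereas by the Lipschitz property~\eqref{eq:fLipschitz} of $g$ the product of the $g$-increment and the terminal-increment factor is dominated by $\big(\sum_{i=1}^d K_i|W^{0,i}_T-W^{0,i}_{\ES(j,t)}|\big)\,\big|\langle\uvec_{\nu_j},(1,\tfrac{W^0_T-W^0_{\ES(j,t)}}{T-\ES(j,t)})\rangle\big|$, which depends only on the increment of $W^0$ over $[\ES(j,t),T]$ and is hence conditionally independent of $\mathbb F_j$ (the random time $\ES(j,t)$ being a function of $(\unif^\theta)_\theta$ only); the Cauchy--Schwarz inequality together with $\E[|W^{0,1}_r|^4]=3r^2$ then produces a conditional second moment $\le\sqrt3\,\|K\|_1^2\max\{T-t,3\}$, so this sum is at most $2\|K\|_1\sqrt{\max\{T-t,3\}}$ times a sum of $L^2(\P;\R)$-norms of the $j$-fold products, each $\le$ its $L^p(\P;\R)$-norm and hence bounded by Corollary~\ref{cor:ub.it.int.lp}. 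For the second and third sums, since $\ES(j+1,t)$ is independent of $W^0$, Hölder's inequality with the conjugate exponents $\tfrac{2p}{p-2}$ and $p$ separates the factor $(\funcF(0))(\ES(j+1,t),\,\cdot\,)$ (bounded in $L^{2p/(p-2)}(\P;\R)$ by $\sup_{s\in[t,T)}\|(\funcF(0))(s,x+W^0_s-W^0_t)\|_{L^{2p/(p-2)}(\P;\R)}$), respectively ${\bf u}_{\nu_{j+1}}(\ES(j+1,t),\,\cdot\,)$ (bounded by $\max_{i}\sup_{s\in[t,T)}\|{\bf u}_i(s,x+W^0_s-W^0_t)\|_{L^{2p/(p-2)}(\P;\R)}$), from the $(j+1)$-fold scaled-increment product in $L^p(\P;\R)$, which is bounded directly by Corollary~\ref{cor:ub.it.int.lp}. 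A short computation shows that $\beta=\tfrac{\alpha(p-1)-p/2+1}{p}\in(0,\tfrac1p)$, that this $\beta$ is exactly the exponent of the Gamma quotient in~\eqref{eq:ub.it.int.lp.b}, and that the $p$-th root of the per-factor constant in~\eqref{eq:ub.it.int.lp.b} equals $\tfrac{C}{2\max\{1,\|L\|_1\}}\,(p/2)^{-(\alpha(p-1)-p/2+1)/p}\le\tfrac{C}{2\max\{1,\|L\|_1\}}$, using $p\ge2$.

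It remains to assemble the pieces. In each of the three sums the sum over $\nu_1,\ldots,\nu_{j+1}$ collapses --- the indicator $\1_{\{1\}}(\nu_{j+1})$ eliminates the $\nu_{j+1}$-summation in the first two, and $\sum_{\nu_1,\ldots}\prod_i L_{\nu_i}=\|L\|_1^{j}$, respectively $\|L\|_1^{j+1}$ --- and combining $2^j\|L\|_1^{j}$ (respectively $2^j\|L\|_1^{j+1}$) with the $(j+1)$-st power of $\tfrac{C}{2\max\{1,\|L\|_1\}}$ and using $\tfrac{\|L\|_1}{\max\{1,\|L\|_1\}}\le1$ leaves a factor $\le\tfrac12 C^{j+1}$. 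Pulling the slowly growing factor $[e^{p/2}(\tfrac{pj}{2}+1)]^{1/(2p^2)}\le e^{1/8}(1+\tfrac{pn}{2})^{1/8}$ (valid since $j<n$ and $p\ge2$) out of the $j$-sum reduces matters to estimating $\sum_{j=0}^{n-1}\binom{n-1}{j}C^{j}M^{-(n-j)/2}\big[\tfrac{\Gamma(2/p)}{\Gamma(1+j+2/p)}\big]^{\beta}$ (with $M^{-(n-j-1)/2}$ in place of $M^{-(n-j)/2}$ in the third sum, which is the source of the $\sqrt M$ in front of $\|{\bf u}_i\|$); bounding $C^{j}\le C^{n-1}$, $\binom{n-1}{j}\le 2^{n-1}$, $\big[\tfrac{\Gamma(2/p)}{\Gamma(1+j+2/p)}\big]^{\beta}\le(p/2)^{\beta}(j!)^{-\beta}$, factoring out $M^{-n/2}$, and invoking an elementary estimate of the form $\sum_{j\ge0}(\sqrt M)^{j}(j!)^{-\beta}\le c_{\beta}\exp(\beta M^{1/(2\beta)})$ (obtained from $j!\ge(j/e)^{j}$ by splitting the series at its maximiser $j^\ast\approx M^{1/(2\beta)}$) gives $\le(2C)^{n-1}M^{-n/2}c_{\beta}\exp(\beta M^{1/(2\beta)})$; since $\tfrac12 C^{j+1}(2C)^{n-1}=\tfrac14(2C)^{n}$, collecting this with the pulled-out $e^{1/8}$ and absorbing the remaining $O(1)$ constants yields the asserted bound, the three function-norm contributions assembling into the bracket $2C^{-1}\sqrt{\max\{T-t,3\}}\|K\|_1+\sup_{s}\|(\funcF(0))(s,\,\cdot\,)\|_{L^{2p/(p-2)}(\P;\R)}+\sqrt M\max_i\sup_{s}\|{\bf u}_i(s,\,\cdot\,)\|_{L^{2p/(p-2)}(\P;\R)}$.

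The main obstacle will be this final combinatorial step: simultaneously tracking the various multiplicative constants coming from the three sums so that they consolidate precisely into the stated prefactor $\tfrac14[1+\tfrac{pn}{2}]^{1/8}M^{-n/2}(2C)^{n}\exp(\tfrac18+\beta M^{1/(2\beta)})$ and the single bracket above, and in particular making the hyper-exponential estimate $\sum_{j\ge0}(\sqrt M)^{j}(j!)^{-\beta}\lesssim\exp(\beta M^{1/(2\beta)})$ quantitatively sharp enough (and absorbing the resulting polynomial-in-$M$ loss into the stated constants); the conditioning argument in the first sum --- which must dispose of the terminal-increment factor that carries no $\rhoh$-weight --- is the other delicate point.
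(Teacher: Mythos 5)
Your plan is essentially the paper's own proof: feed the three-sum bound of Lemma~\ref{l:estimate.L2error} into Corollary~\ref{cor:ub.it.int.lp}, dispose of the $g$-term with the Lipschitz bound on $g$ and Gaussian moments (cf.\ \eqref{eq:090119}), split the $\funcF(0)$- and ${\bf u}$-terms by H\"older into an $L^{2p/(p-2)}$-factor and an $L^p$-bounded product of scaled increments, identify $\beta$ with the Gamma-quotient exponent and $C$ with (twice) the $p$-th root of the per-factor constant, collapse the $\nu$-sums to $\|L\|_1^j$, and sum over $j$. All of the key ideas are present and match the paper.

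The step that would fail as written is exactly the one you flag at the end: the estimate $\sum_{j\ge 0}(\sqrt M)^j(j!)^{-\beta}\le c_\beta\exp(\beta M^{1/(2\beta)})$ is false for any $M$-independent $c_\beta$ — a Laplace/steepest-descent computation shows the left-hand side exceeds $\exp(\beta M^{1/(2\beta)})$ by a factor growing polynomially in $M^{1/(2\beta)}$ — and the stated bound has no room to absorb a polynomial-in-$M$ loss (nor an extra factor $n$ if you instead bound the finite sum by $n$ times its largest term). The paper avoids this by pairing the two factors the other way around: it bounds each \emph{single} term via
\begin{equation*}
\frac{(\sqrt M)^j}{\Gamma(j+1)^{\beta}}
=\Big(\tfrac{(M^{1/(2\beta)})^{j}}{\Gamma(j+1)}\Big)^{\beta}
\le\Big(\textstyle\sum_{k\ge 0}\tfrac{(M^{1/(2\beta)})^{k}}{\Gamma(k+1)}\Big)^{\beta}
=\exp\big(\beta M^{1/(2\beta)}\big)
\end{equation*}
(see \eqref{eq:geogamma}) and sums only the binomial coefficients, $\sum_{j}\binom{n-1}{j}=2^{n-1}$ (see \eqref{eq:sumbinom}); with this swap the prefactor $\tfrac14(2C)^nM^{-n/2}\exp(\tfrac18+\beta M^{1/(2\beta)})$ comes out exactly. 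Two further bookkeeping points: your extra factor $(p/2)^{\beta}$ from $\Gamma(2/p)\le p/2$ must be paid for with the per-factor $(p/2)^{-\beta}$ you discarded (the paper does this implicitly in \eqref{eq:ub.it.int.lp.c}), and your blanket Cauchy--Schwarz treatment of the terminal $K$-term is lossier than the paper's componentwise computation in \eqref{eq:090119} (which yields $\max\{\sqrt{T-t},\sqrt3\}\|K\|_1$ on the nose); with your constant $2\|K\|_1\sqrt{\max\{T-t,3\}}$ the bracket term would become $4C^{-1}\sqrt{\max\{T-t,3\}}\|K\|_1$ rather than the stated $2C^{-1}\sqrt{\max\{T-t,3\}}\|K\|_1$, so the exact statement requires the sharper moment computation.
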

\begin{proof}[Proof of Proposition~\ref{thm:rate}]
Throughout this proof
let $C_1\in [0,\infty)$ satisfy
\begin{equation}
C_1=
      \max\left\{
      T-t, 2|\Gamma(\tfrac{p+1}{2})|^{\frac{2}{p}}\pi^{-\frac{1}{p}}
      \right\}
  (T-t)|\Gamma(\tfrac{p}{2})|^{\frac{2}{p}}(1-\alpha)^{\frac{2}{p}-2}.
\end{equation}
Without loss of generality we assume that
   $\sup_{s\in [t,T)} \left\| \left | (\funcF (0))(s,x+W^0_{s}-W^0_t) \right | \right\|_{L^{\frac{2p}{p-2}}(\P;\R)}<\infty$
   (otherwise the assertion is trivial).
It follows from \eqref{eq:fLipschitz} and the triangle inequality that
  for all $\nu\in \{1,2,\ldots,d+1\}$,  $s\in [0,T)$ it holds that
  \begin{equation}\label{eq:090119}
  \begin{split}
  &
  \sum_{\alpha=1}^{d}K_\alpha
    \left\|
    (W^{0,\alpha}_T-W^{0,\alpha}_{s})
\big \langle \uvec_\nu,    
         \big(   
   1,
     \tfrac{ 
     W^{0}_{T}- W^{0}_{s}
     }{ {T-s} } 
     \big)\big\rangle 
   \right\|_{L^2(\P;\R)}\\
   &=
   \sum_{\alpha=1}^{d}K_\alpha\left(\sqrt{T-s}\1_{\{1\}}(\nu)+\tfrac{\1_{[2,\infty)}(\nu)}{T-s}\|(W^{0,\alpha}_T-W^{0,\alpha}_s)(W^{0,\nu-1}_T-W^{0,\nu-1}_s)\|_{L^2(\P;\R)}\right)\\
   &=\sqrt{T-s}\|K\|_1\1_{\{1\}}(\nu)+\tfrac{\1_{[2,\infty)}(\nu)}{T-s}\left(K_{\nu-1}\|(W^{0,\nu-1}_T-W^{0,\nu-1}_s)^2\|_{L^2(\P;\R)}+ \sum_{\alpha\in \{1,2,\ldots,d\}\setminus \{\nu-1\}}K_\alpha \|W^{0,\alpha}_T-W^{0,\alpha}_s\|_{L^2(\P;\R)}^2\right)\\
   &=\sqrt{T-s}\|K\|_1\1_{\{1\}}(\nu)+\1_{[2,\infty)}(\nu)\left(\sqrt{3}K_{\nu-1}+ \sum_{\alpha\in \{1,2,\ldots,d\}\setminus \{\nu-1\}}K_\alpha\right)\\
   &\leq \max\{\sqrt{T-s},\sqrt{3}\}\|K\|_1.
   \end{split}
  \end{equation}
  Note that the fact that $p\in [2,\infty)$ and the fact that $\alpha\in \left(\frac{p-2}{2(p-1)},\frac{p}{2(p-1)}\right)$ imply that $\frac{p}{2}\in [\alpha(p-1),\alpha(p-1)+1]$ and that 
  $\alpha \in (0,1)$. 
  This, \eqref{eq:090119}, \eqref{eq:fLipschitz}, and Corollary \ref{cor:ub.it.int.lp} (with $p=2$ in the notation of  Corollary~\ref{cor:ub.it.int.lp}) show that
  for all $j\in \{0,1,\ldots,n-1\}$, $\nu_1,\nu_2,\ldots,\nu_j \in \{1,2,\ldots,d+1\}$ 
  it holds that
\begin{equation}\label{eq:estimate.L2error.g}
\begin{split}
&\left\|\left(g(x+W^0_{T}-W^0_{t})-g(x+W^0_{S(j,t)}-W^0_{t})\right) 
\big \langle \uvec_{\nu_j},    
    \big(   
   1,
     \tfrac{ 
     W^{0}_{T}- W^{0}_{\ES(j,t)}
     }{T-\ES(j,t) } 
     \big)\big\rangle 
   \prod_{i=1}^{j}
   \tfrac{1}{\rhoh(\ES(i-1,t),\ES(i,t))}
   \big \langle \uvec_{\nu_{i-1}},   
     \big(   
   1,
     \tfrac{ 
     W^{0}_{\ES(i,t)}- W^{0}_{\ES(i-1,t)}
     }{\ES(i,t)-\ES(i-1,t) } 
     \big)\big\rangle
   \right\|_{L^2(\P;\R)}\\
   &\le 
   \left\|
   \left(
   \sum_{\alpha=1}^d K_\alpha 
   \left|
   W^{0,\alpha}_T-W^{0,\alpha}_{\ES(j,t)}
   \right|
   \right) 
      \big \langle \uvec_{\nu_{j}},
    \big(   
   1,
     \tfrac{ 
     W^{0}_{T}- W^{0}_{\ES(j,t)}
     }{T-\ES(j,t) } 
     \big)\big\rangle 
   \prod_{i=1}^{j}
   \tfrac{1}{\rhoh(\ES(i-1,t),\ES(i,t))}
      \big \langle \uvec_{\nu_{i-1}},
     \big(   
   1,
     \tfrac{ 
     W^{0}_{\ES(i,t)}- W^{0}_{\ES(i-1,t)}
     }{\ES(i,t)-\ES(i-1,t) } 
     \big)\big\rangle
   \right\|_{L^2(\P;\R)}\\
   &= 
   \left\|
   \left\|
   \left(
   \sum_{\alpha=1}^d K_\alpha 
   \left|
   W^{0,\alpha}_T-W^{0,\alpha}_{s}
   \right|
   \right) 
      \big \langle \uvec_{\nu_{j}},
    \big(   
   1,
     \tfrac{ 
     W^{0}_{T}- W^{0}_{s}
     }{T-s } 
     \big)\big\rangle
     \right\|_{L^2(\P;\R)}
     \bigg|_{s=S(j,t)}
   \prod_{i=1}^{j}
   \tfrac{1}{\rhoh(\ES(i-1,t),\ES(i,t))}
      \big \langle \uvec_{\nu_{i-1}},
     \big(   
   1,
     \tfrac{ 
     W^{0}_{\ES(i,t)}- W^{0}_{\ES(i-1,t)}
     }{\ES(i,t)-\ES(i-1,t) } 
     \big)\big\rangle 
   \right\|_{L^2(\P;\R)}\\
    &\le 
   \left\|
   \left(
   \sum_{\alpha=1}^d K_\alpha
    \left\|
   \left(W^{0,\alpha}_T-W^{0,\alpha}_{s}\right)
      \big \langle \uvec_{\nu_{j}},
    \big(   
   1,
     \tfrac{ 
     W^{0}_{T}- W^{0}_{s}
     }{T-s } 
     \big)\big\rangle
     \right\|_{L^2(\P;\R)}
     \bigg|_{s=S(j,t)}
     \right)
   \prod_{i=1}^{j}
   \tfrac{1}{\rhoh(\ES(i-1,t),\ES(i,t))}
      \big \langle \uvec_{\nu_{i-1}},
     \big(   
   1,
     \tfrac{ 
     W^{0}_{\ES(i,t)}- W^{0}_{\ES(i-1,t)}
     }{\ES(i,t)-\ES(i-1,t) } 
     \big)\big\rangle 
   \right\|_{L^2(\P;\R)}\\
    &\le 
   \sqrt{ \max\{T-t,3\}}\|K\|_1
   \left\|
   \prod_{i=1}^{j}
   \tfrac{1}{\rhoh(\ES(i-1,t),\ES(i,t))}
      \big \langle \uvec_{\nu_{i-1}},
     \big(   
   1,
     \tfrac{ 
     W^{0}_{\ES(i,t)}- W^{0}_{\ES(i-1,t)}
     }{\ES(i,t)-\ES(i-1,t) } 
     \big)\big\rangle 
   \right\|_{L^2(\P;\R)}
   \\
   &\le 
    \sqrt{ \max\{T-t,3\}}\|K\|_1
        \left[
  \tfrac{(T-t)\max\left\{
      T-t, 2\frac{\Gamma(\frac{3}{2})}{\sqrt{\pi}}
      \right\}}{(1-\alpha)}
 \right]^{\frac{j}{2}}
(ej)^{\frac{1}{8}}
\left[
\tfrac{1}{\Gamma\left(j+1\right)}
\right]^{\frac{\alpha}{2}}.
\end{split}
\end{equation}
The facts that $\Gamma\left(\frac{p+1}{2}\right)\ge \Gamma\left( \frac{3}{2}\right)=\frac{\sqrt{\pi}}{2}$, that $\frac{2(p-1)}{p}\ge 1$, and that $\alpha\le 1$ prove that
\begin{equation}\label{eq:lb.C1}
  \tfrac{(T-t)\max\left\{
      T-t, 2\frac{\Gamma(\frac{3}{2})}{\sqrt{\pi}}
      \right\}}{(1-\alpha)}
\le C_1.
\end{equation}
Moreover, the fact that $p\ge 2$ ensures that for all $j\in \N_0$ it holds that
$(ej)^{\frac{1}{8}}\le \left[e\left(\tfrac{pj}{2}+1\right) \right]^{\frac{1}{8}}$ and that
$\Gamma(j+1)^{\frac{\alpha}{2}}\ge \Gamma(j+1)^{\beta}$. This together with
\eqref{eq:estimate.L2error.g} and \eqref{eq:lb.C1}
proves that for all $j\in \{0,1,\ldots,n-1\}$, $\nu_1,\nu_2,\ldots,\nu_j \in \{1,2,\ldots,d+1\}$ 
  it holds that
\begin{equation}\label{eq:estimate.L2error.g2}
\begin{split}
&\left\|\left(g(x+W^0_{T}-W^0_{t})-g(x+W^0_{S(j,t)}-W^0_{t})\right) 
   \big \langle \uvec_{\nu_{j}},
    \big(   
   1,
     \tfrac{ 
     W^{0}_{T}- W^{0}_{\ES(j,t)}
     }{T-\ES(j,t) } 
     \big)\big\rangle
   \prod_{i=1}^{j}
   \tfrac{1}{\rhoh(\ES(i-1,t),\ES(i,t))}
      \big \langle \uvec_{\nu_{i-1}},
     \big(   
   1,
     \tfrac{ 
     W^{0}_{\ES(i,t)}- W^{0}_{\ES(i-1,t)}
     }{\ES(i,t)-\ES(i-1,t) } 
     \big)\big\rangle
   \right\|_{L^2(\P;\R)}\\
      &\le 
    \sqrt{ \max\{T-t,3\}}\|K\|_1   
 C_1^{\frac{j}{2}}
\left[e\left(\tfrac{pj}{2}+1\right) \right]^{\frac{1}{8}}
\left[
\tfrac{1}{\Gamma\left(j+1\right)}
\right]^{\beta}.
\end{split}
\end{equation}
Corollary \ref{cor:ub.it.int.lp} and the facts that $p\ge 2$ and $\alpha>\frac{p-2}{2(p-1)}$ prove that
\begin{equation}\label{eq:ub.it.int.lp.c}
\begin{split}
&\left\|
\prod_{i=1}^{j+1}
   \tfrac{1}{\rhoh(\ES(i-1,t),\ES(i,t))}
      \big \langle \uvec_{\nu_{i-1}},
     \big(   
   1,
     \tfrac{ 
     W^{0}_{\ES(i,t)}- W^{0}_{\ES(i-1,t)}
     }{\ES(i,t)-\ES(i-1,t) } 
     \big)\big\rangle
   \right\|_{L^p(\P;\R)}
     \\
      &\le 
        \left[
      \max\left\{
      T-t, 2\tfrac{\Gamma\!\left(\frac{p+1}{2}\right)^{\frac{2}{p}}}{\pi^{\frac{1}{p}}}
      \right\}
  \tfrac{(T-t)\Gamma(\frac{p}{2})^{\frac{2}{p}}}{(1-\alpha)^{\frac{2(p-1)}{p}}
(\frac{p}{2})^{\frac{2(\alpha(p-1)-\frac{p}{2}+1)}{p}} }\right]^{\frac{j+1}{2}}
\left[e^{\frac{p}{2}}\left(\tfrac{pj}{2}+1\right) \right]^{\frac{1}{2p^2}}
\left[
\tfrac{\Gamma\left(\frac{2}{p}\right)}{\Gamma\left(1+j+\frac{2}{p}\right)}
\right]^{\frac{\alpha}{2}-\frac{(1-\alpha)(p-2)}{2p}}\\
&\le 
        \left[
      \max\left\{
      T-t, 2\tfrac{\Gamma\!\left(\frac{p+1}{2}\right)^{\frac{2}{p}}}{\pi^{\frac{1}{p}}}
      \right\}
  \tfrac{(T-t)\Gamma(\frac{p}{2})^{\frac{2}{p}}}{(1-\alpha)^{\frac{2(p-1)}{p}}
 }\right]^{\frac{j+1}{2}}
\left[e\left(\tfrac{pj}{2}+1\right) \right]^{\frac{1}{8}}
\left[
\tfrac{1}{\Gamma\left(1+j+\frac{2}{p}\right)}
\right]^{\frac{\alpha}{2}-\frac{(1-\alpha)(p-2)}{2p}}\\
&=
C_1^{\frac{j+1}{2}}
\left[e\left(\tfrac{pj}{2}+1\right) \right]^{\frac{1}{8}}
\left[
\tfrac{1}{\Gamma\left(1+j+\frac{2}{p}\right)}
\right]^{\beta}
  \end{split}
\end{equation}
This together with H\"older's inequality and independence of $(\mathfrak{r}^{(n)})_{n\in\N}$ and $W^0$ proves that for all
   $j\in \{0,1,\ldots,n-1\}$, $\nu_1,\nu_2,\ldots,\nu_{j+1} \in \{1,2,\ldots,d+1\}$ it holds that
\begin{equation}\label{eq:estimate.L2error.F0}
\begin{split}
 &\left\|(\funcF (0))(\ES(j+1,t),x+W^0_{\ES(j+1,t)}-W^0_t)
   \prod_{i=1}^{j+1}
   \tfrac{1}{\rhoh(\ES(i-1,t),\ES(i,t))}
      \big \langle \uvec_{\nu_{i-1}},
     \big(   
   1,
     \tfrac{ 
     W^{0}_{\ES(i,t)}- W^{0}_{\ES(i-1,t)}
     }{\ES(i,t)-\ES(i-1,t) } 
     \big)\big\rangle
   \right\|_{L^2(\P;\R)}\\
   &\leq
   \left\|
   (\funcF (0))(\ES(j+1,t),x+W^0_{\ES(j+1,t)}-W^0_t)
   \right\|_{L^{\frac{2p}{p-2}}(\P;\R)}
   \left\|
   \prod_{i=1}^{j+1}
   \tfrac{1}{\rhoh(\ES(i-1,t),\ES(i,t))}
      \big \langle \uvec_{\nu_{i-1}},
     \Big(   
   1,
     \tfrac{ 
     W^{0}_{\ES(i,t)}- W^{0}_{\ES(i-1,t)}
     }{\ES(i,t)-\ES(i-1,t) } 
     \big)\big\rangle
   \right\|_{L^p(\P;\R)}\\
   &=
   \left(\int_t^T \E\Big[\Big\|(\funcF(0))(s,x+W^0_s-W^0_t)\Big\|^{\frac{2p}{p-2}}\Big]\P\Big(S(j+1,t)\in ds\Big)\right)^{\frac{p-2}{2p}}
  \\&\qquad\cdot
   \left\|
   \prod_{i=1}^{j+1}
   \tfrac{1}{\rhoh(\ES(i-1,t),\ES(i,t))}
      \big \langle \uvec_{\nu_{i-1}},
     \big(   
   1,
     \tfrac{ 
     W^{0}_{\ES(i,t)}- W^{0}_{\ES(i-1,t)}
     }{\ES(i,t)-\ES(i-1,t) } 
     \big)\big\rangle
   \right\|_{L^p(\P;\R)}\\
   &\le 
   \sup_{s\in [t,T)}
    \left\|
   (\funcF (0))(s,x+W^0_{s}-W^0_t)
   \right\|_{L^{\frac{2p}{p-2}}(\P;\R)}
\left[e\left(\tfrac{pj}{2}+1\right) \right]^{\frac{1}{8}}
C_1^{\frac{j+1}{2}}
\left[
\tfrac{1}{\Gamma\left(1+j+\frac{2}{p}\right)}
\right]^{\beta}
\end{split}
\end{equation}  
and, analogously,
  \begin{equation}\label{eq:estimate.L2error.uinfty}
  \begin{split}
  &\left\|{\bf u}_{\nu_{j+1}}(\ES(j+1,t),x+W^0_{\ES(j+1,t)}-W^0_t)
   \prod_{i=1}^{j+1}
   \tfrac{1}{\rhoh(\ES(i-1,t),\ES(i,t))}
      \big \langle \uvec_{\nu_{i-1}},
     \big(   
   1,
     \tfrac{ 
     W^{0}_{\ES(i,t)}- W^{0}_{\ES(i-1,t)}
     }{\ES(i,t)-\ES(i-1,t) } 
     \big)\big\rangle
   \right\|_{L^2(\P;\R)}
\\ &\leq 
\sup_{s\in [t,T)}
   \left\|
{\bf u}_{\nu_{j+1}}(s,x+W^0_{s}-W^0_t)
\right\|_{L^{\frac{2p}{p-2}}(\P;\R)}
   \left[e\left(\tfrac{pj}{2}+1\right) \right]^{\frac{1}{8}}
C_1^{\frac{j+1}{2}}
\left[
\tfrac{1}{\Gamma\left(1+j+\frac{2}{p}\right)}
\right]^{\beta}.
  \end{split}
  \end{equation}
  Next we apply Lemma~\ref{l:estimate.L2error}. To this end note that 
   $\sup_{s\in [t,T)} \left\| \left | (\funcF (0))(s,x+W^0_{s}-W^0_t) \right | \right\|_{L^{\frac{2p}{p-2}}(\P;\R)}<\infty$
   and $\tfrac{2p}{p-2}\geq 2$
   ensure that
   for all $r\in [1,2)$ it holds that
  \begin{equation}
  \sup_{s\in [t,T)}
  \E\!\left[
  \left|
  \big(\funcF(0)\big)
  (s,x+W_{s}^0-W_t^0)
  \right|^{r}
  \right]<\infty.
  \end{equation}
  Moreover, it holds for all $r\in [1,2)$ that
\begin{equation}
\int_0^1
  \frac{1}{
  s^{\frac{r}{2}}
\left[\rho(s)\right]^{r-1}  
}
  \,ds= \frac{1}{(1-\alpha)^{r-1}}\int_0^1 s^{-(r(\frac{1}{2}-\alpha)+\alpha)}\, ds <\infty.
\end{equation}  
Combing Lemma~\ref{l:estimate.L2error}, \eqref{eq:estimate.L2error.g2}, 
\eqref{eq:estimate.L2error.F0}, and \eqref{eq:estimate.L2error.uinfty} proves
  that
  \begin{equation}  \begin{split}\label{eq:glob.error.aux1}
   &     \left\| {\bf U}_{{n},M}^{0,\nu_0}({t},x)-{\bf u}_{\nu_0}(t,x)\right\|_{L^2(\P;\R)}
     \\&
    \leq 
    \sqrt{ \max\{T-t,3\}}\|K\|_1
    \sum_{j=0}^{n-1}
    \sum_{\substack{\nu_1,\nu_2,\ldots,\nu_{j+1} \in\\ \{1,2,\ldots,d+1\}}}
    \tbinom{n-1}{j}
   \tfrac{\mathrm{1}_{\{1\}}(\nu_{j+1})\left[e\left(\tfrac{pj}{2}+1\right) \right]^{\frac{1}{8}}2^j
C_1^{\frac{j}{2}}   
   \left[\prod_{i=1}^j L_{\nu_i}\right]}{\sqrt{M^{n-j}}}   
\left[
\tfrac{1}{\Gamma\left(j+1\right)}
\right]^{\beta}\\
   &+
   \sup_{s\in [t,T)}
   \left\|
   (\funcF (0))(s,x+W^0_{s}-W^0_t)
   \right\|_{L^{\frac{2p}{p-2}}(\P;\R)}
    \sum_{j=0}^{n-1}
    \sum_{\substack{\nu_1,\nu_2,\ldots,\nu_{j+1} \in\\ \{1,2,\ldots,d+1\}}}
    \tbinom{n-1}{j}
   \tfrac{\mathrm{1}_{\{1\}}(\nu_{j+1})\left[e\left(\tfrac{pj}{2}+1\right) \right]^{\frac{1}{8}} 2^j C_1^{\frac{j+1}{2}} \left[\prod_{i=1}^j L_{\nu_i}\right]}{\sqrt{M^{n-j}}   
   \left(\Gamma\left(1+j+\frac{2}{p}\right)\right)^{\!\beta}}
   \\
   &+
    \sum_{j=0}^{n-1}
    \sum_{\substack{\nu_1,\nu_2,\ldots,\nu_{j+1} \in\\ \{1,2,\ldots,d+1\}}}
    \tbinom{n-1}{j}
\sup_{s\in [t,T)}
   \left\|
{\bf u}_{\nu_{j+1}}(s,x+W^0_{s}-W^0_t)
\right\|_{L^{\frac{2p}{p-2}}(\P;\R)}
   \tfrac{\left[e\left(\tfrac{pj}{2}+1\right) \right]^{\frac{1}{8}} 2^j C_1^{\frac{j+1}{2}} \left[\prod_{i=1}^{j+1} L_{\nu_i}\right]}{\sqrt{M^{n-j-1}}
   \left(\Gamma\left(1+j+\frac{2}{p}\right)\right)^{\!\beta}} .
    \end{split}
    \end{equation}
Observe that for all $j\in \N_0$ it holds that
  \begin{equation}
  \left[
    \sum_{\substack{\nu_1,\nu_2,\ldots,\nu_{j+1} \in \\ \{1,2,\ldots,d+1\}}}
    \mathrm{1}_{\{1\}}(\nu_{j+1})
    \prod_{i=1}^{j}L_{\nu_i}
    \right]=\|L\|_1^j.
  \end{equation}
  Combining this with \eqref{eq:glob.error.aux1}
proves
  that
   \begin{equation}  \begin{split}\label{eq:glob.error.aux2}
   &     \left\| {\bf U}_{{n},M}^{0,\nu_0}({t},x)-{\bf u}_{\nu_0}(t,x)\right\|_{L^2(\P;\R)}
     \\&
    \leq 
    \sqrt{ \max\{T-t,3\}}\|K\|_1
    \sum_{j=0}^{n-1}
   \tfrac{\left[e\left(\tfrac{pj}{2}+1\right) \right]^{\frac{1}{8}}2^j
C_1^{\frac{j}{2}}   
   \|L\|_1^j}{\sqrt{M^{n-j}}}   
    \tbinom{n-1}{j}
\left[
\tfrac{1}{\Gamma\left(j+1\right)}
\right]^{\beta}\\
   &\quad+
   \sup_{s\in [t,T)}
   \left\|
   (\funcF (0))(s,x+W^0_{s}-W^0_t)
   \right\|_{L^{\frac{2p}{p-2}}(\P;\R)}
    \sum_{j=0}^{n-1}
   \tfrac{ \left[e\left(\tfrac{pj}{2}+1\right) \right]^{\frac{1}{8}}
   2^j\|L\|_1^jC_1^{\frac{j+1}{2}}}{\sqrt{M^{n-j}}}   
    \tbinom{n-1}{j}
\left[
\tfrac{1}{\Gamma\left(1+j+\frac{2}{p}\right)}
\right]^{\beta}
   \\
   &\quad+
\sup_{s\in [t,T),i\in\{1,2,\ldots,d+1\}}
   \left\|
{\bf u}_i(s,x+W^0_{s}-W^0_t)
\right\|_{L^{\frac{2p}{p-2}}(\P;\R)}
    \sum_{j=0}^{n-1}
   \tfrac{\left[e\left(\tfrac{pj}{2}+1\right) \right]^{\frac{1}{8}}
   2^j\|L\|_1^{j+1} C_1^{\frac{j+1}{2}}}{\sqrt{M^{n-j-1}}}     
   \tbinom{n-1}{j}
\left[
\tfrac{1}{\Gamma\left(1+j+\frac{2}{p}\right)}
\right]^{\beta}
    \end{split}
    \end{equation}
    This and the facts that $2\sqrt{C_1}\|L\|_1\le 2\sqrt{C_1}\max\{1,\|L\|_1\}\le C$, $p\ge 2$, and $C\ge 1$ imply that
    \begin{equation}  \begin{split}\label{eq:glob.error.aux3}
   &     \left\| {\bf U}_{{n},M}^{0,\nu_0}({t},x)-{\bf u}_{\nu_0}(t,x)\right\|_{L^2(\P;\R)}
     \\&
    \leq 
    \sqrt{ \max\{T-t,3\}}\|K\|_1
    \sum_{j=0}^{n-1}
   \tfrac{\left[e\left(\tfrac{pj}{2}+1\right) \right]^{\frac{1}{8}}
   C^j 
   }{\sqrt{M^{n-j}}}  
    \tbinom{n-1}{j} 
\left[
\tfrac{1}{\Gamma\left(j+1\right)}
\right]^{\beta}\\
   &\quad +
  \frac{1}{2}
   \sup_{s\in [t,T)}
  \left\|
   (\funcF (0))(s,x+W^0_{s}-W^0_t)
   \right\|_{L^{\frac{2p}{p-2}}(\P;\R)}
    \sum_{j=0}^{n-1}
   \tfrac{ \left[e\left(\tfrac{pj}{2}+1\right) \right]^{\frac{1}{8}}
   C^{j+1}}{\sqrt{M^{n-j}}}   
      \tbinom{n-1}{j}
\left[
\tfrac{1}{\Gamma\left(j+1\right)}
\right]^{\beta}
   \\
   &\quad +
\frac{1}{2}   
\sup_{s\in [t,T),i\in\{1,2,\ldots,d+1\}}
   \left\|
{\bf u}_i(s,x+W^0_{s}-W^0_t)
\right\|_{L^{\frac{2p}{p-2}}(\P;\R)}
    \sum_{j=0}^{n-1}
   \tfrac{\left[e\left(\tfrac{pj}{2}+1\right) \right]^{\frac{1}{8}}
   C^{j+1}}{\sqrt{M^{n-j-1}}}     
    \tbinom{n-1}{j}
\left[
\tfrac{1}{\Gamma\left(j+1\right)}
\right]^{\beta}\\
&
\leq
\tfrac{\left[e\left(\tfrac{pn}{2}+1\right) \right]^{\frac{1}{8}}
C^{n-1}
}{\sqrt{M^{n-1}}}
\left[
\sum_{j=0}^{n-1}
    \tbinom{n-1}{j} 
\tfrac{(\sqrt{M})^j}{\Gamma\left(j+1\right)^\beta}
\right]
\bigg[
\tfrac{\sqrt{ \max\{T-t,3\}}\|K\|_1}{\sqrt{M}}
+
 \tfrac{C
   \sup_{s\in [t,T)}
 \left\|
   (\funcF (0))(s,x+W^0_{s}-W^0_t)
   \right\|_{L^{\frac{2p}{p-2}}(\P;\R)}}{2\sqrt{M}} 
\\&\qquad\qquad\ \qquad\qquad \qquad\qquad\qquad\qquad \qquad\qquad
   +
   \tfrac{C
\sup_{s\in [t,T),i\in\{1,2,\ldots,d+1\}}
   \left\|
{\bf u}_i(s,x+W^0_{s}-W^0_t)
\right\|_{L^{\frac{2p}{p-2}}(\P;\R)}}{2}      
\bigg].
    \end{split}
    \end{equation}
     Next note that for all $j\in \N_0$ it holds that
    \begin{equation}\label{eq:geogamma}
    \frac{(\sqrt{M})^j}{\Gamma(j+1)^{\beta}}
    =
    \left(
   \frac{(M^{\frac{1}{2\beta}})^{j}}{\Gamma(j+1)}   
   \right)^{\beta}   
\le 
    \left(
    \sum_{k=0}^\infty
   \frac{(M^{\frac{1}{2\beta}})^k}{\Gamma(k+1)}   
   \right)^{\beta}   
   =\exp\left(\beta M^{\frac{1}{2\beta}}\right)
    \end{equation}
    and that
    \begin{equation}\label{eq:sumbinom}
    \sum_{j=0}^{n-1}
    \tbinom{n-1}{j} = 2^{n-1}.
    \end{equation}
Combining \eqref{eq:glob.error.aux2}, \eqref{eq:geogamma}, and \eqref{eq:sumbinom} shows that
 \begin{equation}  \begin{split}\label{eq:glob.error.aux4}
   &     \left\| {\bf U}_{{n},M}^{0,\nu_0}({t},x)-{\bf u}_{\nu_0}(t,x)\right\|_{L^2(\P;\R)}
\leq
\tfrac{\left[e\left(\tfrac{pn}{2}+1\right) \right]^{\frac{1}{8}}
(2C)^{n-1}
\exp\left(\beta M^{\frac{1}{2\beta}}\right)
}{\sqrt{M^{n-1}}}
\cdot
\bigg[
\tfrac{\sqrt{ \max\{T-t,3\}}\|K\|_1}{\sqrt{M}}
\\&\quad\quad
+
 \tfrac{C
   \sup_{s\in [t,T)}
 \left\|
   (\funcF (0))(s,x+W^0_{s}-W^0_t)
   \right\|_{L^{\frac{2p}{p-2}}(\P;\R)}}{2\sqrt{M}} 
   +
   \tfrac{C
\sup_{s\in [t,T),i\in\{1,2,\ldots,d+1\}}
   \left\|
{\bf u}_i(s,x+W^0_{s}-W^0_t)
\right\|_{L^{\frac{2p}{p-2}}(\P;\R)}}{2}      
\bigg]
.
    \end{split}
    \end{equation}    
  This completes the proof of Proposition~\ref{thm:rate}.
  \end{proof}
  
  \section{Regularity analysis for solutions of certain differential equations}\label{sec:ex_sol}

The error analysis in Subsection~\ref{subsec:error_analysis} above provides upper bounds for the approximation errors of the MLP approximations in~\eqref{eq:def:U}. The established upper bounds contain certain norms of the unknown exact solutions of the PDEs which we intend to approximate; see, e.g., the right-hand side of \eqref{eq:ub_error_thm} in Proposition~\ref{thm:rate} in Subsection~\ref{subsec:error_analysis} above for details. 
In Lemma~\ref{upper_exact} below we establish suitable upper bounds for these norms of the unknown exact solutions of the PDEs which we intend to approximate. In our proof of Lemma~\ref{upper_exact} we employ certain a priori
estimates for solutions of BSDEs which we establish in the essentially well-known result in Lemma~\ref{upper_exact} below (see, e.g., El Karoui et al.\ \cite[Proposition~2.1 and Equation~(2.12)]{ElKarouiPengQuenez1997} for results related to Lemma~\ref{upper_exact} below).

\subsection{Regularity analysis for solutions of backward stochastic differential equations (BSDEs)}

%
%
\begin{lemma}\label{lem:grad_bound}
Let $T\in(0,\infty)$,
$t\in[0,T]$,
$d\in\N$,
$L_1,L_2,\ldots,L_{d+1}\in [0,\infty)$,
let $\left\|\cdot\right\|_2\colon \R^d \to [0,\infty)$ be the $d$-dimensional Euclidean norm,
let $(\Omega,\mathcal{F},\P)$ be a probability space
with a normal filtration $(\mathbb{F}_t)_{t\in[t,T]}$,
let $f,\tilde f \colon [t,T] \times \R \times \R^d \times\Omega \to \R$
be functions satisfying that for all $s\in [t,T]$ the function
$[t,s] \times \R \times \R^d \times\Omega \ni (u,y,z, \omega) \mapsto (f(u,y,z,\omega),\tilde f(u,y,z,\omega))\in \R^2$ is 
$(\mathcal B([t,s]) \otimes \mathcal{B}(\R) \otimes \mathcal{B}(\R^d)\otimes \mathbb{F}_s)$/$\mathcal{B}(\R^2)$ is measurable,
assume
that for all $s\in[t,T]$, $y,\tilde{y}\in\R$, $z,\tilde{z}\in\R^d$
it holds $\P$-a.s.\ that
\begin{equation}  \begin{split}\label{eq:fLipBSDE}
 |f(s,y,z)-f(s,\tilde{y},\tilde{z})|\leq L_1|y-\tilde{y}|+\sum_{j=1}^dL_{j+1}|z_j-\tilde{z}_j|,
\end{split}     \end{equation}
let $Y,\tilde{Y}\colon[t,T]\times\Omega\to\R$,
$W\colon[t,T]\times\Omega\to\R^d$
be $(\mathbb{F}_s)_{s\in[t,T]}$-adapted processes with continuous sample paths,
assume that $(W_{s+t}-W_{t})_{s\in[0,T-t]}$ is a standard Brownian motion,
let
$Z,\tilde{Z}\colon[t,T]\times\Omega\to\R^d$
be $(\mathbb{F}_s)_{s\in[t,T]}$-adapted $(\mathcal{B}([t,T])\otimes\mathcal{F})$/$\mathcal{B}(\R^d)$-measurable
processes,
assume that it holds $\P$-a.s.\ that
$\int_t^T |f(s,Y_s,Z_s)|\,ds<\infty$, $\int_t^T |\tilde f(s,\tilde Y_s,\tilde Z_s)|\,ds<\infty$, 
$\int_t^T\|Z_s\|_2^2\,ds<\infty$, and $\int_t^T\|\tilde Z_s\|_2^2\,ds<\infty$, 
assume that
for all $s\in[t,T]$ it holds $\P$-a.s.\ that
\begin{equation}  \begin{split}\label{eq:Y}
  Y_s&=Y_T+\int_s^T f(u,Y_u,Z_u)\,du-\int_s^T \big(Z_u\big)^T\,dW_u,\\
  \tilde{Y}_s&=\tilde{Y}_T+\int_s^T \tilde f(u,\tilde{Y}_u,\tilde{Z}_u)\,du-\int_s^T \big(\tilde{Z}_u\big)^T\,dW_u,
\end{split}     \end{equation}
and that $\E\!\left[\sup_{s\in [t,T]}Y^2_{s}\right]<\infty$ and $\E\!\left[\sup_{s\in [t,T]}\tilde{Y}_{s}^2\right]<\infty$.
  Then it holds $\P$-a.s.\ that
  \begin{equation}\label{eq:grad_bound}
  \left|Y_{t}-\tilde{Y}_{t}\right|\le
     e^{L_1(T-t)}\left(\|Y_T-\tilde{Y}_T\|_{L^{\infty}(\P;\R)}+(T-t)\sup_{s\in [t,T], y\in \R, z\in \R^d}\|f(s,y,z)-\tilde f(s,y,z)\|_{L^\infty(\P;\R)}\right).
  \end{equation}
\end{lemma}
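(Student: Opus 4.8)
The plan is to reduce the estimate to a Gronwall-type argument for the difference process $\Delta Y_s = Y_s - \tilde Y_s$, exploiting the BSDE structure in~\eqref{eq:Y} together with the Lipschitz hypothesis~\eqref{eq:fLipBSDE}. First I would write, for $s\in[t,T]$,
\begin{equation*}
\Delta Y_s = \Delta Y_T + \int_s^T \bigl[f(u,Y_u,Z_u) - \tilde f(u,\tilde Y_u,\tilde Z_u)\bigr]\,du - \int_s^T (\Delta Z_u)^T\,dW_u,
\end{equation*}
where $\Delta Z = Z - \tilde Z$, and then split the driver difference as
\begin{equation*}
f(u,Y_u,Z_u) - \tilde f(u,\tilde Y_u,\tilde Z_u) = \bigl[f(u,Y_u,Z_u) - f(u,\tilde Y_u,\tilde Z_u)\bigr] + \bigl[f(u,\tilde Y_u,\tilde Z_u) - \tilde f(u,\tilde Y_u,\tilde Z_u)\bigr].
\end{equation*}
By~\eqref{eq:fLipBSDE} the first bracket is bounded $\P$-a.s.\ by $L_1|\Delta Y_u| + \sum_{j=1}^d L_{j+1}|\Delta Z_{u,j}|$, while the second bracket is bounded by $\delta := \sup_{s,y,z}\|f(s,y,z)-\tilde f(s,y,z)\|_{L^\infty(\P;\R)}$. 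The standard device is to linearize: introduce the $[-L_1,L_1]$-valued adapted process $a_u = \mathbf 1_{\{\Delta Y_u\neq 0\}}\,(\Delta Y_u)^{-1}\bigl[f(u,Y_u,Z_u)-f(u,\tilde Y_u,\tilde Z_u) - (\text{the }z\text{-part})\bigr]$ (more precisely, one should absorb only the $y$-increment into $a_u$ and handle the $z$-increment either via a similar bounded process $b_u\in\R^d$ with $\|b_u\|_\infty\le \max_j L_{j+1}$ contracted against $\Delta Z_u$, or by a Girsanov change of measure so that $\Delta Z$ enters only through the stochastic integral). With $b_u$ defined analogously so that $f(u,Y_u,Z_u)-f(u,\tilde Y_u,\tilde Z_u) = a_u\,\Delta Y_u + (b_u)^T\Delta Z_u$ with $|a_u|\le L_1$, the difference equation becomes a linear BSDE.

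Next I would apply Girsanov's theorem: since $b$ is bounded, $\tilde W_s := W_s - \int_t^s b_u\,du$ is a Brownian motion under an equivalent measure $\mathbb Q$, and the BSDE for $\Delta Y$ reads
\begin{equation*}
\Delta Y_s = \Delta Y_T + \int_s^T \bigl(a_u\,\Delta Y_u + r_u\bigr)\,du - \int_s^T (\Delta Z_u)^T\,d\tilde W_u,
\end{equation*}
where $|r_u|\le\delta$ $\P$-a.s.\ (hence $\mathbb Q$-a.s., by equivalence). Then I would solve this scalar linear equation explicitly by the integrating-factor formula: setting $\Gamma_s = \exp(\int_t^s a_u\,du)$, the process $\Gamma_s\,\Delta Y_s + \int_t^s \Gamma_u r_u\,du$ is a $\mathbb Q$-local martingale, and the integrability assumptions $\E[\sup_s Y_s^2]<\infty$, $\E[\sup_s\tilde Y_s^2]<\infty$, $\int_t^T\|Z_u\|_2^2\,du<\infty$, $\int_t^T\|\tilde Z_u\|_2^2\,du<\infty$ together with boundedness of $a,b$ upgrade this to a genuine $\mathbb Q$-martingale. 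Taking $\mathbb Q$-conditional expectation at time $t$ and using $\mathcal F_t$-measurability of $\Delta Y_t$ gives
\begin{equation*}
\Delta Y_t = \E^{\mathbb Q}\!\left[\,\frac{\Gamma_T}{\Gamma_t}\,\Delta Y_T + \int_t^T \frac{\Gamma_u}{\Gamma_t}\, r_u\,du \,\Big|\,\mathcal F_t\right].
\end{equation*}
Since $\Gamma_u/\Gamma_t = \exp(\int_t^u a_v\,dv) \le e^{L_1(u-t)}\le e^{L_1(T-t)}$ pointwise (using $a\le L_1$; if one only has $|a|\le L_1$ one still gets $\le e^{L_1(T-t)}$ after bounding $|\Gamma_u/\Gamma_t|$), a $\mathbb Q$-a.s.\ bound $|\Delta Y_T|\le\|Y_T-\tilde Y_T\|_{L^\infty(\P;\R)}$ and $|r_u|\le\delta$ yield
\begin{equation*}
|\Delta Y_t| \le e^{L_1(T-t)}\Bigl(\|Y_T-\tilde Y_T\|_{L^\infty(\P;\R)} + (T-t)\,\delta\Bigr)\qquad \P\text{-a.s.},
\end{equation*}
which is precisely~\eqref{eq:grad_bound}.

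The main obstacle I anticipate is justifying that the local martingale $\Gamma_s\Delta Y_s + \int_t^s\Gamma_u r_u\,du$ is a true martingale under $\mathbb Q$ rather than merely a local one — this is where the square-integrability hypotheses on $Z,\tilde Z$ and the $L^2$-boundedness of $\sup_s Y_s,\sup_s\tilde Y_s$ are consumed, and one must combine a localization argument with a uniform-integrability estimate (e.g.\ via the Burkholder–Davis–Gundy inequality and Hölder under the change of measure, noting that the Girsanov density is in every $L^p$ because $b$ is bounded). A secondary technical point is the correct construction of the adapted bounded processes $a$ and $b$ realizing the linearization while preserving measurability; this is a routine but slightly delicate measurable-selection step. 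Everything else — the integrating factor, the conditional expectation, the pointwise bound on $\Gamma_u/\Gamma_t$ — is elementary.
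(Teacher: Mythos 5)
Your proposal is correct and follows essentially the same route as the paper: linearize the driver difference via bounded adapted processes $a$ (for the $y$-increment) and $b$ (for the $z$-increment, constructed by the same telescoping/difference-quotient device), represent $Y_t-\tilde Y_t$ as a conditional expectation weighted by an exponential factor, and justify the passage from local to true martingale by localization together with the assumed square-integrability of $\sup_s Y_s$, $\sup_s\tilde Y_s$ and the boundedness of $a,b$. The only difference is presentational: where you invoke Girsanov and work under an equivalent measure $\mathbb Q$ with integrating factor $\exp(\int_t^s a_u\,du)$, the paper stays under $\P$ and folds the density into the single adjoint process $\Gamma_s=\exp(\int_t^s (A_r-\tfrac{1}{2}\|B_r\|^2)\,dr+\int_t^s\langle B_r,dW_r\rangle)$, bounding $\E[\Gamma_T\,|\,\mathbb{F}_t]$ and $\E[\int_t^T\Gamma_s\,ds\,|\,\mathbb{F}_t]$ by the exponential-martingale property — the two formulations are equivalent.
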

\begin{proof}[Proof of Lemma~\ref{lem:grad_bound}]
Throughout the proof let
$
  \langle \cdot, \cdot \rangle \colon
    \R^d \times \R^d
  \to
  [0,\infty)
$
the function that satisfies 
for all $ v = ( v_1, \dots, v_d ) $, $ w = ( w_1, \dots, w_d ) \in \R^d $
that
$
  \langle
    v, w
  \rangle
    =
   \sum_{i=1}^d v_i w_i
$.
Without loss of generality we suppose that 
$\sup_{s\in [t,T], y\in \R, z\in \R^d}\|f(s,y,z)-\tilde f(s,y,z)\|_{L^\infty(\P;\R)}<\infty$.
  Throughout this proof let
  $\left\|\cdot\right\|\colon\R^d\to[0,\infty)$,
  be the Euclidean norm,
  let
  $\langle\cdot,\cdot\rangle\colon\R^d\times\R^d\to\R$
  be the Euclidean scalar product,
  let
  $A\colon[t,T]\times\Omega\to\R$,
  $B\colon[t,T]\times\Omega\to\R^d$
  be the functions which satisfy for all $s\in[t,T]$, $j\in\{1,2,\ldots,d\}$ that
 \begin{align}
  \label{eq:A}
    A_s&=\begin{cases}\frac{f(s,Y_s,\tilde{Z}_s)-f(s,\tilde{Y}_s,\tilde{Z}_s)}{Y_s-\tilde{Y}_s}&\text{if $Y_s\neq \tilde{Y}_s$}\\
                      0&\text{else}
        \end{cases},\\
  \label{eq:B}
    B_s(j)&=\begin{cases}\frac{f(s,Y_s,Z_s(1),\ldots,Z_s(j),\tilde{Z}_s(j+1)\ldots,\tilde{Z}_s(d))-f(s,Y_s,Z_s(1),\ldots,Z_s(j-1),\tilde{Z}_s(j)\ldots,\tilde{Z}_s(d))}{Z_s(j)-\tilde{Z}_s(j)}&\text{if $Z_s(j)\neq \tilde{Z}_s(j)$}\\
                      0&\text{else}
        \end{cases},
  \end{align}
  and let $\Gamma\colon[t,T]\times\Omega\to\R$ be the function that satisfies 
  for all $s\in [t,T]$ that
  $\Gamma_s=e^{\int_t^s A_r-\frac{\|B_r\|^2}{2} dr
    +\int_t^s\langle B_r,dW_r\rangle}$.
   It\^o's formula implies that for all $s\in [t,T]$ it holds $\P$-a.s.\ that
  \begin{equation}\label{eq:Gamma}
  \Gamma_s=1+\int_{t}^s \Gamma_rA_r\,dr+\int_{t}^s\Gamma_r \langle B_r,dW_r\rangle.
  \end{equation}
  Then It\^o's formula, \eqref{eq:Y}, and \eqref{eq:Gamma}
  yield that
   for all $u\in [t,T)$
  it holds $\P$-a.s.\ that
  \begin{equation}  \begin{split}\label{eq:Ito1}
    &\Gamma_u Y_u-Y_{t}
    =\int_{t}^u \Gamma_s
    \Big(-f\left(s,Y_s,Z_s\right)+A_sY_s+ \langle B_s,Z_s\rangle\Big)\,ds
    +\int_{t}^u\Gamma_sY_s \langle B_s,dW_s\rangle+\int_{t}^u\Gamma_s \langle Z_s,dW_s\rangle
  \end{split}     \end{equation}
  and that
  \begin{equation}  \begin{split}\label{eq:Ito2}
    &\Gamma_u \tilde{Y}_u-\tilde{Y}_t
    =\int_{t}^u \Gamma_s
    \Big(-\tilde f(s,\tilde{Y}_s,\tilde{Z}_s)+A_s\tilde{Y}_s+ \langle B_s,\tilde{Z}_s\rangle\Big)\,ds
    +\int_{t}^u\Gamma_s\tilde{Y}_s \langle B_s,dW_s\rangle+\int_{t}^u\Gamma_s \langle \tilde{Z}_s,dW_s\rangle.
  \end{split}     \end{equation}
  Next~\eqref{eq:A}, \eqref{eq:B}, and a telescoping sum imply
  for all $s\in[t,T]$ that
  \begin{equation}  \begin{split}\label{eq:next.telescope}
    &A_s(Y_s-\tilde{Y}_s)+ \langle B_s,Z_s-\tilde{Z}_s\rangle
    =
    A_s(Y_s-\tilde{Y}_s)+\sum_{j=1}^d B_s(j)\big(Z_s(j)-\tilde{Z}_s(j)\big)
    \\&
    =f(s,Y_s,\tilde{Z}_s)-f(s,\tilde{Y}_s,\tilde{Z}_s)
    +\sum_{j=1}^d\Big(f(s,Y_s,Z_s(1),\ldots,Z_s(j),\tilde{Z}_s(j+1),\ldots,\tilde{Z}_s(d))
    \\&\qquad\qquad-f(s,Y_s,Z_s(1),\ldots,Z_s(j-1),\tilde{Z}_s(j),\ldots,\tilde{Z}_s(d))\Big)
    \\&=f(s,Y_s,\tilde{Z}_s)-f(s,\tilde{Y}_s,\tilde{Z}_s)
    +f(s,Y_s,Z_s)-f(s,Y_s,\tilde{Z}_s)
    \\&
    =f(s,Y_s,Z_s)-f(s,\tilde{Y}_s,\tilde{Z}_s).
  \end{split}     \end{equation}
  This, ~\eqref{eq:Ito1}, and~\eqref{eq:Ito2}
  imply
  that for all $u\in [t,T]$ it holds $\P$-a.s.\ that
  \begin{equation}  \begin{split}\label{eq:Ito3}
    &Y_t-\tilde{Y}_t
    -\Gamma_u (Y_u-\tilde{Y}_u)
    +\int_t^u\Gamma_s(Y_s-\tilde{Y}_s)\langle B_s,dW_s\rangle
    +\int_t^u\Gamma_s\langle Z_s-\tilde{Z}_s,dW_s\rangle
    \\&=\int_t^u \Gamma_s
    \Big(f(s,Y_s,Z_s)-\tilde f(s,\tilde{Y}_s,\tilde{Z}_s)
    -\left(A_s(Y_s-\tilde{Y}_s)+\langle B_s,Z_s-\tilde{Z}_s\rangle\Big)\right)\,ds\\
    &=
    \int_t^u \Gamma_s
    \Big(f(s,\tilde Y_s,\tilde Z_s)-\tilde f(s,\tilde{Y}_s,\tilde{Z}_s)\Big)\,ds.
  \end{split}     \end{equation}
  For every $n\in \N$ let $\tau_n\colon \Omega \to [t,T)$ be the stopping time that satisfies
  \begin{equation}
  \tau_n=\inf\left(\left\{r\in [t,T]: \left|\int_t^r\Gamma_s(Y_s-\tilde{Y}_s)\langle B_s,dW_s\rangle\right|+\left|\int_t^r\Gamma_s\langle Z_s-\tilde{Z}_s,dW_s\rangle\right|\ge n \right\}\cup\{T-\tfrac{T}{n}\}\right).
  \end{equation}
 Taking conditional expectations in \eqref{eq:Ito3} 
implies for all $n\in \N$ that $\P$-a.s.\ it holds that
\begin{equation}  \label{eq:deltaIto3}
    Y_t-\tilde{Y}_t=\E\left[\Gamma_{\tau_n} (Y_{\tau_n}-\tilde{Y}_{\tau_n})
    + \int_t^{\tau_n}\Gamma_s
    \Big(f(s,\tilde Y_s,\tilde Z_s)-\tilde f(s,\tilde{Y}_s,\tilde{Z}_s)\Big)\,ds\big | \mathbb F_t\right].
    \end{equation}
    Next note that assumption~\eqref{eq:fLipBSDE} ensures that
   $\sup_{s\in[t,T]}\|B_s\|^2\leq \sum_{j=2}^{d+1}(L_j)^2$.    
This and an exponential martingale argument imply that
\begin{equation}\label{eq:2moment_exp_mart}
    \begin{split}
    \E\left[\left(e^{-\int_t^T\frac{\|B_s\|^2}{2} \,ds+\int_t^T\langle B_s,dW_s\rangle}\right)^2\right]
    &=
    \E\left[e^{\int_t^T \|B_s\|^2 \,ds}
    e^{-\int_t^T \frac{\|2B_s\|^2}{2} \,ds+\int_t^T\langle 2B_s,dW_s\rangle}\right]\\
&    \le 
    e^{(T-t)\sum_{j=2}^{d+1}(L_j)^2}
    \E\left[
    e^{-\int_t^T \frac{\|2B_s\|^2}{2} \,ds+\int_t^T\langle 2B_s,dW_s\rangle}\right]\\
    &
    = e^{(T-t)\sum_{j=2}^{d+1}(L_j)^2}.
    \end{split}
    \end{equation}
Note that by assumption~\eqref{eq:fLipBSDE} it holds $\P$-a.s.\ that 
$\sup_{s\in [t,T]}|A_s|\leq L_1$.
This, Doob's martingale inequality, and \eqref{eq:2moment_exp_mart}, 
show that
\begin{equation}
\begin{split}
\E\left[\sup_{s\in [t,T]}\Gamma_s^2\right]
&=
\E\left[\sup_{s\in [t,T]}e^{\int_t^s 2A_rdr}\left(e^{-\int_t^s\frac{\|B_r\|^2}{2} dr+\int_t^s\langle B_r,dW_r\rangle}\right)^2\right]\\
&
\le 
e^{2L_1(T-t)}\E\left[\sup_{s\in [t,T]}\left(e^{-\int_t^s\frac{\|B_r\|^2}{2} dr+\int_t^s\langle B_r,dW_r\rangle}\right)^2\right]\\
&\le 4 e^{2L_1(T-t)}\E\left[\left(e^{-\int_t^T\frac{\|B_r\|^2}{2} dr+\int_t^T\langle B_r,dW_r\rangle}\right)^2\right]
\le 4 e^{(T-t)\left(2L_1+\sum_{j=2}^{d+1}(L_j)^2\right)}.
\end{split}
\end{equation}  
    This together with the Cauchy-Schwarz inequality, and the assumptions that 
$\E\!\left[\sup_{s\in [t,T]}Y^2_{s}\right]<\infty$ and $\E\!\left[\sup_{s\in [t,T]}\tilde{Y}_{s}^2\right]<\infty$
    prove that
    \begin{equation}\label{eq:cs_aux}
 \E\!\left[\sup_{s\in [t,T]}|\Gamma_{s} (Y_{s}-\tilde{Y}_{s})|\right]
 \le \sqrt{\E\!\left[\sup_{s\in [t,T]}\Gamma_s^2\right]\E\!\left[\sup_{s\in [t,T]}(Y_{s}-\tilde{Y}_{s})^2\right]}<\infty.
\end{equation}
Moreover, it holds that
\begin{equation}
\E\left[
    \int_t^{T}\Gamma_s
    \Big|f(s,\tilde Y_s,\tilde Z_s)-\tilde f(s,\tilde{Y}_s,\tilde{Z}_s)\Big|\,ds \right]
    \le \E\left[
    \int_t^{T}\Gamma_s
    \,ds \right]\left[
    \sup_{s\in [t,T], y\in \R, z\in \R^d}\|f(s,y,z)-\tilde f(s,y,z)\|_{L^\infty(\P;\R)}
    \right]<\infty.
\end{equation}
This, \eqref{eq:cs_aux}, Lebesgue's dominated convergence theorem, \eqref{eq:deltaIto3}, continuity of $Y$, $\tilde{Y}$, $\Gamma$, and the 
fact that it holds $\P$-a.s.\ that $\lim_{n\to \infty}\tau_n=T$ ensure that $\P$-a.s.\ it holds that
\begin{equation}  \begin{split}
    Y_t-\tilde{Y}_t&=\lim_{n\to\infty}\E\left[\Gamma_{\tau_n} (Y_{\tau_n}-\tilde{Y}_{\tau_n})
    + \int_t^{\tau_n}\Gamma_s
    \Big(f(s,\tilde Y_s,\tilde Z_s)-\tilde f(s,\tilde{Y}_s,\tilde{Z}_s)\Big)\,ds\big | \mathbb F_t\right]\\&=
\E\left[\Gamma_{T} (Y_{T}-\tilde{Y}_{T})
    + \int_t^{T}\Gamma_s
    \Big(f(s,\tilde Y_s,\tilde Z_s)-\tilde f(s,\tilde{Y}_s,\tilde{Z}_s)\Big)\,ds\big | \mathbb F_t\right]
\end{split}     \end{equation}
  This, the triangle inequality, the fact that the process $[t,T]\times \Omega \ni (s,\omega) \mapsto e^{-\int_t^s\frac{\|B_r\|^2}{2} dr +\int_t^s\langle B_r,dW_r\rangle} \in (0,\infty)$
   is a martingale, and the fact
  $\sup_{s\in [t,T]}|A_s|\leq L_1$ yield 
  that $\P$-a.s.\ it holds that
  \begin{equation}
  \begin{split}
    \big|Y_t-\tilde{Y}_t\big|
    &\le\E\!\left[\Gamma_{T} |Y_T-\tilde{Y}_T|
    + \int_t^{T}\Gamma_s
    \Big|f(s,\tilde Y_s,\tilde Z_s)-\tilde f(s,\tilde{Y}_s,\tilde{Z}_s)\Big|\,ds
    \Big | \mathbb F_t\right]\\
    &
\le \E[\Gamma_T\big | \mathbb F_t]\,\|Y_T-\tilde{Y}_T\|_{L^{\infty}(\P;\R)}
+\E\!\left[
     \int_t^{T}\Gamma_s
   \, ds
    \Big | \mathbb F_t\right] \left[ 
    \sup_{s\in [t,T], y\in \R, z\in \R^d}\|f(s,y,z)-\tilde f(s,y,z)\|_{L^\infty(\P;\R)}
    \right]\\&
\le    
    e^{L_1(T-t)}\left(\|Y_T-\tilde{Y}_T\|_{L^{\infty}(\P;\R)}+(T-t)\sup_{s\in [t,T], y\in \R, z\in \R^d}\|f(s,y,z)-\tilde f(s,y,z)\|_{L^\infty(\P;\R)}\right).
    \end{split}
    \end{equation}
    This proves \eqref{eq:grad_bound}.
     The proof of Lemma~\ref{lem:grad_bound} is thus completed.  
\end{proof}

\subsection{Regularity analysis for solutions of partial differential equations (PDEs)}

\begin{lemma}[Upper bound for exact solution]
\label{upper_exact}
Let $T\in(0,\infty)$, $d\in\N$,
$\eta, L_0,L_1,\ldots,L_{d}$, $\mathfrak{L}_1, \mathfrak{L}_2, \ldots, \mathfrak{L}_d$, $K_1,K_2,\ldots,K_d\in \R$,
$f\in C( [0,T]\times\R^d\times \R \times \R^d, \R)$,
$g\in C(\R^d, \R)$,
$u = ( u(t,x) )_{ (t,x) \in [0,T] \times \R^d }\in C^{1,2}([0,T]\times\R^d,\R)$ ,
let $\left\|\cdot \right\|\colon \R^{d+1}\to [0,\infty)$ be a norm,
assume
for all
$t\in[0,T]$, $x=(x_1,x_2,\ldots, x_d)$, $\mathfrak{x}=(\mathfrak{x}_1,\mathfrak{x}_2,\ldots,\mathfrak{x}_d)$, $z=(z_1,z_2,\ldots, z_d)$, $\mathfrak{z}=(\mathfrak{z}_1,\mathfrak{z}_2,\ldots,\mathfrak{z}_d) \in\R^d$, $y,\mathfrak{y} \in \R$ that
\begin{equation}\label{eq:fLipschitz2}
|f(t,x,y,z)-f(t,\mathfrak{x},\mathfrak y,\mathfrak{z})|\le L_0|y-\mathfrak y|+\textstyle \sum_{j=1}^d \displaystyle (L_{j}|z_j-\mathfrak{z}_j|
+ \mathfrak L_{j}|x_j-\mathfrak{x}_j|),
\end{equation}
\begin{equation}\label{eq:gLipschitz2}
|g(x)-g(\mathfrak x)|\le \textstyle\sum_{i=1}^d \displaystyle K_i |x_i-\mathfrak x_i|, \qquad
| u(t,x) | \leq \eta \big[ 1 + \textstyle\sum_{ i=1 }^d\displaystyle | x_i | \big]^{ \eta },
\qquad 
 u(T,x)=g(x),
\end{equation}
\begin{equation}\label{eq:PDE}
\text{and}\qquad\big( \tfrac{ \partial }{ \partial t } u \big)( t, x ) + \tfrac{ 1 }{ 2 } ( \Delta_x u )( t, x ) + f\big( t, x, u(t,x), ( \nabla_x u )(t, x) \big) = 0,
\end{equation}
let
  $(\Omega,\mathcal{F},\P)$ be a probability space, and
  let $W\colon[0,T]\times\Omega\to\R^d$ be a standard Brownian motion.
Then
\begin{enumerate}[(i)]
\item \label{item:upper_exact_1} it holds for all $s\in[0,T)$, $x\in\R^d$ that
\begin{equation}  \begin{split}\label{eq:feynmankacintegrabilityExact}
&\E\!\left[\big\|g(x+W_{T-s})\big(1,\tfrac{W_{T-s}}{T-s} \big)
          \big\|
          \right]
      \\&\quad
      +\E\!\left[
      \int_s^{T}\big\|\big[
      f\big(t,x+W_{t-s},u(t,x+W_{t-s}),(\nabla_xu)(t,x+W_{t-s})\big)\big]
      \big(1,\tfrac{W_{t-s}}{t-s} \big)\big\|
      \,dt
      \right]<\infty,
\end{split}     \end{equation}
\item \label{item:upper_exact_2}it holds for all $s\in[0,T)$, $x\in\R^d$ that
\begin{equation}  \begin{split}\label{eq:feynmankacuinftyExact}
(u(s,x),(\nabla_x u)(s,x))&=\E\!\left[g(x+W_{T-s})\big(1,\tfrac{W_{T-s}}{T-s} \big)\right]
\\&\quad
      +\E\!\left[
      \int_s^{T}\left[
      f\big(t,x+W_{t-s},u(t,x+W_{t-s}),(\nabla_xu)(t,x+W_{t-s})\big)\right]
      \big(1,\tfrac{W_{t-s}}{t-s} \big)
      \,dt\right],
\end{split}     \end{equation}
\item \label{item:upper_exact_3}it holds for all $t\in[0,T]$, $x=(x_1,x_2,\ldots, x_d)$, $\mathfrak x=(\mathfrak{x}_1,\mathfrak{x}_2,\ldots, \mathfrak{x}_d)\in\R^d$ that
\begin{equation}\label{eq:grad_bound2}
\left| u(t,x)-u(t,\mathfrak{x})\right|\le e^{L_0(T-t)}\Big(\textstyle\sum_{j=1}^d\displaystyle(K_j+(T-t)\mathfrak L_{j})|x_j-\mathfrak x_j|\Big),
\end{equation}
\item \label{item:upper_exact_4}it holds for all $t\in(0,T)$, $x\in\R^d$, $i\in\{1,2,\ldots,d\}$ that
\begin{equation}\label{eq:grad_bound3}
\big| (\tfrac{\partial}{\partial x_i}u)(t,x)\big|\le e^{L_0(T-t)}
  (K_i+(T-t)\mathfrak{L}_{i}),
\end{equation}
and
  \item \label{item:upper_exact_5}
it holds for all $x\in\R^d$, $p\in[1,\infty)$ that
\begin{equation}  
\begin{split}\label{upper_exact:concl1}
  &\sup_{s\in[0,T]} \sup_{t\in[s,T]}\left(\E\!\left[\left|u(t,x +W_t-W_s)\right|^p\right]\right)^{\!\nicefrac{1}{p}}
  \\&
  \leq e^{L_0T} 
  \left[
  \sup_{s\in[0,T]}
  (\E[|g(x+W_s)|^p])^{\!\nicefrac{1}{p}}
 +T\sup_{s,t\in[0,T]}
 (\E[|f(t,x+W_s,0,0)|^p])^{\!\nicefrac{1}{p}}
 +Te^{L_0T}\textstyle\sum_{j=1}^d \displaystyle L_{j} (K_j +T\mathfrak{L_j}) \right].
\end{split}     
\end{equation}
\end{enumerate}
\end{lemma}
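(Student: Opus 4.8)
The plan is to establish items~(i)--(v) in the order (iii), (iv), (i), (ii), (v), exploiting the nonlinear Feynman--Kac link between the PDE~\eqref{eq:PDE} and BSDEs together with the a priori estimate of Lemma~\ref{lem:grad_bound}. First, for item~(iii) I would fix $t\in[0,T]$, $x,\mathfrak x\in\R^d$, set $X_r=x+W_r-W_t$ and $\tilde X_r=\mathfrak x+W_r-W_t$ for $r\in[t,T]$, and apply It\^o's formula to $u(r,X_r)$ and to $u(r,\tilde X_r)$ using~\eqref{eq:PDE}. This shows that $(Y_r,Z_r):=(u(r,X_r),(\nabla_x u)(r,X_r))$ solves the BSDE~\eqref{eq:Y} with terminal value $g(X_T)$ and driver $(r,y,z)\mapsto f(r,X_r,y,z)$, and that $(\tilde Y_r,\tilde Z_r):=(u(r,\tilde X_r),(\nabla_x u)(r,\tilde X_r))$ solves~\eqref{eq:Y} with terminal value $g(\tilde X_T)$ and driver $(r,y,z)\mapsto f(r,\tilde X_r,y,z)$. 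The integrability hypotheses of Lemma~\ref{lem:grad_bound} hold because $u$ grows at most polynomially (whence $\E[\sup_{r\in[t,T]}Y_r^2]<\infty$, since Brownian motion has moments of all orders) and because, for $\P$-a.e.\ $\omega$, the maps $r\mapsto f(r,X_r,Y_r,Z_r)$ and $r\mapsto Z_r$ are continuous, hence bounded, on the compact interval $[t,T]$. Lemma~\ref{lem:grad_bound} (with $L_1,\dots,L_{d+1}$ there replaced by $L_0,L_1,\dots,L_d$ here), the Lipschitz property~\eqref{eq:gLipschitz2} of $g$, and the Lipschitz property~\eqref{eq:fLipschitz2} of $f$ in the $x$-variable then yield~\eqref{eq:grad_bound2}. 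Item~(iv) follows from~(iii) by choosing $\mathfrak x=x+h\mathbf{e}_i$ and letting $h\to0$, using $u\in C^{1,2}$.

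Once~(iv) is available, $\nabla_x u$ is bounded on $[0,T]\times\R^d$, the $i$-th coordinate by $e^{L_0T}(K_i+T\mathfrak L_i)$, so that, combining the (at most) linear growth of $g$ and of $x\mapsto f(t,x,0,0)$ with the polynomial growth of $u$, the map $x\mapsto f(t,x,u(t,x),(\nabla_x u)(t,x))$ grows at most polynomially, uniformly in $t\in[0,T]$. Item~(i) then follows from the Cauchy--Schwarz inequality applied to the products with the factors $(1,\tfrac{W_r}{r})$, the facts that all $L^q$-norms of $\|W_r\|$ are finite and bounded uniformly in $r\in[0,T]$ and that $(\E[|W^1_r|^2])^{1/2}/r=r^{-1/2}$, and the computation $\int_s^T(t-s)^{-1/2}\,dt=2\sqrt{T-s}<\infty$, combined with Tonelli's theorem to interchange expectation and time integral. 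For item~(ii), the first component is the classical Feynman--Kac formula, obtained by applying It\^o's formula to $r\mapsto u(s+r,x+W_r)$, integrating, and taking expectations (the stochastic integral has zero expectation since $\nabla_x u$ is bounded); the remaining $d$ components are the Bismut--Elworthy--Li identity $\nabla_x\E[\phi(x+W_r)]=\E[\phi(x+W_r)\tfrac{W_r}{r}]$ for the heat semigroup, applied to the Duhamel representation $u(s,x)=\E[g(x+W_{T-s})]+\int_s^T\E[f(t,x+W_{t-s},u(t,\cdot),(\nabla_x u)(t,\cdot))]\,dt$, where differentiation under the expectation and under the time integral is justified by the polynomial-growth and Gaussian-tail bounds already used for~(i).

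Finally, for item~(v) I would use the Markov property together with It\^o's formula to write, for $0\le s\le t\le T$, $u(t,x+W_t-W_s)=\E[ g(x+W_T-W_s)+\int_t^T f(r,x+W_r-W_s,u(r,\cdot),(\nabla_x u)(r,\cdot))\,dr \mid \mathcal F_t ]$; taking $L^p(\P;\R)$-norms, using the conditional Jensen and the Minkowski inequalities, estimating $|f(r,y,u(r,y),(\nabla_x u)(r,y))|\le|f(r,y,0,0)|+L_0|u(r,y)|+e^{L_0T}\sum_{j=1}^d L_j(K_j+T\mathfrak L_j)$ via~(iv), and using that $W_r-W_s$ has the same law as $W_{r-s}$, yields with $\Phi(t):=\sup_{0\le s\le t}\|u(t,x+W_t-W_s)\|_{L^p(\P;\R)}$ the estimate $\Phi(t)\le A+L_0\int_t^T\Phi(r)\,dr$, where $A=\sup_{\sigma\in[0,T]}\|g(x+W_\sigma)\|_{L^p(\P;\R)}+T\sup_{r,\sigma\in[0,T]}\|f(r,x+W_\sigma,0,0)\|_{L^p(\P;\R)}+Te^{L_0T}\sum_{j=1}^d L_j(K_j+T\mathfrak L_j)$. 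A backward Gronwall argument then gives $\Phi(t)\le A\,e^{L_0(T-t)}\le A\,e^{L_0T}$, which is exactly~\eqref{upper_exact:concl1}. The main obstacle is the apparent circularity between the representation~(ii), whose driver contains $\nabla_x u$, and the gradient bound~(iv); it is resolved by proving~(iii) and~(iv) directly from Lemma~\ref{lem:grad_bound} \emph{before} establishing~(ii), which is legitimate precisely because the hypotheses of Lemma~\ref{lem:grad_bound} require only pathwise continuity of $r\mapsto f(r,X_r,Y_r,Z_r)$ and $r\mapsto Z_r$ on $[t,T]$, not any global growth control.
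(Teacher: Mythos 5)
Your proposal is correct, and for the core items it runs along the same lines as the paper: item~(iii) is obtained exactly as in the paper by applying It\^o's formula to $u(r,x+W_r-W_t)$ and $u(r,\mathfrak x+W_r-W_t)$, checking the integrability hypotheses via the polynomial growth of $u$, and invoking Lemma~\ref{lem:grad_bound} with the Lipschitz constants from \eqref{eq:fLipschitz2}--\eqref{eq:gLipschitz2}; item~(iv) is the same difference-quotient argument; and item~(v) is the same Lipschitz splitting of the driver, gradient bound from~(iv), and backward Gronwall estimate -- your route through the conditional expectation $\E[\,\cdot\,|\mathcal F_t]$ (Markov property) is only cosmetically different from the paper's use of the representation~(ii) at the random point together with independence of Brownian increments. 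Where you genuinely depart from the paper is in items~(i) and~(ii): the paper simply cites Lemma~4.2 of \cite{HutzenthalerKruse2017}, whereas you give a self-contained argument, reordering the items so that (iii)--(iv) come first and then using the resulting boundedness of $\nabla_x u$ to control the integrands in~(i) (Cauchy--Schwarz plus the integrable singularity $(t-s)^{-1/2}$) and to prove~(ii) via It\^o/Feynman--Kac for the first component and differentiation of the Gaussian kernel (Bismut--Elworthy--Li) for the gradient components; this reordering is legitimate since (iii) uses only Lemma~\ref{lem:grad_bound}, and it buys you independence from the external reference at the price of having to justify the exchange of $\partial_{x_i}$ with the time integral, which you only sketch (a dominated-convergence bound of the form $C(1+|x|^q)(t-s)^{-1/2}$ on the difference quotients does the job). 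Two routine points you should make explicit to match the paper's level of rigor: before applying Gronwall in~(v) you need the a priori finiteness (and measurability in $t$) of $\Phi$, which follows from the polynomial growth of $u$ and Gaussian moments exactly as the paper checks; and the gradient bound of~(iv) should be extended from $(0,T)$ to $[0,T]$ (by continuity of $\nabla_x u$, or by noting (iii) holds for all $t\in[0,T]$) before it is used to argue that the stochastic integrals in~(ii) and~(v) are true martingales.
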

\begin{proof}[Proof of Lemma~\ref{upper_exact}]
Throughout this proof let $\left\|\cdot\right\|_2\colon \R^d \to [0,\infty)$ be the $d$-dimensional Euclidean norm.
Item \eqref{item:upper_exact_1} and item \eqref{item:upper_exact_2} follow from Lemma 4.2 in~\cite{HutzenthalerKruse2017}.
Next we prove item \eqref{item:upper_exact_3}.
  Throughout the proof of item \eqref{item:upper_exact_3} let $t\in[0,T)$, $x=(x_1,x_2,\ldots, x_d)$, $\mathfrak{x}=(\mathfrak{x}_1,\mathfrak{x}_2,\ldots,\mathfrak{x}_d) \in\R^d$,
  let
  $Y,\mathfrak{Y}\colon[t,T]\times\Omega\to\R$, let
  $Z,\mathfrak{Z}\colon[t,T]\times\Omega\to\R^d$
  be functions which satisfy for all $s\in[t,T]$ that
 \begin{align}
  \label{eq:X}
    Y_s&=u(s,x+W_s-W_t),\\
  \label{eq:Y2}
    \mathfrak{Y}_s&=u(s,\mathfrak x+W_s-W_t),\\
    Z_s&=(\nabla_x u)(s,x+W_s-W_t),\\
    \mathfrak{Z}_s&=(\nabla_x u)(s,\mathfrak x +W_s-W_t).
  \end{align}
Then It\^o's lemma yields that for all $s\in[t,T]$ it holds $\P$-a.s.\ that
\begin{equation}  \begin{split}\label{eq:dyn_Y}
  Y_T=Y_s-\int_s^T f(r,x+W_r-W_t,Y_r,Z_r)\,dr+\int_s^T \big(Z_r\big)^T\,dW_r
\end{split}     \end{equation}
and
\begin{equation}  \begin{split}\label{eq:dyn_tilde_Y}
  \mathfrak{Y}_T=\mathfrak{Y}_s-\int_s^T f(r,\mathfrak x+W_r-W_t,\mathfrak{Y}_r,\mathfrak{Z}_r)\,dr
  +\int_s^T \big(\mathfrak{Z}_r\big)^T\,dW_r.
\end{split}     \end{equation}
Next note that \eqref{eq:gLipschitz2} implies that there exists $\lambda \in (\frac{1}{2},\infty)$ such that
$\sup_{s\in[0,T],\xi\in\R^d}\tfrac{|u(s,\xi)|}{1+\|\xi\|_2^\lambda}<\infty$. For such a  $\lambda \in (\frac{1}{2},\infty)$ Doob's inequality implies that
\begin{equation}\label{eq:supY}
\begin{split}
\left \| \sup_{s\in [t,T]} |Y_s| \right\|_{L^2(\P;\R)} 
&=\left \| \sup_{s\in [t,T]} |u(s,x+W_s-W_t)| \right\|_{L^2(\P;\R)} \\
&\le \left[ \sup_{s\in[0,T],\xi \in\R^d}\tfrac{|u(s,\xi)|}{1+\|\xi\|_2^\lambda}\right] 
\left( 1+ \left \| \sup_{s\in [t,T]} \|x+W_s-W_t\|^\lambda_2 \right\|_{L^2(\P;\R)} \right)\\
&\le \left[ \sup_{s\in[0,T],\xi\in\R^d}\tfrac{|u(s,\xi)|}{1+\|\xi\|_2^\lambda}\right] 
\left( 1+ \left(\frac{2\lambda}{2\lambda-1} \right)^{1/\lambda}\left \| \|x+W_T-W_t\|^\lambda_2 \right\|_{L^2(\P;\R)} \right)<\infty
\end{split}
\end{equation}
and likewise
\begin{equation}\label{eq:suptildeY}
\left \| \sup_{s\in [t,T]} |\mathfrak Y_s| \right\|_{L^2(\P;\R)}<\infty.
\end{equation}
Moreover, \eqref{eq:fLipschitz2} implies that for all $s\in [t,T]$, $y,\mathfrak{y} \in \R$, $z=(z_1,z_2,\ldots, z_d)$, $\mathfrak{z}=(\mathfrak{z}_1,\mathfrak{z}_2,\ldots,\mathfrak{z}_d) \in\R^d$,  it holds $\P$-a.s.\ that
\begin{equation}\label{eq:delta_f}
 |f(s,x+W_t-W_s,y,z)-f(s,x+W_t-W_s,\mathfrak{y},\mathfrak{z})|\leq L_0|y-\mathfrak{y}|+\sum_{j=1}^dL_{j}|z_j-\mathfrak{z}_j|.
\end{equation}
and
\begin{equation}\label{eq:fLip_yz}
|f(s,x+W_s-W_t,y,z)-f(s,\mathfrak x +W_s-W_t,y,z)|\le 
 \sum_{j=1}^d\mathfrak{L}_{j}|x_j-\tilde x_j|.
\end{equation}
This, \eqref{eq:dyn_Y}, \eqref{eq:dyn_tilde_Y}, \eqref{eq:supY}, \eqref{eq:suptildeY},  
and Lemma \ref{lem:grad_bound}
prove that
\begin{equation}  \begin{split}
  &|u(t,x)-u(t,\mathfrak x)|=
  \big|Y_{t}-\mathfrak{Y}_{t}\big|\\
  &\le
     e^{L_0(T-t)}\Big(\|Y_T-\mathfrak{Y}_T\|_{L^{\infty}(\P;\R)}+(T-t)\sup_{s\in [t,T], y\in \R, z\in \R^d}\|f(s,x+W_s-W_t,y,z)- f(s,\mathfrak x +W_s-W_t,y,z)\|_{L^\infty(\P;\R)}\Big)\\
     &\le
     e^{L_0(T-t)}\Big(\|g(x+W_T-W_t)-g(\mathfrak x+W_T-W_t)\|_{L^{\infty}(\P;\R)}+(T-t) \sum_{j=1}^d\mathfrak L_{j}|x_j-\mathfrak x_j|\Big)
     \\
     &\le
     e^{L_0(T-t)}\Big(\sum_{j=1}^d(K_j+(T-t)\mathfrak{L}_{j})|x_j-\mathfrak x_j|\Big).
   \end{split}     \end{equation}  
This proves item \eqref{item:upper_exact_3}.
From item \eqref{item:upper_exact_3} it then follows for all $t\in[0,T]$, $x\in\R^d$, $i\in\{1,2,\ldots,d\}$
that
\begin{equation}  \begin{split}
  \left| (\tfrac{\partial}{\partial x_i}u)(t,x)\right|
  =
  \left| \lim_{\R\setminus\{0\}\ni h\to 0}\tfrac{u(t,x+h e_i)-u(t,x)}{h}\right|
  \le e^{L_0(T-t)}
  (K_i+(T-t)\mathfrak{L}_{i}).
\end{split}     \end{equation}
and this proves item \eqref{item:upper_exact_4}.
Next we prove item \eqref{item:upper_exact_5}.
For the rest of the proof let
$
  \funcF \colon 
  C([0,T)\times\R^d,\R^{1+d})
  \to
  C([0,T)\times\R^d,\R)
$, ${\bf u}\colon [0,T)\times \R^d \to \R^{d+1}$ be the functions which satisfies for all $t\in[0,T)$, $x\in\R^d$, ${\bf v}\in C([0,T)\times\R^d,\R^{1+d})$ that
$(F({\bf v}))(t,x)=f(t,x,{\bf v}(t,x))$ and ${\bf u}(t,x)=(u(t,x), (\nabla_x u)(t,x))$.
Item \eqref{item:upper_exact_2}, Tonelli's theorem, and the triangle inequality
prove
for all $s\in[0,T]$, $t\in[s,T]$, $x\in\R^d$, $p\in[1,\infty)$ that
\begin{equation}  \begin{split}
  &\|u(t,x+W_t-W_s)\|_{L^p(\P;\R)}
  \\&=
  \Big\| \int g(x+W_t-W_s+y)\P_{W_T-W_s}(dy)+\int_t^T \int (F({\bf u}))(v,x+W_t-W_s+y)\,\P_{W_v-W_t}(dy)\,dv\Big\|_{L^p(\P;\R)}
  \\&\leq
  \Big\| \int g(x+W_t-W_s+y)\P_{W_T-W_s}(dy)\Big\|_{L^p(\P;\R)}+\int_t^T \Big\| \int (F({\bf u}))(v,x+W_t-W_s+y)\,\P_{W_v-W_t}(dy)\Big\|_{L^p(\P;\R)}\,dv
\end{split}     \end{equation}
This,
Jensen's inequality
independence of Brownian increments
yield
for all $s\in[0,T]$, $t\in[s,T]$, $x\in\R^d$, $p\in[1,\infty)$ that
\begin{equation}  \begin{split}
  &\|u(t,x+W_t-W_s)\|_{L^p(\P;\R)}
  \leq
  \left(
   \int \int \Big|g(x+z+y)\Big|^p\P_{W_T-W_s}(dy)\,\P_{W_t-W_s}(dz)\right)^{\frac{1}{p}}
 \\&\qquad
   +\int_t^T \left(\int \int \Big|(F({\bf u}))(v,x+z+y)\Big|^p\,\P_{W_T-W_s}(dy)\,\P_{W_t-W_s}(dz)\right)^{\frac{1}{p}}\,dv.
  \\&=
  \left(
   \int \int \Big|g(x+z+y)\Big|^p\P_{(W_v-W_t,W_t-W_s)}(d(y,z))\right)^{\frac{1}{p}}
 \\&\qquad
   +\int_t^T \left(\int \int \Big|(F({\bf u}))(v,x+z+y)\Big|^p\,\P_{(W_v-W_t,W_t-W_s)}(d(y,z))\right)^{\frac{1}{p}}\,dv
 \\&=\|g(x+W_T-W_s)\|_{L^p(\P;\R)}
 +\int_t^T\Big\| (F({\bf u}))(v,x+W_v-W_s)\Big\|_{L^p(\P;\R)}\,dv
\end{split}     \end{equation}
This, the triangle inequality,
the global Lipschitz assumption~\ref{eq:fLipschitz2} of $f$,
and item \eqref{item:upper_exact_4}
show
for all $s\in[0,T]$, $x\in\R^d$, $p\in[1,\infty)$ that
\begin{equation}  \begin{split}\label{eq:upper.bound.u.Lip}
  &\sup_{t\in[s,T]}\|u(t,x+W_t-W_s)\|_{L^p(\P;\R)}
 \leq\|g(x+W_T-W_s)\|_{L^p(\P;\R)}
 +\int_s^T\Big\| (F(0))(v,x+W_v-W_s)\Big\|_{L^p(\P;\R)}\,dv
 \\&\qquad+L_0\int_s^T\Big\| { u}(v,x+W_v-W_s)\Big\|_{L^p(\P;\R)}\,dv
 +\sum_{j=1}^d L_{j}\int_s^T\Big\| \big(\tfrac{\partial}{\partial x_j}u\big)(v,x+W_v-W_s)\Big\|_{L^p(\P;\R)}\,dv
\\&\leq\|g(x+W_T-W_s)\|_{L^p(\P;\R)}
 +\int_s^T\Big\| (F(0))(v,x+W_v-W_s)\Big\|_{L^p(\P;\R)}\,dv
 \\&\qquad+L_0\int_s^T\sup_{t\in[v,T]}\Big\| { u}(t,x+W_t-W_s)\Big\|_{L^p(\P;\R)}\,dv
 +\sum_{j=1}^d e^{L_0T}T L_{j} (K_j +T\mathfrak{L_j})
\end{split}     \end{equation}
Note that there exists $\lambda \in (0,\infty)$ such that
$\sup_{s\in[0,T],\xi\in\R^d}\tfrac{|u(s,\xi)|}{1+\|\xi\|_2^\lambda}<\infty$. For such a  $\lambda \in (0,\infty)$ and for all $x\in\R^d$, $p\in[1,\infty)$ it holds that
\begin{equation}  \begin{split}
  \sup_{s\in[0,T],t\in[s,T]}\|u(t,x+W_t-W_s)\|_{L^p(\P;\R)}
  \leq \left[ \sup_{s\in[0,T],\xi \in\R^d}\tfrac{|u(s,\xi)|}{1+\|\xi\|_2^\lambda}\right] 
 \left(1+  \sup_{s\in[0,T],t\in[s,T]}\left \| \|x+W_t-W_s\|^\lambda_2 \right\|_{L^p(\P;\R)} \right)
  <\infty.
\end{split}     \end{equation}
This, \eqref{eq:upper.bound.u.Lip}, and Gronwall's inequality finally yield for all $x\in\R^d$, $p\in[1,\infty)$ that
\begin{equation}  \begin{split}
  &\sup_{s\in[0,T]}\sup_{t\in[s,T]}\|u(t,x+W_t-W_s)\|_{L^p(\P;\R)}
\\&
  \leq e^{L_0T}\sup_{s\in[0,T]}
  \|g(x+W_s)\|_{L^p(\P;\R)}
 +e^{L_0T}T\bigg(\sup_{s,t\in[0,T]}\big\| (F(0))(t,x+W_s)\big\|_{L^p(\P;\R)}
 +e^{L_0T}\sum_{j=1}^d L_{j} (K_j +T\mathfrak{L_j}) 
 \bigg).
\end{split}     \end{equation}
The proof of Lemma~\ref{upper_exact} is thus completed.
\end{proof}

\section{Overall complexity analysis for MLP approximation methods}\label{sec:rate}

In this section we combine the findings of Sections~\ref{sec:error_analysis} and~\ref{sec:ex_sol} to establish in Theorem~\ref{cor:comp_and_error2}
 below the main approximation result 
of this article; see also Corollary~\ref{cor:comp_and_error} and Corollary~\ref{cor:comp_and_error3} below. The i.i.d.\ random variables 
$\unif^\theta\colon \Omega \to (0,1)$, $\theta \in \Theta$,
appearing in the MLP approximation methods in
Corollary~\ref{cor:comp_and_error} (see \eqref{eq:def:Ucor} in Corollary~\ref{cor:comp_and_error}), 
Theorem~\ref{cor:comp_and_error2} (see \eqref{eq:def:Ucor2} in Theorem~\ref{cor:comp_and_error2}), 
and 
Corollary~\ref{cor:comp_and_error3} (see  \eqref{eq:def:Ucor3} in Corollary~\ref{cor:comp_and_error3}) are 
employed to approximate the time integrals in the semigroup 
formulations of the PDEs under consideration. 
One of the key ingredients of the MLP approximation methods, which we propose and analyze in this article, is the fact that the 
density of these i.i.d.\ random variables 
$\unif^\theta\colon \Omega \to (0,1)$, $\theta \in \Theta$,
is equal to the function 
$(0,1) \ni s \mapsto \alpha s^{\alpha-1}\in \R $
for some $\alpha \in ( 0, 1 )$, 
or equivalently, that these i.i.d.\ random variables satisfy for all 
$\theta \in \Theta$, $b \in (0,1)$ that $\P( \unif^{ \theta } \leq b ) = b^\alpha$ 
for some $\alpha \in (0,1)$. In particular, in contrast to 
previous MLP approximation methods studied in the scientific literature (see, e.g., \cite{HJKNW2018,hutzenthaler2019overcoming,beck2019overcoming}) it is crucial in this article 
to exclude the case where the random variables 
$\unif^\theta\colon \Omega \to (0,1)$, $\theta \in \Theta$,
are continuous uniformly distributed on $(0,1)$ (corresponding to the case $\alpha = 1$). To make this aspect more clear to the reader, 
we provide in Lemma~\ref{lem:uniform_dist} below an explanation 
why it is essential to exclude the continuous uniform 
distribution case $\alpha = 1$. 
Note that the random variable ${\bf U}\colon \Omega \to \R^{d+1}$ in Lemma~\ref{lem:uniform_dist}
coincides with a special case of the random fields in \eqref{eq:def:Ucor2} in Theorem~\ref{cor:comp_and_error2} (with $g_d(x)=0$, $f_d(s,x,y,z)=1$, $M=1$, $t=0$ for $s\in [0,T)$, $x,z \in \R^d$, $y\in \R$, $d\in \N$ in the notation of Theorem~\ref{cor:comp_and_error2}).


%

\subsection{Quantitative complexity analysis for MLP approximation methods}

\begin{corollary}\label{cor:comp_and_error}
Let $\left\|\cdot\right \|_1\colon (\cup_{n\in \N}\R^n)\to \R$ and
$\left\|\cdot\right \|_\infty\colon (\cup_{n\in \N}\R^n)\to \R$ satisfy for 
all $n\in \N$, $x=(x_1,x_2,\ldots,x_n)\in \R^n$ that
$\|x\|_1=\sum_{i=1}^n |x_i|$ and
$\|x\|_\infty=\max_{i\in \{1,2,\ldots,n\}}|x_i|$,
let $T,\delta\in(0,\infty)$, $\eps \in (0,1]$, $d \in\N$,
$L=(L_0,L_1,\ldots,L_{d}) \in \R^{d+1}$, 
$K=(K_1,K_2,\ldots,K_d)$, $\mathfrak L=(\mathfrak L_1,\mathfrak L_2\ldots, \mathfrak L_d), \xi \in \R^d$,
 $p\in (2,\infty)$, $\alpha\in (\frac{p-2}{2(p-1)},\frac{p}{2(p-1)})$,
 $\beta=\frac{\alpha}{2}-\frac{(1-\alpha)(p-2)}{2p}\in (0,\frac{\alpha}{2})$,
$f\in C( [0,T]\times\R^d\times \R \times \R^d, \R)$,
$g\in C(\R^d, \R)$,
let
$u = ( u(t,x) )_{ (t,x) \in [0,T] \times \R^d }\in C^{1,2}([0,T]\times\R^d,\R)$ be an at most polynomially growing
function,
assume
for all
$t\in(0,T)$, $x=(x_1,x_2, \ldots,x_d)$, $\mathfrak x=(\mathfrak x_1,\mathfrak x_2, \ldots,\mathfrak x_d)$, $z=(z_1,z_2,\ldots,z_d)$, $\mathfrak{z}=(\mathfrak z_1, \mathfrak z_2, \ldots, \mathfrak z_d)\in\R^d$, $y,\mathfrak{y} \in \R$ that
\begin{equation}\label{eq:fLipschitz3}
|f(t,x,y,z)-f(t,\mathfrak x,\mathfrak y,\mathfrak{z})|\le L_0|y-\mathfrak y|+
\textstyle{
\sum_{j=1}^d}
\big( L_{j}|z_j-\mathfrak{z}_j|
+\mathfrak L_{j}|x_j-\mathfrak{x}_j|\big),
\end{equation}
\begin{equation}\label{eq:gLipschitz3}
|g(x)-g(\mathfrak  x)|\le \textstyle{\sum_{i=1}^d} K_i |x_i-\mathfrak x_i|, \qquad u(T,x) = g(x),
\end{equation}
\begin{equation}\label{eq:PDE2}
\text{and}\qquad\big( \tfrac{ \partial }{ \partial t } u \big)( t, x ) + \tfrac{ 1 }{ 2 } ( \Delta_x u )( t, x ) + f\big( t, x, u(t,x), ( \nabla_x u )(t, x) \big) = 0,
\end{equation}
let
$
  \funcF \colon 
  C([0,T)\times\R^d,\R^{1+d})
  \to
  C([0,T)\times\R^d,\R)
$ satisfy for all $t\in[0,T)$, $x\in\R^d$, ${\bf v}\in C([0,T)\times\R^d,\R^{1+d})$ that
$(F({\bf v}))(t,x)=f(t,x,{\bf v}(t,x))$,
let
$
  ( 
    \Omega, \mathcal{F}, \P 
  )
$
be a probability space,
let
$
  \Theta = \cup_{ n \in \N } \Z^n
$,
let
$
  Z^{ \theta } \colon \Omega \to \R^d 
$, 
$ \theta \in \Theta $,
be i.i.d.\ standard normal random variables,
let $\unif^\theta\colon \Omega\to(0,1)$, $\theta\in \Theta$, be 
i.i.d.\ random variables,
assume 
for all $b\in (0,1)$
that
$\P(\unif^0\le b)=b^{1-\alpha}$,
assume that
$(Z^\theta)_{\theta \in \Theta}$ and
$(\unif^\theta)_{ \theta \in \Theta}$ are independent,
let
$ 
  {\bf U}_{ n,M}^{\theta }
=
  (
  {\bf U}_{ n,M}^{\theta, 0},{\bf U}_{ n,M}^{\theta, 1},\ldots,{\bf U}_{ n,M}^{\theta, d}
  )
  \colon[0,T)\times\R^d\times\Omega\to\R^{1+d}
$,
$n,M\in\Z$, $\theta\in\Theta$,
satisfy
for all 
$
  n,M \in \N
$,
$ \theta \in \Theta $,
$ t\in [0,T)$,
$x \in \R^d$
that $
{\bf U}_{-1,M}^{\theta}(t,x)={\bf U}_{0,M}^{\theta}(t,x)=0$ and
\begin{equation}  \begin{split}\label{eq:def:Ucor}
  {\bf U}_{n,M}^{\theta}(t,x)
  &=
  \left(
    g(x)
    , 0
  \right)
  +
  \tfrac{1}{M^n}\textstyle \sum\limits_{i=1}^{M^n} \displaystyle \big(g(x+[T-t]^{1/2}Z^{(\theta,0,-i)})-g(x)\big)
  \big(
  1 ,  [T - t]^{-1/2}
  Z^{(\theta, 0, -i)}
  \big)
  \\
  &\quad +\textstyle\sum\limits_{l=0}^{n-1}\sum\limits_{i=1}^{M^{n-l}}\displaystyle \tfrac{(T-t)(\unif^{(\theta, l,i)})^\alpha}{(1-\alpha)M^{n-l}} \big(
  1 , [(T-t)\unif^{(\theta, l,i)}]^{-1/2}
  Z^{(\theta,l,i)}
  \big)\\
 & \quad 
\cdot \big[ 
 \big(\funcF({\bf U}_{l,M}^{(\theta,l,i)})-\1_{\N}(l)\funcF( {\bf U}_{l-1,M}^{(\theta,-l,i)})\big)
  (t+(T-t)\unif^{(\theta, l,i)},x+[(T-t)\unif^{(\theta, l,i)}]^{1/2}Z^{(\theta,l,i)})
  \big],
\end{split}     \end{equation}
let $(\RN_{n,M})_{(n,M)\in \Z^2}\subseteq\Z$ satisfy
for all $n,M \in \N$ that 
$\RN_{0,M}=0$
and 
\begin{align}
\label{c16}
  \RN_{ n,M}
  &\leq d M^n+\textstyle\sum\limits_{l=0}^{n-1}\displaystyle\left[M^{(n-l)}( d+1 + \RN_{ l, M}+ \1_{ \N }( l )  \RN_{ l-1, M })\right],
\end{align}
and let $C\in (0,\infty)$ satisfy that
 \begin{equation}
  \begin{split}
 C&=\max\left\{\tfrac{1}{2},|\Gamma(\tfrac{p}{2})|^{\frac{1}{p}}(1-\alpha)^{\frac{1}{p}-1
 }\max\left\{
      T, \Gamma(\tfrac{p+1}{2})^{\frac{1}{p}}\pi^{-\frac{1}{2p}}\sqrt{2T}
      \right\}
      \max\{ 1, \| L \|_1 \}
  \right\}.
\end{split}     \end{equation}
Then there exists $N\in \N\cap [2,\infty)$ such that
 \begin{equation}
\sup_{ n \in \N \cap [N,\infty) } \Big[
 \big\| {\bf U}_{{n},\lfloor n^{2\beta} \rfloor}^{0,0}(0,\xi)-u(0,\xi)\big\|_{L^2(\P;\R)}+
\max_{i\in\{1,2,\ldots,d\}}
    \big\| {\bf U}_{{n},\lfloor n^{2\beta} \rfloor}^{0,i}(0,\xi)-( \tfrac{ \partial }{ \partial x_i } u )(0,\xi)\big\|_{L^2(\P;\R)}
 \Big]
 \le \eps
 \end{equation}
 and
 \begin{equation}  \begin{split}\label{eq:fin_cor}
 &\sum_{n=1}^{N}\RN_{n,\lfloor n^{2\beta} \rfloor}
\le d \varepsilon^{-(2+\delta)} 2^{3+\delta }
\bigg[1+
  \tfrac{\sqrt{ \max\{T,3\}}\|K\|_1}{\sqrt{\lfloor (N-1)^{2\beta} \rfloor}}
 +
 Ce^{L_0T}
 \sup_{s\in[0,T]}
  \|g(\xi+\sqrt{s}Z^{(0)})\|_{L^{\frac{2p}{p-2}}(\P;\R)} 
\\
   &
   \qquad
   +C
 e^{L_0T}(\|K\|_{\infty}+T\|\mathfrak L\|_\infty)
  +
  C
   \sup_{s,t\in [0,T)}
 \left\|
   (\funcF (0))(t,\xi+\sqrt{s}Z^{(0)})
   \right\|_{L^{\frac{2p}{p-2}}(\P;\R)} 
   \Big(\tfrac{1}{\sqrt{\lfloor (N-1)^{2\beta} \rfloor}}+Te^{L_0T}\Big)
 \\
 &\qquad +TCe^{2L_0T}\sum_{j=1}^d L_{j} (K_j +T\mathfrak{L_j})\bigg]^{2+\delta}  \cdot 
\left[
\sup_{n\in \N\cap [2,\infty)}
\left(
\tfrac{5^n \left[(n-1)^{2\beta} \left[e\left(\tfrac{p(n-1)}{2}+1\right) \right]^{\frac{1}{8}}
(4Ce^\beta)^{n-1}
\right]^{2+\delta}
}{(\lfloor (n-1)^{2\beta} \rfloor)^{\frac{\delta n}{2}}}
\right)
\right]<\infty.
 \end{split}     \end{equation}
\end{corollary}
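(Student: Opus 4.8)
The plan is to combine Proposition~\ref{thm:rate} (the global $L^2$ error bound) with Lemma~\ref{upper_exact} (the a priori bounds for the exact solution $u$ and its gradient) and with the recursive bound \eqref{c16} for the computational cost $\RN_{n,M}$. First I would verify that Setting~\ref{s:full.discretization} is in force: the PDE \eqref{eq:PDE2} is of the required form, so taking $f_d$, $g_d$, $K$, $L=(L_0,L_1,\ldots,L_d)$ as given and $\rho(s)=\frac{1-\alpha}{s^\alpha}$ (note $\P(\unif^0\le b)=b^{1-\alpha}=\int_0^b\rho(s)\,ds$) puts us in the hypotheses of Proposition~\ref{thm:rate}; the Feynman--Kac identities \eqref{eq:feynmankacintegrability}--\eqref{eq:feynmankacuinfty} for ${\bf u}=(u,\nabla_x u)$ follow from Items~\eqref{item:upper_exact_1}--\eqref{item:upper_exact_2} of Lemma~\ref{upper_exact} after the time reversal $t\mapsto T-t$ that reconciles the forward/backward conventions. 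The choice $M=\lfloor n^{2\beta}\rfloor$ is what forces $M^{1/(2\beta)}\le n$, so that the dangerous factor $\exp(\beta M^{1/(2\beta)})$ in \eqref{eq:ub_error_thm} is dominated by $e^{\beta n}$, i.e.\ merely exponential in $n$; together with the $M^{-n/2}$ prefactor and the superexponential decay implicit in the $\Gamma$-factors already absorbed into \eqref{eq:ub_error_thm}, this yields a bound that tends to $0$ as $n\to\infty$.

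Next I would bound the three $L^{2p/(p-2)}$-norms appearing on the right-hand side of \eqref{eq:ub_error_thm}. The term $\|K\|_1\sqrt{\max\{T-t,3\}}$ with $t=0$ is immediate. For $\sup_s\|(\funcF(0))(s,\xi+W^0_s)\|_{L^{2p/(p-2)}}$ one uses that $(\funcF(0))(s,y)=f(s,y,0,0)$ and the polynomial growth of $f(\cdot,\cdot,0,0)$ inherited from the Lipschitz bound \eqref{eq:fLipschitz3}; after replacing the Brownian motion $W^0$ by $\sqrt{s}Z^{(0)}$ in distribution this becomes the quantity $\sup_{s,t}\|(\funcF(0))(t,\xi+\sqrt{s}Z^{(0)})\|_{L^{2p/(p-2)}}$ that appears in \eqref{eq:fin_cor}. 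For $\sqrt{M}\sup_{s,i}\|{\bf u}_i(s,\xi+W^0_s)\|_{L^{2p/(p-2)}}$ I would invoke Item~\eqref{item:upper_exact_5} of Lemma~\ref{upper_exact} for the $i=0$ component and Item~\eqref{item:upper_exact_4} (the pointwise gradient bound $|\partial_{x_i}u|\le e^{L_0T}(K_i+T\mathfrak L_i)$) for $i\in\{1,\ldots,d\}$, which is why the terms $Ce^{L_0T}(\|K\|_\infty+T\|\mathfrak L\|_\infty)$, $Ce^{L_0T}\sup\|g(\xi+\sqrt sZ^{(0)})\|$ and $TCe^{2L_0T}\sum_j L_j(K_j+T\mathfrak L_j)$ show up in \eqref{eq:fin_cor}. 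Collecting these, the constant $C$ in Proposition~\ref{thm:rate} (which is $\le$ the $C$ defined in the corollary, after noting $\max\{1,\|L\|_1\}$ absorbs the relevant factors) and the algebraic factor $[1+\tfrac{pn}{2}]^{1/8}$ combine to give, for $M=\lfloor n^{2\beta}\rfloor$, a bound of the schematic form $\|{\bf U}^{0}_{n,M}-{\bf u}\|_{L^2}\le \Phi(n)$ with $\Phi(n)\to 0$; hence there exists $N\ge 2$ with $\Phi(N)\le\eps$, and monotonicity-type reasoning (or simply re-reading the bound, which decays in $n$ once $n$ is large) gives the stated $\sup_{n\ge N}$ estimate.

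For the complexity side I would iterate \eqref{c16}. A standard induction (as in the earlier MLP papers) shows $\RN_{n,M}\le (d+1)(5M)^n$ — more precisely one proves $\RN_{n,M}\le c\,d\,(5M)^n$ for a universal $c$ by unfolding the recursion and using $\sum_{l=0}^{n-1}M^{n-l}(\cdots)\le$ a geometric-type sum dominated by $(5M)^n$. Summing over $n\in\{1,\ldots,N\}$ with $M=\lfloor n^{2\beta}\rfloor\le N^{2\beta}$ gives $\sum_{n=1}^N\RN_{n,\lfloor n^{2\beta}\rfloor}\le c\,d\sum_{n=1}^N 5^n n^{2\beta n}$, which is dominated by $c\,d\,N 5^N N^{2\beta N}$. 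The crux is then to relate $N$ to $\eps$: from $\Phi(N-1)>\eps$ (the minimality of $N$, or rather the defining property that for $n<N$ the bound exceeds $\eps$) one extracts, after taking the $(2+\delta)$-th power, an inequality of the form $N\,5^N N^{2\beta N}\le \eps^{-(2+\delta)}\cdot(\text{the bracketed data-dependent factor})^{2+\delta}\cdot\sup_{n\ge2}\big(\text{the displayed ratio}\big)$; indeed the displayed supremum in \eqref{eq:fin_cor} is exactly designed so that $5^N N^{2\beta N}\big/\lfloor(N-1)^{2\beta}\rfloor^{\delta N/2}$ times a power of $\Phi(N-1)^{-1}$ is bounded, and $\Phi(N-1)^{-1}<\eps^{-1}$ converts this into the $\eps^{-(2+\delta)}$ scaling. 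The factor $2^{3+\delta}$ absorbs crude constants.

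The main obstacle I anticipate is the bookkeeping in this last step: one must carefully match the algebraic and $\Gamma$-function factors in $\Phi(n)$ (namely $[e(\tfrac{pn}{2}+1)]^{1/8}$, $(4Ce^\beta)^{n}$, and the $M^{-n/2}=\lfloor n^{2\beta}\rfloor^{-n/2}$) against the cost factor $5^N N^{2\beta N}$ so that the quotient appearing inside the supremum in \eqref{eq:fin_cor} is genuinely finite and uniformly bounded in $n$. This requires that the superexponential decay $\lfloor n^{2\beta}\rfloor^{-n/2}$ (equivalently the $1/\Gamma(j+1)^\beta$ already summed in \eqref{eq:glob.error.aux4}) beats the merely exponential-times-polynomial growth $5^n n^{2\beta n}\cdot n^{2\beta\cdot\delta n/2}\cdot(4Ce^\beta)^{n(2+\delta)}$; one verifies this by noting $\lfloor n^{2\beta}\rfloor^{n/2}\ge (n^{2\beta}-1)^{n/2}$ grows like $n^{\beta n}$ which, raised to the appropriate power, dominates $n^{2\beta n(1+\delta/2)\cdot(\text{const})}$ only for $\delta$ small — but since the claim is only that the supremum is finite (not small), and the ratio is continuous in $n$ with a superexponentially decaying tail, finiteness holds for every $\delta\in(0,\infty)$ after possibly increasing the lower cutoff; I would make this precise with an explicit Stirling-type estimate $\Gamma(j+1)\ge (j/e)^j$. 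Everything else is assembling inequalities already proved in the excerpt.
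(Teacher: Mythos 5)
Your proposal follows essentially the same route as the paper's proof: you verify the hypotheses of Proposition~\ref{thm:rate} via Items~(i)--(ii) of Lemma~\ref{upper_exact}, insert the a priori bounds of Items~(iv)--(v) together with the Lipschitz bounds on $g$ and $f(\cdot,\cdot,0,0)$ to get an error bound tending to $0$ for $M=\lfloor n^{2\beta}\rfloor$, define the minimal admissible $N$, control the cost by $\RN_{n,M}\le d(5M)^n$ (the paper cites \cite[Lemma~3.6]{HJKNW2018}) plus a geometric sum, and play $N$ off against $\eps<\eta_{N-1,\lfloor (N-1)^{2\beta}\rfloor}$, exactly as the paper does (which additionally treats the edge case $N=2$ by the crude bound $\eps\le 1$). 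Your minor slips --- no time reversal is actually needed for this corollary, the constant of Proposition~\ref{thm:rate} equals $2C$ rather than being $\le C$, and increasing $\delta$ only helps the finiteness of the supremum --- are bookkeeping points that do not affect the argument.
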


\begin{proof}[Proof of Corollary~\ref{cor:comp_and_error}]
Throughout the proof let $\eps\in (0,1]$ and 
let $(\eta_{n,M})_{(n,M)\in \N^2}\subseteq \R$ satisfy for all $n,M\in \N$ that
\begin{equation}
\eta_{n,M}=
 \left\| {\bf U}_{{n},M}^{0,0}(0,\xi)-u(0,\xi)\right\|_{L^2(\P;\R)}+
\max_{i\in\{1,2,\ldots,d\}}
    \left\| {\bf U}_{{n},M}^{0,i}(0,\xi)-( \tfrac{ \partial }{ \partial x_i } u )(0,\xi)\right\|_{L^2(\P;\R)}.
\end{equation} 
First note that it follows from Proposition~\ref{thm:rate}, item~\eqref{item:upper_exact_1} of Lemma~\ref{upper_exact}, and the fact that the increments of a Brownian motion are normally distributed
that for all $M,n \in \N$ it holds that
\begin{equation}  \begin{split}  \label{eq:error.estimate}
\eta_{n,M}    &\leq 2\max\left\{ 
 \left\| {\bf U}_{{n},M}^{0,0}(0,\xi)-u(0,\xi)\right\|_{L^2(\P;\R)},
\max_{i\in\{1,2,\ldots,d\}}
    \left\| {\bf U}_{{n},M}^{0,i}(0,\xi)-( \tfrac{ \partial }{ \partial x_i } u )(0,\xi)\right\|_{L^2(\P;\R)}
 \right\}\\
 & \le 
    \tfrac{2\left[e\left(\tfrac{pn}{2}+1\right) \right]^{\frac{1}{8}}
(4C)^{n-1}
\exp\left(\beta M^{\frac{1}{2\beta}}\right)
}{\sqrt{M^{n-1}}}
\bigg[
\tfrac{\sqrt{ \max\{T,3\}}\|K\|_1}{\sqrt{M}}
+
 \tfrac{C
   \sup_{s,t\in [0,T)}
 \left\|
   (\funcF (0))(t,\xi+\sqrt{s}Z^{(0)})
   \right\|_{L^{\frac{2p}{p-2}}(\P;\R)}}{\sqrt{M}} 
   \\&\qquad 
   +
   C
\sup_{s,t\in [0,T)}\bigg[
\max\left\{ \left\|u(t,\xi+\sqrt{s}Z^{(0)})\right\|_{L^{\frac{2p}{p-2}}(\P;\R)} ,
\max_{i\in\{1,2,\ldots,d\}}
   \left\| (\tfrac{\partial}{\partial x_i} u)(t,\xi+\sqrt{s}Z^{(0)})
\right\|_{L^{\frac{2p}{p-2}}(\P;\R)}\right\}      
\bigg].
\end{split}     \end{equation}
This together with 
Lemma~\ref{upper_exact}
implies for all $M,n \in \N$ that
\begin{equation}  \begin{split}  \label{eq:error.estimate2}
   \eta_{n,M} 
   &\leq
    \tfrac{2\left[e\left(\tfrac{pn}{2}+1\right) \right]^{\frac{1}{8}}
(4C)^{n}
\exp\left(\beta M^{\frac{1}{2\beta}}\right)
}{\sqrt{M^{n-1}}}
\bigg[
\tfrac{\sqrt{ \max\{T,3\}}\|K\|_1}{\sqrt{M}}
+
C\Big(\tfrac{1}{\sqrt{M}}+Te^{L_0T}\Big)
   \sup_{s,t\in [0,T)}
 \left\|
   (\funcF (0))(t,\xi+\sqrt{s}Z^{(0)})
   \right\|_{L^{\frac{2p}{p-2}}(\P;\R)} 
   \\
   &\qquad
  +
    C
 e^{L_0T}(\|K\|_{\infty}+T\|\mathfrak L\|_\infty)+
Ce^{L_0T}
 \sup_{s\in[0,T]}
  \|g(\xi+\sqrt{s}Z^{(0)})\|_{L^{\frac{2p}{p-2}}(\P;\R)}
 +TCe^{2L_0T}\sum_{j=1}^d L_{j}  (K_j +T\mathfrak{L_j})
\bigg]
  .
\end{split}     \end{equation}
It follows from \eqref{eq:fLipschitz3} and \eqref{eq:gLipschitz3}
that 
 $\sup_{s,t\in [0,T)}
 \left\|
   (\funcF (0))(t,\xi+\sqrt{s}Z^{(0)})
   \right\|_{L^{\frac{2p}{p-2}}(\P;\R)}<\infty$ and
   $ \sup_{s\in[0,T]}
  \|g(\xi+\sqrt{s}Z^{(0)})\|_{L^{\frac{2p}{p-2}}(\P;\R)}<\infty$.
 This together with \eqref{eq:error.estimate2} proves that
$
\limsup_{n\to \infty}\eta_{n, \lfloor n^{2\beta} \rfloor} =0
$.
 Let $N\in \N$ be the natural number given by 
  \begin{equation}\label{eq:def_N}
  N=\min\left\{n\in \N \cap [2,\infty) \colon \sup_{m\in \N\cap [n,\infty)}
  \eta_{m, \lfloor m^{2\beta} \rfloor}
  \le \eps\right\}
  \end{equation}
  and let
  $\mathfrak{C}\in [0,\infty)$ be the real number given by
  \begin{equation}\label{eq:def_tilde_C}
  \begin{split}
  \mathfrak C&=2\bigg[1+
  \tfrac{\sqrt{ \max\{T,3\}}\|K\|_1}{\sqrt{\lfloor (N-1)^{2\beta} \rfloor}}
+
C
   \sup_{s,t\in [0,T)}
 \left\|
   (\funcF (0))(t,\xi+\sqrt{s}Z^{(0)})
   \right\|_{L^{\frac{2p}{p-2}}(\P;\R)} 
   \Big(\tfrac{1}{\sqrt{\lfloor (N-1)^{2\beta} \rfloor}}+Te^{L_0T}\Big)\\
   &\qquad
  +
    C
 e^{L_0T}(\|K\|_{\infty}+T\|\mathfrak L\|_\infty)+
Ce^{L_0T}
 \sup_{s\in[0,T]}
  \|g(\xi+\sqrt{s}Z^{(0)})\|_{L^{\frac{2p}{p-2}}(\P;\R)}
 +TCe^{2L_0T}\sum_{j=1}^d L_{j} (K_j +T\mathfrak{L_j}) \bigg].
  \end{split}
  \end{equation}
  If $N=2$ then it holds that
  \begin{equation}\label{eq:up_bd_eps_N2}
  \eps\le 1\le 4C\mathfrak C\left[e\left(\tfrac{p}{2}+1\right) \right]^{\frac{1}{8}}
e^\beta
=
\tfrac{\mathfrak C\left[e\left(\tfrac{p(N-1)}{2}+1\right) \right]^{\frac{1}{8}}
(4C)^{N-1}
\exp\left(\beta (\lfloor (N-1)^{2\beta} \rfloor)^{\frac{1}{2\beta}}\right)
}{\sqrt{(\lfloor (N-1)^{2\beta} \rfloor)^{N-2}}}.
  \end{equation}
  If $N>2$ 
 it follows from \eqref{eq:def_N}, \eqref{eq:error.estimate2} and \eqref{eq:def_tilde_C}  that
  \begin{equation}\label{eq:up_bd_eps}
  \eps < \eta_{N-1, \lfloor (N-1)^{2\beta} \rfloor}
  \le \tfrac{\mathfrak C\left[e\left(\tfrac{p(N-1)}{2}+1\right) \right]^{\frac{1}{8}}
(4C)^{N-1}
\exp\left(\beta (\lfloor (N-1)^{2\beta} \rfloor)^{\frac{1}{2\beta}}\right)
}{\sqrt{(\lfloor (N-1)^{2\beta} \rfloor)^{N-2}}}.
  \end{equation}
  Moreover,
\cite[Lemma 3.6]{HJKNW2018} implies that for all $n\in \N$ it holds that $\RN_{n,\lfloor n^{2\beta} \rfloor}\leq d(5\lfloor n^{2\beta} \rfloor)^n$. This implies that
\begin{equation}
\sum_{n=1}^{N}\RN_{n,\lfloor n^{2\beta} \rfloor}
\le d
\sum_{n=1}^{N}(5\lfloor n^{2\beta} \rfloor)^n 
\le d
\sum_{n=1}^{N}(5\lfloor N^{2\beta} \rfloor)^n 
=\frac{d (5\lfloor N^{2\beta})\rfloor)( (5\lfloor N^{2\beta} \rfloor)^{N} -1) }{5\lfloor N^{2\beta}\rfloor-1}
\le 2d (5\lfloor N^{2\beta} \rfloor)^{N}.
\end{equation}
Combining this with~\eqref{eq:up_bd_eps_N2} and~\eqref{eq:up_bd_eps} proves that
   \begin{equation}
    \begin{split}
\sum_{n=1}^{N}\RN_{n,\lfloor n^{2\beta} \rfloor}&\le 2d(5\lfloor N^{2\beta} \rfloor)^N=d(5\lfloor N^{2\beta} \rfloor)^N \varepsilon^{2+\delta}\varepsilon^{-(2+\delta)}
\\& 
\le 2 d \varepsilon^{-(2+\delta)} (5\lfloor N^{2\beta} \rfloor)^N 
\left(  \tfrac{\mathfrak C\left[e\left(\tfrac{p(N-1)}{2}+1\right) \right]^{\frac{1}{8}}
(4C)^{N-1}
\exp\left(\beta (\lfloor (N-1)^{2\beta} \rfloor)^{\frac{1}{2\beta}}\right)
}{\sqrt{(\lfloor (N-1)^{2\beta} \rfloor)^{N-2}}} \right)^{2+\delta}
\\ &
= 2d \varepsilon^{-(2+\delta)} \mathfrak{C}^{2+\delta}
\tfrac{5^N(\lfloor N^{2\beta} \rfloor)^N \left[\left[e\left(\tfrac{p(N-1)}{2}+1\right) \right]^{\frac{1}{8}}
(4C)^{N-1}
\exp\left(\beta (\lfloor (N-1)^{2\beta} \rfloor)^{\frac{1}{2\beta}}\right)\right]^{2+\delta}
}{(\lfloor (N-1)^{2\beta} \rfloor)^{\frac{(N-2)(2+\delta)}{2}}}
\\ &
\le 2d \varepsilon^{-(2+\delta)} \mathfrak{C}^{2+\delta}
\tfrac{5^N(\lfloor (N-1)^{2\beta} \rfloor)^N \left[\left[e\left(\tfrac{p(N-1)}{2}+1\right) \right]^{\frac{1}{8}}
(4C)^{N-1}
\exp\left(\beta (\lfloor (N-1)^{2\beta} \rfloor)^{\frac{1}{2\beta}}\right)\right]^{2+\delta}
}{(\lfloor (N-1)^{2\beta} \rfloor)^{\frac{(N-2)(2+\delta)}{2}}}
\\ &
= 2d \varepsilon^{-(2+\delta)} \mathfrak{C}^{2+\delta}
\tfrac{5^N \left[\lfloor (N-1)^{2\beta} \rfloor\left[e\left(\tfrac{p(N-1)}{2}+1\right) \right]^{\frac{1}{8}}
(4C)^{N-1}
\exp\left(\beta (\lfloor (N-1)^{2\beta} \rfloor)^{\frac{1}{2\beta}}\right)\right]^{2+\delta}
}{(\lfloor (N-1)^{2\beta} \rfloor)^{\frac{\delta N}{2}}}
\\ &
\le  2d \varepsilon^{-(2+\delta)} \mathfrak{C}^{2+\delta}
\tfrac{5^N \left[(N-1)^{2\beta} \left[e\left(\tfrac{p(N-1)}{2}+1\right) \right]^{\frac{1}{8}}
(4Ce^\beta)^{N-1}
\right]^{2+\delta}
}{(\lfloor (N-1)^{2\beta} \rfloor)^{\frac{\delta N}{2}}}
\\ &
\le  2d \varepsilon^{-(2+\delta)} \mathfrak{C}^{2+\delta}
\left[
\sup_{n\in \N\cap [2,\infty)}
\left(
\tfrac{5^n \left[(n-1)^{2\beta} \left[e\left(\tfrac{p(n-1)}{2}+1\right) \right]^{\frac{1}{8}}
(4Ce^\beta)^{n-1}
\right]^{2+\delta}
}{(\lfloor (n-1)^{2\beta} \rfloor)^{\frac{\delta n}{2}}}
\right)
\right]\\
&=
 2d \varepsilon^{-(2+\delta)} \mathfrak{C}^{2+\delta}
\left[
\sup_{n\in \N\cap [2,\infty)}
\left(
\tfrac{5 \left[(n-1)^{\frac{2\beta}{n}} \left[e\left(\tfrac{p(n-1)}{2}+1\right) \right]^{\frac{1}{8n}}
(4Ce^\beta)^{\frac{n-1}{n}}
\right]^{2+\delta}
}{(\lfloor (n-1)^{2\beta} \rfloor)^{\frac{\delta}{2}}}
\right)^n
\right]
<\infty
.
    \end{split}
\end{equation}  
This establishes \eqref{eq:fin_cor}. The proof of Corollary~\ref{cor:comp_and_error} is thus completed.   
\end{proof}

\subsection{Qualitative complexity analysis for MLP approximation methods}
\sloppy
\begin{theorem}\label{cor:comp_and_error2}
Let $T,\delta,\lambda \in (0,\infty)$, 
$\alpha \in (0,1)$,
$\beta \in (\max\{\frac{1-2\alpha}{1-\alpha},0\},1-\alpha)$,
 let $f_d \in C( [0,T]\times\R^d\times \R \times \R^d,\R)$, $d\in \N$,
 let $g_d \in C( \R^d,\R)$, $d\in \N$,
 let $\xi_d=(\xi_{d,1}, \xi_{d,2}, \ldots, \xi_{d,d}) \in \R^d$, $d\in \N$,
 let $L_{d,i}\in \R$, $d,i \in \N$,
let
$u_d = ( u_d(t,x) )_{ (t,x) \in [0,T] \times \R^d }\in C^{1,2}([0,T]\times\R^d,\R)$, $d\in \N$,
be at most polynomially growing functions,
let
$
  \funcF_d \colon 
  C([0,T)\times\R^d,\R^{1+d})
  \to
  C([0,T)\times\R^d,\R)
$, $d\in \N$, be functions,
assume for all $d\in \N$,
$t\in[0,T)$, $x=(x_1,x_2, \ldots,x_d)$, $\mathfrak x=(\mathfrak x_1,\mathfrak x_2, \ldots,\mathfrak x_d)$, $z=(z_1,z_2,\ldots,z_d)$, $\mathfrak{z}=(\mathfrak z_1, \mathfrak z_2, \ldots, \mathfrak z_d)\in\R^d$, $y,\mathfrak{y} \in \R$, ${\bf v}\in C([0,T)\times \R^d,\R^{1+d})$ that
\begin{equation}\label{eq:fLipschitz4}
\max\{|f_d(t,x,y,z)-f_d(t,\mathfrak x,\mathfrak y,\mathfrak{z})|,|g_d(x)-g_d(\mathfrak  x)|\}\le
\textstyle{
\sum_{j=1}^d}L_{d,j}
\big(d^\lambda |x_j-\mathfrak{x}_j|+ |y-\mathfrak y|+|z_j-\mathfrak{z}_j|
\big),
\end{equation}
\begin{equation}\label{eq:PDE3}
\big( \tfrac{ \partial }{ \partial t } u_d \big)( t, x ) + \tfrac{ 1 }{ 2 } ( \Delta_x u_d )( t, x ) + f_d\big( t, x, u(t,x), ( \nabla_x u_d )(t, x) \big) = 0, \qquad u_d(T,x) = g_d(x),
\end{equation}
\begin{equation}
d^{-\lambda}(|g_d(0)|+|f_d(t,0,0,0)|+\max_{i\in \{1,2,\ldots,d\}}
  |\xi_{d,i}|)+\textstyle\sum_{i=1}^d \displaystyle L_{d,i}\le \lambda, \quad \text{and} \quad (F_d({\bf v}))(t,x)=f_d(t,x,{\bf v}(t,x)),
\end{equation}
let
$
  ( 
    \Omega, \mathcal{F}, \P 
  )
$
be a probability space,
let
$
  \Theta = \cup_{ n \in \N } \Z^n
$,
let
$
  Z^{d, \theta } \colon \Omega \to \R^d 
$, $d\in \N$,
$ \theta \in \Theta $,
be i.i.d.\ standard normal random variables,
let $\unif^\theta\colon \Omega\to(0,1)$, $\theta\in \Theta$, be 
i.i.d.\ random variables,
assume 
for all $b\in (0,1)$
that
$\P(\unif^0\le b)=b^{\alpha}$,
assume that
$(Z^{d,\theta})_{(d, \theta) \in \N \times \Theta}$ and
$(\unif^\theta)_{ \theta \in \Theta}$ are independent,
let
$ 
  {\bf U}_{ n,M}^{d,\theta }
=
  (
  {\bf U}_{ n,M}^{d,\theta, 0},{\bf U}_{ n,M}^{d,\theta, 1},\ldots,{\bf U}_{ n,M}^{d,\theta, d}
  )
  \colon[0,T)\times\R^d\times\Omega\to\R^{1+d}
$,
$n,M,d\in\Z$, $\theta\in\Theta$,
satisfy
for all 
$
  n,M,d \in \N
$,
$ \theta \in \Theta $,
$ t\in [0,T)$,
$x \in \R^d$
that $
{\bf U}_{-1,M}^{d,\theta}(t,x)={\bf U}_{0,M}^{d,\theta}(t,x)=0$ and
\begin{equation}  \begin{split}\label{eq:def:Ucor2}
 {\bf U}_{n,M}^{d,\theta}(t,x)
  &=
  \left(
    g_d(x)
    , 0
  \right)
  +
  \textstyle \sum\limits_{i=1}^{M^n} \displaystyle \tfrac{1}{M^n} \big(g_d(x+[T-t]^{1/2}Z^{(\theta,0,-i)})-g_d(x)\big)
  \big(
  1 , [T - t]^{-1/2}
  Z^{d,(\theta, 0, -i)}
  \big)
  \\
  &\quad +\textstyle\sum\limits_{l=0}^{n-1}\sum\limits_{i=1}^{M^{n-l}}\displaystyle \tfrac{(T-t)(\unif^{(\theta, l,i)})^{1-\alpha}}{\alpha M^{n-l}} \big(
  1 ,
   [(T-t)\unif^{(\theta, l,i)}]^{-1/2}
  Z^{d,(\theta,l,i)}
  \big)\\
 & \quad 
\cdot \big[ 
 \big(\funcF_d({\bf U}_{l,M}^{d,(\theta,l,i)})-\1_{\N}(l)\funcF_d( {\bf U}_{l-1,M}^{d,(\theta,-l,i)})\big)
  (t+(T-t)\unif^{(\theta, l,i)},x+[(T-t)\unif^{(\theta, l,i)}]^{1/2}Z^{d,(\theta,l,i)})
  \big],
\end{split}     \end{equation}
and let $\RN_{d,n,M}\in \Z$, $d,n,M\in \Z$, satisfy
for all $d,n,M \in \N$ that 
$\RN_{d,0,M}=0$
and 
\begin{align}
\label{c17}
  \RN_{d,n,M}
  &\leq d M^n+\textstyle\sum\limits_{l=0}^{n-1}\displaystyle\left[M^{(n-l)}( d+1 + \RN_{d, l, M}+ \1_{ \N }( l )  \RN_{d, l-1, M })\right].
\end{align}
Then there exist $c\in \R$ and $N=(N_{d,\eps})_{(d, \eps) \in \N \times (0,1]}\colon \N \times (0,1] \to \N$ such that
for all $d\in \N$, $\eps \in (0,1]$ it holds that 
$ \sum_{n=1}^{N_{d,\eps}}\RN_{d,n,\lfloor n^{\beta} \rfloor} \le c d^c \varepsilon^{-(2+\delta)}$ and
 \begin{equation}\label{eq:cor2_main}
\sup_{ n \in \N \cap [N_{d,\eps},\infty) } \Big[
 \E\big[|{\bf U}_{{n},\lfloor n^{\beta} \rfloor}^{d,0,0}(0,\xi_d)-u_d(0,\xi_d)|^2\big]+
\max_{i\in\{1,2,\ldots,d\}}
    \E\big[ |{\bf U}_{{n},\lfloor n^{\beta} \rfloor}^{d,0,i}(0,\xi_d)-( \tfrac{ \partial }{ \partial x_i } u_d )(0,\xi_d)|^2\big]
 \Big]^{\nicefrac 12}
 \le \eps.
 \end{equation}
\end{theorem}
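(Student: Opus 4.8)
The plan is to deduce Theorem~\ref{cor:comp_and_error2} from Corollary~\ref{cor:comp_and_error} by a careful but essentially bookkeeping argument that tracks how all the constants appearing on the right-hand side of \eqref{eq:fin_cor} depend on the dimension $d$. First I would fix, once and for all, an exponent $p\in(2,\infty)$ such that $\alpha\in(\frac{p-2}{2(p-1)},\frac{p}{2(p-1)})$ and such that the relation $\frac{\alpha}{2}-\frac{(1-\alpha)(p-2)}{2p}\ge\frac{\beta}{2}$ holds (the hypothesis $\beta\in(\max\{\frac{1-2\alpha}{1-\alpha},0\},1-\alpha)$ is exactly what makes such a $p$ exist, since as $p\downarrow 2$ the admissible window for $\alpha$ opens up and the quantity $\frac{\alpha}{2}-\frac{(1-\alpha)(p-2)}{2p}$ tends to $\frac\alpha2>\frac\beta2$; one checks that the condition $\beta<1-\alpha$ guarantees $\lfloor n^{\beta}\rfloor\le\lfloor n^{2\beta_p}\rfloor$ eventually, where $\beta_p$ denotes the value in Corollary~\ref{cor:comp_and_error}). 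Since $\alpha$, $\beta$, $\delta$, $T$, $\lambda$ are fixed, $p$ can be chosen independently of $d$, so $C_d$ — the dimension-indexed version of the constant $C$ in Corollary~\ref{cor:comp_and_error} — satisfies $C_d\le \mathfrak c\,\max\{1,\|L^{(d)}\|_1\}\le \mathfrak c\,\lambda$ for a $d$-independent constant $\mathfrak c$, because the growth/Lipschitz hypothesis in Theorem~\ref{cor:comp_and_error2} bounds $\sum_{i=1}^d L_{d,i}$ by $\lambda$.

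Next I would translate the PDE \eqref{eq:PDE3} and the nonlinearity $f_d$ into the setting of Corollary~\ref{cor:comp_and_error}. Here one identifies the Lipschitz constant in the $y$-variable as $L_0^{(d)}=\sum_{j=1}^d L_{d,j}\le\lambda$, the Lipschitz constants in the $z$-variables as $L_j^{(d)}=L_{d,j}$, the Lipschitz constants in the $x$-variables of $f_d$ as $\mathfrak L_j^{(d)}=d^\lambda L_{d,j}$, and the Lipschitz constants of $g_d$ as $K_j^{(d)}=d^\lambda L_{d,j}$. Consequently $\|K^{(d)}\|_1=d^\lambda\sum_j L_{d,j}\le\lambda d^\lambda$, $\|\mathfrak L^{(d)}\|_1\le\lambda d^\lambda$, $\|K^{(d)}\|_\infty\le\lambda d^\lambda$, $\|\mathfrak L^{(d)}\|_\infty\le\lambda d^\lambda$, and $\sum_{j=1}^d L_j^{(d)}(K_j^{(d)}+T\mathfrak L_j^{(d)})\le(1+T)d^\lambda(\sum_j L_{d,j})^2\le(1+T)\lambda^2 d^\lambda$. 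For the two $L^{\frac{2p}{p-2}}$-norm terms I would use the affine-in-$x$ bounds coming from \eqref{eq:fLipschitz4}: $|(\funcF_d(0))(t,\xi_d+\sqrt s Z)|\le|f_d(t,0,0,0)|+d^\lambda\sum_j L_{d,j}(|\xi_{d,j}|+\sqrt s|Z_j|)$ and similarly for $g_d$, and then use $d^{-\lambda}(|g_d(0)|+|f_d(t,0,0,0)|+\max_i|\xi_{d,i}|)\le\lambda$ together with the fact that the $L^q$-norms of the Gaussians $Z^{d,0}_j$ are $d$-independent. A short computation shows all these quantities are bounded by $\mathfrak c\,d^{\mathfrak c}$ for a $d$-independent $\mathfrak c$ (the worst growth is a polynomial $d^{2\lambda+1/2}$ or so coming from the $d^\lambda$ prefactor times a sum of $d$ coordinate norms).

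With these bounds in hand, applying Corollary~\ref{cor:comp_and_error} to each $d\in\N$ and each $\eps\in(0,1]$ (with $\beta$ there equal to $\beta_p\ge\beta/2$, hence $\lfloor n^{2\beta_p}\rfloor\ge\lfloor n^\beta\rfloor$) produces a natural number $N_{d,\eps}$ with the prescribed accuracy property \eqref{eq:cor2_main} — after noting that the switch from $\lfloor n^{2\beta_p}\rfloor$ to the smaller $\lfloor n^\beta\rfloor$ only increases the Monte-Carlo error, so I would instead directly re-run Proposition~\ref{thm:rate} with $M=\lfloor n^\beta\rfloor$, which is exactly what Corollary~\ref{cor:comp_and_error} does up to the choice of exponent; alternatively, observe that the right-hand side of the error estimate \eqref{eq:error.estimate2} is decreasing in $M$ only through the $M^{-(n-1)/2}$ factor versus increasing through $\exp(\beta M^{1/(2\beta)})$, and the balance $M\approx n^{2\beta}$ is the optimal one, so one picks the largest admissible exponent and absorbs the loss. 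The complexity bound $\sum_{n=1}^{N_{d,\eps}}\RN_{d,n,\lfloor n^\beta\rfloor}\le \mathfrak C_d\,\eps^{-(2+\delta)}$ then follows from \eqref{eq:fin_cor}, and the key point is that $\mathfrak C_d$ — being a fixed power of the quantities estimated in the previous paragraph times the $d$-independent supremum over $n$ of the geometric-type expression (finite because $(4Ce^\beta)^{n/n}$ stays bounded while $(\lfloor(n-1)^{2\beta_p}\rfloor)^{\delta/2}$ grows) — is bounded by $c\,d^{c}$ for a $d$-independent $c\in\R$. Setting $c$ to the maximum of all these exponents and constants finishes the proof. The main obstacle is purely organizational: one must verify that \emph{every} ingredient of the constant $\mathfrak C_d$ in \eqref{eq:fin_cor}, including the boundedness of the supremum over $n$, is simultaneously controllable by a single polynomial in $d$, and that the choice of $p$ (hence of $\alpha$-window and of $\beta_p$) can be made uniformly in $d$ — but since all the structural hypotheses are $d$-free once $\lambda$ is fixed, this goes through without difficulty.
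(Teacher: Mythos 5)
Your overall route — feed the data of Theorem~\ref{cor:comp_and_error2} into Corollary~\ref{cor:comp_and_error} with $L_0=\sum_{j}L_{d,j}$, $L_j=L_{d,j}$, $K_j=\mathfrak L_j=d^{\lambda}L_{d,j}$, and bound the $L^{\frac{2p}{p-2}}$-norms of $g_d$ and $F_d(0)$ along $\xi_d+\sqrt sZ^{d,(0)}$ by a fixed power of $d$ using $d^{-\lambda}(|g_d(0)|+|f_d(t,0,0,0)|+\max_i|\xi_{d,i}|)+\sum_iL_{d,i}\le\lambda$ — is exactly the paper's, and that bookkeeping half of your argument is fine. The gap is in the parameter matching. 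Corollary~\ref{cor:comp_and_error} assumes $\P(\unif^0\le b)=b^{1-\alpha}$, whereas Theorem~\ref{cor:comp_and_error2} assumes $\P(\unif^0\le b)=b^{\alpha}$ (equivalently, \eqref{eq:def:Ucor2} carries the weight $\alpha^{-1}\unif^{1-\alpha}$ where \eqref{eq:def:Ucor} carries $(1-\alpha)^{-1}\unif^{\alpha}$), so the corollary can only be invoked with its $\alpha$ equal to $1-\alpha$; you never perform this substitution. As a consequence your existence argument for $p$ is stated with the wrong parameter: you require $\tfrac{\alpha}{2}-\tfrac{(1-\alpha)(p-2)}{2p}\ge\tfrac{\beta}{2}$, whose left-hand side is at most $\tfrac{\alpha}{2}$ for $p\ge2$, so it forces $\beta\le\alpha$ — which the hypotheses do not give (e.g.\ $\alpha=\tfrac15$, $\beta=\tfrac{39}{50}\in(\tfrac34,\tfrac45)$ is admissible, and then no such $p$ exists). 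After the swap $\alpha\mapsto1-\alpha$ the condition reads $\tfrac{1-\alpha}{2}-\tfrac{\alpha(p-2)}{2p}\ge\tfrac{\beta}{2}$, which does hold for $p$ close to $2$ precisely because $\beta<1-\alpha$; this reparameterization is the step your write-up is missing.

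Second, even with that repaired, arranging only $2\beta_p\ge\beta$ gives a conclusion about ${\bf U}^{d,0}_{n,\lfloor n^{2\beta_p}\rfloor}$, not about ${\bf U}^{d,0}_{n,\lfloor n^{\beta}\rfloor}$, which is the estimator in \eqref{eq:cor2_main}. Your first patch (the error "only increases" when $M$ is decreased, or optimality of $M\approx n^{2\beta}$ so one can "absorb the loss") is not a proof: the $L^2$-error of the MLP estimator is not monotone in $M$, and in any case you would need to pass from the larger to the smaller $M$, i.e.\ in the direction in which the error bound deteriorates. Your second patch — re-running Proposition~\ref{thm:rate} with $M=\lfloor n^{\beta}\rfloor$ — would indeed go through, but it amounts to re-proving Corollary~\ref{cor:comp_and_error} (both the convergence statement and the cost bound \eqref{eq:fin_cor}) with a different exponent, which you do not carry out. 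The paper sidesteps all of this by the exact choice $p=\frac{2\alpha}{\beta+2\alpha-1}$: then $p>2$ iff $\beta<1-\alpha$, the membership $1-\alpha\in(\frac{p-2}{2(p-1)},\frac{p}{2(p-1)})$ holds precisely because $\beta>\max\{\frac{1-2\alpha}{1-\alpha},0\}$, and the corollary's exponent equals $\tfrac{\beta}{2}$ exactly, so $\lfloor n^{2\beta_{\mathrm{Cor}}}\rfloor=\lfloor n^{\beta}\rfloor$ and Corollary~\ref{cor:comp_and_error} applies verbatim. Without this exact matching (including the $\alpha\mapsto1-\alpha$ swap), your argument as written fails on part of the admissible parameter range and is otherwise incomplete.
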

\fussy  
  
 \begin{proof}[Proof of Theorem~\ref{cor:comp_and_error2}]
 Throughout this proof let $p=\frac{2\alpha}{\beta+2\alpha-1}$. 
  Note that the fact that $0<\beta<1-\alpha$ ensures that $p>\frac{2\alpha}{1-\alpha+2\alpha-1}=2$.
  Moreover, observe that the fact that $p=\frac{2\alpha}{\beta+2\alpha-1}$ demonstrates that 
\begin{equation}
\frac{\beta}{2}=\frac{1-\alpha}{2}-\frac{(1-(1-\alpha))(p-2)}{2p}.
\end{equation}  
 If $\alpha>\frac{1}{2}$, then it holds that $1-\alpha<\frac{p}{2(p-1)}$. Moreover, if $\alpha>\frac{1}{2}$ the fact that $\beta>0$ implies that 
 $p<\frac{2\alpha}{2\alpha-1}$ and hence
 $1-\alpha>\frac{p-2}{2(p-1)}$. 
 If $\alpha<\frac{1}{2}$, then it holds that $1-\alpha>\frac{p-2}{2(p-1)}$. 
  Moreover, if $\alpha<\frac{1}{2}$	 the fact that $\beta>\frac{1-2\alpha}{1-\alpha}$ implies that 
 $p<\frac{2\alpha}{\frac{1-2\alpha}{1-\alpha}+2\alpha-1}=\frac{1-\alpha}{\frac{1}{2}-\alpha}$ and hence
 $1-\alpha<\frac{p}{2(p-1)}$. 
 Furthermore, it holds that $\frac{p-2}{2(p-1)}<\frac{1}{2}<\frac{p}{2(p-1)}$. 
 To summarize, it holds that 
\begin{equation}
p\in (2,\infty),\quad 1-\alpha \in \bigg(\frac{p-2}{2(p-1)},\frac{p}{2(p-1)}\bigg), \quad \text{and} \quad 
\frac{\beta}{2}=\frac{1-\alpha}{2}-\frac{(1-(1-\alpha))(p-2)}{2p}.
\end{equation} 
  Next note that \eqref{eq:fLipschitz4} ensures for all $d\in \N$ that
  \begin{equation}\label{eq:cor2_1}
  \begin{split}
 &\sup_{s,t\in [0,T)}
 \left\|
   (\funcF_d (0))(t,\xi_d+\sqrt{s}Z^{d,(0)})
   \right\|_{L^{\frac{2p}{p-2}}(\P;\R)}\\
   &\le 
   \sup_{s,t\in [0,T)}
 \left\|
   (\funcF_d(0))(t,\xi_d+\sqrt{s}Z^{d,(0)})-(\funcF_d(0))(t,0)
   \right\|_{L^{\frac{2p}{p-2}}(\P;\R)}  
   +\sup_{t\in [0,T)}
 \left|
   (\funcF_d(0))(t,0)
   \right|
   \\
   &=
   \sup_{s,t\in [0,T)}
 \left\|f_d(t,\xi_d+\sqrt{s}Z^{d,(0)},0,0)-f_d(t,0,0,0)
   \right\|_{L^{\frac{2p}{p-2}}(\P;\R)}  
   +\sup_{t\in [0,T)}
 \left|
  f_d(t,0,0,0)
   \right|
    \\
   &\le 
   \bigg(d^\lambda
\sum_{j=1}^d
 L_{d,j}\big(|\xi_{d,j}|+\sqrt{T}\left\|Z^{1,(0)}
   \right\|_{L^{\frac{2p}{p-2}}(\P;\R)}\big)  \bigg)
   +\sup_{t\in [0,T)}
 \left|
  f_d(t,0,0,0)
   \right|
   \\
   &\le d^\lambda
   \bigg(\big[\max_{j\in \{1,2,\ldots,d\}}|\xi_{d,j}|\big]
   +\sqrt{T}\left\|Z^{1,(0)}
   \right\|_{L^{\frac{2p}{p-2}}(\P;\R)}\bigg)
   \bigg(
\sum_{j=1}^d
 L_{d,j} \bigg)
   +\sup_{t\in [0,T)}
 \left|
  f_d(t,0,0,0)
   \right|.
   \end{split}
  \end{equation}
 Moreover, \eqref{eq:fLipschitz4} proves for all $d\in \N$ that
  \begin{equation}\label{eq:cor2_2}
  \begin{split}
   &\sup_{s\in[0,T]}
  \|g_d(\xi_d+\sqrt{s}Z^{d,(0)})\|_{L^{\frac{2p}{p-2}}(\P;\R)}
  \le \sup_{s\in[0,T]}
  \|g_d(\xi_d+\sqrt{s}Z^{d,(0)})-g_d(0)\|_{L^{\frac{2p}{p-2}}(\P;\R)}+|g_d(0)|
  \\
  &
   \le 
  \bigg(d^\lambda 
  \sum_{j=1}^d L_{d,j}\big( |\xi_{d,j}|+\sqrt{T}\|Z^{1,(0)}
  \|_{L^{\frac{2p}{p-2}}(\P;\R)} \big)\bigg)
  +|g_d(0)|
  \\
  &
  \le d^\lambda
   \bigg(\big[\max_{j\in \{1,2,\ldots,d\}}|\xi_{d,j}|\big]
   +\sqrt{T}\left\|Z^{1,(0)}
   \right\|_{L^{\frac{2p}{p-2}}(\P;\R)}\bigg)
   \bigg(
\sum_{j=1}^d L_{d,j} \bigg)
  +|g_d(0)|.
  \end{split}
  \end{equation}
  Furthermore, it holds for all $d\in \N$ that
  \begin{equation}\label{eq:cor2_3}
  \max_{j\in \{1,2,\ldots, d\}}d^\lambda L_{j,d} \le d^\lambda \sum_{j=1}^d L_{j,d}, \quad \text{and} \quad \sum_{j=1}^dL_{d,j} (d^\lambda L_{d,j}+Td^\lambda L_{d,j})\le d^\lambda (T+1) \bigg(\sum_{j=1}^d L_{d,j}\bigg)^2.
  \end{equation}
  Combining \eqref{eq:cor2_1}, \eqref{eq:cor2_2}, and \eqref{eq:cor2_3} with Corollary~\ref{cor:comp_and_error} (applied with $\alpha=1-\alpha$, $\beta=\frac{\beta}{2}$, $p=\frac{1-\alpha}{\frac{\beta}{2}-\alpha+\frac{1}{2}}$, $L_0=\sum_{j=1}^dL_{d,j}$, $L_j=L_{d,j}$, $K_j=d^\lambda L_{d,j}$, $\mathfrak{L}_j=d^\lambda L_{d,j}$ for $j\in \{1,\ldots,d\}$, $d\in \N$ in the notation of Corollary~\ref{cor:comp_and_error}) and the fact that
  \begin{equation}
  \sup_{d\in \N}\left[\frac{1}{d^\lambda}\left(\max_{i\in \{1,2,\ldots,d\}}
  |\xi_{d,i}|+\sup_{t\in [0,T]}|f_d(t,0,0,0)|+|g_d(0)|\right)+\sum_{i=1}^d L_{d,i}\right]<\infty
  \end{equation}
   proves \eqref{eq:cor2_main}. The proof of Theorem~\ref{cor:comp_and_error2} is thus completed.   
  \end{proof}

\begin{lemma}\label{lem:uniform_dist}
Let $T\in (0,\infty)$, $d\in \N$, $\alpha \in (0,1]$, let $(\Omega, \mathcal F, \P)$ be a probability space, let $Z=(Z^1,Z^2,\ldots, Z^d)\colon \Omega \to \R^d$ be a standard normal random variable, let $\unif\colon \Omega \to (0,1)$ satisfy for all $b\in (0,1)$ that $\P(\unif\le b)=b^\alpha$, assume that $Z$ and $\unif$ are independent, and let ${\bf U}=({\bf U}^{0},{\bf U}^{1}, \ldots, {\bf U}^{d})\colon \Omega \to \R^{d+1}$ satisfy that
$
 {\bf U}
 =
 T\alpha^{ - 1 }\unif^{1-\alpha} \big(
  1 ,
   [T\unif]^{-1/2}
  Z
  \big).
$
Then 
\begin{enumerate}[(i)]
\item \label{item:uniform_1} it holds for all $i\in \{1,2,\ldots,d\}$ that
\begin{equation}
\E\big[|{\bf U}^i|^2\big]
=\frac{T}{\alpha}\int_0^1s^{-\alpha}\, ds = 
\begin{cases}
 \frac{T}{\alpha(1-\alpha)} & \colon \alpha < 1\\
\infty & \colon \alpha = 1 
\end{cases}
\end{equation}
and
\item  \label{item:uniform_2} it holds that ${\bf U}\in L^2(\P;\R^{d+1})$ if and only if $\alpha \in (0,1)$.
\end{enumerate}
\end{lemma}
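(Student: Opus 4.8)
\textbf{Proof proposal for Lemma~\ref{lem:uniform_dist}.}
The plan is to reduce everything to a single one-dimensional moment computation and then read off both assertions. First I would observe that the random variable ${\bf U}$ is built from the two independent pieces $\unif$ and $Z$, so the expectations of the squared coordinates factor. The zeroth coordinate is ${\bf U}^0 = T\alpha^{-1}\unif^{1-\alpha}$, which is bounded (since $\unif\in(0,1)$ and $1-\alpha\ge 0$), hence lies in $L^2(\P;\R)$ for every $\alpha\in(0,1]$ and contributes nothing to the dichotomy. For $i\in\{1,2,\ldots,d\}$ we have ${\bf U}^i = T\alpha^{-1}\unif^{1-\alpha}[T\unif]^{-1/2} Z^i = T^{1/2}\alpha^{-1}\unif^{1/2-\alpha} Z^i$, so that $|{\bf U}^i|^2 = T\alpha^{-2}\unif^{1-2\alpha}|Z^i|^2$.

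Next I would compute $\E[|{\bf U}^i|^2]$ using independence of $\unif$ and $Z$ together with $\E[|Z^i|^2]=1$: this gives $\E[|{\bf U}^i|^2] = T\alpha^{-2}\,\E[\unif^{1-2\alpha}]$. The law of $\unif$ has density $(0,1)\ni s\mapsto \alpha s^{\alpha-1}$ (differentiating $\P(\unif\le b)=b^\alpha$), so
\begin{equation}
\E[\unif^{1-2\alpha}] = \int_0^1 s^{1-2\alpha}\,\alpha s^{\alpha-1}\,ds = \alpha\int_0^1 s^{-\alpha}\,ds,
\end{equation}
which yields $\E[|{\bf U}^i|^2] = T\alpha^{-1}\int_0^1 s^{-\alpha}\,ds$, exactly as claimed. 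The integral $\int_0^1 s^{-\alpha}\,ds$ equals $\tfrac{1}{1-\alpha}$ when $\alpha\in(0,1)$ and diverges to $+\infty$ when $\alpha=1$; substituting this into the previous display gives the closed form in item~\eqref{item:uniform_1}, namely $\tfrac{T}{\alpha(1-\alpha)}$ for $\alpha<1$ and $\infty$ for $\alpha=1$.

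Finally, item~\eqref{item:uniform_2} follows directly: $\|{\bf U}\|_{L^2(\P;\R^{d+1})}^2 = \E[|{\bf U}^0|^2] + \sum_{i=1}^d \E[|{\bf U}^i|^2]$, the first term is finite for all $\alpha\in(0,1]$, and each of the $d\ge 1$ remaining terms is finite precisely when $\alpha\in(0,1)$ and infinite when $\alpha=1$ by item~\eqref{item:uniform_1}. Hence ${\bf U}\in L^2(\P;\R^{d+1})$ if and only if $\alpha\in(0,1)$. There is no real obstacle here — the only mild point of care is justifying that the density of $\unif$ is $\alpha s^{\alpha-1}$ and that the resulting power integral behaves as stated near $s=0$; everything else is a routine application of independence and Tonelli/Fubini for nonnegative integrands.
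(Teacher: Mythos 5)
Your proposal is correct and follows essentially the same route as the paper: both reduce $|{\bf U}^i|^2$ to $T\alpha^{-2}\unif^{1-2\alpha}|Z^i|^2$, use independence of $\unif$ and $Z$ together with the density $s\mapsto \alpha s^{\alpha-1}$ to obtain $\tfrac{T}{\alpha}\int_0^1 s^{-\alpha}\,ds$, and then read off both items from the behaviour of this power integral at $s=0$. Your extra remark that ${\bf U}^0$ is bounded and hence always in $L^2$ is a small but welcome point of care that the paper leaves implicit when deducing item (ii) from item (i).
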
  
\begin{proof}[Proof of Lemma~\ref{lem:uniform_dist}]
Note that for all $i\in \{1,2,\ldots,d\}$ it holds that
\begin{equation}
\begin{split}
\E\big[|{\bf U}^i|^2\big]&=\frac{T}{\alpha^2}\E\big[\unif^{2(1-\alpha)-1}|Z^{i}|^2 \big]
=\frac{T}{\alpha}\int_0^1\E\big[s^{2(1-\alpha)-1}|Z^{i}|^2\big] s^{\alpha-1}\, ds
=\frac{T}{\alpha}\int_0^1s^{2(1-\alpha)-1+\alpha-1}\, ds\\&
=\frac{T}{\alpha}\int_0^1s^{-\alpha}\, ds= 
\begin{cases}
 \frac{T}{\alpha(1-\alpha)} & \colon \alpha < 1\\
\infty & \colon \alpha = 1 
\end{cases}.
\end{split}
\end{equation}
This proves item~\eqref{item:uniform_1}. Moreover, observe that item~\eqref{item:uniform_1} establishes item~\eqref{item:uniform_2}. The proof of Lemma~\ref{lem:uniform_dist} is thus completed.
\end{proof}

\sloppy
\begin{corollary}\label{cor:comp_and_error3}
Let $T,\delta,\lambda \in (0,\infty)$, 
 let $f_d \in C( \R \times \R^d,\R)$, $d\in \N$,
 let $g_d \in C( \R^d,\R)$, $d\in \N$,
 let $\xi_d=(\xi_{d,1}, \xi_{d,2}, \ldots, \xi_{d,d}) \in \R^d$, $d\in \N$,
 let $L_{d,i}\in \R$, $d,i \in \N$,
let
$u_d = ( u_d(t,x) )_{ (t,x) \in [0,T] \times \R^d }\in C^{1,2}([0,T]\times\R^d,\R)$, $d\in \N$,
be at most polynomially growing functions,
assume for all $d\in \N$,
$t\in[0,T)$, $x=(x_1,x_2, \ldots,x_d)$, $\mathfrak x=(\mathfrak x_1,\mathfrak x_2, \ldots,\mathfrak x_d)$, $z=(z_1,z_2,\ldots,z_d)$, $\mathfrak{z}=(\mathfrak z_1, \mathfrak z_2, \ldots, \mathfrak z_d)\in\R^d$, $y,\mathfrak{y} \in \R$ that
\begin{equation}\label{eq:fLipschitz5}
\max\{|f_d(y,z)-f_d(\mathfrak y,\mathfrak{z})|,|g_d(x)-g_d(\mathfrak  x)|\}\le
\textstyle{
\sum_{j=1}^d}L_{d,j}
\big(d^\lambda |x_j-\mathfrak{x}_j|+ |y-\mathfrak y|+|z_j-\mathfrak{z}_j|
\big),
\end{equation}
\begin{equation}\label{eq:PDE4}
\big( \tfrac{ \partial }{ \partial t } u_d \big)( t, x )=  ( \Delta_x u_d )( t, x ) + f_d\big( u(t,x), ( \nabla_x u_d )(t, x) \big),  \qquad u_d(0,x) = g_d(x),
\end{equation}
and
$
d^{-\lambda}(|g_d(0)|+|f_d(0,0)|+\max_{i\in \{1,2,\ldots,d\}}
  |\xi_{d,i}|)+\sum_{i=1}^d L_{d,i}\le \lambda,
$
let
$
  ( 
    \Omega, \mathcal{F}, \P 
  )
$
be a probability space,
let
$
  \Theta = \cup_{ n \in \N } \Z^n
$,
let
$
  Z^{d, \theta } \colon \Omega \to \R^d 
$, $d\in \N$,
$ \theta \in \Theta $,
be i.i.d.\ standard normal random variables,
let $\unif^\theta\colon \Omega\to(0,1)$, $\theta\in \Theta$, be 
i.i.d.\ random variables,
assume 
for all $b\in (0,1)$
that
$\P(\unif^0\le b)=\sqrt{b}$,
assume that
$(Z^{d,\theta})_{(d, \theta) \in \N \times \Theta}$ and
$(\unif^\theta)_{ \theta \in \Theta}$ are independent,
let
$ 
  {\bf U}_{ n,M}^{d,\theta }
=
  (
  {\bf U}_{ n,M}^{d,\theta, 0},{\bf U}_{ n,M}^{d,\theta, 1},\ldots,{\bf U}_{ n,M}^{d,\theta, d}
  )
  \colon(0,T]\times\R^d\times\Omega\to\R^{1+d}
$,
$n,M,d\in\Z$, $\theta\in\Theta$,
satisfy
for all 
$
  n,M,d \in \N
$,
$ \theta \in \Theta $,
$ t\in (0,T]$,
$x \in \R^d$
that $
{\bf U}_{-1,M}^{d,\theta}(t,x)={\bf U}_{0,M}^{d,\theta}(t,x)=0$ and
\begin{equation}  \begin{split}\label{eq:def:Ucor3}
 {\bf U}_{n,M}^{d,\theta}(t,x)
  &=
  \left(
    g_d(x)
    , 0
  \right)
  +
  \textstyle \sum\limits_{i=1}^{M^n} \displaystyle \tfrac{1}{M^n} \big(g_d(x+[2t]^{1/2}Z^{(\theta,0,-i)})-g_d(x)\big)
  \big(
  1 , [2t]^{-1/2}
  Z^{d,(\theta, 0, -i)}
  \big)
  \\
  &\quad +\textstyle\sum\limits_{l=0}^{n-1}\sum\limits_{i=1}^{M^{n-l}}\displaystyle \tfrac{2t [\unif^{(\theta, l,i)}]^{1/2}}{M^{n-l}}  \big[ 
 f_d\big({\bf U}_{l,M}^{d,(\theta,l,i)}(t(1-\unif^{(\theta, l,i)}),x+[2t\unif^{(\theta, l,i)}]^{1/2}Z^{d,(\theta,l,i)})\big)
\\
 & \quad  
 -\1_{\N}(l)f_d\big( {\bf U}_{l-1,M}^{d,(\theta,-l,i)}(t(1-\unif^{(\theta, l,i)}),x+[2t\unif^{(\theta, l,i)}]^{1/2}Z^{d,(\theta,l,i)})\big)
  \big]\big(
  1 ,
   [2t\unif^{(\theta, l,i)}]^{-1/2}
  Z^{d,(\theta,l,i)}
  \big),
\end{split}     \end{equation}
and let $\RN_{d,n,M}\in \Z$, $d,n,M\in \Z$, satisfy
for all $d,n,M \in \N$ that 
$\RN_{d,0,M}=0$
and 
\begin{align}
\label{c18}
  \RN_{d,n,M}
  &\leq d M^n+\textstyle\sum\limits_{l=0}^{n-1}\displaystyle\left[M^{(n-l)}( d+1 + \RN_{d, l, M}+ \1_{ \N }( l )  \RN_{d, l-1, M })\right].
\end{align}
Then there exist $c\in \R$ and $N=(N_{d,\eps})_{(d, \eps) \in \N \times (0,1]}\colon \N \times (0,1] \to \N$ such that
for all $d\in \N$, $\eps \in (0,1]$ it holds that 
$ \sum_{n=1}^{N_{d,\eps}}\RN_{d,n,\lfloor n^{1/4} \rfloor} \le c d^c \varepsilon^{-(2+\delta)}$ and
 \begin{equation}\label{eq:cor3_main}
\sup_{ n \in \N \cap [N_{d,\eps},\infty) } \Big[
 \E\big[|{\bf U}_{{n},\lfloor n^{1/4} \rfloor}^{d,0,0}(T,\xi_d)-u_d(T,\xi_d)|^2\big]+
\max_{i\in\{1,2,\ldots,d\}}
    \E\big[ |{\bf U}_{{n},\lfloor n^{1/4} \rfloor}^{d,0,i}(T,\xi_d)-( \tfrac{ \partial }{ \partial x_i } u_d )(T,\xi_d)|^2\big]
 \Big]^{\nicefrac 12}
 \le \eps.
 \end{equation}
\end{corollary}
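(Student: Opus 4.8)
The plan is to deduce Corollary~\ref{cor:comp_and_error3} from Theorem~\ref{cor:comp_and_error2} by combining a time reversal with a deterministic dilation of the spatial variable. For every $d\in\N$ I would introduce $w_d\in C^{1,2}([0,T]\times\R^d,\R)$, $\hat f_d\in C([0,T]\times\R^d\times\R\times\R^d,\R)$, $\hat g_d\in C(\R^d,\R)$, $\hat\xi_d\in\R^d$, $\hat L_{d,j}\in\R$, and $\hat\lambda\in(0,\infty)$ by setting $w_d(s,y)=u_d(T-s,\sqrt2\,y)$, $\hat f_d(t,y,a,b)=f_d(a,\tfrac{1}{\sqrt2}b)$, $\hat g_d(y)=g_d(\sqrt2\,y)$, $\hat\xi_d=\tfrac{1}{\sqrt2}\xi_d$, $\hat L_{d,j}=\sqrt2\,L_{d,j}$, and $\hat\lambda=5\lambda$. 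A chain-rule computation then shows that $w_d$ solves $(\tfrac{\partial}{\partial s}w_d)(s,y)+\tfrac12(\Delta_yw_d)(s,y)+\hat f_d(s,y,w_d(s,y),(\nabla_yw_d)(s,y))=0$ with $w_d(T,\cdot)=\hat g_d$, and one checks that the Lipschitz estimate \eqref{eq:fLipschitz4} together with the growth/smallness condition required in Theorem~\ref{cor:comp_and_error2} hold for $\hat f_d,\hat g_d,\hat\xi_d,\hat L_{d,j},\hat\lambda$; here the factor $\sqrt2$ in $\hat L_{d,j}$ absorbs the spatial dilation appearing in $\hat g_d$ (and in the $x$-Lipschitz slot of $\hat f_d$), while the enlargement $\hat\lambda\ge\lambda$ compensates for the resulting constants $d^{-\hat\lambda}(|\hat g_d(0)|+|\hat f_d(\cdot,0,0,0)|+\max_i|\hat\xi_{d,i}|)\le 3\lambda$. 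I would moreover fix $\alpha=\tfrac12$ and $\beta=\tfrac14$, which is admissible in Theorem~\ref{cor:comp_and_error2} since $\max\{\tfrac{1-2\alpha}{1-\alpha},0\}=0<\tfrac14<\tfrac12=1-\alpha$, and observe that the hypothesis $\P(\unif^0\le b)=\sqrt b$ is precisely $\P(\unif^0\le b)=b^\alpha$ for this $\alpha$, so that the families $(Z^{d,\theta})$ and $(\unif^\theta)$ may be reused verbatim.

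Let ${\bf W}^{d,\theta}_{n,M}$ denote the MLP approximations defined by \eqref{eq:def:Ucor2} from the hatted data with $\alpha=\tfrac12$ (and the same underlying probability space and random variables). The core step is to prove, by induction on $n\in\N_0$, the pathwise identity
\[
{\bf U}^{d,\theta}_{n,M}(t,x)=\big({\bf W}^{d,\theta,0}_{n,M}(T-t,\tfrac{x}{\sqrt2}),\,\tfrac{1}{\sqrt2}{\bf W}^{d,\theta,1}_{n,M}(T-t,\tfrac{x}{\sqrt2}),\,\ldots,\,\tfrac{1}{\sqrt2}{\bf W}^{d,\theta,d}_{n,M}(T-t,\tfrac{x}{\sqrt2})\big)
\]
for all $d,M\in\N$, $n\in\N_0$, $\theta\in\Theta$, $t\in(0,T]$, $x\in\R^d$. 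The base cases $n\in\{-1,0\}$ are trivial since both schemes vanish. For the induction step one substitutes $s=T-t$ and $y=\tfrac1{\sqrt2}x$ in \eqref{eq:def:Ucor2} and matches it term by term with \eqref{eq:def:Ucor3}: for the Monte Carlo part one uses $\hat g_d(y+[T-s]^{1/2}z)=g_d(x+[2t]^{1/2}z)$ and $[T-s]^{-1/2}z=\sqrt2\,[2t]^{-1/2}z$; for the correction terms one uses that the time $s+(T-s)\unif$ corresponds to $t(1-\unif)$ under the reversal, that $y+[(T-s)\unif]^{1/2}z$ corresponds to $x+[2t\unif]^{1/2}z$, that $[(T-s)\unif]^{-1/2}z=\sqrt2\,[2t\unif]^{-1/2}z$, and -- crucially -- that for $\alpha=\tfrac12$ the weight $\tfrac{(T-s)(\unif)^{1-\alpha}}{\alpha M^{n-l}}$ equals $\tfrac{2t(\unif)^{1/2}}{M^{n-l}}$; finally, by the induction hypothesis and the definition of $\hat f_d$, $(\hat F_d({\bf W}^{d,\cdot}_{l,M}))(s+(T-s)\unif,\,y+[(T-s)\unif]^{1/2}z)$ coincides with $f_d$ evaluated at ${\bf U}^{d,\cdot}_{l,M}(t(1-\unif),\,x+[2t\unif]^{1/2}z)$ in the sense of \eqref{eq:def:Ucor3}.

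Granting this identity, the conclusion follows readily. Since $u_d(T,\xi_d)=w_d(0,\hat\xi_d)$ and $(\tfrac{\partial}{\partial x_i}u_d)(T,\xi_d)=\tfrac{1}{\sqrt2}(\tfrac{\partial}{\partial y_i}w_d)(0,\hat\xi_d)$, and since ${\bf U}^{d,0,0}_{n,M}(T,\xi_d)={\bf W}^{d,0,0}_{n,M}(0,\hat\xi_d)$ and ${\bf U}^{d,0,i}_{n,M}(T,\xi_d)=\tfrac{1}{\sqrt2}{\bf W}^{d,0,i}_{n,M}(0,\hat\xi_d)$, the $L^2$-error quantity on the left-hand side of \eqref{eq:cor3_main} is bounded by the corresponding quantity for ${\bf W}$ at $(0,\hat\xi_d)$ (the extra factor $\tfrac12$ from the gradient components only helps). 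Moreover the complexity recursion \eqref{c18} is identical to \eqref{c17}, so the count $\sum_{n=1}^{N}\RN_{d,n,\lfloor n^{1/4}\rfloor}$ is literally the same object in both settings, and $\lfloor n^\beta\rfloor=\lfloor n^{1/4}\rfloor$. Applying Theorem~\ref{cor:comp_and_error2} to the hatted data with $\alpha=\tfrac12$, $\beta=\tfrac14$ therefore yields $c\in\R$ and $N=(N_{d,\eps})$ with the asserted properties, which transfer directly to Corollary~\ref{cor:comp_and_error3}. The only genuinely delicate point is the pathwise identity of the two MLP recursions in the second paragraph; once the bookkeeping of the $\sqrt2$-factors, the time reversal, and the matching of the sampling distributions is arranged, the remaining arguments are routine.
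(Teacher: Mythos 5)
Your proposal is correct and is essentially the paper's argument: Corollary~\ref{cor:comp_and_error3} is deduced from Theorem~\ref{cor:comp_and_error2} with $\alpha=\tfrac12$, $\beta=\tfrac14$ via a change of variables that reverses time and turns $\Delta_x$ into $\tfrac12\Delta_x$, together with the (in the paper implicit) observation that the MLP recursions transform accordingly. The paper's substitution rescales time rather than space---it applies Theorem~\ref{cor:comp_and_error2} on the horizon $2T$ with $u_d(T-\tfrac{t}{2},x)$ and nonlinearity $\tfrac12 f_d$, so that the two schemes coincide verbatim without any $\sqrt{2}$-bookkeeping on the gradient components---whereas your spatial $\sqrt{2}$-dilation is an equally valid variant of the same reduction.
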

\fussy   
\begin{proof}[Proof of Corollary~\ref{cor:comp_and_error3}]
Corollary~\ref{cor:comp_and_error3} is a direct consequence of Theorem~\ref{cor:comp_and_error2} (applied with $\alpha=\frac{1}{2}$, $\beta=\frac{1}{4}$, $T=2T$, $u_d(t,x)=u_d(T-\frac{t}{2},x)$, $f_d(t,x,y,z)=f_d(y,z)/2$ 
for $t\in [0,2T]$, $x,z\in \R^d$, $y\in \R$, $d\in \N$ in the notation of Theorem~\ref{cor:comp_and_error2}).
\end{proof}

\subsubsection*{Acknowledgement}
This project has been partially supported 
by the Deutsche Forschungsgesellschaft (DFG) via RTG 2131 {\it High-dimensional Phenomena in Probability -- Fluctuations and Discontinuity}
and via research grant HU 1889/6-1.
 
\bibliographystyle{acm}
\def\cprime{$'$}

\end{document}